\subjclass[2020]{Primary: 37A15, 37B05; Secondary: 43A40, 22C05}
\DeclareMathAlphabet{\mathpzc}{OT1}{pzc}{m}{it}
\title[A Halmos-von Neumann theorem for actions of general groups]{A Halmos-von Neumann theorem for actions of general groups}
\begin{document}

\begin{abstract}
 	We give a new categorical approach to the Halmos-von Neumann theorem for actions of general topological groups. As a first step, we establish that the categories of topological and measure-preserving irreducible systems with discrete spectrum are equivalent. This allows to prove the Halmos-von Neumann theorem in the framework of topological dynamics. We then use the Pontryagin and Tannaka-Krein duality theories to obtain classification results for topological and then measure-preserving systems with discrete spectrum. As a byproduct, we obtain a complete isomorphism invariant for compactifications of a fixed topological group.\medskip\\
 	\textbf{Keywords}: ergodic theory; topological dynamics; group compactifications; duality theory; category theoretical language.
\end{abstract}

\maketitle

\section*{Introduction}

The desire to find meaningful isomorphism invariants for objects is present in many mathematical areas and has lead to fruitful new disciplines such as homological algebra, algebraic topology and K-theory. In ergodic theory it inspired the famous \emph{isomorphism problem}: Given a measure-preserving transformation $\varphi \colon \mathrm{X} \rightarrow \mathrm{X}$ on a probability space $\mathrm{X}$, can we find a simple isomorphism invariant which is complete, i.e., classifies $(\mathrm{X},\varphi)$ up to an isomorphism?\footnote{An account of the origins and history of the problem is given in \cite{ReWe2012}, see also \cite[Subsection 18.4.7]{EFHN2015}.}

For transformations which are \enquote{irreducible} (i.e., \emph{ergodic}) and \enquote{structured} (i.e., \emph{have discrete spectrum}) this question was first answered positively by P. Halmos and J. von Neumann in \cite{HaNe1942} (see also Section 4 of von Neumann's article \cite{vNeum1932}). Their results classify such systems in terms of the point spectrum $\sigma_{\mathrm{p}}(T_\varphi)$ of the induced Koopman operator\footnote{The original result considers the Koopman operator on $\mathrm{L}^2$-spaces. However, the Hilbert space structure is inessential for the result and the point spectrum of the Koopman operator coincides on all $\mathrm{L}^p$-spaces for $p \in [1, \infty]$, see \cite[Proposition 2.18]{EFHN2015}. We prefer $\mathrm{L}^1(\uX)$.} $T_\varphi \in \mathscr{L}(\mathrm{L}^1(\uX))$ defined by $T_\varphi f \coloneqq f \circ \varphi$ for $f \in \mathrm{L}^1(\uX)$ which is (for such systems) a subgroup of the complex unit circle $\T$.
In modern language their results can be summarized as follows (see \cite[Chapter 17]{EFHN2015}).

\begin{theorem*}
	The following assertions hold for ergodic systems with discrete spectrum.
		\begin{enumerate}[(i)]
			\item Two systems $(\mathrm{X}_1,\varphi_1)$ and $(\mathrm{X}_2,\varphi_2)$ are isomorphic if and only if $\sigma_{\mathrm{p}}(T_{\varphi_1}) = \sigma_{\mathrm{p}}(T_{\varphi_2})$.
			\item Given any subgroup $\sigma \subset \T$ there is a system $(\mathrm{X},\varphi)$ with $\sigma_{\mathrm{p}}(T_\varphi) = \sigma$.
			\item Any such system is isomorphic to a rotation system on a compact monothetic group.
		\end{enumerate}
\end{theorem*}

Thus, their result not only shows that the point spectrum of the Koopman operator is a complete isomorphism invariant, but also characterizes the resulting point spectra precisely as the subgroubs of the complex unit circle, and exhibits a canonical representative for every isomorphism class. These three aspects of the classification result are known as \emph{uniqueness}, \emph{realization} and \emph{representation}. 

The theorem has been modified and generalized in several directions. There is a version for minimal homeomorphisms on compact spaces with discrete spectrum (see \cite[Section 5.5]{Walters1975}, \cite[Chapter VIII]{DNP1987}, and \cite[Theorem 3.1 and Section 4]{Edeko2019}) as well as an extension to topological and measure-preserving systems with quasi-discrete spectrum (see \cite{Abra1962}, \cite{HaPa1965}, and \cite{HaMo2018}),
 and to non-ergodic systems (see \cite{Kwia1981} and \cite{Edeko2019}). Moreover, there are versions for actions of abelian groups (see \cite[Subsection 1.5]{JaShTa2021} and \cite[Theorem 2.6]{Stor1974}), even on non-commutative von Neumann algebras (see \cite[Sections 5 and 7]{OPT1980}). G. Mackey (see \cite{Mack1964}) and R. Ellis (in \cite{Elli1987}) gave a partial generalization of the original result to actions of arbitrary groups. The representation aspect is treated even for group representations on Banach lattices with discrete spectrum in a paper by R. Nagel and M. Wolff (see \cite{NaWo1972}).
 
A further remarkable step has been made by R. Zimmer in \cite{Zimm1976}: He proved a \enquote{full version} of the Halmos-von Neumann theorem covering all three aspects for so-called \emph{normal} actions of a second countable,  locally compact group on standard probability spaces. Here, normality is a strengthened version of discrete spectrum which arises when studying actions of non-abelian groups. Zimmer's results even cover the more general case of normal \emph{extensions} of ergodic measure-preserving systems. His proofs are very measure-theoretic in nature and some of his concepts rely on the rather involved theory of virtual groups.

One goal of this work is to provide a new and accessible proof of Zimmer's result in the case of systems by using topological models. With this approach we avoid any measure-theoretic intricacies by first proving a purely topological version of the result and then simply \enquote{translating} it to ergodic theory. This also allows to drop any separability assumptions imposed on the group and the spaces it acts on. Our work therefore contributes to a recent endeavour by A. Jamneshan and T. Tao to remove such assumptions from classical results of ergodic theory (see, e.g.,  \cite{JamnTao2019pre}, \cite{JamnTao2020apre}, \cite{JamnTao2020bpre}, \cite{Jamn2020pre}). 

The second major purpose of this article is to give a more concise and precise formulation of the coherences established by the Halmos-von Neumann theorem by using category theoretical language. Written in this way, the topological version of the Halmos-von Neumann theorem establishes an equivalence between three categories for a fixed topological group $G$: the category of (pointed) normal $G$-actions, the category of compactifications of $G$ and the category of \emph{grouplike subsets} of its dual (see \cref{main2} for the precise formulation). Moreover, we show that there is an equivalence between the categories of normal topological and measure-preserving systems, respectively (see \cref{normalequiv}). Thus, all these mathematical worlds are---in essence---the same.

This article is also a preparation for upcoming work in which we discuss structured extensions in topological dynamics and ergodic theory. Even if one is only interested in $\Z$-actions, more general (possibly non-commutative) groups arise naturally when studying extensions between systems having relative discrete spectrum. A thorough understanding of the Halmos-von Neumann theorem for actions of arbitrary groups is therefore the basis for studying the \enquote{relative case} of structured extensions.

Let us finally remark that the key results of this article are on compactifications of topological groups. In fact, the proof of the Halmos-von Neumann theorem is closely tied to the duality theory of compact groups. The well-known and elegant Pontryagin duality (see, e.g., \cite[Section 4.3]{Foll2016}) is already used in the original work of Halmos and von Neumann and can still be employed when dealing with actions of abelian groups (see \cref{scatcomm} below). In the non-commutative case, it is replaced by the much more sophisticated Tannaka-Krein duality (see, e.g., \cite[Paragraph 30]{HeRo1970} and \cite{JoSt1991}). As pointed out in the introduction of \cite[Paragraph 30]{HeRo1970}, a general compact group is not determined by its dual, i.e., the set of equivalence classes of irreducible representations. This is the reason why the Tannaka-Krein duality is rather difficult to state.  However, we show below that, when dealing with compatifications $H$ of a fixed topological group $G$, the situation becomes much easier: The dual of $H$ (embedded as a subset of the dual of $G$) already determines the compactification uniquely up to an isomorphism (see \cref{grpcompvsgrplike}). This purely harmonic analytic observation is interesting in its own right and is the essential part of our results.

\subsection*{Organization of the article and main results.} We now briefly outline the structure of this article. In Section \ref{sdiscspectrum} we recall the concept of discrete spectrum in the general framework of bounded strongly continuous group representations (see Subsection \ref{sdiscspectrumg}). We then discuss properties of topological (see Subsection \ref{sdiscspectrumt}) and measure-preserving (see Subsection \ref{sdiscspectrumm}) dynamical systems having discrete spectrum. In Section \ref{scattheory} we provide a short repetition of basic category theoretical language illustrated by means of the Gelfand representation theory for unital commutative C*-algebras. We establish in Section \ref{sequivalence} that the categories of irreducible topological and measure-preserving dynamical systems with discrete spectrum are equivalent (see \cref{equivtopmeas}). The Halmos-von Neumann theorem for actions of commutative groups is then discussed in Section \ref{sclassicalcom} (in a classical formulation, see \cref{comhvn1} and \cref{comhvn2}) and in Section \ref{scatcomm} (in terms of category theory, see \cref{hvncat1}).\footnote{The generalization of the classical ergodic theoretic result from $\Z$-actions to actions of general abelian groups is straightforward and probably quite well-known, see, e.g., \cite[Subsection 1.5]{JaShTa2021}. We have included it anyway in order to highlight our categorial perspective, to illustrate our topological approach and to motivate our methods in the general, non-commutative case.} Our main results are then contained in Section \ref{scatgen}. After introducing the necessary concepts in Subsections \ref{scatgen1}, \ref{scatgen2} and \ref{scatgen3}, we prove a Tannaka-Krein-type duality result for compactifications of topological groups (see \cref{grpcompvsgrplike}). After introducing normal systems in Subsection \ref{scatgen5}, we use this to prove versions of the Halmos-von Neumann theorem for actions of general groups in Subsection \ref{scatgen6}, first in a category theoretical language (see \cref{main2}) and then in a more classical formulation (see \cref{nomralhvntop2} and \cref{nomralhvntop2}).

\subsection*{Terminology and notation.} In the following all compact spaces are Hausdorff and all vector spaces are complex. If $E$ is a Banach space, we write $\mathscr{L}(E)$ for the space of bounded linear operators on $E$, and $E'$ for the dual space. 

We now briefly recall some concepts and terminology from topological dynamics and ergodic theory (see, e.g., \cite{Ausl1988}, \cite{EiWa2011}, \cite{OlVi2016}, and \cite{EFHN2015} for an introduction). Here and in the following, $G$ denotes a fixed topological group and $1$ is its neutral element.

A \emph{topological dynamical system} is a pair $(K,\varphi)$ consisting of a compact space $K$ and a continuous group action $\varphi \colon G \times K \rightarrow K, \, (t,x) \mapsto \varphi_t(x) = tx$. We call $(K,\varphi)$ \emph{minimal} if $K$ and $\emptyset$ are the only closed invariant subsets of $K$.
 
Topological dynamical systems can be studied effectively from an operator theoretic perspective via the induced \emph{Koopman representation}
	\begin{align*}
		T^\varphi \colon G \rightarrow \mathscr{L}(\mathrm{C}(K)), \quad t \mapsto T_t^\varphi
	\end{align*}
where $T^\varphi_tf \coloneqq f \circ \varphi_{t^{-1}}$ for $f \in \mathrm{C}(K)$. This is a strongly continuous group representation, i.e., the map $G \rightarrow \mathrm{C}(K), \, t \mapsto T_t^\varphi f$ is continuous for every $f \in \mathrm{C}(K)$ (see \cite[Theorem 4.17]{EFHN2015}).

We recall that a \emph{morphism} $q \colon (K,\varphi) \rightarrow (L,\psi)$ between topological dynamical systems $(K,\varphi)$ and $(L,\psi)$ is a continuous map $q\colon K \rightarrow L$ such that the diagram
	\[
		\xymatrix{
			K \ar[r]^{\varphi_t}  \ar[d]^q &K  \ar[d]^q\\
			L \ar[r]^{\psi_t} & L
		}
	\]
commutes for every $t \in G$. If $q$ is surjective (which is automatically the case whenever $(L,\psi)$ is minimal), then $q$ is also called an \emph{extension}. Every morphism $q \colon (K,\varphi) \rightarrow (L,\psi)$ induces an operator $J_q \in \mathscr{L}(\mathrm{C}(L) , \mathrm{C}(K))$ given by $J_qf \coloneqq f \circ q$ for $f \in \mathrm{C}(L)$ intertwining the group representations $T^\psi$ and $T^\varphi$. Moreover, $q$ is an extension if and only if $J_q$ is an isometry.

Classically, if $G$ is a discrete group, a \emph{measure-preserving point-transformation} is a pair $(\uX,\varphi)$ where $\mathrm{X} = (X, \Sigma_\uX,\mu_\uX)$ is a probability space and $\varphi = (\varphi_t)_{t \in G}$ is a family of measurable and measure-preserving maps on $\uX$ such that $\varphi_{1} = \mathrm{id}_X$ and $\varphi_{t_1t_2} = \varphi_{t_1}\circ \varphi_{t_2}$ almost everywhere for all $t_1,t_2 \in G$. It is called  \emph{ergodic} if each invariant subset has either measure $0$ or $1$.

As in the topological case, every measure-preserving system gives rise to a \emph{Koopman representation}
	\begin{align*}
		T^\varphi\colon G \rightarrow \mathscr{L}(\mathrm{L}^1(\uX)), \quad t \mapsto T_t,
	\end{align*}
where again $T_t^\varphi f \coloneqq f \circ \varphi_{t^{-1}}$ for $f \in \mathrm{L}^1(\uX)$ and $t \in G$. For every $t \in G$ the operator $T_t \coloneqq T^\varphi_t \in \mathscr{L}(\mathrm{L}^1(\uX))$ is a \emph{Markov isomorphism}, i.e., $T_t$ is an invertible isometry with $|T_t f|= T_t|f|$ for all $f \in \mathrm{L}^1(\uX)$ and $T_t \mathbbm{1} = \mathbbm{1}$ (see \cite[Section 13.2]{EFHN2015}). If $\uX$ is a standard probability space, then a result of von Neumann tells that any group representation $T \colon G \rightarrow \mathscr{L}(\mathrm{L}^1(\uX))$ of $G$ as Markov isomorphisms on $\mathrm{L}^1(\uX)$ is induced by a measure-preserving system $(\mathrm{X},\varphi)$ in this way. Moreover, $\varphi_t$ is unique up to a null-set for every $t \in G$ (see \cite[Theorem 7.20]{EFHN2015}). On general probability spaces such representations are still induced by transformations of the measure algebra (see \cite[Theorem 12.10]{EFHN2015}). However, we avoid these measure-theoretic intricacies and stick with the functional analytic perspective. Thus, here a \emph{measure-preserving system} (for an arbitrary topological group $G$) is a pair $(\mathrm{X},T)$ consisting of a probability space $\uX$  and a strongly continuous group representation $T \colon G \rightarrow \mathscr{L}(\mathrm{L}^1(\uX)), \, t \mapsto T_t$ as Markov lattice isomorphisms on $\mathrm{L}^1(\uX)$. It is \emph{ergodic} if the \emph{fixed space}
	\begin{align*}
		\fix(T) \coloneqq \{f \in \mathrm{L}^1(\uX)\mid T_tf = f \textrm{ for every } t \in G\}
	\end{align*}
is one-dimensional.

Finally, a \emph{morphism} (or \emph{extension}) $J \colon (\uY,S) \rightarrow (\uX,T)$ between such systems is a \emph{Markov embedding} $J \in \mathscr{L}(\mathrm{L}^1(\uY),\mathrm{L}^1(\uX))$ intertwining the representations, i.e., $J$ is an isometry, satisfies  $|Jf| = J|f|$ for all $f \in \mathrm{L}^1(\uX)$ as well as $J\mathbbm{1} = \mathbbm{1}$,  and the diagram
	\[
		\xymatrix{
			\mathrm{L}^1(\uX)  \ar[r]^{T_t}  &\mathrm{L}^1(\uX)  \\
			\mathrm{L}^1(\uY)  \ar[u]_J \ar[r]^{S_t} & \mathrm{L}^1(\uY)  \ar[u]_J  
		}
	\]
commutes for every $t \in G$.

Many examples in ergodic theory arise from topological dynamical systems. Given a topological dynamical system $(K,\varphi)$ we write $\mathrm{P}_{\varphi}(K)$ for the space of invariant probability measures\footnote{Here and in the following, a probability measure on a compact space is assumed to be a regular Borel measure.} on $K$. By the Riesz representation theorem we can identify this with a weak*-compact convex subset of the dual space $\mathrm{C}(K)'$ of $\mathrm{C}(K)$. We also remark that if $q \colon (K,\varphi) \rightarrow (L,\psi)$ is a morphism to another system $(L,\psi)$ and $\mu \in \mathrm{P}_{\varphi}(K)$, then $q_{*}\mu \coloneqq \mu \circ J_q \in \mathrm{P}_{\psi}(L)$. We call $q_{*} \mu$ the \emph{push-forward of $\mu$}. 

Now, if $\mu$ is an invariant probability measure of a system $(K,\varphi)$, we write $(K,\mu)$ for the arising Borel probability space and note that the Koopman representation $T^\varphi \colon G \rightarrow \mathscr{L}(\mathrm{C}(K))$ uniquely extends to a strongly continuous group representation 
	\begin{align*}
		T^\varphi \colon G \rightarrow \mathscr{L}(\mathrm{L}^1(K,\mu)), \quad t \mapsto T_t^{\varphi}
	\end{align*}
as Markov isomorphisms. Thus, every invariant probability measure $\mu$ of a topological dynamial system $(K,\varphi)$ gives rise to a measure-preserving system and we denote this by $(K,\mu,T^\varphi)$.

\section{Systems with discrete spectrum}\label{sdiscspectrum}

\subsection{Group representations with discrete spectrum}\label{sdiscspectrumg}
Structured systems in topological dynamics and ergodic theory can be introduced quite elegantly via the respective Koopman representations. We therefore first cover the notion of \emph{discrete spectrum} in the abstract, operator theoretic framework of representations $T \colon G \rightarrow \mathscr{L}(E), \, t \mapsto T_t$ of $G$ on a Banach space $E$ which are \emph{(norm) bounded} (i.e., $\sup_{t \in G} \|T_t\| < \infty$) and \emph{strongly continuous} (i.e., the map $G \rightarrow E, \, t \mapsto T_tv$ is continuous for every $v \in E$).

\begin{definition}
	A bounded strongly continuous group representation $T \colon G \rightarrow \mathscr{L}(E)$ on a Banach space $E$ \emph{has discrete spectrum} if
		\begin{align*}
			E = \overline{\bigcup \{M \subset E\mid M \textrm{ finite-dimensional invariant subspace}\}}.
		\end{align*}	
\end{definition}
Loosely speaking, this means that the representation is generated by its \enquote{small} subrepresentations. Here is a different characterization (extending \cite[Theorem 16.36]{EFHN2015}) in terms of the \emph{enveloping operator semigroup}
	\begin{align*}
		\mathcal{J}(T) \coloneqq \overline{\{T_t \mid t \in G\}} \subset \mathscr{L}(E)
	\end{align*}
where the closure is taken with respect to the strong operator topology (cf. \cite[Appendix C.8]{EFHN2015}).
\begin{proposition}\label{chardisc}
	For a bounded strongly continuous group representation $T \colon G \rightarrow \mathscr{L}(E)$ on a Banach space $E$ the following assertions are equivalent.
		\begin{enumerate}
			\item $T$ has discrete spectrum.
			\item The orbits $\{T_tv\mid t \in G\}$ are relatively compact for every $v \in E$.
			\item The semigroup $\mathcal{J}(T)$ is a compact topological group.\label{charc}
		\end{enumerate}
\end{proposition}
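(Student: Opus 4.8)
The plan is to prove the cyclic chain of implications $(1)\Rightarrow(2)\Rightarrow(3)\Rightarrow(1)$; throughout I put $C\coloneqq\sup_{t\in G}\|T_t\|<\infty$.

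\emph{$(1)\Rightarrow(2)$.} This is a routine approximation argument. Fix $v\in E$ and $\varepsilon>0$. By discrete spectrum there is a vector $w$ lying in some finite-dimensional invariant subspace $M$ with $\|v-w\|<\varepsilon/(2C)$, so $\|T_tv-T_tw\|<\varepsilon/2$ for every $t\in G$. The orbit $\{T_tw\mid t\in G\}$ is a bounded, hence totally bounded, subset of the finite-dimensional space $M$ and is therefore covered by finitely many balls of radius $\varepsilon/2$; consequently $\{T_tv\mid t\in G\}$ is covered by finitely many balls of radius $\varepsilon$. Thus this orbit is totally bounded, and since $E$ is complete it is relatively compact.

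\emph{$(2)\Rightarrow(3)$.} Since the norm is lower semicontinuous for the strong operator topology, $\mathcal J(T)$ is contained in the closed ball of radius $C$, on which multiplication in $\mathscr L(E)$ is jointly continuous for the strong operator topology; hence $\mathcal J(T)$ is a topological semigroup with identity $e=T_1$. Using $(2)$ and Tychonoff's theorem, the canonical injection $\mathcal J(T)\hookrightarrow\prod_{v\in E}\overline{\{T_tv\mid t\in G\}}$, $S\mapsto(Sv)_v$, is a homeomorphism onto its image, and this image is closed: any limit of families $(T_tv)_v$ is of the form $(Sv)_v$ for some bounded operator $S$, which then lies in $\mathcal J(T)$. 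Therefore $\mathcal J(T)$ is compact. Next, from $T_tT_{t^{-1}}=\mathrm{id}$ one gets $\|T_tv\|\ge C^{-1}\|v\|$ for all $t$ and $v$, and this uniform lower bound passes to the strong-operator closure, so every element of $\mathcal J(T)$ is injective. Hence $e$ is the only idempotent of $\mathcal J(T)$, since an idempotent $p$ satisfies $p(pv-v)=0$ and is injective. Now fix $s\in\mathcal J(T)$; the closed subsemigroup $\Gamma\coloneqq\overline{\{s^n\mid n\ge1\}}$ is a nonempty compact commutative semigroup, hence contains an idempotent, which must be $e$. Writing $e=\lim_i s^{n_i}$ with $n_i\to\infty$ and passing to a subnet with $s^{n_i-1}\to u\in\Gamma$, joint continuity of multiplication gives $su=us=e$, so $s$ is invertible. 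Thus $\mathcal J(T)$ is a compact group with jointly continuous multiplication. Finally, $(a,b)\mapsto(a,ab)$ is a continuous bijection of the compact Hausdorff space $\mathcal J(T)\times\mathcal J(T)$ onto itself, hence a homeomorphism, so its inverse $(a,c)\mapsto(a,a^{-1}c)$ is continuous; taking $c=e$ shows inversion is continuous. Thus $\mathcal J(T)$ is a compact topological group.

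\emph{$(3)\Rightarrow(1)$.} View $H\coloneqq\mathcal J(T)$ as a compact group acting on $E$ through the inclusion $H\hookrightarrow\mathscr L(E)$; this action is strongly continuous because $S\mapsto Sv$ is continuous for the strong operator topology, and a finite-dimensional subspace of $E$ is $H$-invariant exactly when it is $\{T_t\mid t\in G\}$-invariant. Let $dS$ denote normalised Haar measure on $H$ and put $T_fv\coloneqq\int_H f(S)\,Sv\,dS$ (a Bochner integral) for $f\in\mathrm C(H)$ and $v\in E$. A direct computation shows that if $f$ is a matrix coefficient of a finite-dimensional representation $\pi$ of $H$, then $T_fv$ lies in the finite-dimensional $H$-invariant subspace spanned by the finitely many vectors $T_gv$ with $g$ running over the matrix coefficients of $\pi$ that have a fixed first argument. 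Given $\varepsilon>0$, strong continuity provides a neighbourhood $U$ of $1$ in $H$ with $\|Sv-v\|<\varepsilon$ for $S\in U$; choosing $f\in\mathrm C(H)$ with $f\ge0$, $\int_H f\,dS=1$ and $\operatorname{supp}f\subseteq U$ gives $\|T_fv-v\|\le\int_H f(S)\|Sv-v\|\,dS<\varepsilon$, and replacing $f$ by a sufficiently close finite linear combination $g$ of matrix coefficients (possible by the Peter--Weyl theorem) keeps $\|T_gv-v\|$ small while $T_gv$ lies in a finite-dimensional $H$-invariant subspace. Hence $v$ lies in the closure of the union of the finite-dimensional invariant subspaces, so $T$ has discrete spectrum.

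The main obstacle is the step $(2)\Rightarrow(3)$, i.e.\ promoting relative compactness of the orbits to the assertion that $\mathcal J(T)$ is a group: the key points are that the uniform lower bound $\|T_tv\|\ge C^{-1}\|v\|$ forces $e$ to be the \emph{unique} idempotent of the compact semigroup $\mathcal J(T)$, and that a compact monoid whose only idempotent is its identity is a group, which one extracts by applying the existence of idempotents to each cyclic subsemigroup $\Gamma$. The implication $(3)\Rightarrow(1)$ is then essentially the Peter--Weyl decomposition of a Banach-space representation of a compact group.
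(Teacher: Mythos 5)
Your proof is correct, and its overall architecture coincides with the paper's: the same cycle of implications, with the first step done exactly as in the paper (approximate $v$ by an element of a finite-dimensional invariant subspace and use the uniform bound $C=\sup_t\|T_t\|$ to see the orbit is totally bounded). The differences are in the last two steps. For the group property of $\mathcal{J}(T)$ the paper argues directly with the inverses available in the generating set: if $S=\lim_i T_{t_i}$ strongly, then (after passing to a subnet, by compactness) $(T_{t_i^{-1}})$ converges, and joint continuity of multiplication on bounded sets identifies the limit as an inverse of $S$; continuity of inversion is obtained by a similar net argument. You instead observe that $\|T_tv\|\ge C^{-1}\|v\|$ passes to strong limits, so every element of $\mathcal{J}(T)$ is injective and the identity is the only idempotent, and you then run the Ellis--Numakura-type idempotent argument in the closed cyclic subsemigroup generated by $s$, plus the compact-Hausdorff closed-map trick for continuity of inversion. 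This route uses only compactness, joint continuity and the lower bound (not the explicit inverses $T_{t^{-1}}$), at the price of invoking the idempotent lemma and of one small point you should make explicit: from $e\in\Gamma$ you need either $s$ of finite order (then $s$ is trivially invertible) or a net $s^{n_i}\to e$ with $n_i\to\infty$; the latter holds because otherwise $e$ would lie in the finite, hence closed, set $\{s^n\mid n<N\}$ for some $N$, forcing $s$ to have finite order. Finally, for the implication from (3) to (1) the paper simply cites the Peter--Weyl-type corollary that a strongly continuous representation of a compact group on a Banach space is spanned by its finite-dimensional invariant subspaces, whereas you prove that corollary directly: $T_fv$ for $f$ a matrix coefficient lies in a finite-dimensional invariant subspace (since left translation preserves the span of matrix coefficients with fixed first argument), and a bump function near the identity together with Peter--Weyl density makes $T_gv$ close to $v$. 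So your argument is a correct, essentially self-contained expansion of the paper's proof, with a genuinely different mechanism for extracting the group structure of $\mathcal{J}(T)$.
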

\begin{proof}
	Assume that (a) holds. Since bounded subsets of finite-dimensional normed spaces are relatively compact, we have a dense subset $D \subset E$ such that the orbit $\{T_tv \mid t \in G\}$ is relatively compact for every $v \in D$. But, again since the representation is bounded, this already implies that all orbits are relatively compact (see \cite[Exercise 16.10]{EFHN2015}), hence (b).
	
	If (b) holds, then $\mathcal{J}(T)$ is compact (see again \cite[Exercise 16.10]{EFHN2015}).  Since composing operators is jointly continuous on norm bounded subsets (see \cite[Proposition C.19]{EFHN2015}), we obtain that $\mathcal{J}(T)$ is a compact topological semigroup. Using compactness it is readily checked that if $S = \lim_{i} T_{t_i}$ with respect to the strong operator topology for some net $(T_{t_i})_{i \in I}$, then $(T_{t_i^{-1}})_{i \in I}$ converges to an inverse of $S$. A similar argument shows that  inversion is a continuous map on $\EuScript{J}$ and therefore $\mathcal{J}(T)$ is a compact topological group. This shows (c).
	
	Finally, if (c) holds, then (a) follows by a corollary of the Peter-Weyl representation theory for compact groups (see \cite[Theorem 15.14]{EFHN2015}).
\end{proof}
In the case of an abelian group $G$, we obtain the following additional characterization using the dual group
	\begin{align*}
		G^* \coloneqq \{\chi \colon G \rightarrow \T\mid \chi \textrm{ continuous group homomorphism}\}
	\end{align*}
of $G$ (where $\T$ is the complex unit circle).
\begin{proposition}\label{chardisc2}
	Assume that the group $G$ is commutative. A bounded strongly continuous group representation $T \colon G \rightarrow \mathscr{L}(E)$ on a Banach space $E$ has discrete spectrum if and only if
		\begin{align*}
			E = \overline{\lin} \{v \in E \mid \exists \chi \in G^* \textrm{ with } T_tv = \chi(t)v \textrm{ for every } t \in G\}.
		\end{align*}
\end{proposition}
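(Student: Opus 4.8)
The plan is to reduce the statement to \cref{chardisc} together with the elementary representation theory of compact abelian groups. I would dispatch the easy implication first: if $T_t v = \chi(t) v$ holds for all $t \in G$ and some $\chi \in G^*$, then $\mathbb{C} v$ is a one-dimensional invariant subspace, so the closed linear span of all such eigenvectors is contained in the closure of the union of all finite-dimensional invariant subspaces; if that span equals $E$, then $T$ has discrete spectrum by definition.

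For the converse, assume $T$ has discrete spectrum. By \cref{chardisc}\eqref{charc} the enveloping semigroup $H \coloneqq \mathcal{J}(T)$ is a compact topological group, and it is abelian: the operators $T_t$ commute since $G$ does, and as multiplication is jointly continuous on the norm-bounded set $H$ (see \cite[Proposition C.19]{EFHN2015}), commutativity passes to the closure. The map $\iota \colon G \to H$, $t \mapsto T_t$, is a continuous group homomorphism by strong continuity of $T$. Hence a character $\eta \in H^*$ pulls back to a character $\chi \coloneqq \eta \circ \iota \in G^*$ (note that $\eta$ is $\T$-valued because $H$ is compact), and an $H$-eigenvector for $\eta$ is a $T$-eigenvector for $\chi$. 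It therefore suffices to prove that $E$ is the closed linear span of the joint eigenvectors of $H$.

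For this I would take an arbitrary finite-dimensional $T$-invariant subspace $M \subset E$. Being closed, $M$ is invariant under all of $H$, and $H$ acts on the finite-dimensional space $M$ through the continuous representation $S \mapsto S|_M$. Averaging any inner product over the Haar measure of $H$ makes this representation unitary, so $\{S|_M \mid S \in H\}$ is a commuting family of unitaries and $M$ admits a basis of common eigenvectors, each of which is a joint eigenvector of $H$ on $E$. Since by hypothesis $E$ is the closure of the union of such subspaces $M$, it follows that $E$ is the closed linear span of the joint eigenvectors of $H$, which completes the argument.

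I do not expect a serious obstacle here; the only point requiring some care is that $E$ need not be a Hilbert space, so one cannot directly put a spectral decomposition on all of $E$. This is circumvented by working inside the finite-dimensional invariant subspaces handed to us by the discrete-spectrum hypothesis and only averaging there. One should also verify the small bookkeeping that the restriction map $H \to \mathrm{GL}(M)$ is continuous and that its image is a genuine compact abelian subgroup, both of which are immediate from $M$ being closed and $H$ being a compact group.
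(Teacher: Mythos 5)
Your proof is correct, and it follows the paper's overall strategy (pass to the enveloping semigroup $\mathcal{J}(T)$, note via \cref{chardisc} that it is a compact --- and here abelian --- topological group, and pull characters of this group back along $t \mapsto T_t$), but it differs in how the eigenvector decomposition is obtained. The paper simply quotes a Peter--Weyl-type corollary (\cite[Corollary 15.18]{EFHN2015}) for strongly continuous representations of compact abelian groups, applied to $\mathcal{J}(T)$ acting on all of $E$; you instead avoid that infinite-dimensional input and argue by hand inside each finite-dimensional $T$-invariant subspace $M$: such an $M$ is closed, hence $\mathcal{J}(T)$-invariant, the restricted representation can be unitarized by Haar averaging, and the resulting commuting unitaries are simultaneously diagonalizable, giving a basis of joint eigenvectors whose (continuous) eigenvalue characters of $\mathcal{J}(T)$ pull back to elements of $G^*$. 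What your route buys is self-containedness at the level of finite-dimensional linear algebra, at the cost of using the definition of discrete spectrum (density of finite-dimensional invariant subspaces) directly rather than only the compactness of $\mathcal{J}(T)$; the paper's route is shorter because the cited corollary already packages exactly this decomposition. The small points you flag (continuity of the restriction $\mathcal{J}(T) \to \mathscr{L}(M)$, commutativity of $\mathcal{J}(T)$ via joint continuity of multiplication on bounded sets, and continuity of the eigenvalue characters, so that the pulled-back $\chi$ really lies in $G^*$) are indeed the only bookkeeping needed, and they all check out; the easy implication is handled exactly as in the paper, where it is dismissed as trivial.
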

\begin{proof}
	If $T$ has discrete spectrum, than the desired equality follows directly from \cref{chardisc} \ref{charc} combined with \cite[Corollary 15.18]{EFHN2015}. The other implication is trivial.
\end{proof}
\begin{remark}
	If $G= \Z$, we can identify $G^*$ with $\T$ and the condition in \cref{chardisc2} becomes the usual notion of discrete spectrum for power-bounded operators (cf. \cite[Section 16.36]{EFHN2015}).
\end{remark}

\subsection{Topological dynamical systems with discrete spectrum}\label{sdiscspectrumt} 
The definition of systems with discrete spectrum in topological dynamics is now a special case of the more general setting discussed in Subsection \ref{sdiscspectrumg}.
\begin{definition}
	A topological dynamical system $(K,\varphi)$ \emph{has discrete spectrum} if the induced Koopman representation $T^\varphi\colon G \rightarrow \mathscr{L}(\mathrm{C}(K))$ has discrete spectrum.
\end{definition}
Clearly, a trivial action on any compact space has discrete spectrum. Prototypical examples for minimal systems with discrete spectrum are the following \emph{rotations} and \emph{quasi-rotations}.

\begin{example}\label{topexample}
	Call a pair $(H,c)$ with $H$ a compact group and $c \colon G \rightarrow H$ a continuous group homomorphism with dense range a \emph{group compactification}\footnote{Note that we do not require $c$ to be a homeomorphism onto its range. In fact, we do not even demand that $c$ is injective.} of $G$ (see \cite[Remark D.12.3]{deVr1993}). Given such a group compactification $(H,c)$ the continuous group action
		\begin{align*}
			\varphi = \varphi_c\colon G \times H \rightarrow H, \quad (t,x) \mapsto c(t)x
		\end{align*}
	defines a minimal topological dynamical system $(H,\varphi_c)$ with discrete spectrum which we call a \emph{(topological) rotation} (use the representation theory of compact groups, see, e.g., \cite[Section 5.2]{Foll2016}), cf. \cite[Remark 3.35.6]{deVr1993}.
	
	Given a closed subgroup $U$ of $H$, we also obtain an action on the homogeneuous space $H/S$ via
		\begin{align*}
			\varphi = \varphi_{c,U} \colon G \times H/U \rightarrow H/U, \quad (t,xU) \mapsto (c(t)x)U.
		\end{align*}
	Then $(H/U,\varphi)$ is still a minimal topological dynamical system with discrete spectrum (as both properties are stable with respect to taking factors). We call this a \emph{(topological) quasi-rotation}.
\end{example}

There are additional well-known characterizations of topological dynamical systems with discrete spectrum. To recall them, we remind the reader of some standard concepts from topological dynamics (cf. \cite[Chapter 2]{Ausl1988}, \cite[Section IV.2]{deVr1993} and \cite[Definition 1.16]{EdKr2020}).

\begin{definition}\label{structureddef}
	A topological dynamical system $(K,\varphi)$ is 
		\begin{enumerate}[(i)]
			\item \emph{equicontinuous} if the set $\{\varphi_t\mid t \in G\} \subset \mathrm{C}(K,K)$ is equicontinuous.
			\item \emph{pseudoisometric} if there is a family $P$ of pseudometrics on $K$  generating the topology such that $p(tx,ty) = p(x,y)$ for all $x,y \in K$, $t \in G$ and $p \in P$.
			\item \emph{isometric} if $P$ in (ii) can be chosen to consist of a single element (which is then necessarily a metric on $K$).
		\end{enumerate}
\end{definition}

We also remind the reader of the construction of enveloping semigroups in topological dynamics (cf. \cite[Chapters 3 and 5]{Ausl1988}). We will use the following version (see also the introduction of \cite{EdKr2020}).

\begin{definition}\label{uniformenv}
	For a topological dynamical system $(K,\varphi)$ the closure
		\begin{align*}
			\mathrm{E}_{\mathrm{u}}(K,\varphi) \coloneqq \overline{\{\varphi_t \colon K \rightarrow K \mid t \in G\}} \subset \mathrm{C}(K,K)
		\end{align*}
	with respect to the topology of uniform convergence equipped with the composition of maps is the \emph{uniform enveloping semigroup of $(K,\varphi)$}.
\end{definition}

The following result shows that we can identify the uniform enveloping semigroup with the enveloping operator semigroup of the induced Koopman representations. However, it is sometimes convenient to work with one or the other.
\begin{proposition}\label{semigroups}
	Let $(K,\varphi)$ be a topological dynamical system. Then the map
		\begin{align*}
			\mathrm{E}_{\mathrm{u}}(K,\varphi) \rightarrow \mathcal{J}(T^\varphi), \quad \vartheta \mapsto S_\vartheta
		\end{align*}
	with $S_\vartheta f \coloneqq f \circ \vartheta$ for $f \in \mathrm{C}(K)$ and $\vartheta \in \mathrm{E}_{\mathrm{u}}(K,\varphi)$ is an anti-isomorphism of topological semigroups.
\end{proposition}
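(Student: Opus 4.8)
The plan is to produce the inverse of the map $\Phi\colon\vartheta\mapsto S_\vartheta$ by hand and to show that $\Phi$ and $\Phi^{-1}$ translate uniform convergence of maps into strong convergence of operators and back. The algebraic part is immediate: associativity of composition gives $\Phi(\vartheta\circ\vartheta')f = f\circ\vartheta\circ\vartheta' = S_{\vartheta'}(S_\vartheta f)$ for all $f\in\mathrm{C}(K)$, so $\Phi(\vartheta\circ\vartheta') = \Phi(\vartheta')\Phi(\vartheta)$; and since $T_tf = f\circ\varphi_{t^{-1}}$ we have $\Phi(\varphi_t) = T_{t^{-1}}$, so $\Phi$ carries the dense set $\{\varphi_t\mid t\in G\}\subseteq\mathrm{E}_{\mathrm{u}}(K,\varphi)$ onto the dense set $\{T_t\mid t\in G\}\subseteq\mathcal{J}(T_\varphi)$.

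The technical heart is the equivalence: for a net $(\vartheta_i)$ and a point $\vartheta$ in $\mathrm{C}(K,K)$, one has $\vartheta_i\to\vartheta$ uniformly if and only if $S_{\vartheta_i}\to S_\vartheta$ in the strong operator topology. For the forward implication I would use that every $f\in\mathrm{C}(K)$ is uniformly continuous for the unique uniformity of the compact space $K$, so uniform convergence $\vartheta_i\to\vartheta$ forces $\|f\circ\vartheta_i-f\circ\vartheta\|_\infty\to 0$. For the converse I would use that this uniformity has a base of entourages of the form $\{(x,y)\mid |f_k(x)-f_k(y)|<\delta,\ k=1,\dots,n\}$ with $f_1,\dots,f_n\in\mathrm{C}(K)$ and $\delta>0$; strong convergence of the finitely many nets $S_{\vartheta_i}f_k$ then forces $(\vartheta_i(x),\vartheta(x))$ to lie eventually in any prescribed entourage, uniformly in $x$. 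Granting this, $\Phi$ is well defined with values in $\mathcal{J}(T_\varphi)$ and is continuous; $\Phi^{-1}$ is continuous as soon as $\Phi$ is known to be bijective; and $\Phi$ is injective because $\mathrm{C}(K)$ separates the points of $K$.

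It remains to prove surjectivity, which I expect to be the main point. Given $S\in\mathcal{J}(T_\varphi)$, write $S=\lim_i T_{t_i}$ strongly. Each $T_{t_i}$ is a unital algebra homomorphism of $\mathrm{C}(K)$, and this passes to the strong limit: for $f,g\in\mathrm{C}(K)$ we have $T_{t_i}(fg)\to S(fg)$, $T_{t_i}f\to Sf$, $T_{t_i}g\to Sg$ in norm, and $T_{t_i}(fg)=(T_{t_i}f)(T_{t_i}g)$, so continuity of multiplication in $\mathrm{C}(K)$ yields $S(fg)=(Sf)(Sg)$; likewise $S\mathbbm{1}=\mathbbm{1}$. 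By the Gelfand representation theory recalled in Section \ref{scattheory}, for $x\in K$ the character $\delta_x\circ S$ of $\mathrm{C}(K)$ equals $\delta_{\vartheta(x)}$ for a unique $\vartheta(x)\in K$, and $\vartheta\colon K\to K$ is continuous with $S=S_\vartheta$. Finally $S_{\varphi_{t_i^{-1}}}=T_{t_i}\to S_\vartheta$ strongly, so by the equivalence above $\varphi_{t_i^{-1}}\to\vartheta$ uniformly, whence $\vartheta\in\mathrm{E}_{\mathrm{u}}(K,\varphi)$ and $\Phi(\vartheta)=S$.

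The subtlety worth flagging is exactly this last step: $\mathrm{C}(K,K)$ is in general not precompact in the topology of uniform convergence (it is precisely when $(K,\varphi)$ is equicontinuous), so one cannot simply pass to a uniformly convergent subnet of $(\varphi_{t_i^{-1}})_i$; the detour through Gelfand duality is what manufactures the limiting map $\vartheta$. The remaining points are routine: composition of maps is jointly continuous on $\mathrm{C}(K,K)$ (again by uniform continuity), so $\mathrm{E}_{\mathrm{u}}(K,\varphi)$ is a topological semigroup, and operator multiplication is jointly continuous on the norm-bounded set $\mathcal{J}(T_\varphi)$, so the latter is one too.
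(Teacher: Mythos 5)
Your proof is correct and follows essentially the same route as the paper: the two key ingredients are identical, namely the equivalence of uniform convergence of maps with strong convergence of the induced composition operators (using that the uniformity of the compact space $K$ is generated by $\mathrm{C}(K)$) and the Gelfand correspondence between continuous maps $K \rightarrow K$ and unital algebra homomorphisms of $\mathrm{C}(K)$, which the paper packages as a lemma identifying $\mathrm{C}(K,K)$ with $\mathrm{Hom}(\mathrm{C}(K))$ and then restricts to the closures of $\{\varphi_t \mid t \in G\}$ and $\{T_t \mid t \in G\}$. The only (welcome) difference is organizational: your surjectivity argument explicitly verifies that a strong limit $S = \lim_i T_{t_i}$ in $\mathscr{L}(\mathrm{C}(K))$ is again a unital algebra homomorphism, which is exactly the step implicitly needed for the paper's \emph{direct consequence} of its lemma, since the closure defining $\mathcal{J}(T_\varphi)$ is taken in $\mathscr{L}(\mathrm{C}(K))$ rather than in $\mathrm{Hom}(\mathrm{C}(K))$.
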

\cref{semigroups} is a direct consequence of the following lemma.
\begin{lemma}
	Let $K$ be a compact space. Equip $\mathrm{C}(K,K)$ with the topology of uniform convergence and the space of unital algebra homomorphisms $\mathrm{Hom}(\mathrm{C}(K))$ with the strong operator topology. Then the map
		\begin{align*}
			\mathrm{C}(K,K) \rightarrow \mathrm{Hom}(\mathrm{C}(K)), \quad \vartheta \mapsto S_\vartheta
		\end{align*}
	with $S_\vartheta f \coloneqq f \circ \vartheta$ for $f \in \mathrm{C}(K)$ and $\vartheta \in \mathrm{C}(K,K)$ is an anti-isomorphism of topological semigroups.
\end{lemma}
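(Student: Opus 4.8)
The plan is to establish three things about the assignment $\vartheta \mapsto S_\vartheta$ separately: that it reverses the semigroup operation, that it is a bijection, and that it is a homeomorphism for the two indicated topologies; calling it an \emph{anti-isomorphism of topological semigroups} then only requires one extra remark (that composition is continuous on both sides). Well-definedness and anti-multiplicativity are purely formal, and bijectivity is essentially the object-and-morphism part of Gelfand duality; the one step that genuinely needs care is the homeomorphism claim, because $K$ is an arbitrary compact space, not necessarily metrizable, so ``uniform convergence'' on $\mathrm{C}(K,K)$ must be read with respect to the canonical uniformity of $K$. I expect that to be the main obstacle, and I would overcome it by recalling the description of this uniformity in terms of the algebra $\mathrm{C}(K)$.

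For $\vartheta \in \mathrm{C}(K,K)$ the operator $S_\vartheta \colon f \mapsto f \circ \vartheta$ is clearly linear, multiplicative and fixes the constant function $\mathbbm{1}$, so $S_\vartheta \in \mathrm{Hom}(\mathrm{C}(K))$; and for $\vartheta, \eta \in \mathrm{C}(K,K)$ one has $S_{\vartheta \circ \eta} f = f \circ (\vartheta \circ \eta) = (f \circ \vartheta) \circ \eta = S_\eta(S_\vartheta f)$, i.e. $S_{\vartheta \circ \eta} = S_\eta \circ S_\vartheta$. Injectivity is immediate: if $f \circ \vartheta = f \circ \eta$ for all $f \in \mathrm{C}(K)$, then for each $x$ the points $\vartheta(x)$ and $\eta(x)$ cannot be separated by a continuous function, so $\vartheta = \eta$. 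For surjectivity let $\Phi \in \mathrm{Hom}(\mathrm{C}(K))$ and, for $x \in K$, consider the unital algebra homomorphism $\delta_x \circ \Phi \colon \mathrm{C}(K) \to \mathbb{C}$ (with $\delta_x$ the evaluation at $x$); its kernel is a maximal ideal of $\mathrm{C}(K)$, hence closed, hence of the form $\{f \mid f(y) = 0\}$ for a unique point $y =: \vartheta(x)$, by the elementary Gelfand theory of commutative unital C*-algebras. Thus $\Phi(f)(x) = f(\vartheta(x))$ for all $f$ and $x$; since $f \circ \vartheta = \Phi(f)$ is continuous for every $f \in \mathrm{C}(K)$ and continuous functions generate the topology of $K$, the map $\vartheta$ lies in $\mathrm{C}(K,K)$ and $\Phi = S_\vartheta$.

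For the homeomorphism I would use that the compact space $K$ carries a unique uniformity inducing its topology, with a base consisting of the sets $W_{f_1,\dots,f_n,\varepsilon} \coloneqq \{(x,y) \in K \times K \mid |f_j(x) - f_j(y)| < \varepsilon \ \text{ for } j = 1,\dots,n\}$ for $f_1,\dots,f_n \in \mathrm{C}(K)$ and $\varepsilon > 0$ (each such set is an entourage since every $f_j$ is uniformly continuous on $K$, they separate the diagonal because $\mathrm{C}(K)$ separates points, and they recover the topology of $K$ by Urysohn's lemma). Hence a net $(\vartheta_i)$ converges to $\vartheta$ uniformly in $\mathrm{C}(K,K)$ if and only if for all $f_1,\dots,f_n \in \mathrm{C}(K)$ and $\varepsilon > 0$ one eventually has $\|f_j \circ \vartheta_i - f_j \circ \vartheta\|_\infty < \varepsilon$ for each $j$; since a finite conjunction of eventual conditions on a net is again eventual, this is exactly the assertion that $S_{\vartheta_i} f \to S_\vartheta f$ in $\mathrm{C}(K)$ for every $f$, i.e. that $S_{\vartheta_i} \to S_\vartheta$ in the strong operator topology. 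So $\vartheta \mapsto S_\vartheta$ carries the topology of uniform convergence precisely onto the strong operator topology on $\mathrm{Hom}(\mathrm{C}(K))$ and is therefore a homeomorphism. To finish, note that by the previous paragraph every element of $\mathrm{Hom}(\mathrm{C}(K))$ is of the form $S_\vartheta$ and hence a contraction, so $\mathrm{Hom}(\mathrm{C}(K))$ is norm bounded and composition of operators is jointly strongly continuous on it (see \cite[Proposition C.19]{EFHN2015}); transporting this along the homeomorphism shows that $\mathrm{C}(K,K)$ with the topology of uniform convergence is a topological semigroup too, so $\vartheta \mapsto S_\vartheta$ is the desired anti-isomorphism of topological semigroups.
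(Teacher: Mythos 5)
Your proposal is correct and follows essentially the same route as the paper: bijectivity via the Gelfand-type identification of unital algebra homomorphisms of $\mathrm{C}(K)$ (which the paper simply cites), and the homeomorphism property via the fact that the unique uniformity of the compact space $K$ is generated by the functions in $\mathrm{C}(K)$, so that uniform convergence of a net $(\vartheta_i)$ is equivalent to strong convergence of $(S_{\vartheta_i})$. Your closing remark on joint continuity of composition (contractivity of all $S_\vartheta$ plus joint strong continuity on bounded sets) is a harmless addition that the paper leaves implicit.
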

\begin{proof}
	The map is a bijection by \cite[Theorem 4.13]{EFHN2015} and it is clear that it is an antihomomorphism of semigroups. To show that it is a homeomorphism, let $(\vartheta_i)_{i \in I}$ be a net in $\mathrm{C}(K,K)$ and $\vartheta \in \mathrm{C}(K,K)$. Since the uniform structure of $K$ is generated by the continuous functions $f \in \mathrm{C}(K)$, we obtain that $\lim_{i} \vartheta_i = \vartheta$ uniformly if and only if
		\begin{align*}
			\lim_i S_{\vartheta_i} f = \lim_i f\circ \vartheta_i = f \circ \vartheta = S_\vartheta f
		\end{align*}
	in the norm of $\mathrm{C}(K)$ for every $f \in \mathrm{C}(K)$. But this means $\lim_i S_{\vartheta_i} = S_\vartheta$ with respect to the strong operator topology. 
\end{proof}

This leads to the following additional characterizations of topological dynamical systems with discrete spectrum.
\begin{theorem}\label{chardiscrsp}
	For a topological dynamical system $(K,\varphi)$ the following assertions are equivalent.
		\begin{enumerate}[(a)]
			\item The system $(K,\varphi)$ has discrete spectrum.
			\item The uniform enveloping semigroup $\mathrm{E}_{\mathrm{u}}(K,\varphi)$ is a compact topological group.
			\item The system $(K,\varphi)$ is pseudoisometric.
			\item The system $(K,\varphi)$ is equicontinuous.
		\end{enumerate}
	If the space $K$ is metrizable, then these assertions are also equivalent to
		\begin{enumerate}[(c')]
			\item The system $(K,\varphi)$ is isometric.
		\end{enumerate}
\end{theorem}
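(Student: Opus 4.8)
The plan is to prove the equivalences (a) $\Leftrightarrow$ (b), (b) $\Leftrightarrow$ (d) and (c) $\Leftrightarrow$ (d), and then to treat the metrizable addendum separately. The equivalence (a) $\Leftrightarrow$ (b) comes essentially for free from the preparatory results: by \cref{semigroups} the uniform enveloping semigroup $\mathrm{E}_{\mathrm{u}}(K,\varphi)$ is anti-isomorphic, as a topological semigroup, to the enveloping operator semigroup $\mathcal{J}(T_\varphi)$, and the opposite of a compact topological group is again a compact topological group; hence $\mathrm{E}_{\mathrm{u}}(K,\varphi)$ is a compact topological group exactly when $\mathcal{J}(T_\varphi)$ is, which by \cref{chardisc} happens exactly when the Koopman representation $T_\varphi$ --- i.e.\ the system $(K,\varphi)$ --- has discrete spectrum.

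For (b) $\Leftrightarrow$ (d) I would invoke the uniform-space form of the Arzel\`a--Ascoli theorem, using that the compact space $K$ carries a unique uniformity and that pointwise relative compactness in $\mathrm{C}(K,K)$ is automatic because $K$ is compact. A compact subset of $\mathrm{C}(K,K)$ in the topology of uniform convergence is then equicontinuous; restricting to $\{\varphi_t \mid t\in G\}$ gives (b) $\Rightarrow$ (d). Conversely, if $\{\varphi_t\}$ is equicontinuous, its closure $\mathrm{E}_{\mathrm{u}}(K,\varphi)$ is compact, composition of maps is jointly continuous on this equicontinuous set, so $\mathrm{E}_{\mathrm{u}}(K,\varphi)$ is a compact topological semigroup; that it is in fact a group follows exactly as in the proof of \cref{chardisc} --- a subnet of $(\varphi_{t_i^{-1}})_i$ converges to an inverse of $\lim_i \varphi_{t_i}$, and inversion has closed graph on a compact space, hence is continuous. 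This gives (d) $\Rightarrow$ (b).

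The substantive, genuinely dynamical part is (c) $\Leftrightarrow$ (d). The direction (c) $\Rightarrow$ (d) is immediate: if $P$ is an invariant family of pseudometrics generating the topology of $K$, then the sets $\{(x,y) : p_i(x,y) < \varepsilon,\ 1\le i\le n\}$ with $p_i\in P$ form a base of $G$-invariant entourages for the unique uniformity of $K$, and invariance of a base of entourages is precisely equicontinuity of $\{\varphi_t\}$. For (d) $\Rightarrow$ (c) one first shows that equicontinuity forces the uniformity of $K$ to possess a base of symmetric $G$-invariant entourages: given an entourage $U$, equicontinuity yields an entourage $V$ with $(x,y)\in V \Rightarrow (\varphi_t x,\varphi_t y)\in U$ for all $t\in G$, and then $W \coloneqq \bigcap_{t\in G}(\varphi_t\times\varphi_t)^{-1}(U)$ contains $V$, so it is a $G$-invariant entourage, which one may symmetrize without losing these properties. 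Feeding decreasing chains drawn from such a base into the classical construction of a pseudometric from a chain of symmetric entourages produces $G$-invariant pseudometrics, and the family $P$ of all of them generates the uniformity and hence the topology; thus $(K,\varphi)$ is pseudoisometric.

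Finally, if $K$ is metrizable its uniformity has a countable base; intersecting such a base with $G$-invariant entourages as above yields a countable base of symmetric $G$-invariant entourages, and the single pseudometric that the metrization lemma produces from it is $G$-invariant and, $K$ being Hausdorff, separates points, hence is a metric --- which is (c'); and (c') $\Rightarrow$ (c) is trivial. I expect the (d) $\Rightarrow$ (c) step to be the main obstacle: not conceptually, but because it requires carrying $G$-invariance through the standard uniformity-to-pseudometric machinery while working with the abstract uniform structure of a possibly non-metrizable compact space rather than with a metric. Everything else either quotes a preparatory proposition or repeats an argument already carried out in the proof of \cref{chardisc}.
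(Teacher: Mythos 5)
Your proof is correct, but it takes a genuinely different route from the paper at the two places where real work is done. For producing invariant pseudometrics, the paper first establishes (b) and then exploits compactness of the enveloping group: starting from \emph{any} family $Q$ of pseudometrics generating the topology it sets $p_q(x,y) \coloneqq \max\{q(\vartheta(x),\vartheta(y)) \mid \vartheta \in \mathrm{E}_{\mathrm{u}}(K,\varphi)\}$, which is invariant because composition with $\varphi_t$ permutes $\mathrm{E}_{\mathrm{u}}(K,\varphi)$; taking $Q$ to be a single metric immediately yields (c') in the metrizable case. You instead go directly from (d) to (c) by invariantizing entourages, $W = \bigcap_{t}(\varphi_t\times\varphi_t)^{-1}(U)$, and running the classical chain-of-entourages metrization lemma while carrying $G$-invariance through it. Your route never uses compactness of the enveloping semigroup (it works from equicontinuity alone), but it is noticeably more technical: you need invariant chains $U_{n+1}\circ U_{n+1}\circ U_{n+1}\subset U_n$ and, for (c'), a countable invariant base, details you only sketch --- they do go through, but the paper's maximization trick makes them unnecessary. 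Secondly, to close the cycle the paper proves (d) $\Rightarrow$ (a) by applying Arzel\`a--Ascoli to the Koopman orbits $\{f\circ\varphi_t\}$ in $\mathrm{C}(K)$ and quoting \cref{chardisc}, whereas you prove (d) $\Leftrightarrow$ (b) via the uniform-space Arzel\`a--Ascoli theorem in $\mathrm{C}(K,K)$ and re-run the group argument from the proof of \cref{chardisc}; both are sound, and your version has the small advantage of exhibiting the compactness of $\mathrm{E}_{\mathrm{u}}(K,\varphi)$ as a direct consequence of equicontinuity, at the cost of duplicating an argument the paper prefers to quote once at the operator level.
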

\begin{proof}
	Assume that (a) holds. By \cref{chardisc} we obtain that  $\mathcal{J}(T^\varphi)$ is a compact topological group and therefore also $\mathrm{E}_{\mathrm{u}}(K,\varphi)$ is a compact topological group by \cref{semigroups}. This shows (b). 
	
	Now assume (b) and take a family $Q$ of pesudometrics generating the topology of $K$. By setting 
		\begin{align*}
			p_q(x,y) \coloneqq \max \{q(\vartheta(x),\vartheta(y))\mid \vartheta \in \mathrm{E}_{\mathrm{u}}(K,\varphi)\}
		\end{align*}
		for $(x,y) \in K$ and $q \in Q$, we obtain a set $P = \{p_q\mid q \in Q\}$ of pseudometrics satisfying the conditions of \cref{structureddef} (ii). Therefore $(K,\varphi)$ is pseudoisometric, and, if $K$ is metrizable, then even isometric. This shows the implications \enquote{(b) $\Rightarrow$ (c)} and \enquote{(b) $\Rightarrow$ (c')}, respectively.
		
		If $(K,\varphi)$ is pseudoisometric, then it is also equicontinuous, showing that (c) (and thus also (c')) implies (d). Finally, assume that (d) holds, i.e., $(K,\varphi)$ is equicontinuous. Then the orbits
			\begin{align*}
				\{f \circ \varphi_t \mid t \in G\}
			\end{align*}
		are equicontinuous and hence relatively compact by the Arzela-Ascoli theorem for every $f \in \mathrm{C}(K)$. Thus, $(K,\varphi)$ has discrete spectrum by \cref{chardisc}.
\end{proof}

We also obtain the following characterization of minimality for systems with discrete spectrum. Here, in analogy to the measure-preserving case, a topological dynamical system $(K,\varphi)$ is called \emph{topologically ergodic} if the \emph{fixed space} of the Koopman representation
	\begin{align*}
		\fix(T^\varphi) \coloneqq \{f \in \mathrm{C}(K) \mid T_{t}^\varphi f = f \textrm{ for every } t \in G\}
	\end{align*}
is one-dimensional (cf. \cite[Section 7]{EdKr2020}).

\begin{proposition}\label{charirred}
	For a topological dynamical system $(K,\varphi)$ with discrete spectrum the following assertions are equivalent.
		\begin{enumerate}[(a)]
			\item The system $(K,\varphi)$ is topologically ergodic.
			\item The system $(K,\varphi)$ is minimal.
			\item The system $(K,\varphi)$ is strictly ergodic, i.e., it is minimal and admits a unique invariant probability measure.
		\end{enumerate}
\end{proposition}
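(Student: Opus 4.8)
The plan is to prove the four implications ``(a)$\Rightarrow$(b)'', ``(b)$\Rightarrow$(a)'', ``(b)$\Rightarrow$(c)'' and ``(c)$\Rightarrow$(b)''. Of these, ``(c)$\Rightarrow$(b)'' is immediate from the definition of strict ergodicity, and ``(b)$\Rightarrow$(a)'' is essentially formal: an $f \in \fix(T_\varphi)$ is constant on every $\varphi$-orbit, so for a minimal system it is constant on a dense set and hence, by continuity, constant on $K$, whence $\fix(T_\varphi)$ reduces to the constant functions. The two substantial implications ``(a)$\Rightarrow$(b)'' and ``(b)$\Rightarrow$(c)'' both exploit the discrete spectrum assumption through the uniform enveloping semigroup. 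By \cref{chardiscrsp} the space $E \coloneqq \mathrm{E}_{\mathrm{u}}(K,\varphi)$ is a compact topological group; it acts on $K$ by homeomorphisms via evaluation $(\vartheta,x) \mapsto \vartheta(x)$, the action is jointly continuous, and since $\vartheta \mapsto \vartheta(x)$ is continuous the orbits $Ex = \overline{\{\varphi_t(x) \mid t \in G\}}$ are continuous images of the compact set $E$, hence compact and closed, and (as $E$ is a group) they partition $K$. Finally, by \cref{semigroups} the set $\{S_\vartheta \mid \vartheta \in E\}$ equals $\mathcal{J}(T_\varphi)$, which is the closure of $\{T_t \mid t \in G\}$ in the strong operator topology; since evaluation at $f$ is strongly continuous, $f \in \mathrm{C}(K)$ lies in $\fix(T_\varphi)$ precisely when $f \circ \vartheta = f$ for all $\vartheta \in E$, i.e. precisely when $f$ is constant on the $E$-orbits.

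For ``(a)$\Rightarrow$(b)'' I would argue by contraposition. If $(K,\varphi)$ is not minimal, then some orbit closure $C \coloneqq Ex_0$ is a proper non-empty closed subset of $K$, and it is automatically $E$-invariant. Choosing $y \in K \setminus C$, the orbit $Ey$ is a closed set disjoint from $C$ (the $E$-orbits partition $K$), so Urysohn's lemma provides $g \in \mathrm{C}(K)$ with $g|_C = 0$ and $g|_{Ey} = 1$. Averaging against the normalized Haar measure $m$ of $E$, the function $\tilde g(x) \coloneqq \int_E g(\vartheta(x)) \, dm(\vartheta)$ is continuous (using equicontinuity of the compact set $E \subset \mathrm{C}(K,K)$) and $E$-invariant (by bi-invariance of $m$), and it satisfies $\tilde g(x_0) = 0 \neq 1 = \tilde g(y)$. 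By the discussion above, $\tilde g$ is then a non-constant element of $\fix(T_\varphi)$, so $(K,\varphi)$ is not topologically ergodic. (Equivalently, one may note that $\fix(T_\varphi)$ is canonically isomorphic to $\mathrm{C}(K/E)$ for the compact Hausdorff orbit space $K/E$, so topological ergodicity is equivalent to $K/E$ being a singleton, i.e. to transitivity of the $E$-action, i.e. to minimality of $(K,\varphi)$.)

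For ``(b)$\Rightarrow$(c)'' I would first observe that $\mathrm{P}_\varphi(K) \neq \emptyset$ irrespective of minimality: the push-forward of $m$ under the orbit map $E \to K$, $\vartheta \mapsto \vartheta(x_0)$, is an $E$-invariant probability measure on $K$, hence also $\varphi$-invariant because each $\varphi_t$ lies in $E$. For uniqueness, let $\mu \in \mathrm{P}_\varphi(K)$. The map $\vartheta \mapsto \vartheta_*\mu$ is continuous from $E$ into the weak*-compact set of probability measures on $K$ and agrees with $\mu$ on the dense subset $\{\varphi_t \mid t \in G\}$ of $E$, so $\mu$ is $E$-invariant. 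Since, by minimality, $E$ acts transitively on $K$, a routine Fubini computation using the invariance of $m$ shows that the only $E$-invariant probability measure on $K$ is the one constructed above; hence $\mu$ is uniquely determined and $(K,\varphi)$ is strictly ergodic.

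The main obstacle — and the step I would treat with the most care — is the passage from $\varphi$-invariance to $E$-invariance, both at the level of functions (so that $\fix(T_\varphi)$ can be analysed through the orbit space $K/E$) and at the level of measures (so that $\mathrm{P}_\varphi(K)$ coincides with the set of $E$-invariant probability measures). This rests on \cref{semigroups}, on the density of $\{\varphi_t \mid t \in G\}$ in $E$, and on the relevant continuity statements; once it is in place, the remaining ingredients — compactness of $E$ and of the orbits, Urysohn's lemma, bi-invariance of Haar measure, and uniqueness of the invariant probability measure of a transitive compact transformation group — are standard.
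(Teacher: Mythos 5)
Your proposal is correct and follows essentially the same route as the paper: it uses \cref{chardiscrsp} to make $\mathrm{E}_{\mathrm{u}}(K,\varphi)$ a compact group whose closed orbits partition $K$, proves (a)$\Rightarrow$(b) by separating two orbits with a continuous function and averaging it against the Haar measure to produce a non-constant element of $\fix(T_\varphi)$, and proves (b)$\Rightarrow$(c) by pushing forward the Haar measure along an orbit map and using transitivity plus a Fubini argument for uniqueness. The only differences are expository (e.g.\ you spell out the passage from $\varphi$-invariance to $\mathrm{E}_{\mathrm{u}}$-invariance of measures, which the paper leaves implicit), so there is nothing substantive to add.
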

\begin{proof}
	By \cref{chardiscrsp} we obtain that $\mathrm{E}_{\mathrm{u}}(K,\varphi)$ is a compact group. We denote its Haar measure by $m$. As the orbits of the action of $\mathrm{E}_{\mathrm{u}}(K,\varphi)$ are precisely the minimal subsets, the system $(K,\varphi)$ is a disjoint union of its minimal subsets 
	 (this holds even more generally for distal systems, see \cite[Corollary 5.4]{Ausl1988}). 
	
	Now assume that $(K,\varphi)$ contains two minimal subsets $M_1$ and $M_2$. We then find an $f \in \mathrm{C}(K)$ taking the value $0$ on $M_1$ and the value $1$ on $M_2$. But then the function
		\begin{align*}
			\int_{\mathrm{E}_{\mathrm{u}}(K,\varphi)} f \circ \vartheta \, m(\vartheta) \in \mathrm{C}(K)
		\end{align*}
	has the same property and is contained in $\fix(T^\varphi)$. This shows that (a) implies (b). 
	
	If (b) holds, then it is easy to check that for $x \in K$ setting
		\begin{align*}
			\mu(f) \coloneqq \int_{\mathrm{E}_{\mathrm{u}}(K,\varphi)} f(\vartheta(x))\, \mathrm{d}m(\vartheta)
		\end{align*}
	for $f \in \mathrm{C}(K)$ defines an invariant probability measure on $K$. By minimality of the system $(K,\varphi)$, the group $\mathrm{E}_{\mathrm{u}}(K,\varphi)$ acts transitively on $K$ which yields that $\mu$ does not depend on the choice of $x\in K$. If $\nu$ is any other invariant probability measure on $K$, then it is also invariant with respect to the action of $\mathrm{E}_{\mathrm{u}}(K,\varphi)$ and therefore
		\begin{align*}
			\int_K f(x)\, \mathrm{d}\nu(x) = \int_{\mathrm{E}_{\mathrm{u}}(K,\varphi)} \int_K f(\vartheta(x)) \,\mathrm{d}\nu(x)\,\mathrm{d}m(\vartheta) = \langle \langle f,\mu\rangle \cdot \mathbbm{1},\nu \rangle = \langle f , \mu \rangle
		\end{align*}
	by Fubini's theorem.
	
	The implications \enquote{(c) $\Rightarrow$ (b)} and \enquote{(b) $\Rightarrow$ (a)} are obvious.
\end{proof}
\subsection{Measure-preserving systems with discrete spectrum}\label{sdiscspectrumm}
Similarly to the topological case, we now introduce measure-preserving dynamical systems with discrete spectrum.

\begin{definition}
 A measure-preserving system $(\mathrm{X},T)$ \emph{has discrete spectrum} if the underlying strongly continuous group representation $T \colon G \rightarrow \mathscr{L}(\mathrm{L}^1(\uX))$ has discrete spectrum.
\end{definition}

The following are the measure-thereotic versions of the standard examples discussed in \cref{topexample}.

\begin{example}\label{measrot}
	Let $(H,c)$ be a group compactification of $G$ and $(H,\varphi_c)$ the induced group rotation of \cref{topexample}. The Haar measure $m_H$ on $H$ then induces an ergodic measure-preserving system $(H,m_H,T^{\varphi_{c}})$ with discrete spectrum. We call such systems \emph{(measure-preserving) rotations}. 
	
	More generally, given any closed subgroup $U$ of $H$ we equip the quasi-rotation $(H/U,\varphi)$ with the pushforward $m_{H/U}$ of the Haar measure $m$ to again arrive at an ergodic measure-preserving system $(H/U,m_{H/U},T^{\varphi_c})$ with discrete spectrum. Such systems are \emph{(measure-preserving) quasi-rotations}.
\end{example}

We recall the following interesting result from \cite{NaWo1972} related to ergodic systems with discrete spectrum.

\begin{proposition}\label{subinv}
	Let $(\mathrm{X},T)$ be an ergodic measure-preserving system. If $M \subset \mathrm{L}^1(\uX)$ is a finite-dimensional invariant subspace, then $M \subset \mathrm{L}^\infty(\uX)$.
\end{proposition}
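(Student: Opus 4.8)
The plan is to manufacture a single nonnegative function $h \in \mathrm{L}^1(\uX)$ which dominates $|f|$ for every $f$ in the unit ball of $M$, to show that $h$ is a fixed point of the representation $T$, and then to conclude from ergodicity that $h$ is essentially constant, hence bounded --- which forces every element of $M$ to be bounded.

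Here are the steps I would carry out. We may assume $M \neq \{0\}$. Put $B \coloneqq \{f \in M \mid \|f\|_1 \leq 1\}$; since $M$ is finite-dimensional, $B$ is norm-compact, so I can fix a countable norm-dense subset $\{g_j\}_j \subseteq B$. Choosing a basis $f_1,\dots,f_n$ of $M$ and using equivalence of norms on the finite-dimensional space $M$, there is a constant $C > 0$ with $|f| \leq C\|f\|_1 \sum_{i=1}^n |f_i|$ a.e.\ for all $f \in M$; hence $g \coloneqq C \sum_{i=1}^n |f_i| \in \mathrm{L}^1(\uX)$ is an upper bound for $\{|g_j|\}_j$, so $h \coloneqq \sup_j |g_j|$ exists in $\mathrm{L}^1(\uX)$, satisfies $0 \leq h \leq g$, and --- approximating in norm and passing to an a.e.-convergent subsequence --- dominates $|f|$ for every $f \in B$. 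Next, each $T_t$ is an invertible isometry of $\mathrm{L}^1(\uX)$ with $T_t M = M$ (invariance plus invertibility) and $\|T_t\cdot\|_1 = \|\cdot\|_1$, so $T_t$ restricts to a bijective isometry of $B$ onto itself and $\{T_t g_j\}_j$ is again norm-dense in $B$. Writing $h$ as the increasing limit of the finite maxima $\max_{j \leq m} |g_j|$, using that $T_t$ is a positive operator on $\mathrm{L}^1(\uX)$ (whose norm is order continuous) and that $|T_t g_j| = T_t|g_j|$, one obtains $T_t h = \sup_j |T_t g_j|$; and this equals $h$, since each $T_t g_j \in B$ (so the supremum is $\leq h$) while density of $\{T_t g_j\}_j$ in $B$ forces it to be $\geq h$. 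Thus $h \in \fix(T)$. By ergodicity $\fix(T)$ is spanned by $\mathbbm{1}$, so $h = c\mathbbm{1}$ with $c \geq 0$, and $c > 0$ because $M \neq \{0\}$. Applying the domination property to $f/\|f\|_1 \in B$ for $0 \neq f \in M$ then gives $|f| \leq c\|f\|_1\,\mathbbm{1}$ a.e., i.e.\ $f \in \mathrm{L}^\infty(\uX)$ with $\|f\|_\infty \leq c\|f\|_1$. Hence $M \subseteq \mathrm{L}^\infty(\uX)$.

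The main obstacle I anticipate is the rigorous handling of the suprema over the ball $B$: because $\uX$ is not assumed separable, one cannot form a pointwise supremum over uncountably many functions directly, and must reduce to a countable dense subset (legitimate since $B$ is compact) and know that $T_t$ commutes with countable suprema. The latter is exactly order continuity of positive operators on $\mathrm{L}^1(\uX)$, which holds because the norm of $\mathrm{L}^1(\uX)$ is order continuous; combined with the lattice-homomorphism identity $|T_t g_j| = T_t|g_j|$ it makes the invariance computation $T_t h = h$ go through. Everything else --- norm equivalence on the finite-dimensional space $M$, invariance of $B$ under each $T_t$, and the appeal to ergodicity via $\fix(T) = \mathbb{C}\mathbbm{1}$ --- is routine.
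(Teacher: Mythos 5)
Your proof is correct. Note that for this particular proposition the paper offers no argument of its own: it is quoted directly from the Nagel--Wolff paper \cite{NaWo1972}, so there is no in-text proof to compare against. Your route is the standard lattice-theoretic one and all the delicate points are handled properly: the domination of the whole unit ball $B$ by the countable supremum $h$ via norm-approximation and an a.e.-convergent subsequence; the identity $T_tM=M$ (invariance of $M$ under both $T_t$ and $T_{t^{-1}}$, plus invertibility); the fact that $T_t$ preserves finite suprema, which indeed follows from $|T_tf|=T_t|f|$ together with linearity, since $T_t(f\vee g)=\tfrac12\bigl(T_tf+T_tg+|T_tf-T_tg|\bigr)=T_tf\vee T_tg$; the passage $T_th=\sup_j|T_tg_j|$ using monotone convergence (only $\sigma$-order continuity of the $\mathrm{L}^1$-norm is needed, i.e.\ the monotone convergence theorem); and the two-sided comparison of $\sup_j|T_tg_j|$ with $h$ via density of $\{T_tg_j\}_j$ in $B$. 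Ergodicity then gives $h=c\mathbbm{1}$ and hence $\|f\|_\infty\le c\|f\|_1$ for all $f\in M$, as you state. Two cosmetic remarks: only separability of $B$ (automatic since $M$ is finite-dimensional), not its compactness, is needed to extract the countable dense subset; and the observation $c>0$ is not actually required for the conclusion, since $|f|\le c\|f\|_1\mathbbm{1}$ already yields boundedness for any $c\ge 0$.
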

\section{Some category theoretical language}\label{scattheory}
To compare the topological and the measure-preserving concepts of discrete spectrum, and to make the statement of the Halmos-von Neumann theorem as precise and concise as possible, we will use some category theoretical language. We briefly recall the necessary concepts, but refer to \cite{MacL1998} or \cite[Appendix 3]{HoMo2006} for a proper introduction. 
\begin{definition}
A \emph{(locally small) category} $\mathscr{C}$ consists of a class $\mathrm{Ob}(\mathscr{C})$ of \emph{objects}, a set $\mathrm{Mor}(X,Y) = \mathrm{Mor}_{\mathscr{C}}(X,Y)$ of \emph{morphisms} for every pair $(X,Y)$ of objects, a \emph{composition map}
	\begin{align*}
		\circ = \circ_{X,Y,Z} \colon \mathrm{Mor}(X,Y) \times \mathrm{Mor}(Y,Z) \rightarrow \mathrm{Mor}(X,Z), \quad (\alpha, \beta) \mapsto \beta \circ \alpha 
	\end{align*}
	for every triple $(X,Y,Z)$ of objects and an identity morphism $\mathrm{id}_X \in \mathrm{Mor}(X,X)$ for every object $X$ satisfying the following conditions.
		\begin{enumerate}[(i)]
			\item For objects $W$, $X$, $Y$, $Z$ we have $\mathrm{Mor}(W,X) \cap \mathrm{Mor}(Y,Z) = \emptyset$ if $W \neq Y$ or $X \neq Z$.
			\item For objects $W$, $X$, $Y$, $Z$ the identity 
				\begin{align*}
					\gamma \circ (\beta \circ \alpha) = (\gamma \circ \beta) \circ \alpha
				\end{align*}
				holds for all $\alpha \in \mathrm{Mor}(W,X)$, $\beta \in \mathrm{Mor}(X,Y)$ and $\gamma \in \mathrm{Mor}(Y,Z)$.
			\item For objects $W$, $X$, $Y$ the identities
				\begin{align*} 
					\beta \circ \mathrm{id}_X = \beta \textrm{ and } \mathrm{id}_{X} \circ \alpha = \alpha
				\end{align*}
				hold for all $\alpha \in \mathrm{Mor}(W,X)$ and $\beta \in \mathrm{Mor}(X,Y)$.
		\end{enumerate}
\end{definition}
Simple examples are the categories of sets (with maps as morphisms) or the category of vector spaces (with linear maps as morphisms). Topological dynamical systems (with respect to the fixed topological group $G$) together with their morphisms  define a category $\mathbf{TopDyn}(G)$. Likewise, measure-preserving systems with their morphisms form a category $\mathbf{MeasDyn}(G)$. 

By taking the Koopman representation of a topological dynamical system we also obtain an object of the following functional analytic category.
\begin{example}\label{cstarcat}
	We call a pair $(A,T)$ a \emph{commutative $\mathrm{C}^*$-dynamical system} if $A$ is a unital commutative $\mathrm{C}^*$-algebra and $T \colon G \rightarrow \mathscr{L}(A)$ is a strongly continuous group representation of $G$ as $^*$-automorphisms. Every topological dynamical system $(K,\varphi)$ induces such a commutative $\mathrm{C}^*$-dynamical system $(\mathrm{C}(K),T^\varphi)$. A \emph{morphism} $J \colon (A,T) \rightarrow (B,S)$ between commutative $\mathrm{C}^*$-dynamical system is a unital $^*$-homorphism $V \colon A \rightarrow B$ such that the diagram
		\[
			\xymatrix{
					B \ar[r]^{S_t}  & B \\
					A\ar[u]^{J}  \ar[r]^{T_t} & A\ar[u]_{J}
				}
		\]	
	commutes for every $t \in G$. For example, if $q \colon (K,\varphi) \rightarrow (L,\psi)$ is a morphism of topological dynamical systems, then the induced operator $J_q \colon \mathrm{C}(L) \rightarrow \mathrm{C}(K), \, f \mapsto f \circ q$ is a morphism between the induced commutative $\mathrm{C}^*$-dynamical systems $(\mathrm{C}(L),T^\psi)$ and $(\mathrm{C}(K),T^\varphi)$. We write $\mathbf{ComC^*Dyn}(G)$ for the category of commutative $\mathrm{C}^*$-dynamical systems.
\end{example}

\begin{remark}
Given any category $\mathscr{C}$ we can perform standard constructions. 
	\begin{enumerate}[(i)]
		\item A \emph{subcategory} $\mathscr{D}$ of $\mathscr{C}$ consists of a subclass of objects of $\mathscr{C}$ and a subset $\mathrm{Mor}_{\mathscr{D}}(X,Y) \subset \mathrm{Mor}_{\mathscr{C}}(X,Y)$ of morphisms for every pair $(X,Y)$ of objects in $\mathscr{D}$ such that 
			\begin{itemize}
				\item $\mathrm{id}_X \in \mathrm{Mor}_{\mathscr{D}}(X,X)$ for every object $X$ in $\mathscr{D}$, and 
				\item $\beta \circ \alpha \in \mathrm{Mor}_{\mathscr{D}}(X,Z)$ for all $\alpha \in \mathrm{Mor}_{\mathscr{D}}(X,Y)$ and $\beta \in \mathrm{Mor}_{\mathscr{D}}(Y,Z)$ where $X,Y,Z$ are objects in $\mathscr{D}$.
			\end{itemize}
		In this case, $\mathscr{D}$ becomes a category in its own right in the obvious way. If $\mathrm{Mor}_{\mathscr{D}}(X,Y) = \mathrm{Mor}_{\mathscr{C}}(X,Y)$ for all objects $X$ and $Y$ in $\mathscr{D}$, then $\mathscr{D}$ is called a \emph{full subcategory of} $\mathscr{C}$.
		\item The \emph{opposite category} $\mathscr{C}^{\mathrm{op}}$ has the same objects $\mathrm{Ob}(\mathscr{C}^{\mathrm{op}}) \coloneqq \mathrm{Ob}(\mathscr{C})$ but the reversed morphisms: $\mathrm{Mor}_{\mathscr{C}^{\mathrm{op}}}(X,Y) \coloneqq \mathrm{Mor}_{\mathscr{C}}(Y,X)$ for all objects $X$ and $Y$. The composition maps are then also reversed.
	\end{enumerate}
\end{remark}
	
To compare categories, we also need the concept of functors.

\begin{definition}
Given categories $\mathscr{C}$ and $\mathscr{D}$, a \emph{functor} $F\colon \mathscr{C} \rightarrow \mathscr{D}$ consists of
			\begin{enumerate}[(i)]
				\item  an assignment $F \colon \mathrm{Ob}(\mathscr{C}) \rightarrow \mathrm{Ob}(\mathscr{D})$ and, 
				\item a map
			\begin{align*}
				F =  F_{X,Y} \colon \mathrm{Mor}(X,Y) \rightarrow \mathrm{Mor}(X,Y), \quad \alpha \mapsto F(\alpha)
			\end{align*}
		for every pair $(X,Y)$ of objects in $\mathscr{C}$,
			\end{enumerate}
		such that 
			\begin{enumerate}[(1)]
				\item $F(\beta \circ \alpha ) = F(\beta) \circ F(\alpha)$ for all $\alpha \in \mathrm{Mor}(X,Y)$ and $\beta \in \mathrm{Mor}(Y,Z)$ where $X,Y,Z$ are objects in $\mathscr{C}$, and
				\item $F(\mathrm{id}_X) = \mathrm{Id}_{F(X)}$ for every object $X$ in $\mathscr{C}$.
			\end{enumerate}
\end{definition}

On any category $\mathscr{C}$ there is the \emph{identity functor} $\mathrm{Id}_{\mathscr{C}}\colon \mathscr{C} \rightarrow \mathscr{C}$ assigning every object and every morphism to itself. Given two functors $F_1 \colon \mathscr{C} \rightarrow \mathscr{D}$ and $F_2 \colon \mathscr{D} \rightarrow \mathscr{E}$ we can also consider the \emph{composition} $F_2 \circ F_1$ which is defined in the obvious way. 

\begin{example}\label{koopmanfunct}
		For a topological dynamical system $(K,\varphi)$ we set $\mathrm{Koop}(K,\varphi) \coloneqq (\mathrm{C}(K),T^\varphi)$. If $q \colon (K,\varphi) \rightarrow (L,\psi)$ is a morphism between topological dynamical systems we take $\mathrm{Koop}(q)$ as the induced morphism $J_q$ from $(\mathrm{C}(L),T^\psi)$ to $(\mathrm{C}(K),T^\varphi)$ (cf. \cref{cstarcat}). Then $\mathrm{Koop}$ is a functor from the category $\mathbf{TopDyn}(G)$ of topological dynamical systems to the opposite category $\mathbf{ComC^*Dyn}(G)^{\mathrm{op}}$ of commutative $\mathrm{C}^*$-dynamical systems (since the direction of morphisms is reversed). We call this the \emph{Koopman functor}.
	\end{example}
	By Gelfand's representation theory for commutative $\mathrm{C}^*$-algebras we also obtain a functor in the other direction.
	\begin{example}\label{gelfand}
	 	Let $G$ be a topological group and $(A,T)$ be a commutative $\mathrm{C}^*$-dynamical system. Then the \emph{spectrum} (or \emph{Gelfand space})
	 		\begin{align*}
	 			\mathrm{Spec}(A) \coloneqq \{\chi \in A'\mid 0 \neq \chi \textrm{ unital algebra  homomorphism}\} \subset A'
	 		\end{align*}
	 	equipped with the weak* topology is a compact space. By restricting the adjoints, i.e., setting $\varphi_t \coloneqq T_{-t}'|_{\mathrm{Spec}(A)}$ for $t \in G$, we obtain a topological dynamical system $\mathrm{Spec}(A,T) \coloneqq (\mathrm{Spec}(A),\varphi)$. Moreover, every morphism $J \colon (A,T) \rightarrow (B,S)$ between commutative $\mathrm{C}^*$-dynamical systems defines a morphism $q \coloneqq J'|_{\mathrm{Spec}(B)} \colon \mathrm{Spec}(B,S) \rightarrow \mathrm{Spec}(A,T)$. We therefore obtain a functor $\mathrm{Spec}$ from the category  $\mathbf{ComC^*Dyn}(G)^{\mathrm{op}}$ to the category $\mathbf{TopDyn}(G)$ which we call the \emph{Gelfand functor}.
	\end{example} 
Finally, we recall the concepts of natural transformations between functors and equivalence of two categories.
	\begin{definition}\label{natiso}
		Given categories $\mathscr{C}$ and $\mathscr{D}$ and functors $F_1, F_2 \colon \mathscr{C} \rightarrow \mathscr{D}$, a \emph{natural transformation} $\eta \colon F_1 \rightarrow F_2$ assigns to every object $X$ of $\mathscr{C}$ a morphism $\eta_X \in \mathrm{Mor}_{\mathscr{D}}(F_1(X), F_2(X))$ such that the diagram commutes
			\[
				\xymatrix{
					F_1(X) \ar[r]^{\eta_X} \ar[d]_{F_1(\alpha)} & F_2(X) \ar[d]^{F_2(\alpha)}\\
					F_1(Y) \ar[r]^{\eta_Y} & F_2(Y)
				}
			\]
		for all morphisms $\alpha \in \mathrm{Mor}(X,Y)$ and objects $X$, $Y$ of $\mathscr{C}$.
		If $\eta_X$ is an isomorphism for every object $X$, then $\eta$ is called a \emph{natural isomorphism}.
		
		Two functors $F_1\colon \mathscr{C} \rightarrow \mathscr{D}$ and $F_2 \colon \mathscr{D} \rightarrow \mathscr{C}$ are \emph{essentially inverse to each other} if  $F_2 \circ F_1$ is naturally isomorphic to $\mathrm{Id}_{\mathscr{C}}$, and $F_1 \circ F_2$ is naturally isomorphic to $\mathrm{Id}_{\mathscr{D}}$. In this case, the categories $\mathscr{C}$ and $\mathscr{D}$ are \emph{equivalent}.
	\end{definition}

\begin{example}\label{pointeval}
	If $(K,\varphi)$ is a topological dynamical system, we obtain an isomorphism 
		\begin{align*}
			\gamma_{(K,\varphi)} \colon (K,\varphi) \rightarrow \mathrm{Spec}(\mathrm{Koop}(K,\varphi))
		\end{align*}
	of topological dynamical systems by mapping each point $x \in K$ to the corresponding point evaluation $\delta_x \colon \mathrm{C}(K) \rightarrow \C, \, f \mapsto f(x)$ (see \cite[Theorem 4.11]{EFHN2015}). One readily checks that $\gamma \colon \mathrm{Id}_{\mathbf{TopDyn}(G)} \rightarrow \mathrm{Spec} \circ \mathrm{Koop}$ is a natural isomorphism.
\end{example}

\begin{example}\label{gelfandmap}
	If $(A,T)$ is a commutative $\mathrm{C}^*$-dynamical system, the Gelfand map $A \rightarrow \mathrm{C}(\mathrm{Spec}(A)), \, x \mapsto [\chi \mapsto \chi(x)]$ defines an isomorphism 
		\begin{align*}
			\gamma_{(A,T)} \colon (A,T) \rightarrow \mathrm{Koop}(\mathrm{Spec}(A,T)),
		\end{align*}
	 see, e.g., \cite[Section 4.4 and the Supplement of Chapter 4]{EFHN2015}. A moment's thought reveals that $\gamma \colon \mathrm{Id}_{\mathbf{ComC^*Dyn}(G)^{\mathrm{op}}} \rightarrow \mathrm{Koop} \circ \mathrm{Spec}$ is actually a natural isomorphism.
\end{example}
As a consequence of the previous two examples, we immediately obtain the following well-known but fundamental consequence of the Gelfand-Naimark representation theorem for commutative $\mathrm{C}^*$-algebras.
\begin{theorem}\label{gelfandcat}
	The functors 
		\begin{align*}
			\mathrm{Koop} &\colon \mathbf{TopDyn}(G) \rightarrow \mathbf{ComC^*Dyn}^{\mathrm{op}}(G), \\
			\mathrm{Spec}& \colon \mathbf{ComC^*Dyn}^{\mathrm{op}}(G) \rightarrow \mathbf{TopDyn}(G)
		\end{align*}
	establish an equivalence between the category of topological dynamical systems and the opposite category of commutative $\mathrm{C}^*$-dynamical systems.
\end{theorem}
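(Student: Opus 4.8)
The plan is to reduce the statement entirely to \cref{natiso} together with the two preceding examples. By \cref{koopmanfunct,gelfand} we already know that $\mathrm{Koop}$ and $\mathrm{Spec}$ are well-defined functors between $\mathbf{TopDyn}(G)$ and $\mathbf{ComC^*Dyn}^{\mathrm{op}}(G)$, and \cref{pointeval,gelfandmap} assert precisely that $\gamma\colon \mathrm{Id}_{\mathbf{TopDyn}(G)}\to\mathrm{Spec}\circ\mathrm{Koop}$ and $\gamma\colon\mathrm{Id}_{\mathbf{ComC^*Dyn}^{\mathrm{op}}(G)}\to\mathrm{Koop}\circ\mathrm{Spec}$ are natural isomorphisms. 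Hence the functors are essentially inverse to each other in the sense of \cref{natiso}, and the two categories are equivalent. So the only thing that genuinely needs to be carried out is the verification of the claims made in \cref{pointeval,gelfandmap}, which I would do as follows.

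For \cref{pointeval}: fix a topological dynamical system $(K,\varphi)$. The point-evaluation map $\gamma_{(K,\varphi)}\colon K\to\mathrm{Spec}(\mathrm{C}(K))$, $x\mapsto\delta_x$, is a homeomorphism by the Gelfand representation theorem for commutative unital $\mathrm{C}^*$-algebras (\cite[Theorem 4.11]{EFHN2015}). That it is a morphism of systems — and therefore an isomorphism — is the one-line check that, for $f\in\mathrm{C}(K)$, $x\in K$ and $t\in G$, one has $\langle f,T_{-t}'\delta_x\rangle=\langle T_{-t}f,\delta_x\rangle=(T_{-t}f)(x)=f(\varphi_t(x))=\langle f,\delta_{\varphi_t(x)}\rangle$, so that the restricted adjoint action on $\mathrm{Spec}(\mathrm{C}(K))$ sends $\delta_x$ to $\delta_{\varphi_t(x)}$. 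Naturality in $(K,\varphi)$ amounts to $\mathrm{Spec}(\mathrm{Koop}(q))\circ\gamma_{(K,\varphi)}=\gamma_{(L,\psi)}\circ q$ for every morphism $q\colon(K,\varphi)\to(L,\psi)$, which is immediate from $(J_q)'\delta_x=\delta_x\circ J_q=\delta_{q(x)}$. Dually, for \cref{gelfandmap}: fix a commutative $\mathrm{C}^*$-dynamical system $(A,T)$. The Gelfand transform $\gamma_{(A,T)}\colon A\to\mathrm{C}(\mathrm{Spec}(A))$, $a\mapsto\hat a$ with $\hat a(\chi)=\chi(a)$, is an isometric unital $^*$-isomorphism by the Gelfand--Naimark theorem, and it intertwines $T$ with the Koopman representation of $\mathrm{Spec}(A,T)$ by the computation $\widehat{T_ta}(\chi)=\chi(T_ta)=(T_t'\chi)(a)=\hat a(T_t'\chi)$ valid for all $a\in A$, $t\in G$ and $\chi\in\mathrm{Spec}(A)$; hence $\gamma_{(A,T)}$ is an isomorphism in $\mathbf{ComC^*Dyn}^{\mathrm{op}}(G)$. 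Naturality is checked in the same way by evaluating both composites of a morphism $J\colon(A,T)\to(B,S)$ at an element of $A$ and a character of $B$. Assembling these facts and invoking \cref{natiso} completes the proof.

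The proof is not deep: the entire analytic substance is packaged into the Gelfand--Naimark representation theorem, which I invoke as a black box. The only place where care is genuinely required is the bookkeeping forced by passing to the opposite category — one must keep the reversal of morphisms straight so that $\mathrm{Koop}$ and $\mathrm{Spec}$ compose in the intended order and the naturality squares are oriented correctly — together with matching the sign conventions $\varphi_t=T_{-t}'|_{\mathrm{Spec}}$ and $T_tf=f\circ\varphi_{t^{-1}}$ consistently in the equivariance computations. I would expect that to be the only realistic source of errors.
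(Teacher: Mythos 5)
Your proposal is correct and follows exactly the paper's route: the theorem is obtained as an immediate consequence of the natural isomorphisms $\gamma\colon \mathrm{Id}_{\mathbf{TopDyn}(G)}\to\mathrm{Spec}\circ\mathrm{Koop}$ and $\gamma\colon\mathrm{Id}_{\mathbf{ComC^*Dyn}(G)^{\mathrm{op}}}\to\mathrm{Koop}\circ\mathrm{Spec}$ from \cref{pointeval,gelfandmap}, whose analytic content is the Gelfand--Naimark theorem. The equivariance and naturality computations you spell out are precisely the routine verifications the paper leaves to the reader, and they are carried out correctly.
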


\section{Equivalence between topological and measure-preserving systems with discrete spectrum}\label{sequivalence}
We now show that, from a category theoretic point of view, the worlds of minimal topological dynamical systems with discrete spectrum on one hand, and ergodic measure-preserving systems with discrete spetrum on the other, are equivalent. This allows to prove a topological Halmos-von Neumann theorem and then derive the ergodic theory version simply by using this correspondence.

In the following we write $\mathbf{MinDisc}(G)$ for the full subcategory of $\mathbf{TopDyn}(G)$ having as objects the minimal systems with discrete spectrum. Likewise, the full subcategory of $\mathbf{MeasDyn}(G)$ of ergodic systems with discrete spectrum as its objects is denoted by $\mathbf{ErgDisc}(G)$.

\begin{construction}\label{measfunctor}
	Define a functor $\mathrm{Meas} \colon \mathbf{MinDisc}(G) \rightarrow \mathbf{ErgDisc}(G)^{\mathrm{op}}$ by setting
		\begin{enumerate}[(i)]
			\item $\mathrm{Meas}(K,\varphi) \coloneqq (K,\mu,T^\varphi)$ for every minimal topological dynamical system $(K,\varphi)$ with discrete spectrum, where $\mu$ is the unique invariant probability measure (see \cref{charirred}), and
			\item $\mathrm{Meas}(q) \coloneqq J_q$ for a morphism $q \colon (K,\varphi) \rightarrow (L,\psi)$ between such systems, where $J_q$ is the unique continuous extension of the induced operator $J_q \in \mathscr{L}(\mathrm{C}(L),\mathrm{C}(K))$ to the corresponding $\mathrm{L}^1$-spaces.
		\end{enumerate}
\end{construction}

To construct a functor in the other direction we first prove the following easy lemma.

\begin{lemma}\label{discspsubalgebra}
	Let $(\uX,T)$ be a measure-preserving system. Then
		\begin{align*}
			\mathrm{L}^\infty(\uX)_{\mathrm{disc}} =  \overline{\bigcup \{M \subset \mathrm{L}^1(\uX)\mid M \textrm{ finite-dimensional invariant subspace}\}}^{\mathrm{L}^\infty} \subset \mathrm{L}^\infty(\uX).
		\end{align*}	
	is an invariant unital $\mathrm{C}^*$-subalgebra of $\mathrm{L}^\infty(\uX)$ and the map
		\begin{align*}
			T_{\mathrm{disc}} \colon G \rightarrow \mathscr{L}(\mathrm{L}^\infty(\uX)_{\mathrm{disc}}), \quad t \mapsto T_t|_{\mathrm{L}^\infty(\uX)_{\mathrm{disc}}}
		\end{align*}
	is a bounded strongly continuous group representation as $^*$-automorphisms of $\mathrm{L}^\infty(\uX)_{\mathrm{disc}}$.
\end{lemma}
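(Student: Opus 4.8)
The plan is to reduce everything to the set
\[
  A_0 \coloneqq \bigl\{ f \in \mathrm{L}^\infty(\uX) \bigm| \operatorname{span}\{ T_t f \mid t \in G \} \text{ is finite-dimensional} \bigr\}.
\]
A finite-dimensional invariant subspace of $\mathrm{L}^1(\uX)$ containing a given $f$ exists precisely when the orbit span $\operatorname{span}\{T_t f \mid t \in G\}$ is itself finite-dimensional (this span being always invariant), so $A_0$ is exactly the part of the union in the statement that lies in $\mathrm{L}^\infty(\uX)$; since the statement asserts $\mathrm{L}^\infty(\uX)_{\mathrm{disc}} \subseteq \mathrm{L}^\infty(\uX)$, I read the displayed $\mathrm{L}^\infty$-closure as $\mathrm{L}^\infty(\uX)_{\mathrm{disc}} = \overline{A_0}^{\,\|\cdot\|_\infty}$ (and in the ergodic case the whole union already lies in $\mathrm{L}^\infty(\uX)$ by \cref{subinv}). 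The one preliminary fact needed is that each Markov isomorphism $T_t$ restricts to an isometric unital $^*$-automorphism of the commutative $\mathrm{C}^*$-algebra $\mathrm{L}^\infty(\uX)$: $T_t$ and $T_{t^{-1}}$ are lattice homomorphisms, hence positive, and unital, hence $\|\cdot\|_\infty$-contractive, so $T_t$ maps $\mathrm{L}^\infty(\uX)$ isometrically onto itself; and a unital lattice homomorphism of a commutative $\mathrm{C}^*$-algebra is multiplicative (it is a composition operator; cf.\ Kadison's theorem on unital order isomorphisms).

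With this in hand, $A_0$ is a unital $^*$-subalgebra of $\mathrm{L}^\infty(\uX)$: it contains $\mathbbm{1}$; it is a linear subspace because $\operatorname{span}\{T_t(f+g)\} \subseteq \operatorname{span}\{T_t f\} + \operatorname{span}\{T_t g\}$; it is closed under multiplication because $T_t(fg) = (T_t f)(T_t g)$ places $\operatorname{span}\{T_t(fg)\}$ inside the finite-dimensional span of the products $e_i d_j$ of bases $(e_i)$ of $\operatorname{span}\{T_t f\}$ and $(d_j)$ of $\operatorname{span}\{T_t g\}$; and it is closed under conjugation because $T_t \overline{f} = \overline{T_t f}$. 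Hence $\mathrm{L}^\infty(\uX)_{\mathrm{disc}} = \overline{A_0}^{\,\|\cdot\|_\infty}$ is a unital $\mathrm{C}^*$-subalgebra of $\mathrm{L}^\infty(\uX)$. Invariance is then immediate: $T_t(A_0) = A_0$ since $T_t$ merely reindexes orbits, and $T_t$ preserves $\|\cdot\|_\infty$-closures; so $T_{\mathrm{disc}}$ is a well-defined group homomorphism into the $^*$-automorphism group of $\mathrm{L}^\infty(\uX)_{\mathrm{disc}}$, and it is bounded since $\|T_t\| = 1$ for every $t$.

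The only point that is not bookkeeping is the strong continuity of $T_{\mathrm{disc}}$ with respect to the $\mathrm{L}^\infty$-norm, since $T$ is assumed strongly continuous only on $\mathrm{L}^1(\uX)$ — indeed on all of $\mathrm{L}^\infty(\uX)$ such continuity typically fails. Here I would exploit finite-dimensionality: for $f \in A_0$ the invariant subspace $M_f \coloneqq \operatorname{span}\{T_t f \mid t \in G\}$ is finite-dimensional and, because the $T_t$ are $\|\cdot\|_\infty$-isometries, contained in $\mathrm{L}^\infty(\uX)$; on $M_f$ the norms $\|\cdot\|_1$ and $\|\cdot\|_\infty$ are equivalent, so the $\|\cdot\|_1$-continuity of $t \mapsto T_t f$ (which holds by hypothesis) gives its $\|\cdot\|_\infty$-continuity. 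For general $f \in \mathrm{L}^\infty(\uX)_{\mathrm{disc}}$ one approximates $f$ in $\|\cdot\|_\infty$ by elements of $A_0$ and applies the standard three-$\varepsilon$ estimate, once more using that each $T_t$ is a $\|\cdot\|_\infty$-isometry. This finite-dimensional reduction — the mechanism that upgrades $\mathrm{L}^1$- to $\mathrm{L}^\infty$-strong continuity on $\mathrm{L}^\infty(\uX)_{\mathrm{disc}}$ — is the main obstacle; the rest is routine.
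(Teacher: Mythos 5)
Your proof is correct and takes essentially the same route as the paper: the paper's proof cites \cite[Theorem 13.9]{EFHN2015} for the fact that each $T_t$ restricts to a $^*$-automorphism of $\mathrm{L}^\infty(\uX)$, treats the algebra verification as routine, and establishes strong continuity exactly as you do, namely by using that the $\mathrm{L}^1$- and $\mathrm{L}^\infty$-topologies agree on finite-dimensional invariant subspaces to get continuity of $t \mapsto T_tf$ on a dense subset and then extending to the closure via boundedness of the representation. Your reading of the $\mathrm{L}^\infty$-closure through the set $A_0$ (i.e.\ intersecting the union with $\mathrm{L}^\infty(\uX)$) is a sensible clarification of the statement and does not alter the argument.
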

\begin{proof}
	For every $t \in G$ the operator $T_t \in \mathscr{L}(\mathrm{L}^1(\uX))$ restricts to a $^*$-automorphism $T_t|_{\mathrm{L}^\infty(\uX)} \in \mathscr{L}(\mathrm{L}^\infty(\uX))$ (see \cite[Theorem 13.9]{EFHN2015}). The only remaining non-trivial assertion is strong continuity of the restricted representation. However, since the topologies induced by the $\mathrm{L}^\infty$ and $\mathrm{L}^1$-norms agree on finite dimensional subspaces, we obtain that there is a dense subset $D \subset \mathrm{L}^\infty(\uX)_{\mathrm{disc}}$ such that
		\begin{align*}
			G \rightarrow \mathrm{L}^\infty(\uX)_{\mathrm{disc}}, \quad t \mapsto T_tf
		\end{align*}
	is continuous for every $f \in D$. Since the representation is bounded, this already shows strong continuity (use \cite[Proposition C.18]{EFHN2015}).
\end{proof}

In other words, the pair $(\mathrm{L}^\infty(\uX)_{\mathrm{disc}},T_{\mathrm{disc}})$ is a commutative $\mathrm{C}^*$-dynamical system (cf. \cref{cstarcat}). Applying \cref{gelfandcat}, we arrive at a topological dynamical system with discrete spectrum. We use this observation to perform the following construction (keeping the notation of \cref{discspsubalgebra}).

\begin{construction}\label{topfunctor}
	We define a functor $\mathrm{Top} \colon \mathbf{ErgDisc}(G)^{\mathrm{op}} \rightarrow \mathbf{MinDisc}(G)$ by setting
		\begin{enumerate}[(i)]
			\item $\mathrm{Top}(\uX,T) \coloneqq \mathrm{Spec}(\mathrm{L}^\infty(\uX)_{\mathrm{disc}},T_{\mathrm{disc}})$  for every ergodic measure-preserving system $(\uX,T)$ with discrete spectrum, and 
			\item $\mathrm{Top}(J)\coloneqq \mathrm{Spec}(J|_{\mathrm{L}^\infty(\uY)_{\mathrm{disc}}})$ for every morphism $J \colon (\uY,S) \rightarrow (\uX,T)$ between such systems.
		\end{enumerate}

\end{construction}
Note here that, for an ergodic measure-preserving system $(\uX,T)$ with discrete spectrum, the constructed topological dynamical system $\mathrm{Spec}(\mathrm{L}^\infty(\uX)_{\mathrm{disc}},T_{\mathrm{disc}})$ with discrete spectrum is in fact minimal by \cref{charirred}.

To show that the functors $\mathrm{Meas}$ of \cref{measfunctor} and $\mathrm{Top}$ of \cref{topfunctor} are essentially inverse to each other, we now have to prove that the compositions $\mathrm{Top} \circ \mathrm{Meas}$ and $\mathrm{Meas} \circ \mathrm{Top}$ are naturally isomorphic to the identity functors on $\mathbf{MinDisc}(G)$ and $\mathbf{ErgDisc}(G)$, respectively. For the first composition we need the following lemma.

\begin{lemma}\label{comingback}
	Let $(K,\varphi)$ be a minimal system with discrete spectrum and $\mu$ its unique invariant probability measure. If $M \subset \mathrm{L}^\infty(K,\mu)$ is a finite-dimensional, invariant subspace, then $M \subset \mathrm{C}(K)$.
\end{lemma}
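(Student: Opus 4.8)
The plan is to show that a finite-dimensional invariant subspace $M \subset \mathrm{L}^\infty(K,\mu)$ actually consists of continuous functions, by exploiting the fact that such $M$ is spanned by eigenvectors (or generalized eigenvectors) of the compact group $\mathrm{E}_{\mathrm{u}}(K,\varphi)$ acting on both $\mathrm{C}(K)$ and $\mathrm{L}^1(K,\mu)$ compatibly. First I would recall from \cref{chardiscrsp} that $G_0 \coloneqq \mathrm{E}_{\mathrm{u}}(K,\varphi)$ is a compact topological group, and that by \cref{semigroups} it is anti-isomorphic to $\mathcal{J}(T_\varphi)$ acting on $\mathrm{C}(K)$; since the $\varphi_t$ preserve $\mu$, every $\vartheta \in G_0$ is a continuous measure-preserving transformation of $(K,\mu)$, so the Koopman action of $G_0$ on $\mathrm{C}(K)$ extends to a strongly continuous unitary-type (Markov) action on $\mathrm{L}^1(K,\mu)$, and on $\mathrm{L}^2(K,\mu)$ it is a continuous unitary representation of the compact group $G_0$. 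The key point is that the original $G$-action and the $G_0$-action have the same invariant/eigen-structure: a subspace of $\mathrm{C}(K)$ or of $\mathrm{L}^p(K,\mu)$ is $G$-invariant if and only if it is $G_0$-invariant, because $\{T_t \mid t\in G\}$ is dense in $\mathcal{J}(T_\varphi) \cong G_0$ in the strong operator topology.

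Next I would decompose $M$ under the compact group $G_0$. Since $M$ is finite-dimensional and $G_0$-invariant, by complete reducibility (Peter–Weyl / Maschke for compact groups, as already invoked via \cite[Theorem 15.14]{EFHN2015} in \cref{chardisc}) $M$ is a direct sum of irreducible $G_0$-subrepresentations; so it suffices to treat an irreducible finite-dimensional invariant $M \subset \mathrm{L}^\infty(K,\mu)$. Now here is the crucial comparison: the space $\mathrm{C}(K)$ also has discrete spectrum for the $G_0$-action, so $\mathrm{C}(K) = \overline{\bigcup N}$ over finite-dimensional $G_0$-invariant $N \subset \mathrm{C}(K)$, and this sum is dense in $\mathrm{C}(K)$ hence (as $\mathrm{C}(K) \hookrightarrow \mathrm{L}^2(K,\mu)$ densely) dense in $\mathrm{L}^2(K,\mu)$. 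By the orthogonality relations for the compact group $G_0$, the isotypic component of $\mathrm{L}^2(K,\mu)$ corresponding to a fixed irreducible class $\pi$ is finite-dimensional (its dimension is at most $(\dim\pi)^2$ times the multiplicity, which is finite because the $\pi$-isotypic part of the dense subspace $\bigcup N \subset \mathrm{C}(K)$ must already be all of it), and — this is the point — that entire isotypic component is contained in $\mathrm{C}(K)$, since it is spanned by the $\pi$-isotypic parts of the continuous functions in the $N$'s. An irreducible $M$ of class $\pi$ lies in this isotypic component, hence $M \subset \mathrm{C}(K)$.

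To make the previous paragraph fully rigorous I would argue as follows: let $P_\pi$ denote the (bounded, self-adjoint on $\mathrm{L}^2$) projection onto the $\pi$-isotypic component, given by integrating $\overline{\chi_\pi(\vartheta)}\cdot(\dim\pi)$ against the Haar measure of $G_0$ applied to the Koopman operators $S_\vartheta$; this $P_\pi$ maps $\mathrm{C}(K)$ into $\mathrm{C}(K)$ because each $S_\vartheta$ does and the integral of a continuous $\mathrm{C}(K)$-valued function is again in $\mathrm{C}(K)$ (strong continuity of $\vartheta \mapsto S_\vartheta$ on $\mathrm{C}(K)$, exactly as used in the proof of \cref{charirred}). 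Since $\mathrm{C}(K)$ is dense in $\mathrm{L}^2(K,\mu)$ and $P_\pi(\mathrm{C}(K))$ is finite-dimensional (the $\pi$-component being finite-dimensional as noted), we get $P_\pi(\mathrm{L}^2(K,\mu)) = \overline{P_\pi(\mathrm{C}(K))} = P_\pi(\mathrm{C}(K)) \subset \mathrm{C}(K)$. Finally, for an irreducible $M$ of class $\pi$ we have $M = P_\pi(M) \subset P_\pi(\mathrm{L}^2(K,\mu)) \subset \mathrm{C}(K)$ — note $M \subset \mathrm{L}^\infty \subset \mathrm{L}^2$ since $\mu$ is a probability measure — and decomposing a general finite-dimensional invariant $M$ into irreducibles finishes the proof.

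\textbf{Main obstacle.} The delicate point is the claim that the $\pi$-isotypic component of $\mathrm{L}^2(K,\mu)$ is finite-dimensional and already "exhausted" by continuous functions; this rests on showing $P_\pi(\mathrm{C}(K))$ is dense in $P_\pi(\mathrm{L}^2(K,\mu))$ (immediate from density of $\mathrm{C}(K)$ and boundedness of $P_\pi$) together with the finite-dimensionality of each isotypic component — which in turn needs that the multiplicities are finite. The cleanest route to finiteness of multiplicities is probably to avoid $\mathrm{L}^2$ multiplicity-counting altogether and instead argue directly: any finite-dimensional $G_0$-invariant $M \subset \mathrm{L}^\infty(K,\mu)$ decomposes into irreducibles, and for each irreducible summand one applies $P_\pi$ and uses only that $P_\pi(\mathrm{C}(K)) \subset \mathrm{C}(K)$ is $\mathrm{L}^2$-dense in $P_\pi(\mathrm{L}^2)$ — but one still must know $P_\pi(M) \subset \overline{P_\pi(\mathrm{C}(K))}^{\mathrm{L}^2}$, which is clear, and that the latter equals $P_\pi(\mathrm{C}(K))$, i.e. that $P_\pi(\mathrm{C}(K))$ is $\mathrm{L}^2$-closed — equivalently finite-dimensional. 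So the genuine content I cannot sidestep is: \emph{for a minimal equicontinuous $(K,\varphi)$, the $\pi$-isotypic subspace of $\mathrm{C}(K)$ under $\mathrm{E}_{\mathrm{u}}(K,\varphi)$ is finite-dimensional.} This follows because $K$ is (a factor of) $\mathrm{E}_{\mathrm{u}}(K,\varphi)$ acting on a homogeneous space, so by Peter–Weyl/Frobenius reciprocity the multiplicity of $\pi$ in $\mathrm{C}(K)$ is at most $\dim\pi < \infty$; alternatively one cites that $(K,\varphi)$ with discrete spectrum is, up to isomorphism, a quasi-rotation (to be established by the Halmos–von Neumann theorem) — but since that result comes later, the self-contained argument via the homogeneous-space structure of $K$ under the compact group $\mathrm{E}_{\mathrm{u}}(K,\varphi)$ (using that the isotropy-averaging projection lands in $\mathrm{C}(K)$) is the one to spell out.
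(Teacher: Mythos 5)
Your proposal is correct, and its architecture is the same as the paper's: pass to the compact enveloping group $H=\mathrm{E}_{\mathrm{u}}(K,\varphi)$, which acts transitively and measure-preservingly on $(K,\mu)$, and show that the isotypic projections $P_{[\pi]}$, $[\pi]\in\hat{H}$, defined by integrating characters against the Haar measure, have ranges inside $\mathrm{C}(K)$, so that any finite-dimensional invariant $M$ is a finite sum of continuous pieces. The difference is where the key fact comes from: the paper simply cites \cite[Theorem 6.1]{EdKr2020}, which states that the $P_{[\pi]}$ are pairwise orthogonal projections with ranges in $\mathrm{C}(K)$ whose sum reproduces every $f\in\mathrm{L}^2(K,\mu)$, and then concludes directly (for $f\in M$ one has $P_{[\pi]}f\in M$, so only finitely many terms are nonzero). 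You instead re-derive the containment $P_{[\pi]}(\mathrm{L}^2(K,\mu))\subset\mathrm{C}(K)$ from scratch: first $P_{[\pi]}(\mathrm{C}(K))\subset\mathrm{C}(K)$ via a $\mathrm{C}(K)$-valued integral of the strongly continuous map $\vartheta\mapsto S_\vartheta f$, and then finite-dimensionality of each isotypic component of $\mathrm{L}^2(K,\mu)$ via the equivariant isometric embedding into $\mathrm{L}^2(H)$ given by an orbit map (this uses that $\mu$ is the push-forward of the Haar measure, as in the proof of \cref{charirred}) together with the Peter--Weyl multiplicity bound $\dim[\pi]$; density of $\mathrm{C}(K)$ then gives $P_{[\pi]}(\mathrm{L}^2(K,\mu))=P_{[\pi]}(\mathrm{C}(K))\subset\mathrm{C}(K)$. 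This buys a self-contained proof of exactly the ingredient the paper outsources, at the cost of the multiplicity/homogeneous-space discussion; do note that the parenthetical finiteness claim in your second paragraph is circular as stated and is only repaired by the homogeneous-space argument in your final paragraph, so that is the version to record, and your reduction of $M$ to irreducible summands, while fine, can be skipped by arguing as the paper does with the orthogonal pieces $P_{[\pi]}f\in M$ directly.
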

\begin{proof}
	Observe that $H \defeq \mathrm{E}_{\mathrm{u}}(K,\varphi)$ is a compact group (see \cref{chardiscrsp}) acting continuously and transitively on $K$. This allows us to apply a Fourier analytic result from \cite[Section 6]{EdKr2020}: For every equivalence class $[\pi] \in \hat{H}$ of irreducible finite-dimensional representations of $H$ we obtain an operator $P_{[\pi]} \in \mathscr{L}(\mathrm{L}^2(K,\mu))$ via
		\begin{align*}
			P_{[\pi]}f \coloneqq \mathrm{dim}([\pi]) \int_H \mathrm{tr}([\pi])(\vartheta) \cdot S_{\vartheta^{-1}}f \, \mathrm{d}m_H(\vartheta) \textrm{ for } f \in \mathrm{L}^2(K,\mu),
		\end{align*}
	where $\mathrm{dim}([\pi])$ and $\mathrm{tr}([\pi])$ denote the dimension and trace of $[\pi]$, respectively, $S_{\vartheta} f \coloneqq f \circ \vartheta$ for $\vartheta \in H$, and the integral is understood in the weak sense (see \cite[Definition 3.26 and Theorem 3.27]{Rudi1991}). Moreover,  $P_{[\pi]}f$ is contained in the closed linear hull of the orbit
		\begin{align*}
			\overline{\mathrm{lin}}\{S_\vartheta f \mid \vartheta \in H\}
		\end{align*}
	for $f \in \mathrm{L}^2(K,\mu)$ (see again \cite[Theorem 3.27]{Rudi1991}), and this implies $P_{[\pi]}M \subset M$ by invariance of $M$ with respect to $T^\varphi$ and continuity of the map $H \rightarrow \mathrm{L}^2(K,\mu), \, \vartheta \mapsto S_{\vartheta}f$. By  \cite[Theorem 6.1]{EdKr2020} the operators $P_{[\pi]}$ for $[\pi] \in \hat{H}$ are pairwise orthogonal projections with ranges in $\mathrm{C}(K)$ and
			\begin{align*}
				f = \sum_{[\pi] \in \hat{H}} P_{[\pi]}f
			\end{align*}
		for every $f \in \mathrm{L}^2(K,\mu)$.\footnote{In the cited article the operators $P_{[\pi]}$ are defined via pointwise integrals, but one can readily check that the two definitions agree, cf. the end of \cite[Appendix 3]{Foll2016}.}
		Now if $f \in M$, then this series representation is actually a finite sum since $M$ is finite-dimensional, and we conclude that $f \in \mathrm{C}(K)$.
\end{proof}
Thus, if we start from a minimal topological dynamical system $(K,\varphi)$ with discrete spectrum, equip it with its unique invariant probability measure $\mu$ and then perform the construction of \cref{topfunctor}, we come back to the original system up to a canonical isomorphism. More precisely, we obtain the following.
\begin{proposition}\label{natiso1}
	The functor $\mathrm{Top} \circ \mathrm{Meas}\colon \mathbf{MinDisc}(G) \rightarrow\mathbf{MinDisc}(G)$	is the restriction of the functor $\mathrm{Spec} \circ \mathrm{Koop}\colon \mathbf{TopDyn}(G) \rightarrow \mathbf{TopDyn}(G)$ to the full subcategory $\mathbf{MinDisc}(G)$. In particular, the natural isomorphism $\gamma$ from \cref{pointeval} restricts to a natural isomorphism 
		\begin{align*}
			\gamma \colon \mathrm{Id}_{\mathbf{MinDisc}(G)} \rightarrow \mathrm{Top} \circ \mathrm{Meas}.
		\end{align*}	
\end{proposition}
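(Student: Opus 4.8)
The plan is to unwind the definitions of the two functors and show that, when restricted to $\mathbf{MinDisc}(G)$, the composite $\mathrm{Top} \circ \mathrm{Meas}$ literally equals $\mathrm{Spec} \circ \mathrm{Koop}$ restricted to that subcategory; the asserted natural isomorphism then follows immediately from \cref{pointeval}. First I would take a minimal topological system $(K,\varphi)$ with discrete spectrum and let $\mu$ be its unique invariant probability measure (which exists by \cref{charirred}). By definition $\mathrm{Meas}(K,\varphi) = (K,\mu,T_\varphi)$, and then $\mathrm{Top}(K,\mu,T_\varphi) = \mathrm{Spec}(\mathrm{L}^\infty(K,\mu)_{\mathrm{disc}}, T_{\mathrm{disc}})$. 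So the crux is to identify the commutative $\mathrm{C}^*$-dynamical system $(\mathrm{L}^\infty(K,\mu)_{\mathrm{disc}}, T_{\mathrm{disc}})$ with $(\mathrm{C}(K), T_\varphi) = \mathrm{Koop}(K,\varphi)$, compatibly with the $G$-actions.

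The key step is precisely \cref{comingback}: every finite-dimensional invariant subspace of $\mathrm{L}^\infty(K,\mu)$ is already contained in $\mathrm{C}(K)$. Combined with the fact that $\mathrm{C}(K) \subset \mathrm{L}^\infty(K,\mu)$ (the inclusion is isometric because $\mu$ has full support — here one uses that $(K,\varphi)$ is strictly ergodic, so $\mathrm{supp}(\mu) = K$, another consequence of \cref{charirred}), this shows $\mathrm{L}^\infty(K,\mu)_{\mathrm{disc}} \subset \mathrm{C}(K)$. For the reverse inclusion, since $(K,\varphi)$ has discrete spectrum, $\mathrm{C}(K)$ is the closed linear span of its finite-dimensional invariant subspaces in the $\mathrm{C}(K)$-norm; by the same full-support fact the $\mathrm{C}(K)$-norm and $\mathrm{L}^\infty$-norm coincide on $\mathrm{C}(K)$, so this span is also dense in the $\mathrm{L}^\infty$-norm, giving $\mathrm{C}(K) \subset \mathrm{L}^\infty(K,\mu)_{\mathrm{disc}}$. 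Thus $\mathrm{L}^\infty(K,\mu)_{\mathrm{disc}} = \mathrm{C}(K)$ as $\mathrm{C}^*$-algebras, and $T_{\mathrm{disc}}$ is by definition the restriction of the Koopman representation, i.e.\ $T_{\mathrm{disc}} = T_\varphi$ on $\mathrm{C}(K)$. Hence $\mathrm{Top}(\mathrm{Meas}(K,\varphi)) = \mathrm{Spec}(\mathrm{C}(K), T_\varphi) = \mathrm{Spec}(\mathrm{Koop}(K,\varphi))$ on objects.

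Next I would check the statement on morphisms. Given $q \colon (K,\varphi) \to (L,\psi)$ in $\mathbf{MinDisc}(G)$, $\mathrm{Meas}(q)$ is the $\mathrm{L}^1$-extension of $J_q \colon \mathrm{C}(L) \to \mathrm{C}(K)$, and $\mathrm{Top}(\mathrm{Meas}(q))$ is $\mathrm{Spec}$ applied to the restriction of this operator to $\mathrm{L}^\infty(L,\mu_L)_{\mathrm{disc}} = \mathrm{C}(L)$. But that restriction is just $J_q$ itself — note $J_q$ maps finite-dimensional invariant subspaces to finite-dimensional invariant subspaces, and by continuity in the $\mathrm{C}(L)$-norm (hence $\mathrm{L}^\infty$-norm) the $\mathrm{L}^1$-extension agrees with $J_q$ on $\mathrm{C}(L)$. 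So $\mathrm{Top}(\mathrm{Meas}(q)) = \mathrm{Spec}(J_q) = \mathrm{Spec}(\mathrm{Koop}(q))$. This establishes that $\mathrm{Top} \circ \mathrm{Meas}$ is the restriction of $\mathrm{Spec} \circ \mathrm{Koop}$ to $\mathbf{MinDisc}(G)$. Finally, since $\gamma_{(K,\varphi)}$ from \cref{pointeval} is an isomorphism in $\mathbf{TopDyn}(G)$ between objects of $\mathbf{MinDisc}(G)$ (a full subcategory), it is an isomorphism there too, and naturality is inherited verbatim; thus $\gamma$ restricts to the desired natural isomorphism $\mathrm{Id}_{\mathbf{MinDisc}(G)} \to \mathrm{Top} \circ \mathrm{Meas}$.

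I expect the main obstacle to be the careful bookkeeping around the identification $\mathrm{L}^\infty(K,\mu)_{\mathrm{disc}} = \mathrm{C}(K)$ — in particular making sure the two closures (taken in the $\mathrm{L}^\infty$-norm in \cref{discspsubalgebra} versus the $\mathrm{C}(K)$-norm in the definition of discrete spectrum) genuinely produce the same subspace, which rests entirely on $\mu$ having full support together with \cref{comingback}. Everything else is formal diagram-chasing.
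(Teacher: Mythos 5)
Your proposal is correct and follows essentially the same route as the paper, which leaves the proof of \cref{natiso1} implicit and rests it exactly on \cref{comingback} together with the full-support/minimality observation that identifies $\mathrm{L}^\infty(K,\mu)_{\mathrm{disc}}$ with $\mathrm{C}(K)$ and then restricts the natural isomorphism of \cref{pointeval}. The only micro-step worth adding is an appeal to \cref{subinv} to pass from finite-dimensional invariant subspaces of $\mathrm{L}^1(K,\mu)$ (as in \cref{discspsubalgebra}) into $\mathrm{L}^\infty(K,\mu)$ before \cref{comingback} applies.
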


With the help of the following lemma we show that also the composition $\mathrm{Meas} \circ \mathrm{Top}\colon \mathbf{ErgDisc}(G) \rightarrow \mathbf{ErgDisc}(G)$ is naturally isomorphic to the identity functor.
\begin{lemma}\label{prepnatiso2}
	Let $(\uX,T)$ be an ergodic measure-preserving system with discrete spectrum. Then the Gelfand morphism $(\mathrm{L}^\infty(\uX)_{\mathrm{disc}},T_{\mathrm{disc}}) \rightarrow \mathrm{Koop}(\mathrm{Spec}(\mathrm{L}^\infty(\uX)_{\mathrm{disc}},T_{\mathrm{disc}}))$ from \cref{gelfandmap} extends uniquely to an isomorphism 
		\begin{align*}
			\gamma_{(\uX,T)} \colon (\uX,T) \rightarrow\mathrm{Meas}(\mathrm{Spec}(\mathrm{L}^\infty(\uX)_{\mathrm{disc}},T_{\mathrm{disc}}))).
		\end{align*}
\end{lemma}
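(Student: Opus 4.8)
The plan is to transport everything through Gelfand duality. Write $A \coloneqq \mathrm{L}^\infty(\uX)_{\mathrm{disc}}$, let $(K,\varphi) \coloneqq \mathrm{Spec}(A, T_{\mathrm{disc}})$ be the associated topological dynamical system, and let $\Gamma \colon A \to \mathrm{C}(K)$ denote the Gelfand isomorphism, which by \cref{gelfandmap} is an isomorphism $(A, T_{\mathrm{disc}}) \to \mathrm{Koop}(K,\varphi)$ of commutative $\mathrm{C}^*$-dynamical systems. As noted after \cref{topfunctor}, $(K,\varphi)$ is minimal with discrete spectrum, hence strictly ergodic by \cref{charirred}; let $\mu$ be its unique invariant probability measure, so that $\mathrm{Meas}(K,\varphi) = (K,\mu,T_\varphi)$ with underlying Banach space $\mathrm{L}^1(K,\mu)$. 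The goal is to show that $\Gamma$, viewed as a map into $\mathrm{L}^1(K,\mu)$, is isometric for the $\mathrm{L}^1(\uX)$-norm on its domain, and then to extend it by density.

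First I would record that $A$ is dense in $\mathrm{L}^1(\uX)$: since $(\uX,T)$ has discrete spectrum, $\mathrm{L}^1(\uX)$ is the $\mathrm{L}^1$-closure of the union of its finite-dimensional invariant subspaces, and by \cref{subinv} each of these is contained in $\mathrm{L}^\infty(\uX)$, hence in $A$. Next comes the crucial point. The restriction to $A$ of the state $\omega \colon f \mapsto \int_{\uX} f \, \mathrm{d}\mu_{\uX}$ is $T_{\mathrm{disc}}$-invariant (the $T_t$ are Markov operators, hence positive and integral-preserving), so under $\Gamma$ it corresponds to a $\varphi$-invariant regular Borel probability measure $\nu$ on $K$, characterised by $\omega(x) = \int_K \Gamma(x)\,\mathrm{d}\nu$ for $x \in A$. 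By unique ergodicity, $\nu = \mu$. Since $\Gamma$ is a $^*$-isomorphism it commutes with the modulus, and the modulus in the $\mathrm{C}^*$-algebra $A$ agrees with the pointwise modulus in $\mathrm{L}^\infty(\uX)$; therefore, for every $x \in A$,
\[
  \|\Gamma x\|_{\mathrm{L}^1(K,\mu)} = \int_K |\Gamma x|\,\mathrm{d}\mu = \int_K \Gamma(|x|)\,\mathrm{d}\nu = \omega(|x|) = \|x\|_{\mathrm{L}^1(\uX)}.
\]
Thus $\Gamma \colon A \to \mathrm{C}(K) \subset \mathrm{L}^1(K,\mu)$ is $\mathrm{L}^1$-isometric.

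Since $A$ is dense in $\mathrm{L}^1(\uX)$ and $\Gamma(A) = \mathrm{C}(K)$ is dense in $\mathrm{L}^1(K,\mu)$, this isometry extends uniquely to an isometric bijection $\gamma_{(\uX,T)} \colon \mathrm{L}^1(\uX) \to \mathrm{L}^1(K,\mu)$ (its image is closed and dense, hence everything); uniqueness of the extension — in particular among morphisms of measure-preserving systems, which are bounded operators — is clear since continuous maps agreeing on a dense set coincide. It then remains to verify that $\gamma_{(\uX,T)}$ is a morphism in $\mathbf{ErgDisc}(G)$, i.e. a Markov embedding intertwining the Koopman representations $T$ and $T_\varphi$. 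Each defining relation ($J\mathbbm{1} = \mathbbm{1}$, $|Jf| = J|f|$, and commutation with the group actions) holds on the dense subspace $A$, where $\Gamma$ is a unital $^*$-homomorphism intertwining $T_{\mathrm{disc}}$ and $T_\varphi|_{\mathrm{C}(K)}$ by \cref{gelfandmap}, and all relations pass to the $\mathrm{L}^1$-closure by continuity. A bijective Markov embedding is an isomorphism (its inverse is again a Markov embedding), so $\gamma_{(\uX,T)}$ is the desired isomorphism.

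The hard part is the identification $\nu = \mu$ together with the ensuing isometry computation: this is exactly what upgrades the purely $\mathrm{C}^*$-algebraic statement ``$\Gamma$ is a $^*$-isomorphism'' to a statement about the $\mathrm{L}^1$-geometry, and it is where \cref{subinv} (for density of $A$) and \cref{charirred} (for unique ergodicity) enter. Everything after that is a routine density-and-continuity argument.
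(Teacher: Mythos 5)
Your argument is correct and follows essentially the same route as the paper's proof: transport the integration functional through the Gelfand isomorphism, identify the resulting invariant measure with $\mu$ via unique ergodicity (\cref{charirred}), use the lattice/modulus compatibility to get the $\mathrm{L}^1$-isometry, invoke \cref{subinv} for density, and extend by continuity to a Markov lattice isomorphism intertwining the representations. The only difference is that you spell out the density of $\mathrm{L}^\infty(\uX)_{\mathrm{disc}}$ in $\mathrm{L}^1(\uX)$ and the closure of the Markov relations in more detail than the paper, which simply declares these steps routine.
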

\begin{proof}
	We write $(K,\varphi)$ for the minimal system $\mathrm{Spec}(\mathrm{L}^\infty(\uX)_{\mathrm{disc}},T_{\mathrm{disc}}) = \mathrm{Top}(\uX,T)$ with discrete spectrum constructed in \cref{topfunctor} and $\mu$ for its unique invariant probability measure (see \cref{charirred}). Moreover, let $V \colon (\mathrm{L}^\infty(\uX)_{\mathrm{disc}},T_{\mathrm{disc}}) \rightarrow (\mathrm{C}(K),T^\varphi)$ be the Gelfand isomorphism from \cref{gelfandmap}. By transporting the integration functional 
	\begin{align*}
		\langle \cdot, \mathbbm{1} \rangle \colon \mathrm{L}^\infty(\uX)_{\mathrm{disc}} \rightarrow \C, \quad f \mapsto \int f \, \mathrm{d}\mu_{\mathrm{X}}
	\end{align*}
	we obtain a functional $(V^{-1})'\langle \cdot, \mathbbm{1} \rangle$ on $\mathrm{C}(K)$. Using the Riesz representation theorem we identify this with an invariant probability measure on $K$, hence $(V^{-1})'\langle \cdot, \mathbbm{1} \rangle = \mu$. Since $|Vf| = V|f|$ for every $f \in \mathrm{C}(K)$ (see \cite[Theorem 7.23]{EFHN2015}), one now readily checks that $V$ is an isometry with respect to the corresponding $\mathrm{L}^1$-norms and intertwines the representations (see \cite[Section 12.3]{EFHN2015}). Since $V$ also has dense range by \cref{subinv}, we obtain that $V$ has a unique extension to a Markov lattice isomorphism $\mathrm{L}^1(K,\mu) \rightarrow \mathrm{L}^1(\uX)$ which then defines the desired isomorphism $\gamma_{(\uX,T)}$. 
\end{proof}

With the notation of \cref{prepnatiso2} we now obtain the counterpart to \cref{natiso1}.
\begin{proposition}\label{natiso2}
	The isomorphisms $\gamma_{(\uX,T)}$ for ergodic measure-preserving sysystems $(\uX,T)$ with discrete spectrum from \cref{prepnatiso2} define a natural isomorphism 
		\begin{align*}
			\gamma \colon \mathrm{Id}_{\mathbf{ErgDisc}(G)} \rightarrow \mathrm{Meas} \circ \mathrm{Top}.
		\end{align*}
\end{proposition}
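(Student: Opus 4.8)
By \cref{prepnatiso2} the map $\gamma_{(\uX,T)}$ is an isomorphism of measure-preserving systems for every object $(\uX,T)$ of $\mathbf{ErgDisc}(G)$, so the only thing left to check is naturality, and the plan is to reduce this to the naturality of the Gelfand natural isomorphism of \cref{gelfandmap}. First I would fix a morphism $J\colon(\uY,S)\to(\uX,T)$ in $\mathbf{ErgDisc}(G)$ and record that, being a Markov embedding intertwining the representations, $J$ restricts to a morphism
\[
J_{\infty}\coloneqq J|_{\mathrm{L}^\infty(\uY)_{\mathrm{disc}}}\colon\ (\mathrm{L}^\infty(\uY)_{\mathrm{disc}},S_{\mathrm{disc}})\ \longrightarrow\ (\mathrm{L}^\infty(\uX)_{\mathrm{disc}},T_{\mathrm{disc}})
\]
of commutative $\mathrm{C}^*$-dynamical systems: $J$ restricts to an injective unital $^*$-homomorphism on the $\mathrm{L}^\infty$-level, and, being an injective intertwiner, it carries each finite-dimensional invariant subspace of $\mathrm{L}^1(\uY)$ to one of $\mathrm{L}^1(\uX)$ --- which lies in $\mathrm{L}^\infty(\uX)$ by \cref{subinv} --- so that $J$ maps $\mathrm{L}^\infty(\uY)_{\mathrm{disc}}$ into $\mathrm{L}^\infty(\uX)_{\mathrm{disc}}$ by \cref{discspsubalgebra}. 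This is precisely the restriction used to define $\mathrm{Top}(J)$ in \cref{topfunctor}.

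Next I would consider the naturality square for $\gamma$ and $J$, i.e.\ the identity $(\mathrm{Meas}\circ\mathrm{Top})(J)\circ\gamma_{(\uY,S)}=\gamma_{(\uX,T)}\circ J$ of operators from $\mathrm{L}^1(\uY)$ into the $\mathrm{L}^1$-space of $(\mathrm{Meas}\circ\mathrm{Top})(\uX,T)$. All four operators occurring here are Markov, hence contractions, so it suffices to verify the identity on a dense subspace, and by \cref{subinv} the subalgebra $\mathrm{L}^\infty(\uY)_{\mathrm{disc}}$ is $\mathrm{L}^1$-dense in $\mathrm{L}^1(\uY)$. On this subalgebra, by the constructions of $\mathrm{Meas}$ (\cref{measfunctor}) and of $\gamma_{(\uX,T)}$ (\cref{prepnatiso2}): $\gamma_{(\uX,T)}$ agrees with the Gelfand isomorphism $V_{(\uX,T)}\colon(\mathrm{L}^\infty(\uX)_{\mathrm{disc}},T_{\mathrm{disc}})\to\mathrm{Koop}(\mathrm{Top}(\uX,T))$ of \cref{gelfandmap}, the operator $J$ restricts to $J_{\infty}$, and $(\mathrm{Meas}\circ\mathrm{Top})(J)$ restricts to the Koopman operator $\mathrm{Koop}(\mathrm{Top}(J))=\mathrm{Koop}(\mathrm{Spec}(J_{\infty}))$. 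Thus the identity to be checked becomes
\[
\mathrm{Koop}(\mathrm{Spec}(J_{\infty}))\circ V_{(\uY,S)}\;=\;V_{(\uX,T)}\circ J_{\infty},
\]
which (up to reversing all arrows, cf.\ below) is exactly the square expressing that $\gamma$ in \cref{gelfandmap} is a natural transformation $\mathrm{Id}\to\mathrm{Koop}\circ\mathrm{Spec}$, applied to the morphism $J_{\infty}$. Hence the naturality square for $\gamma$ commutes, and together with \cref{prepnatiso2} this shows that $\gamma\colon\mathrm{Id}_{\mathbf{ErgDisc}(G)}\to\mathrm{Meas}\circ\mathrm{Top}$ is a natural isomorphism.

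The argument carries no deep content; the step I would treat most carefully is the bookkeeping of the contravariance, since $\mathrm{Meas}$ and $\mathrm{Top}$ are defined into opposite categories. Concretely, one has to make sure that the arrows in the displayed square point consistently (they do: along the $\mathrm{L}^1$-functors all Markov embeddings run the same way) and that, after translating through the opposite categories, the $\mathrm{C}^*$-level identity above really is an instance of the naturality diagram of \cref{gelfandmap} for the correct morphism $J_{\infty}$. The only remaining verification --- that the four $\mathrm{L}^1$-operators in the square genuinely are the unique $\mathrm{L}^1$-continuous extensions of their restrictions to the relevant $\mathrm{L}^\infty_{\mathrm{disc}}$-subalgebras --- is immediate from the constructions of $\mathrm{Meas}$, $\mathrm{Top}$ and $\gamma_{(\uX,T)}$ together with the density statement in \cref{subinv}.
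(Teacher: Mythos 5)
Your argument is correct and is exactly the verification the paper has in mind: the paper simply declares the commutativity of the naturality square \enquote{straightforward to check} from the definitions of $\mathrm{Top}$, $\mathrm{Meas}$ and $\gamma_{(\uX,T)}$ and omits the details, and your proof supplies precisely these details (restriction of $J$ to the $\mathrm{L}^\infty_{\mathrm{disc}}$-subalgebras, $\mathrm{L}^1$-density plus continuity of the four Markov operators, and naturality of the Gelfand isomorphism from \cref{gelfandmap}), with the contravariance bookkeeping handled correctly.
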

\begin{proof}
	Using the definitions of the functors $\mathrm{Top}$ and $\mathrm{Meas}$ as well as the definition of the isomorphisms $\gamma_{(\uX,T)}$, it is straightforward to check that the diagram in \cref{natiso} actually commutes. We omit the details.
\end{proof}
Combining \cref{natiso1} and \cref{natiso2} we now finally arrive at the desired equivalence.

\begin{theorem}\label{equivtopmeas}
	The functors 
		\begin{align*}
			\mathrm{Meas}&\colon \mathbf{MinDisc}(G) \rightarrow \mathbf{ErgDisc}(G)^{\mathrm{op}},\\
			\mathrm{Top}&\colon  \mathbf{ErgDisc}(G)^{\mathrm{op}} \rightarrow \mathbf{MinDisc}(G)
		\end{align*}
	establish an equivalence between the category of minimal topological dynamical systems with discrete spectrum and the opposite category of ergodic measure-preserving systems with discrete spectrum.
\end{theorem}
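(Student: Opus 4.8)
The plan is to verify the two conditions from \cref{natiso} that make a pair of functors essentially inverse, namely that $\mathrm{Top} \circ \mathrm{Meas}$ is naturally isomorphic to $\mathrm{Id}_{\mathbf{MinDisc}(G)}$ and that $\mathrm{Meas} \circ \mathrm{Top}$ is naturally isomorphic to $\mathrm{Id}_{\mathbf{ErgDisc}(G)^{\mathrm{op}}}$ (equivalently, to $\mathrm{Id}_{\mathbf{ErgDisc}(G)}$, since a natural isomorphism of endofunctors passes to the opposite category by reversing all arrows). Before that one should confirm that the assignments in \cref{measfunctor} and \cref{topfunctor} genuinely define functors into the stated categories: $\mathrm{Meas}(K,\varphi)$ is ergodic with discrete spectrum because $\mathrm{C}(K)$ sits densely in $\mathrm{L}^1(K,\mu)$, the restricted Koopman operators extend to Markov lattice isomorphisms, and strict ergodicity of $(K,\varphi)$ from \cref{charirred} forces the fixed space to be one-dimensional; $\mathrm{Top}(\uX,T)$ is minimal with discrete spectrum by the remark following \cref{topfunctor}. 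Respecting composition and identities is then immediate from the corresponding properties of $\mathrm{Koop}$, $\mathrm{Spec}$, the restriction maps to the discrete $\mathrm{C}^*$-subalgebras, and the uniqueness of the $\mathrm{L}^1$-extensions.

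For the first natural isomorphism I would invoke \cref{natiso1}: on $\mathbf{MinDisc}(G)$ the composite $\mathrm{Top}\circ\mathrm{Meas}$ agrees with the restriction of $\mathrm{Spec}\circ\mathrm{Koop}$, since by \cref{comingback} every finite-dimensional invariant subspace of $\mathrm{L}^\infty(K,\mu)$ already lies in $\mathrm{C}(K)$, so that $\mathrm{L}^\infty(K,\mu)_{\mathrm{disc}}$, with its restricted Koopman action, is just $(\mathrm{C}(K),T_\varphi)$. Hence the point-evaluation natural isomorphism $\gamma$ of \cref{pointeval} restricts to the desired natural isomorphism $\mathrm{Id}_{\mathbf{MinDisc}(G)} \Rightarrow \mathrm{Top}\circ\mathrm{Meas}$.

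For the second natural isomorphism I would invoke \cref{prepnatiso2} and \cref{natiso2}: starting from an ergodic $(\uX,T)$ with discrete spectrum and writing $(K,\varphi) = \mathrm{Top}(\uX,T)$ with its unique invariant measure $\mu$, the Gelfand isomorphism $V\colon (\mathrm{L}^\infty(\uX)_{\mathrm{disc}},T_{\mathrm{disc}}) \to (\mathrm{C}(K),T_\varphi)$ transports the integration functional of $\mu_{\uX}$ to an invariant probability measure on $K$, which by uniqueness must be $\mu$; being a lattice homomorphism that preserves the integral, $V$ is then an $\mathrm{L}^1$-isometry intertwining the representations, and by \cref{subinv} it has dense range, so it extends uniquely to a Markov lattice isomorphism $\mathrm{L}^1(K,\mu)\to\mathrm{L}^1(\uX)$, i.e. to an isomorphism $\gamma_{(\uX,T)}$ in $\mathbf{ErgDisc}(G)$. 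Naturality of the $\gamma_{(\uX,T)}$ is a diagram chase unwinding the definitions of $\mathrm{Meas}$, $\mathrm{Top}$ and the Gelfand maps. Combining the two natural isomorphisms with \cref{natiso} yields the equivalence.

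The genuinely nontrivial input is already packaged into the cited lemmas: \cref{comingback}, which rests on Peter--Weyl/Fourier analysis on the compact enveloping group $\mathrm{E}_{\mathrm{u}}(K,\varphi)$ via the results of \cite{EdKr2020}, for one direction, and \cref{subinv} together with the norm-matching step in \cref{prepnatiso2} for the other. The latter --- showing that the topological model rebuilt from the finite-dimensional invariant subspaces carries its unique invariant measure in a way compatible with the original $\mathrm{L}^1$-norm --- is what I would expect to be the main obstacle if one had to prove the statement from scratch; granted those facts, the theorem itself is a formal consequence of the definition of equivalence.
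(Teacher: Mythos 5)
Your proposal is correct and follows essentially the same route as the paper: it establishes the equivalence by combining the natural isomorphism of \cref{natiso1} (via \cref{comingback}) with that of \cref{natiso2} (via \cref{prepnatiso2} and \cref{subinv}), exactly as the paper does.
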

As a simple consequence we obtain the following.
\begin{corollary}
	For two minimal topological dynamical systems with discrete spectrum  $(K_1,\varphi_1)$ and $(K_2,\varphi_2)$ with unique invariant probability measures $\mu_1$ and $\mu_2$, respectively, the following assertions are equivalent.
		\begin{enumerate}[(a)]
			\item The topological dynamical systems $(K_1,\varphi_1)$ and $(K_2,\varphi_2)$ are isomorphic.
			\item The induced measure-preserving systems $(K_1,\mu_1,T^{\varphi_1})$ and $(K_2,\mu_2,T^{\varphi_2})$ are isomorphic.
		\end{enumerate}		
\end{corollary}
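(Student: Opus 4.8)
The plan is to deduce this immediately from the categorical equivalence established in \cref{equivtopmeas}, using the general principle that a functor preserves isomorphisms and that an equivalence of categories moreover reflects them. The only preliminary remark needed is a bookkeeping one: an isomorphism in the opposite category $\mathbf{ErgDisc}(G)^{\mathrm{op}}$ is exactly an isomorphism in $\mathbf{ErgDisc}(G)$ (an arrow and its inverse simply swap roles under $\mathrm{op}$), so assertion (b) of the corollary is literally the statement that the objects $\mathrm{Meas}(K_1,\varphi_1)$ and $\mathrm{Meas}(K_2,\varphi_2)$ are isomorphic in $\mathbf{ErgDisc}(G)^{\mathrm{op}}$.

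For \enquote{(a) $\Rightarrow$ (b)} I would argue as follows: if $q\colon (K_1,\varphi_1)\rightarrow(K_2,\varphi_2)$ is an isomorphism in $\mathbf{MinDisc}(G)$, then applying the functor $\mathrm{Meas}$ of \cref{measfunctor} yields a morphism $\mathrm{Meas}(q)$ which is an isomorphism in $\mathbf{ErgDisc}(G)^{\mathrm{op}}$, its inverse being $\mathrm{Meas}(q^{-1})$. By the remark above, this says precisely that the induced measure-preserving systems $(K_1,\mu_1,T_{\varphi_1})$ and $(K_2,\mu_2,T_{\varphi_2})$ are isomorphic; concretely $\mathrm{Meas}(q)$ is the continuous $\mathrm{L}^1$-extension of the Koopman operator $J_q$. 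For the converse \enquote{(b) $\Rightarrow$ (a)} I would use that $\mathrm{Top}$ and $\mathrm{Meas}$ are essentially inverse. Assuming $\mathrm{Meas}(K_1,\varphi_1)$ and $\mathrm{Meas}(K_2,\varphi_2)$ are isomorphic in $\mathbf{ErgDisc}(G)^{\mathrm{op}}$, applying the functor $\mathrm{Top}$ shows that $(\mathrm{Top}\circ\mathrm{Meas})(K_1,\varphi_1)$ and $(\mathrm{Top}\circ\mathrm{Meas})(K_2,\varphi_2)$ are isomorphic in $\mathbf{MinDisc}(G)$. By the natural isomorphism $\gamma\colon\mathrm{Id}_{\mathbf{MinDisc}(G)}\rightarrow\mathrm{Top}\circ\mathrm{Meas}$ of \cref{natiso1}, each $(K_i,\varphi_i)$ is isomorphic to $(\mathrm{Top}\circ\mathrm{Meas})(K_i,\varphi_i)$, and composing the three isomorphisms $(K_1,\varphi_1)\xrightarrow{\ \gamma_{(K_1,\varphi_1)}\ }(\mathrm{Top}\circ\mathrm{Meas})(K_1,\varphi_1)\cong(\mathrm{Top}\circ\mathrm{Meas})(K_2,\varphi_2)\xrightarrow{\ \gamma_{(K_2,\varphi_2)}^{-1}\ }(K_2,\varphi_2)$ yields the desired isomorphism in $\mathbf{MinDisc}(G)$.

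I do not expect any genuine obstacle: both implications are purely formal consequences of \cref{equivtopmeas} (or even just of \cref{measfunctor}, \cref{topfunctor} and \cref{natiso1}). The only spot deserving a moment's care is the passage through the opposite category described above; once that is made explicit, the argument is a two-line diagram chase and I would keep the written proof correspondingly short, merely citing \cref{equivtopmeas} and \cref{natiso1}.
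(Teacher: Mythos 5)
Your argument is correct and is precisely what the paper intends: the corollary is stated there as a direct consequence of \cref{equivtopmeas}, and your spelled-out diagram chase (functoriality of $\mathrm{Meas}$ for one direction, $\mathrm{Top}$ together with the natural isomorphism $\gamma$ of \cref{natiso1} for the other, plus the remark that isomorphisms in the opposite category coincide with those in $\mathbf{ErgDisc}(G)$) is exactly the omitted formal verification.
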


\section{Halmos-von Neumann for abelian groups}
\subsection{The classical Halmos-von Neumann theorem for abelian groups}\label{sclassicalcom}
We now state the Halmos-von Neumann for actions of an abelian group $G$ which will be fixed for the whole section. A treatment of the result for measure-preserving systems is given in \cite[Subsection 1.5]{JaShTa2021} and \cite[Theorem 2.6]{Stor1974}. Here, we use a topological approach and start with the following easy generalization of the point spectrum of operators.
	\begin{definition}\label{defpointspect}
		Let $T \colon G \rightarrow \mathscr{L}(E), \, t \mapsto T_t$ be a bounded strongly continuous group representation on a Banach space $E$. We say that $\chi \in G^*$ is an \emph{eigencharacter} of $T$ if there is $x \in E\setminus \{0\}$ with $T_tx = \chi(t)x$ for every $t \in G$. We call the set
			\begin{align*}
				\sigma_{\mathrm{p}}(T) \coloneqq \{\chi \in G^*\mid \chi \textrm{ eigencharacter of } T\} \subset G^*
			\end{align*}
		the \emph{point spectrum of $T$}.
		
		For  a topological dynamical system $(K,\varphi)$ we write $\sigma_{\mathrm{p}}(K,\varphi) \coloneqq \sigma_{\mathrm{p}}(T^\varphi)$, and for a measure-preserving system $(\uX,T)$ we set $\sigma_{\mathrm{p}}(\uX,T) \coloneqq \sigma_{\mathrm{p}}(T)$.
	\end{definition}
	
\begin{remark}
	If $G = \Z$, the point spectrum of a bounded strongly continuous group representation $T \colon G \rightarrow \mathscr{L}(E), \, t \mapsto T_t$ can be identified with the point spectrum $\sigma_{\mathrm{p}}(T_1)$ via the canonical isomorphism $\Z^* \cong \T$. 
\end{remark}

We make the following important observation.
	\begin{proposition}\label{subgroup1}
		\begin{enumerate}[(i)]
			\item Let $(K,\varphi)$ be a topologically ergodic dynamical system. Then $\sigma_{\mathrm{p}}(K,\varphi)$ is a subgroup of $G^*$.
			\item Let $(\uX,T)$ be an ergodic measure-preserving system. Then $\sigma_{\mathrm{p}}(\uX,T)$ is a subgroup of $G^*$.
		\end{enumerate}
	\end{proposition}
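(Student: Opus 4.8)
The plan is to prove both parts simultaneously by working with an abstract ergodic representation $T \colon G \to \mathscr{L}(E)$ (with $E$ either $\mathrm{C}(K)$ in the topological case or $\mathrm{L}^1(\uX)$ in the measure-preserving case), using only the facts that $E$ is a Banach lattice and algebra containing the constants $\mathbbm{1}$, that the $T_t$ are lattice (and, where relevant, algebra) homomorphisms fixing $\mathbbm{1}$, and that the fixed space is one-dimensional. First I would observe that $1 \in \sigma_{\mathrm{p}}(T)$, witnessed by the eigenvector $\mathbbm{1}$. Next, for closure under the group operation: if $\chi_1, \chi_2 \in \sigma_{\mathrm{p}}(T)$ with eigenvectors $x_1, x_2 \neq 0$, then in the topological / $\mathrm{C}^*$-algebra setting one takes the product $x_1 x_2$, which satisfies $T_t(x_1 x_2) = (T_t x_1)(T_t x_2) = \chi_1(t)\chi_2(t)\, x_1 x_2$, so it suffices to know $x_1 x_2 \neq 0$. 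In the measure-preserving case one first uses \cref{subinv} to replace $x_1, x_2$ by representatives in $\mathrm{L}^\infty(\uX)$ (each spans a one-dimensional, hence finite-dimensional, invariant subspace), so that the product $x_1 x_2 \in \mathrm{L}^1(\uX)$ again makes sense and is again an eigenvector for $\chi_1 \chi_2$.

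The key point in both cases is therefore the nonvanishing of the relevant eigenvectors, and this is exactly where ergodicity enters. I would argue that $|x|$ is a fixed point for any eigenvector $x$: since the $T_t$ are lattice homomorphisms, $T_t|x| = |T_t x| = |\chi(t)|\,|x| = |x|$ because $|\chi(t)| = 1$ for $\chi \in G^*$. By ergodicity, $\fix(T) = \mathbb{C}\mathbbm{1}$, so $|x| = c\mathbbm{1}$ for some constant $c > 0$ (as $x \neq 0$); in particular $x$ vanishes nowhere (is invertible in $\mathrm{C}(K)$, respectively nonzero a.e. in $\mathrm{L}^\infty(\uX)$). Then $x_1 x_2$ has $|x_1 x_2| = c_1 c_2 \mathbbm{1} \neq 0$, giving closure under multiplication, and $x_1^{-1}$ (respectively $1/x_1$) is an eigenvector for $\chi_1^{-1}$ with $T_t(x_1^{-1}) = (T_t x_1)^{-1} = \chi_1(t)^{-1} x_1^{-1}$, giving closure under inversion. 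Hence $\sigma_{\mathrm{p}}(T)$ is a subgroup of $G^*$.

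The main obstacle is the bookkeeping needed to make multiplication of eigenvectors legitimate in the $\mathrm{L}^1$-setting: $\mathrm{L}^1(\uX)$ is not an algebra, so one genuinely needs \cref{subinv} to land the eigenvectors in $\mathrm{L}^\infty(\uX)$ first, and then one must check that the resulting eigenvector $x_1 x_2 \in \mathrm{L}^\infty(\uX) \subset \mathrm{L}^1(\uX)$ is the ``same'' eigenvector when viewed back in $\mathrm{L}^1$. This is routine but is the only place where the two parts diverge; alternatively, one can deduce part (ii) from part (i) together with \cref{equivtopmeas} once discrete spectrum is available, but since the proposition is stated for general ergodic systems (not only those with discrete spectrum), the direct Banach-lattice argument above is the natural route, and it handles both cases with essentially one proof. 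I would also note in passing that the same reasoning shows each eigenspace is one-dimensional, though that is not needed here.
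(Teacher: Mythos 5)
Your proposal is correct and follows essentially the same route as the paper: show via the lattice property and ergodicity that $|f|$ is a nonzero constant for every eigenfunction $f$, so eigenfunctions are invertible and products/quotients of eigenfunctions are again eigenfunctions, with \cref{subinv} used in the measure-preserving case to move eigenvectors into $\mathrm{L}^\infty(\uX)$ and then argue as in the topological case. The only cosmetic difference is that the paper treats closure under $\chi\tau^{-1}$ in a single step rather than handling products and inverses separately.
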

	\begin{proof}
		Consider (i). Clearly, $T_t^\varphi \mathbbm{1} = \mathbbm{1}$ for every $t \in G$ and hence $\sigma_{\mathrm{p}}(K,\varphi)$ contains the neutral element of $G^*$. Now if $f \in \mathrm{C}(K)$ and $\chi \in G^*$ with $T_t^\varphi f = \chi(t) f$ for all $t \in G$, then $T_t^\varphi|f| = |T_t^\varphi f| = |f|$ for every $t \in G$, i.e., $|f| \in \fix(T) = \C \mathbbm{1}$.  We thus obtain that $|f|$ is constant. This implies that, whenever $f,g \in \mathrm{C}(K)\setminus\{0\}$ and $\chi,\tau \in G^*$ with $T_t^\varphi f = \chi(t) f$ and $T_t^\varphi g = \tau(t) g$ for every $t \in G$, $g$ is invertible and $f \cdot g^{-1} \neq 0$. Since $T_t^\varphi (fg^{-1}) = (\chi\cdot\tau^{-1})(t) fg^{-1}$ for $t \in G$, this implies $\chi\cdot\tau^{-1} \in \sigma_{\mathrm{p}}(T^\varphi)$. Therefore $\sigma_{\mathrm{p}}(K,\varphi) = \sigma_{\mathrm{p}}(T^\varphi)$ is a subgroup of $G^*$.
		
		For (ii) note that for every $\chi \in G^*$ every element of the space
			\begin{align*}
				\bigcap_{t \in G} \ker(\chi(t) - T_t)
			\end{align*}
		is in $\mathrm{L}^\infty(\uX)$ by \cref{subinv}. Working in $\mathrm{L}^\infty(\uX)$ instead of $\mathrm{L}^1(\uX)$ we then proceed exactly as in the proof of part (i).
	\end{proof}
	
We now state the topological Halmos-von Neumann theorem for commutative groups.
 
\begin{theorem}\label{comhvn1}
	Assume that $G$ is commutative. For minimal topological dynamical systems with discrete spectrum the following assertions hold.
	\begin{enumerate}[(i)]
    	\item Two minimal systems $(K_1,\varphi_1)$ and $(K_2,\varphi_2)$ with discrete spectrum are isomorphic if and only if $\sigma_{\mathrm{p}}(K_1,\varphi_1) = \sigma_{\mathrm{p}}(K_2,\varphi_2)$. 
    	\item For every subgroup $\sigma \subset G^*$ there is a minimal system $(K,\varphi)$ with discrete spectrum such that $\sigma_{\mathrm{p}}(K,\varphi) = \sigma$.
    	\item For every minimal system $(K,\varphi)$ with discrete spectrum there is a group compactification $(H,c)$ such that $(K,\varphi)$ is isomorphic to the rotation $(H,\varphi_c)$.
	\end{enumerate}
\end{theorem}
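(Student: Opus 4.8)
The plan is to establish the three assertions in the order (iii), (i), (ii), since the representation statement (iii) is the natural starting point and feeds into the other two. The ``only if'' part of (i) is immediate: an isomorphism of topological dynamical systems induces, via the Koopman functor, an isometric unital lattice homomorphism intertwining the Koopman representations, and any such intertwiner carries eigenvectors to eigenvectors with the same eigencharacter, so $\sigma_{\mathrm p}$ is an isomorphism invariant.

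For (iii), let $(K,\varphi)$ be minimal with discrete spectrum. By \cref{chardiscrsp} the uniform enveloping semigroup $H \coloneqq \mathrm{E}_{\mathrm u}(K,\varphi)$ is a compact topological group, and $c \colon G \to H$, $t \mapsto \varphi_t$, is a homomorphism with dense range (its continuity follows from strong continuity of $T_\varphi$ via \cref{semigroups}), so $(H,c)$ is a group compactification of $G$. Since $G$ is abelian and $c(G)$ is dense, the continuous commutator map on $H \times H$ vanishes on a dense subset, hence $H$ is abelian. Now $H$ acts on $K$ faithfully by construction and transitively by minimality (see the proof of \cref{charirred}); commutativity forces all point stabilisers to be trivial (if $\vartheta x_0 = x_0$ and $y = \eta x_0$, then $\vartheta y = \vartheta\eta x_0 = \eta\vartheta x_0 = y$). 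Therefore the orbit map $H \to K$, $\vartheta \mapsto \vartheta(x_0)$, is a continuous, $G$-equivariant bijection from a compact space onto a Hausdorff space, hence an isomorphism $(H,\varphi_c) \cong (K,\varphi)$ of topological dynamical systems.

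For the ``if'' direction of (i), using (iii) we may assume $(K_i,\varphi_i) = (H_i,\varphi_{c_i})$ for group compactifications $(H_i,c_i)$ with $H_i$ compact abelian. The computation at the heart of the proof is $\sigma_{\mathrm p}(H,\varphi_c) = \widehat{c}(H^*) \coloneqq \{\gamma \circ c \mid \gamma \in H^*\}$: each character $\gamma \in H^*$ is an eigenvector (with eigencharacter $\overline{\gamma\circ c}$), and conversely, if $0 \neq f \in \mathrm C(H)$ satisfies $T_t f = \chi(t)f$ for all $t$, then $|f|$ is a nonzero constant (as in \cref{subgroup1}) and, after normalising, $f$ agrees with a character on the dense subgroup $c(G)$, hence everywhere. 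Since $c_i$ has dense range the dual homomorphism $\widehat{c_i} \colon H_i^* \to G^*$ is injective, and the hypothesis $\sigma_{\mathrm p}(K_1,\varphi_1) = \sigma_{\mathrm p}(K_2,\varphi_2)$ states precisely that $\widehat{c_1}$ and $\widehat{c_2}$ have the same image. Consequently $\widehat{c_2}^{\,-1}\circ\widehat{c_1}$ is an isomorphism of discrete abelian groups $H_1^* \to H_2^*$, which by Pontryagin duality is dual to a topological isomorphism $\Phi\colon H_2 \to H_1$; reading off the resulting identity on characters (which separate the points of $H_1$) gives $\Phi \circ c_2 = c_1$, so $\Phi$ is an isomorphism $(H_1,\varphi_{c_1}) \cong (H_2,\varphi_{c_2})$, and together with (iii) we conclude $(K_1,\varphi_1) \cong (K_2,\varphi_2)$.

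For (ii), given a subgroup $\sigma \subset G^*$, view it as a discrete abelian group and set $H \coloneqq \widehat{\sigma}$, a compact abelian group. Define $c\colon G \to H$ by $c(t)(\chi) \coloneqq \chi(t)$; this is continuous, and its range is dense because, via the canonical identification $H^* \cong \widehat{\widehat{\sigma}} \cong \sigma$, the only character of $H$ annihilating $c(G)$ is the one corresponding to the trivial $\chi$. Then $(H,\varphi_c)$ is minimal with discrete spectrum by \cref{topexample}, and the computation from the previous paragraph yields $\sigma_{\mathrm p}(H,\varphi_c) = \widehat{c}(H^*) = \{\mathrm{ev}_\chi \circ c \mid \chi \in \sigma\} = \sigma$. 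The expected main obstacle is the Pontryagin-duality step in (i)---recovering the compactification $(H,c)$ from the subgroup $\widehat{c}(H^*) \subseteq G^*$; this is precisely the point that in the non-commutative setting must be replaced by the Tannaka--Krein-type argument behind \cref{grpcompvsgrplike}.
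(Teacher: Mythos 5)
Your proof is correct and follows essentially the same route as the paper: the paper derives \cref{comhvn1} from the categorical statement \cref{hvncat1}, whose ingredients are exactly yours --- the enveloping group $\mathrm{E}_{\mathrm{u}}(K,\varphi)$ as an abelian group compactification acting freely and transitively (\cref{natiso1comp}), the identification $\sigma_{\mathrm{p}}(H,\varphi_c)=c^*H^*$ (\cref{decompfunct}), and the Pontryagin-duality recovery of $(H,c)$ from that subgroup, including the construction $(\sigma^*,c_\sigma)$ for realization (\cref{complemma}, \cref{dualiso1}, \cref{dualiso2}). The only difference is presentational: you argue directly on objects instead of packaging the steps as the functorial equivalences $\mathrm{Env}/\mathrm{Rot}$ and $\mathrm{DDual}/\mathrm{CDual}$.
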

The ergodic theoretic counterpart is the following.
\begin{theorem}\label{comhvn2}
	Assume that $G$ is commutative. For ergodic measure-preserving dynamical systems with discrete spectrum the following assertions hold.
	\begin{enumerate}[(i)]
    	\item Two ergodic systems $(\uX_1,T_1)$ and $(\uX_2,T_2)$ with discrete spectrum are isomorphic if and only if $\sigma_{\mathrm{p}}(\uX_1,T_1) = \sigma_{\mathrm{p}}(\uX_2,T_2)$. 
    	\item For every subgroup $\sigma \subset G^*$ there is an ergodic system $(\uX,T)$ with discrete spectrum such that $\sigma_{\mathrm{p}}(\uX,T) = \sigma$.
    	\item For every ergodic system $(\uX,T)$ with discrete spectrum there is a group compactification $(H,c)$ such that $(\uX,T)$ is isomorphic to the rotation $(H,m_H,T^{\varphi_c})$.
	\end{enumerate}
\end{theorem}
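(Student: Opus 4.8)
The plan is to deduce \cref{comhvn2} from its topological counterpart \cref{comhvn1} through the equivalence of categories of \cref{equivtopmeas}, the only substantive ingredient being that the point spectrum is preserved under the functor $\mathrm{Meas}$, and hence is a complete invariant on both sides simultaneously.

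\emph{Step 1 (the point spectrum is a categorical invariant).} I would first show that for a minimal topological system $(K,\varphi)$ with discrete spectrum and unique invariant probability measure $\mu$ one has $\sigma_{\mathrm p}(K,\varphi)=\sigma_{\mathrm p}(\mathrm{Meas}(K,\varphi))=\sigma_{\mathrm p}(K,\mu,T_\varphi)$. The inclusion ``$\subseteq$'' is immediate: if $f\in\mathrm C(K)\setminus\{0\}$ satisfies $T_tf=\chi(t)f$ for all $t\in G$, then $|f|=T_t|f|\in\fix(T_\varphi)=\C\mathbbm 1$ by topological ergodicity (\cref{charirred}), so $|f|$ is a positive constant and $f$ is a nonzero element of $\mathrm L^1(K,\mu)$ which is still an eigenfunction there. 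Conversely, if $0\neq f\in\mathrm L^1(K,\mu)$ satisfies $T_tf=\chi(t)f$ for all $t$, then $\C f$ is a one-dimensional invariant subspace, hence $f\in\mathrm L^\infty(K,\mu)$ by \cref{subinv} and then $f\in\mathrm C(K)$ by \cref{comingback}, so $\chi\in\sigma_{\mathrm p}(K,\varphi)$. On the measure-preserving side the point spectrum is trivially an isomorphism invariant, since a Markov embedding intertwining the representations carries eigenfunctions to eigenfunctions.

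\emph{Step 2 (uniqueness).} For (i): if $(\uX_1,T_1)\cong(\uX_2,T_2)$ then $\sigma_{\mathrm p}(\uX_1,T_1)=\sigma_{\mathrm p}(\uX_2,T_2)$ by the remark just made. Conversely, using the natural isomorphism of \cref{natiso2} write $(\uX_i,T_i)\cong\mathrm{Meas}(K_i,\varphi_i)$ with $(K_i,\varphi_i)=\mathrm{Top}(\uX_i,T_i)$ minimal with discrete spectrum; by Step 1, $\sigma_{\mathrm p}(K_i,\varphi_i)=\sigma_{\mathrm p}(\uX_i,T_i)$, so the hypothesis gives $\sigma_{\mathrm p}(K_1,\varphi_1)=\sigma_{\mathrm p}(K_2,\varphi_2)$, whence $(K_1,\varphi_1)\cong(K_2,\varphi_2)$ by \cref{comhvn1}(i), and applying the functor $\mathrm{Meas}$ yields $(\uX_1,T_1)\cong\mathrm{Meas}(K_1,\varphi_1)\cong\mathrm{Meas}(K_2,\varphi_2)\cong(\uX_2,T_2)$.

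\emph{Step 3 (realization and representation).} For (ii), given a subgroup $\sigma\subset G^*$ choose via \cref{comhvn1}(ii) a minimal $(K,\varphi)$ with discrete spectrum and $\sigma_{\mathrm p}(K,\varphi)=\sigma$; then $\mathrm{Meas}(K,\varphi)$ is ergodic with discrete spectrum and, by Step 1, has point spectrum $\sigma$. For (iii), given ergodic $(\uX,T)$ with discrete spectrum, \cref{natiso2} gives $(\uX,T)\cong\mathrm{Meas}(K,\varphi)$ for some minimal $(K,\varphi)$ with discrete spectrum; by \cref{comhvn1}(iii), $(K,\varphi)\cong(H,\varphi_c)$ for a group compactification $(H,c)$, hence $(\uX,T)\cong\mathrm{Meas}(H,\varphi_c)$, and it remains only to identify $\mathrm{Meas}(H,\varphi_c)=(H,m_H,T_{\varphi_c})$, i.e.\ that the unique invariant probability measure of the minimal group rotation $(H,\varphi_c)$ is the Haar measure $m_H$ — immediate from left-invariance of $m_H$ under $x\mapsto c(t)x$ together with uniqueness in \cref{charirred}, giving precisely the rotation system of \cref{measrot}. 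I expect no serious difficulty to remain once \cref{equivtopmeas}, \cref{subinv} and \cref{comingback} are in hand; the only point needing care is Step 1, namely that an $\mathrm L^1$-eigenfunction of an ergodic system with discrete spectrum is automatically continuous on the canonical topological model, plus the bookkeeping that $\sigma_{\mathrm p}$ survives the passage through $\mathrm{Meas}$ and $\mathrm{Top}$.
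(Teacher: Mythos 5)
Your proposal is correct and follows exactly the paper's route: the paper likewise deduces \cref{comhvn2} from \cref{comhvn1} via the equivalence of \cref{equivtopmeas}, noting (just as in your Step 1, using \cref{subinv} and \cref{comingback}) that the point spectrum is preserved under the functors $\mathrm{Meas}$ and $\mathrm{Top}$, so the two theorems are equivalent statements. Your write-up merely fills in the same details (including the identification of the unique invariant measure of a rotation with Haar measure, which is \cref{measrot}) more explicitly than the paper does.
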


Given an ergodic measure-preserving system $(\uX,T)$ with discrete spectrum, we obtain that $\sigma_{\mathrm{p}}(\mathrm{Top}(\uX,T)) = \sigma_{\mathrm{p}}(\uX,T)$. Conversely, it follows from \cref{comingback} that $\sigma_{\mathrm{p}}(\mathrm{Meas}(K,\varphi)) = \sigma_{\mathrm{p}}(K,\varphi)$ for every minimal topological dynamical system $(K,\varphi)$ with discrete spectrum. Applying \cref{equivtopmeas} we conclude that \cref{comhvn1} and \cref{comhvn2} are actually equivalent statements. It therefore suffices to prove one of them. We opt to prove the topological version in the next section, but in a different formulation.

\subsection{A categorical topological Halmos-von Neumann theorem for abelian groups}\label{scatcomm}

Using category theoretical language, the topological Halmos-von Neumann theorem \cref{comhvn1} can be expressed in a more concise and, in some sense, even more precise way.
To cover the repesentation aspect (i.e., part (iii)) of the result we consider the following category.

\begin{definition}\label{catgroupcomp}
	The \emph{category of group compactifications} has as objects group compactifications $(H,c)$ of $G$, and as morphisms $\Phi \colon (H_1,c_1) \rightarrow (H_2,c_2)$ between group compactifications $(H_1,c_1)$ and $(H_2,c_2)$ of $G$ continuous group homomorphisms $\Phi \colon H_1 \rightarrow H_2$ such that $\Phi \circ c_1 = c_2$. We denote this category by $\mathbf{Comp}(G)$.
\end{definition}

\begin{remark}\label{morphsurj}
	Note that every morphism of group compactifications is necessarily surjective.
\end{remark}

In view of \cref{topexample}, every group compactifications gives rise to a minimal system with discrete spectrum. The following lemma allows us to also go the other direction by using the uniform enveloping semigroup (introduced in \cref{uniformenv} above).

	\begin{lemma}\label{prepcompfunct}
		\begin{enumerate}[(i)]
			\item If $(K,\varphi)$ is a minimal system with discrete spectrum, then $\mathrm{E}_{\mathrm{u}}(K,\varphi)$ combined with the map $i = i_{(K,\varphi)} \colon G \rightarrow \mathrm{E}_{\mathrm{u}}(K,\varphi),\, t \mapsto \varphi_t$ is a group compactification of $G$.
			\item For a morphism $q \colon (K,\varphi) \rightarrow (L,\psi)$ between minimal system with discrete spectrum, the map
				\begin{align*}
					p_q \colon \mathrm{E}_{\mathrm{u}}(K,\varphi) \rightarrow \mathrm{E}_{\mathrm{u}}(L,\psi), \quad \vartheta \mapsto p_q(\vartheta)
				\end{align*}
			with $p_q(\vartheta)(q(x)) \coloneqq q(\vartheta(x))$ for $x \in K$ and $\vartheta \in \mathrm{E}_{\mathrm{u}}(K,\varphi)$ defines a morphism $p_q \colon (\mathrm{E}_{\mathrm{u}}(K,\varphi),i_{(K,\varphi)}) \rightarrow (\mathrm{E}_{\mathrm{u}}(L,\psi),i_{(L,\psi)})$ of group compactifications.
		\end{enumerate}
	\end{lemma}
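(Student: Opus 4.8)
The plan is to verify the two assertions directly, using the structural results already established for minimal systems with discrete spectrum, in particular \cref{chardiscrsp} (which tells us $\mathrm{E}_{\mathrm{u}}(K,\varphi)$ is a compact topological group) and \cref{charirred} (minimality yields that this group acts transitively on $K$). For part (i): by \cref{chardiscrsp}, $H \coloneqq \mathrm{E}_{\mathrm{u}}(K,\varphi)$ is a compact topological group, so it remains to check that $i\colon G \to H,\, t \mapsto \varphi_t$ is a continuous group homomorphism with dense range. That $i$ is a homomorphism is immediate from $\varphi_{t_1 t_2} = \varphi_{t_1} \circ \varphi_{t_2}$ together with the fact that composition is the group operation on $\mathrm{E}_{\mathrm{u}}(K,\varphi)$. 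Continuity of $i$ follows from strong continuity of the Koopman representation via the identification of topological semigroups in \cref{semigroups} (or directly: $t \mapsto \varphi_t$ is continuous into $\mathrm{C}(K,K)$ with the uniform topology because $t \mapsto f \circ \varphi_t$ is norm-continuous for every $f \in \mathrm{C}(K)$, as the uniformity of $K$ is generated by $\mathrm{C}(K)$). Density of the range is the definition of $\mathrm{E}_{\mathrm{u}}(K,\varphi)$ as the closure of $\{\varphi_t \mid t \in G\}$.

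For part (ii), the first task is to check that $p_q$ is well-defined, i.e.\ that $p_q(\vartheta)(q(x))$ depends only on $q(x)$ and not on the chosen preimage $x$, and that it indeed defines an element of $\mathrm{E}_{\mathrm{u}}(L,\psi)$. For well-definedness I would argue as follows: $q\colon K \to L$ is surjective (\cref{morphsurj}, or because $(L,\psi)$ is minimal), and one checks that for $\vartheta \in H$ the map $q(x) \mapsto q(\vartheta(x))$ is well-defined by approximating $\vartheta$ by a net $\varphi_{t_i}$ uniformly; since $q \circ \varphi_{t_i} = \psi_{t_i} \circ q$ (morphism property), one gets in the limit that $q(\vartheta(x))$ is the limit of $\psi_{t_i}(q(x))$, which manifestly depends only on $q(x)$, and simultaneously shows $p_q(\vartheta) = \lim_i \psi_{t_i}$ lies in $\mathrm{E}_{\mathrm{u}}(L,\psi)$. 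So the single maneuver ``pass to a net and use the intertwining relation'' handles both well-definedness and membership. It also shows continuity of $p_q$ (it is the restriction/corestriction of the continuous surjection $\mathrm{E}_{\mathrm{u}}(K,\varphi) \to \mathrm{E}_{\mathrm{u}}(L,\psi)$ obtained as a limit, but more cleanly: $p_q$ is a continuous group homomorphism because, on the dense subset $\{\varphi_t\}$, it sends $\varphi_t \mapsto \psi_t$, and a homomorphism between compact groups agreeing with a continuous map on a dense set and being determined by the equation $p_q(\vartheta)\circ q = q \circ \vartheta$ is continuous — alternatively one invokes that $p_q$ is the map induced on uniform enveloping semigroups, which is automatically continuous).

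It then remains to verify that $p_q$ is a group homomorphism and that $p_q \circ i_{(K,\varphi)} = i_{(L,\psi)}$, i.e.\ the compactification-morphism compatibility. The homomorphism property is a routine computation: $p_q(\vartheta_1 \circ \vartheta_2)(q(x)) = q(\vartheta_1(\vartheta_2(x))) = p_q(\vartheta_1)(q(\vartheta_2(x))) = p_q(\vartheta_1)(p_q(\vartheta_2)(q(x)))$, using surjectivity of $q$ to conclude $p_q(\vartheta_1 \circ \vartheta_2) = p_q(\vartheta_1) \circ p_q(\vartheta_2)$. The compatibility with $i$ is $p_q(\varphi_t)(q(x)) = q(\varphi_t(x)) = q(tx) = t\,q(x) = \psi_t(q(x))$, again using the morphism property of $q$, so $p_q(\varphi_t) = \psi_t$, that is $p_q \circ i_{(K,\varphi)} = i_{(L,\psi)}$. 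I expect the only genuinely delicate point — and hence the ``main obstacle'' — to be the well-definedness of $p_q$, since it is the one place where one must invoke surjectivity of $q$ together with a limiting argument rather than just manipulating symbols; everything else is bookkeeping. A clean way to package this is to observe that $\vartheta \mapsto p_q(\vartheta)$ is nothing but the corestriction to $\mathrm{E}_{\mathrm{u}}(L,\psi)$ of the semigroup homomorphism $\mathcal{J}(T_\varphi) \to \mathscr{L}(\mathrm{C}(L))$ transported through the isometry $J_q$ and the anti-isomorphisms of \cref{semigroups}, which makes all the required properties automatic; but the direct argument above is more transparent.
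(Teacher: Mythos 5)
Your proposal is correct and follows essentially the same route as the paper: part (i) is the routine verification (the paper simply calls it obvious), and for part (ii) the paper's proof is exactly your key maneuver — use the intertwining relation $q\circ\varphi_t=\psi_t\circ q$ and pass to the limit along a net $\varphi_{t_i}\to\vartheta$ to get well-definedness of $p_q$, with the remaining properties (membership in $\mathrm{E}_{\mathrm{u}}(L,\psi)$, homomorphism, continuity, compatibility with $i$) declared easily checked, which your write-up spells out in more detail. The only cosmetic slips are the citation of \cref{morphsurj} (which concerns morphisms of compactifications; surjectivity of $q$ here comes from minimality of $(L,\psi)$, as you also note) and the informal ``$p_q(\vartheta)=\lim_i\psi_{t_i}$'', which should be justified by compactness of $\mathrm{E}_{\mathrm{u}}(L,\psi)$ and uniqueness of the cluster point — neither affects the argument.
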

	\begin{proof}
		Part (i) is obvious. For part (ii), observe that if $x,y \in K$ satisfy $q(x) = q(y)$, then also 
			\begin{align*}
				q(\varphi_t(x)) = \psi_t(q(x)) = \psi_t(q(y)) = q(\varphi_t(y))
			\end{align*}
		for all $t \in G$, and hence $q(\vartheta(x)) = q(\vartheta(y))$ for every $\vartheta \in \mathrm{E}_{\mathrm{u}}(K,\varphi)$. Thus, $p_q$ is well-defined. It is easily checked that $p_q$ is actually a morphism of group compactifications of $G$.\footnote{Part (ii) is true for extensions of arbitrary systems if one uses the Ellis semigroup instead of the uniform enveloping group and only demands that $p_q$ is a continuous surjective semigroup homomorphism (see \cite[Theorem 3.7]{Ausl1988}). For systems with discrete spectrum the two enveloping semigroups coincide and are actually compact groups by \cref{chardiscrsp}. Moreover, it is evident that $p_q$ is even a morphism of group compactifications in this case.}
	\end{proof}
	
\cref{prepcompfunct} allows us to define a functor $F \colon \mathbf{MinDisc}(G) \rightarrow \mathbf{Comp}(G)$ from the category of minimal systems with discrete spectrum to the category  of group compactifications by setting
	\begin{itemize}
				\item $F(K,\varphi) \coloneqq (\mathrm{E}_{\mathrm{u}}(K,\varphi), i_{(K,\varphi)})$ for every minimal system $(K,\varphi)$ with discrete spectrum, and
				\item $F(q) \coloneqq p_q$ for every morphism $q \colon (K,\varphi) \rightarrow (L,\psi)$ of minimal systems with discrete spectrum.
	\end{itemize}
In general however, this functor does not define a categorical equivalence as the following remark shows.

\begin{remark}\label{problemequiv}
	Assume that $G$ is commutative and that $(K,\varphi)$ is a non-trivial minimal system with discrete spectrum. Pick $\vartheta \in \mathrm{E}_{\mathrm{u}}(K,\varphi) \setminus \{\mathrm{id}_K\}$ and observe that $q \coloneqq \vartheta \colon K \rightarrow K$ defines an automorphism of the system $(K,\varphi)$ (since $\mathrm{E}_{\mathrm{u}}(K,\varphi)$ is abelian) which is not the identity. However, with the notation of \cref{prepcompfunct} we obtain that 
		\begin{align*}
			p_q(\varrho)(q(x)) = \vartheta(q(x)) = \vartheta(\varrho(x)) = \varrho(\vartheta(x)) =  \varrho(q(x))
		\end{align*}
	for every $x \in K$ and $\varrho \in \mathrm{E}_{\mathrm{u}}(K,\varphi)$. We therefore obtain that $p_q(\varrho) = \varrho$ for every $\varrho \in \mathrm{E}_{\mathrm{u}}(K,\varphi)$, showing that $p_q$ is the identity. We conclude that the map
		\begin{align*}
			\mathrm{Mor}((K,\varphi),(K,\varphi)) \rightarrow \mathrm{Mor}(\mathrm{E}_{\mathrm{u}}(K,\varphi),\mathrm{E}_{\mathrm{u}}(K,\varphi)), \quad q \mapsto p_q
		\end{align*}
	is not injective, which implies that the functor $F$ cannot define a categorical equivalence.
\end{remark}

The issue discussed in \cref{problemequiv} arises from the fact that for a group rotation system there is always a distinct point (the neutral element) whereas there is no such point for a general minimal system with discrete spectrum. However, this problem is only minor and can be fixed by choosing a \enquote{base point} for every system.

\begin{definition}
	A \emph{pointed minimal system with discrete spectrum} is a triple $(K,\varphi,y)$ where $(K,\varphi)$ is a minimal system with discrete spectrum and $y \in K$ is a point of $K$. A morphism $q \colon (K,\varphi,y) \rightarrow (L,\psi,z)$ between such systems is a morphism $q \colon (K,\varphi) \rightarrow (L,\psi)$ between the corresponding topological dynamical systems with $q(y) = z$. We write $\mathbf{MinDisc}_{\mathbf{pt}}(G)$ for this category.
\end{definition}

Group compactifications induce pointed minimal systems with discrete spectrum in a natural and functorial way.

\begin{construction}\label{rotfunct}
	We define a functor $\mathrm{Rot}\colon \mathbf{Comp}(G) \rightarrow \mathbf{MinDisc}_{\mathbf{pt}}(G)$ by setting
		\begin{enumerate}[(i)]
			\item $\mathrm{Rot}(H,c) \coloneqq (H,\varphi_c,\mathrm{1}_H)$ for every $(H,c)$ group compactification of $G$, where $\mathrm{1}_H$ is the unit element of $H$, and
			\item $\mathrm{Rot}(\Phi) \coloneqq \Phi$ for a morphism $\Phi \colon (H_1,c_1) \rightarrow (H_2,c_2)$ of group compactifications.
		\end{enumerate}
	We call this the \emph{rotation functor}.
\end{construction}

Conversely, we use the functor $F$ from above to define a functor from the category $\mathbf{MinDisc}_{\mathbf{pt}}(G)$ of pointed minimal systems with discrete spectrum to the category $\mathbf{Comp}(G)$ of compactifications. Again we use the notation of \cref{prepcompfunct}.

\begin{construction}\label{envgrpfunct}
	We define a functor $\mathrm{Env} \colon \mathbf{MinDisc}_{\mathbf{pt}}(G) \rightarrow \mathbf{Comp}(G)$ by setting
		\begin{enumerate}[(i)]
				\item $\mathrm{Env}(K,\varphi) \coloneqq (\mathrm{E}_{\mathrm{u}}(K,\varphi), i_{(K,\varphi)})$ for every pointed minimal system $(K,\varphi,y)$ with discrete spectrum, and
				\item $\mathrm{Env}(q) \coloneqq p_q$ for every morphism $q \colon (K,\varphi,y) \rightarrow (L,\psi,z)$ of such systems.
		\end{enumerate}
	We call this the \emph{enveloping group functor}.
\end{construction}

We will now show that for a commutative group $G$ the functors $\mathrm{Rot}\colon \mathbf{Comp}(G) \rightarrow \mathbf{MinDisc}_{\mathbf{pt}}(G)$ and $\mathrm{Env} \colon \mathbf{MinDisc}_{\mathbf{pt}}(G) \rightarrow \mathbf{Comp}(G)$ are essentially inverse to each other. Hence the categories of pointed minimal systems with discrete spectrum and of group compactifications are equivalent in this case.

\begin{proposition}\label{natiso1comp}
	Assume that $G$ is abelian. For every pointed minimal system $(K,\varphi,y)$ with discrete spectrum $\gamma_{(K,\varphi,y)} \colon \mathrm{Rot}(\mathrm{Env}(K,\varphi,y)) \rightarrow (K,\varphi,y), \, \vartheta \mapsto \vartheta(y)$ is an isomorphism. Moreover, these isomorphisms define a natural isomorphism 
		\begin{align*}
			\gamma \colon \mathrm{Rot} \circ \mathrm{Env} \rightarrow \mathrm{Id}_{\mathbf{MinDisc}_{\mathbf{pt}}(G)}.
		\end{align*}
\end{proposition}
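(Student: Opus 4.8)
The plan is to check by hand that each $\gamma_{(K,\varphi,y)}$ is an isomorphism of pointed systems and then verify the naturality square by unwinding the definitions of $\mathrm{Rot}$, $\mathrm{Env}$ and $p_q$. Throughout write $H \coloneqq \mathrm{E}_{\mathrm{u}}(K,\varphi)$, which by \cref{chardiscrsp} is a compact topological group acting on $K$; by \cref{prepcompfunct} we have $\mathrm{Env}(K,\varphi,y) = (H,i_{(K,\varphi)})$ and hence, by \cref{rotfunct}, $\mathrm{Rot}(\mathrm{Env}(K,\varphi,y)) = (H,\varphi_i,1_H)$, where the action is $\varphi_i(t,\vartheta) = \varphi_t \circ \vartheta$. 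Since $G$ is abelian the maps $\varphi_t$ commute pairwise, and because multiplication on the compact group $H$ is jointly continuous while $\{\varphi_t \mid t\in G\}$ is dense in $H$, the group $H$ is abelian.

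First I would observe that $\gamma \coloneqq \gamma_{(K,\varphi,y)}\colon H \to K$, $\vartheta \mapsto \vartheta(y)$, is a morphism of pointed systems: it is continuous because evaluation at the point $y$ is continuous for the topology of uniform convergence, it sends the base point $1_H = \mathrm{id}_K$ to $y$, and it intertwines the actions because $\gamma(\varphi_t \circ \vartheta) = (\varphi_t \circ \vartheta)(y) = \varphi_t(\gamma(\vartheta))$. Surjectivity of $\gamma$ is immediate: as noted in the proof of \cref{charirred}, minimality of $(K,\varphi)$ forces $H$ to act transitively on $K$, so every $x\in K$ equals $\vartheta(y)$ for some $\vartheta\in H$. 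The only place where commutativity of $G$ enters is the injectivity of $\gamma$: if $\vartheta_1(y) = \vartheta_2(y)$, put $\vartheta \coloneqq \vartheta_2^{-1}\circ\vartheta_1 \in H$, so $\vartheta(y)=y$; given any $x\in K$ choose $\varrho\in H$ with $\varrho(y)=x$, and then, using that $H$ is abelian, $\vartheta(x) = \vartheta(\varrho(y)) = (\varrho\circ\vartheta)(y) = \varrho(y) = x$, so $\vartheta = \mathrm{id}_K$ and $\vartheta_1 = \vartheta_2$. (This is precisely the phenomenon highlighted in \cref{problemequiv}, now harmless because of the chosen base point.) Being a continuous bijection from the compact space $H$ onto the Hausdorff space $K$, $\gamma$ is a homeomorphism, hence an isomorphism in $\mathbf{MinDisc}_{\mathbf{pt}}(G)$.

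Finally, for naturality let $q\colon (K,\varphi,y) \to (L,\psi,z)$ be a morphism of pointed minimal systems with discrete spectrum; by construction (\cref{envgrpfunct}) $\mathrm{Rot}(\mathrm{Env}(q)) = p_q\colon \mathrm{E}_{\mathrm{u}}(K,\varphi)\to\mathrm{E}_{\mathrm{u}}(L,\psi)$ is the map determined by $p_q(\vartheta)(q(x)) = q(\vartheta(x))$ from \cref{prepcompfunct}. Using $z = q(y)$ I then get, for every $\vartheta\in\mathrm{E}_{\mathrm{u}}(K,\varphi)$,
\[
	\gamma_{(L,\psi,z)}(p_q(\vartheta)) = p_q(\vartheta)(q(y)) = q(\vartheta(y)) = q\bigl(\gamma_{(K,\varphi,y)}(\vartheta)\bigr),
\]
so the square in \cref{natiso} commutes and $\gamma$ is a natural isomorphism $\mathrm{Rot}\circ\mathrm{Env} \to \mathrm{Id}_{\mathbf{MinDisc}_{\mathbf{pt}}(G)}$. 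I do not expect any real obstacle; the one point needing a moment's care is the abelianness of $H$, which underlies the injectivity argument, and that follows from joint continuity of multiplication on the compact group $H$ together with density of $i(G)$.
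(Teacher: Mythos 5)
Your proposal is correct and follows essentially the same route as the paper: the paper also argues that $\mathrm{E}_{\mathrm{u}}(K,\varphi)$ is a compact abelian group acting transitively and effectively (hence freely) on $K$, so that evaluation at the base point is an isomorphism, with naturality left as a routine check. You merely spell out the details the paper compresses (freeness via commutativity, the compact-to-Hausdorff homeomorphism argument, and the explicit naturality computation with $p_q$), all of which are accurate.
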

\begin{proof}
	Let $(K,\varphi,z)$ be a  pointed minimal system with discrete spectrum. Then $\mathrm{E}_{\mathrm{u}}(K,\varphi)$ is a compact abelian group acting effectively and transitively on $K$. Such an action is necessarily free. Using that the action is free and transitive, we conclude that $\gamma_{(K,\varphi,y)}$ is an isomorphism. It is readily checked that $\gamma$ is actually a natural isomorphism. 
\end{proof}

As a consequence of \cref{natiso1comp} we obtain that, if $G$ is commutative, the difference between the categories $\mathbf{MinDisc}(G)$ and $\mathbf{MinDisc}_{\mathbf{pt}}(G)$ is quite inessential.

\begin{corollary}\label{pointedvsnonpointed2}
	Let $G$ be abelian and let $(K,\varphi,y)$ and $(L,\psi,z)$ be pointed minimal system with discrete spectrum. Then $(K,\varphi,y)$ and $(L,\psi,z)$ are isomorphic if and only if $(K,\varphi)$ and $(L,\psi)$ are isomorphic.
\end{corollary}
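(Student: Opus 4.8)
The forward implication is trivial: an isomorphism $q \colon (K,\varphi,y) \to (L,\psi,z)$ of pointed systems is, by definition, an isomorphism $q \colon (K,\varphi) \to (L,\psi)$ of the underlying topological dynamical systems. For the converse the plan is to exploit that the enveloping group construction ``forgets'' the base point, and to combine this observation with \cref{natiso1comp}.

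Concretely, recall that $\mathrm{Env}(K,\varphi,y) = (\mathrm{E}_{\mathrm{u}}(K,\varphi), i_{(K,\varphi)})$ depends only on $(K,\varphi)$ and not on $y$; indeed, $\mathrm{Env}$ is obtained from the functor $F \colon \mathbf{MinDisc}(G) \to \mathbf{Comp}(G)$ by simply ignoring base points. So let $\Psi \colon (K,\varphi) \to (L,\psi)$ be an isomorphism of topological dynamical systems. Applying $F$ yields an isomorphism of group compactifications $F(\Psi) \colon (\mathrm{E}_{\mathrm{u}}(K,\varphi), i_{(K,\varphi)}) \to (\mathrm{E}_{\mathrm{u}}(L,\psi), i_{(L,\psi)})$, and since $F(\Psi)$ is in particular a group homomorphism, it maps the unit to the unit; hence $\mathrm{Rot}(F(\Psi)) = F(\Psi)$ defines an isomorphism of pointed systems $\mathrm{Rot}(\mathrm{Env}(K,\varphi,y)) \to \mathrm{Rot}(\mathrm{Env}(L,\psi,z))$. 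Composing with the isomorphisms provided by \cref{natiso1comp} we conclude that $\gamma_{(L,\psi,z)} \circ \mathrm{Rot}(F(\Psi)) \circ \gamma_{(K,\varphi,y)}^{-1} \colon (K,\varphi,y) \to (L,\psi,z)$ is an isomorphism of pointed systems.

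Alternatively, one can argue without invoking the functors: given an isomorphism $\Psi \colon (K,\varphi) \to (L,\psi)$, use that $\mathrm{E}_{\mathrm{u}}(L,\psi)$ is a compact group acting transitively on $L$ (see \cref{chardiscrsp} and \cref{charirred}) to choose $\vartheta \in \mathrm{E}_{\mathrm{u}}(L,\psi)$ with $\vartheta(\Psi(y)) = z$; since $G$ is abelian, $\vartheta$ commutes with every $\psi_t$ and, being an element of a group of homeomorphisms of $L$, is itself a homeomorphism, so $\vartheta$ is an automorphism of $(L,\psi)$ and $\vartheta \circ \Psi$ is the desired isomorphism of pointed systems. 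In either form the statement is essentially immediate once \cref{natiso1comp} is available, and I do not expect a genuine obstacle; the only point needing a little care is that the base point must be allowed to move, which is exactly where commutativity of $G$ — equivalently, the fact that the enveloping group acts by automorphisms — enters.
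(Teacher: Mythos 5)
Your first argument is correct and is essentially the paper's own proof: the paper also passes from an isomorphism of $(K,\varphi)$ and $(L,\psi)$ to an isomorphism of the group compactifications $(\mathrm{E}_{\mathrm{u}}(K,\varphi), i_{(K,\varphi)})$ and $(\mathrm{E}_{\mathrm{u}}(L,\psi), i_{(L,\psi)})$ (which, being a group homomorphism, respects the distinguished unit) and then transports it to the pointed systems via the isomorphisms $\gamma_{(K,\varphi,y)}$, $\gamma_{(L,\psi,z)}$ of \cref{natiso1comp}. Your alternative direct argument---moving the base point by an element $\vartheta$ of the transitively acting abelian group $\mathrm{E}_{\mathrm{u}}(L,\psi)$, which is an automorphism of $(L,\psi)$---is also valid and is really the same mechanism stripped of the categorical packaging.
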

\begin{proof}
	If $(K,\varphi)$ and $(L,\psi)$ are isomorphic, then also the respective group compatifications $(\mathrm{E}_{\mathrm{u}}(K,\varphi), i_{(K,\varphi)})$ and $(\mathrm{E}_{\mathrm{u}}(L,\psi), i_{(L,\psi)})$ are isomorphic. But then $(K,\varphi,y)$ and $(L,\psi,z)$ are isomorphic by \cref{natiso1comp}.
\end{proof}

We now turn to the natural isomorphism between $\mathrm{Id}_{\mathbf{Comp}(G)}$ and the composed functor $\mathrm{Env} \circ \mathrm{Rot} \colon \mathbf{Comp}(G) \rightarrow \mathbf{Comp}(G)$.

\begin{proposition}\label{natiso2comp}
	For a group compactification $(H,c)$ we obtain an isomorphism
		\begin{align*}
			\eta_{(H,c)}\colon (H,c) \rightarrow \mathrm{Env}(\mathrm{Rot}(H,c)), \quad y \mapsto \vartheta_y
		\end{align*}
	where $\vartheta_y(x) = yx$ for $x,y \in H$. Moreover, these isomorphisms define a natural isomorphism 
		\begin{align*}
			\mathrm{Id}_{\mathbf{Comp}(G)} \rightarrow \mathrm{Env} \circ \mathrm{Rot}.
		\end{align*}
\end{proposition}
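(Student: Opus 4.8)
The plan is to identify the uniform enveloping semigroup of the rotation $(H,\varphi_c)$ with the group of left translations of $H$, and then observe that under this identification $\eta_{(H,c)}$ is literally the tautological map $y\mapsto L_y$ with $L_y(x)=yx$. So the first step is to prove $\mathrm{E}_{\mathrm{u}}(H,\varphi_c)=\{L_h\mid h\in H\}$. The map $L\colon H\to\mathrm{C}(H,H)$, $h\mapsto L_h$, is a group homomorphism ($L_{yy'}=L_y\circ L_{y'}$), and it is continuous into $\mathrm{C}(H,H)$ with the topology of uniform convergence: since $H$ is compact it carries a unique uniformity, with respect to which multiplication is uniformly continuous, and for the right uniformity the pair $(L_h(x),L_{h'}(x))$ is controlled by $hh'^{-1}$ uniformly in $x$. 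As $H$ is compact and $\mathrm{C}(H,H)$ Hausdorff, $L(H)$ is compact, hence closed. Because $\varphi_t=L_{c(t)}$ for $t\in G$ and $c(G)$ is dense in $H$, the set $\{\varphi_t\mid t\in G\}=L(c(G))$ is dense in $L(H)$, so the closed set $L(H)$ equals the closure $\mathrm{E}_{\mathrm{u}}(H,\varphi_c)$. Since $L$ is injective ($L_h(1_H)=h$), $L\colon H\to\mathrm{E}_{\mathrm{u}}(H,\varphi_c)$ is a continuous bijective homomorphism of compact groups, hence a topological isomorphism; in particular each $\vartheta_y$ genuinely lies in $\mathrm{E}_{\mathrm{u}}(H,\varphi_c)$, so $\eta_{(H,c)}$ is well defined.

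The second step is to check that $\eta_{(H,c)}=[y\mapsto\vartheta_y]$ is a morphism in $\mathbf{Comp}(G)$: by \cref{rotfunct} and \cref{envgrpfunct} we have $\mathrm{Env}(\mathrm{Rot}(H,c))=(\mathrm{E}_{\mathrm{u}}(H,\varphi_c),i_{(H,\varphi_c)})$, and $\eta_{(H,c)}(c(t))=L_{c(t)}=\varphi_t=i_{(H,\varphi_c)}(t)$, so $\eta_{(H,c)}\circ c=i_{(H,\varphi_c)}$. Thus $\eta_{(H,c)}$ is exactly $L$ regarded as an arrow $(H,c)\to\mathrm{Env}(\mathrm{Rot}(H,c))$, and by the first step it is an isomorphism of group compactifications.

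For naturality, let $\Phi\colon(H_1,c_1)\to(H_2,c_2)$ be a morphism of group compactifications; it is surjective by \cref{morphsurj}, and $\mathrm{Rot}(\Phi)=\Phi$ is the morphism $(H_1,\varphi_{c_1},1_{H_1})\to(H_2,\varphi_{c_2},1_{H_2})$, so $\mathrm{Env}(\mathrm{Rot}(\Phi))=p_\Phi$ with $p_\Phi$ as in \cref{prepcompfunct}. I would then compute, using surjectivity of $\Phi$ to write an arbitrary point of $H_2$ as $\Phi(x)$:
\[
p_\Phi(\vartheta_y)(\Phi(x))=\Phi(\vartheta_y(x))=\Phi(yx)=\Phi(y)\Phi(x),
\]
so $p_\Phi(\vartheta_y)=\vartheta_{\Phi(y)}$ in $\mathrm{E}_{\mathrm{u}}(H_2,\varphi_{c_2})$. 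Hence $\mathrm{Env}(\mathrm{Rot}(\Phi))\bigl(\eta_{(H_1,c_1)}(y)\bigr)=\vartheta_{\Phi(y)}=\eta_{(H_2,c_2)}(\Phi(y))$, which is precisely the commuting square defining a natural transformation $\mathrm{Id}_{\mathbf{Comp}(G)}\to\mathrm{Env}\circ\mathrm{Rot}$; since every component is an isomorphism, it is a natural isomorphism.

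I expect the only non-routine point to be the identification $\mathrm{E}_{\mathrm{u}}(H,\varphi_c)=\{L_h\mid h\in H\}$, i.e.\ the continuity of $h\mapsto L_h$ into $\mathrm{C}(H,H)$ with the uniform topology, which rests on the uniform continuity of multiplication on the compact group $H$; everything else is bookkeeping with the definitions of $\mathrm{Rot}$, $\mathrm{Env}$ and $p_\Phi$. Alternatively one could bypass this point by using \cref{chardiscrsp} to know a priori that $\mathrm{E}_{\mathrm{u}}(H,\varphi_c)$ is a compact group acting continuously and transitively on $H$ with trivial stabilizer at $1_H$, and deduce the identification from that (this is exactly the freeness argument already used in \cref{natiso1comp}, here applied to the rotation itself).
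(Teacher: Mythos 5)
Your proof is correct; the paper omits this argument as \enquote{straightforward}, and what you supply --- identifying $\mathrm{E}_{\mathrm{u}}(H,\varphi_c)$ with the group of left translations via the continuity of $h \mapsto L_h$ into $\mathrm{C}(H,H)$ (uniform continuity of multiplication on the compact group $H$), then the routine checks that $\eta_{(H,c)} \circ c = i_{(H,\varphi_c)}$ and that $p_\Phi(\vartheta_y) = \vartheta_{\Phi(y)}$ gives naturality --- is exactly the intended verification. Only your closing aside needs a caveat: the \enquote{a priori freeness} shortcut is not really available here, since triviality of the stabilizer of $1_H$ in $\mathrm{E}_{\mathrm{u}}(H,\varphi_c)$ rests on the same continuity/identification point (in the abelian setting of Proposition \ref{natiso1comp} freeness came from commutativity, which is unavailable for general $H$), so the direct argument you give is the right one.
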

The proof is straightforward and omitted. Combining \cref{natiso1comp} and \cref{natiso2comp} we obtain the following concise version of the representation part of the topological Halmos-von Neumann theorem \cref{comhvn1} (iii).

\begin{theorem}\label{equivcomp}
	For an abelian group $G$ the functors
		\begin{align*}
			\mathrm{Env} &\colon\mathbf{MinDisc}_{\mathbf{pt}}(G)\rightarrow \mathbf{Comp}(G),\\
			\mathrm{Rot} &\colon  \mathbf{Comp}(G) \rightarrow \mathbf{MinDisc}_{\mathbf{pt}}(G)
		\end{align*}
	establish an equivalence between the category of pointed minimal topological dynamical systems with discrete spectrum and the category of group compactifications.
\end{theorem}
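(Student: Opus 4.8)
The plan is to read the statement off directly from the definition of an equivalence of categories (\cref{natiso}) together with the two preceding propositions. By that definition it suffices to produce natural isomorphisms $\mathrm{Rot}\circ\mathrm{Env}\cong\mathrm{Id}_{\mathbf{MinDisc}_{\mathbf{pt}}(G)}$ and $\mathrm{Env}\circ\mathrm{Rot}\cong\mathrm{Id}_{\mathbf{Comp}(G)}$. The first is exactly \cref{natiso1comp}, which supplies the natural isomorphism $\gamma\colon\mathrm{Rot}\circ\mathrm{Env}\to\mathrm{Id}_{\mathbf{MinDisc}_{\mathbf{pt}}(G)}$ assembled from the orbit maps $\vartheta\mapsto\vartheta(y)$; the second is exactly \cref{natiso2comp}, which supplies the natural isomorphism $\mathrm{Id}_{\mathbf{Comp}(G)}\to\mathrm{Env}\circ\mathrm{Rot}$ assembled from the left-translation maps $y\mapsto\vartheta_y$. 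Passing to inverses where needed, this says precisely that $\mathrm{Env}$ and $\mathrm{Rot}$ are essentially inverse to each other, which is the claim.

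Thus almost all of the content has already been carried out, and I would present the theorem as nothing more than the combination of \cref{natiso1comp} and \cref{natiso2comp}. Still, it is worth recording where the hypotheses enter. Functoriality of $\mathrm{Env}$ rests on \cref{prepcompfunct}: a morphism $q$ of minimal systems with discrete spectrum descends to a morphism $p_q$ between the uniform enveloping semigroups, which uses that for systems with discrete spectrum these semigroups are compact topological groups (\cref{chardiscrsp}). Commutativity of $G$ is used essentially only in \cref{natiso1comp}, where $\mathrm{E}_{\mathrm{u}}(K,\varphi)$ is a compact \emph{abelian} group acting continuously and transitively on $K$; such an action is automatically free, and freeness together with transitivity is exactly what turns the orbit map $\vartheta\mapsto\vartheta(y)$ into a continuous bijection, hence (compact Hausdorff) a homeomorphism. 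Without abelianness this orbit map is only a quotient map, which is the obstruction that forced the restriction to pointed systems in the first place (cf. \cref{problemequiv}).

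The genuine obstacle therefore lies not in this theorem but inside \cref{natiso1comp} and \cref{natiso2comp}, namely in verifying naturality, i.e.\ that the squares of \cref{natiso} commute. These are short diagram chases: for $\gamma$ one evaluates $q\circ\gamma_{(K,\varphi,y)}$ and $\gamma_{(L,\psi,z)}\circ\mathrm{Rot}(\mathrm{Env}(q))$ at an element $\vartheta\in\mathrm{E}_{\mathrm{u}}(K,\varphi)$ and compares them using the defining relation $p_q(\vartheta)(q(x))=q(\vartheta(x))$; for the translation isomorphisms one uses $\Phi(yx)=\Phi(y)\Phi(x)$ for a morphism $\Phi$ of group compactifications together with $\Phi\circ c_1=c_2$. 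Having checked those, the equivalence follows formally.
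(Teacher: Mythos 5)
Your proposal is correct and matches the paper's own argument: the theorem is obtained there precisely by combining \cref{natiso1comp} and \cref{natiso2comp}, which furnish the two natural isomorphisms required by \cref{natiso}. Your additional remarks on where functoriality (\cref{prepcompfunct}) and commutativity of $G$ (freeness of the transitive action in \cref{natiso1comp}) enter are accurate and consistent with the paper's treatment.
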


We now turn our attention to the uniqueness and realization aspects of the topological Halmos-von Neumann theorem for commutative groups. For this we introduce the following category. 

\begin{definition}\label{dualsbgrp}
	Assume that $G$ is commutative. The \emph{category of dual subgroups} has as objects the subgroups of $G^*$. Moreover, the set of morphisms for subgroups $\sigma,\rho \subset G^*$ is given by $\mathrm{Mor}(\sigma,\rho) \coloneqq\{i_{\sigma,\rho}\}$ for the inclusion map $i \colon \sigma \rightarrow \rho$ if $\sigma$ is a subgroup of $\rho$, and $\mathrm{Mor}(\sigma,\rho) \coloneqq \emptyset$ otherwise. We denote this category by $\mathbf{Subgrp}(G^*)$.
\end{definition}

Since the point spectrum of a minimal dynamical system is a subgroup of $G^*$ by \cref{subgroup1}, we already have a canonical way to assign to every object of $\mathbf{MinDisc}_{\mathbf{pt}}(G)$ an object of $\mathbf{Subgrp}(G^*)$. In fact, this defines a functor.

	\begin{construction}\label{functor1}
		For abelian $G$ we define a functor $\sigma_{\mathrm{p}} \colon \mathbf{MinDisc}_{\mathbf{pt}}(G) \rightarrow \mathbf{Subgrp}(G^*)^{\mathrm{op}}$ by setting
			\begin{itemize}
				\item $\sigma_{\mathrm{p}}(K,\varphi,y) \coloneqq \sigma_{\mathrm{p}}(K,\varphi)$ for every pointed minimal system $(K,\varphi,y)$ with discrete spectrum, and
				\item $\sigma_{\mathrm{p}}(q) \coloneqq i_{\sigma_{\mathrm{p}}(L,\psi),\sigma_{\mathrm{p}}(K,\varphi)}$ for every morphism $q \colon (K,\varphi,y) \rightarrow (L,\psi,z)$. 
			\end{itemize}
		We call this the \emph{point spectrum functor}.
	\end{construction}
	Observe here that if $q \colon (K,\varphi) \rightarrow (L,\psi)$ is an extension of topological dynamical systems, then then all eigenspaces of $T^\psi$ embed into eigenspaces of $T^\varphi$ via $J_q$ which implies the inclusion $\sigma_{\mathrm{p}}(L,\psi) \subset \sigma_{\mathrm{p}}(K,\varphi)$.

	To show that the functor $\sigma_{\mathrm{p}}$ establishes an equivalence between the categories $\mathbf{MinDisc}_{\mathbf{pt}}(G)$ and $\mathbf{Subgrp}(G^*)^{\mathrm{op}}$, we rewrite it as a decomposition of two functors. The first one is the enveloping group functor $\mathrm{Env} \colon\mathbf{MinDisc}_{\mathbf{pt}}(G)\rightarrow \mathbf{Comp}(G)$ from \cref{envgrpfunct}. The second functor is the following.

	\begin{construction}\label{discreteduality}
		For abelian $G$ we define a functor $\mathrm{DDual} \colon \mathbf{Comp}(G) \rightarrow \mathbf{SubGrp}(G^*)^{\mathrm{op}}$ by setting
			\begin{itemize}
				\item $\mathrm{DDual}(H,c) \coloneqq c^*H^* \coloneqq \{\chi \circ c \mid \chi \in H^*\}$ for every group compactifiction $(H,c)$ of $G$, and
				\item $\mathrm{DDual}(\Phi) \coloneqq i_{c_2^*H_2^*,c_1^*H_1^*}$ for every morphism $\Phi \colon (H_1,c_1) \rightarrow (H_2,c_2)$ of group compactifications of $G$.
			\end{itemize}
		We call this the \emph{discrete duality functor}.
	\end{construction}
	Note here that for every morphism $\Phi \colon (H_1,c_1) \rightarrow (H_2,c_2)$ of group compactifications the inclusion $c_2^*H_2^* \subset c_1^*H_1^*$ actually holds.

	\begin{proposition}\label{decompfunct}
		For commutative $G$ the diagram of functors 
			\[
			\xymatrix{
					\mathbf{MinDisc}_{\mathbf{pt}}(G) \ar[rr]^{\sigma_{\mathrm{p}}} \ar[rd]_{\mathrm{Env}} & & \mathbf{SubGrp}(G^*)^{\mathrm{op}} \\
					& \mathbf{Comp}(G) \ar[ru]_{\mathrm{DDual}} &  
				}
		\]	
		commutes.
	\end{proposition}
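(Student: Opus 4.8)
The plan is to exploit the rigidity of the target category: in $\mathbf{SubGrp}(G^*)^{\mathrm{op}}$ there is at most one morphism between any two objects. Hence two functors into $\mathbf{SubGrp}(G^*)^{\mathrm{op}}$ coincide as soon as they agree on objects---on morphisms there is then nothing to check, the common value being forced to be the unique available morphism (which exists precisely because each functor is already known to be well defined). So everything reduces to the object-level identity
\[
	\mathrm{DDual}(\mathrm{Env}(K,\varphi,y)) = \sigma_{\mathrm{p}}(K,\varphi)
\]
for every pointed minimal system $(K,\varphi,y)$ with discrete spectrum; writing $H \coloneqq \mathrm{E}_{\mathrm{u}}(K,\varphi)$ and $c \coloneqq i_{(K,\varphi)}$, this is the claim $\sigma_{\mathrm{p}}(T_\varphi) = c^*H^* = \{\chi \circ c \mid \chi \in H^*\}$.

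To prove this I would first pass to group rotations. By \cref{natiso1comp}, $(K,\varphi,y)$ is isomorphic to $\mathrm{Rot}(\mathrm{Env}(K,\varphi,y)) = (H, \varphi_c, 1_H)$; since the point spectrum is an isomorphism invariant (an isomorphism of systems intertwines the Koopman representations, hence identifies the corresponding eigenspaces), it suffices to show for every group compactification $(H,c)$ of $G$ that $\sigma_{\mathrm{p}}(T_{\varphi_c}) = c^*H^*$, where $\varphi_c$ is the rotation $(t,x) \mapsto c(t)x$ on the compact abelian group $H$, so that $(T_t f)(x) = f(c(t)^{-1}x)$ for $f \in \mathrm{C}(H)$.

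For the inclusion $c^*H^* \subseteq \sigma_{\mathrm{p}}(T_{\varphi_c})$ one simply notes that each $\chi \in H^*$, regarded as an element of $\mathrm{C}(H)$, satisfies $T_t \chi = \overline{\chi(c(t))}\,\chi = \overline{(\chi \circ c)(t)}\,\chi$, so $\chi$ is an eigenfunction whose eigencharacter is $\overline{\chi \circ c}$; as $\chi$ runs through $H^*$ this produces every element of the subgroup $c^*H^*$. For the reverse inclusion, let $\eta \in \sigma_{\mathrm{p}}(T_{\varphi_c})$ with eigenfunction $f \in \mathrm{C}(H) \setminus \{0\}$, so $f(c(t)^{-1}x) = \eta(t) f(x)$ for all $t \in G$ and $x \in H$. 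Exactly as in the proof of \cref{subgroup1}, $T_t|f| = |T_t f| = |f|$, so $|f| \in \fix(T_{\varphi_c}) = \C \mathbbm{1}$ because the rotation is topologically ergodic by \cref{charirred}; thus $|f|$ is a nonzero constant and, after rescaling, $|f| \equiv 1$ and $f(1_H) = 1$. Then $f(c(t)^{-1}) = \eta(t)$, and using commutativity of $H$ together with the eigenequation one sees that, for each fixed $x \in H$, the continuous maps $y \mapsto f(xy)$ and $y \mapsto f(x)f(y)$ agree on the dense set $\{c(t)^{-1} \mid t \in G\}$, hence everywhere; therefore $f \in H^*$ and $\eta = \overline{f \circ c} \in c^*H^*$. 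This establishes the object-level identity, and with the first paragraph the diagram commutes.

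I expect the only real difficulty to be bookkeeping: one must make sure that \cref{natiso1comp} is invoked for the compactification $(\mathrm{E}_{\mathrm{u}}(K,\varphi), i_{(K,\varphi)})$ that $\mathrm{Env}$ and $\mathrm{DDual}$ actually see, and that the isomorphism it supplies is treated as an isomorphism of the underlying (unpointed) $G$-systems so that the point spectra may legitimately be compared. If one prefers to avoid the constant-modulus argument, an alternative for $\sigma_{\mathrm{p}}(T_{\varphi_c}) \subseteq c^*H^*$ is to expand an eigenfunction $f$ in its Fourier series $\sum_{\chi \in H^*} a_\chi \chi$ on the compact abelian group $H$ and compare coefficients in the eigenequation, which forces $a_\chi \neq 0$ only for the unique $\chi$ with $\overline{\chi \circ c} = \eta$ (uniqueness since $c(G)$ is dense, so $\chi \mapsto \chi \circ c$ is injective on $H^*$); this simultaneously reconfirms that the eigenspaces are one-dimensional.
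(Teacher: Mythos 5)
Your proof is correct and follows essentially the same route as the paper: both reduce everything to the object-level identity (commutativity on morphisms being automatic because the hom-sets of $\mathbf{SubGrp}(G^*)$ are at most singletons) and both rest on the isomorphism supplied by \cref{natiso1comp}, identifying the point spectrum with the characters of the enveloping group pulled back along $i_{(K,\varphi)}$. The only difference is in packaging the harder inclusion $\sigma_{\mathrm{p}}(K,\varphi)\subset c^*H^*$: the paper stays on $K$ and turns an eigenfunction into a character $\tilde{\chi}$ of $\mathrm{E}_{\mathrm{u}}(K,\varphi)$ via the eigenvalue map $\vartheta\mapsto\lambda_\vartheta$, while you transfer to the rotation model and show the normalized unimodular eigenfunction is itself a character of $H$ by a density argument --- two renderings of the same idea.
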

	\begin{proof}
		We show that the diagram commutes on the level of objects. It is then clear that it also commutes for morphisms. So let $(K,\varphi,z)$ be a pointed minimal system with discrete spectrum. We have to show that $\mathrm{DDual}(\mathrm{Env}(K,\varphi,z)) = \sigma_{\mathrm{p}}(K,\varphi,z)$. 
		
	To show prove the inclusion \enquote{$\subset$}, let $q\coloneqq \eta_{(K,\varphi,z)} \colon \mathrm{E}_{\mathrm{u}}(K,\varphi) \rightarrow K, \, \vartheta \mapsto \vartheta(z)$ be the isomorphism from \cref{natiso1comp}. If $\chi \in \mathrm{E}_{\mathrm{u}}(K,\varphi)^*$, then $f \coloneqq \overline{\chi} \circ q^{-1} \in \mathrm{C}(K)\setminus \{0\}$ satisfies 
		\begin{align*}
			T_{t}^\varphi f(x) = \overline{\chi}(q^{-1}(\varphi_{t}^{-1}(x))) = \overline{\chi}(\varphi_{t}^{-1} \circ q^{-1}(x)) = \chi(\varphi_t)\overline{\chi}(q^{-1}(x)) = \chi(i_{(K,\varphi)}(t)) f(x)
		\end{align*}
	for every $x \in K$ and $t \in G$. Thus, $\chi \circ i_{(K,\varphi)} \in \sigma_{\mathrm{p}}(K,\varphi)$.
	
	 For the converse inclusion  \enquote{$\supset$} take $\chi \in \sigma_{\mathrm{p}}(K,\varphi)$ and pick $f \in \mathrm{C}(K)\setminus\{0\}$ such that $T_t^\varphi f = \chi(t) f$ for every $t \in G$. By compactness of the unit circle $\T$, we find for every $\vartheta \in \mathrm{E}_{\mathrm{u}}(K,\varphi)$ some $\lambda_{\vartheta} \in\T$ with $f \circ \vartheta  = \lambda_{\vartheta}f$. Since $f \neq 0$, we obtain that $\lambda_{\vartheta}$ is uniquely determined by $\vartheta$. It is easy to see that the map 
	 	\begin{align*}
	 		\tilde{\chi}\colon \mathrm{E}_{\mathrm{u}}(K,\varphi) \rightarrow \T, \quad \vartheta \mapsto \lambda_{\vartheta}
	 	\end{align*}
	 is a continuous character of $\mathrm{E}_{\mathrm{u}}(K,\varphi)$ with $\tilde{\chi} \circ i_{(K,\varphi)} = \chi$.
	\end{proof}
	In view of \cref{equivcomp} we thus know that the point spectrum functor $\sigma_{\mathrm{p}} \colon \mathbf{MinDisc}(G) \rightarrow \mathbf{SubGrp}(G^*)^{\mathrm{op}}$ defines an equivalence of categories if we find an essential inverse for $\mathrm{DDual} \colon \mathbf{Comp}(G) \rightarrow \mathbf{SubGrp}(G^*)^{\mathrm{op}}$. For this we prove the following lemma which is basically a consequence of the Pontryagin duality theory.
	\begin{lemma}\label{complemma}
		For commutative $G$ and a subgroup $\sigma \subset G^*$of $G^*$ consider
			\begin{enumerate}[(i)]
				\item the dual group $\sigma^*$ with respect to the topology of pointwise convergence on $\sigma$, and
				\item the evaluation map $c_\sigma \colon G \rightarrow \sigma^*$ given by $c_\sigma(t)(\chi) \coloneqq \chi(t)$ for $t \in G$ and $\chi \in \sigma$. 
			\end{enumerate}
		Then $(\sigma^*,c_\sigma)$ is a group compactification of $G$.
	\end{lemma}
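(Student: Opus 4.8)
The plan is simply to check the three conditions in the definition of a group compactification recalled in \cref{topexample}: that $\sigma^*$ is a compact group, that $c_\sigma$ is a continuous group homomorphism, and that $c_\sigma$ has dense range. The first two are formalities; the density is the only point with real content, and it is where Pontryagin duality is used.

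For the first point I would view $\sigma$ as a discrete abelian group, so that $\sigma^* = \mathrm{Hom}(\sigma,\T)$ is exactly the set of families $(\lambda_\chi)_{\chi\in\sigma}\in\T^\sigma$ with $\lambda_{\chi\tau}=\lambda_\chi\lambda_\tau$ for all $\chi,\tau\in\sigma$. These are closed conditions, so $\sigma^*$ is a closed subgroup of the compact Hausdorff group $\T^\sigma$ (Tychonoff), hence itself a compact Hausdorff abelian group, and on it the topology of pointwise convergence on $\sigma$ is nothing but the subspace topology inherited from $\T^\sigma$.

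For the second point I would observe that for fixed $t\in G$ the assignment $\chi\mapsto\chi(t)$ is a group homomorphism $\sigma\to\T$ because $(\chi\tau)(t)=\chi(t)\tau(t)$, so $c_\sigma(t)\in\sigma^*$; likewise $\chi(t_1t_2)=\chi(t_1)\chi(t_2)$ gives $c_\sigma(t_1t_2)=c_\sigma(t_1)c_\sigma(t_2)$, so $c_\sigma\colon G\to\sigma^*$ is a group homomorphism. Continuity of $c_\sigma$ into $\sigma^*$ with the pointwise topology means precisely that $t\mapsto c_\sigma(t)(\chi)=\chi(t)$ is continuous for each fixed $\chi\in\sigma$, and this holds by the very definition of $G^*$, since every $\chi\in\sigma\subset G^*$ is a continuous character of $G$.

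The main step is density of $c_\sigma(G)$ in $\sigma^*$. Here I would set $H\coloneqq\overline{c_\sigma(G)}$, a closed subgroup of the compact group $\sigma^*$, and argue by contradiction. If $H\neq\sigma^*$, then by Pontryagin duality (see, e.g., \cite[Section 4.3]{Foll2016}) the annihilator $H^\perp=\{\psi\in(\sigma^*)^*\mid\psi|_H\equiv 1\}$ is nontrivial, since $(H^\perp)^\perp=H$ for closed subgroups. As $\sigma$ is discrete, the canonical evaluation map $\sigma\to(\sigma^*)^*$, $\chi\mapsto[\psi\mapsto\psi(\chi)]$, is an isomorphism, so there is some $\chi\in\sigma$ with $\chi\neq 1$ in $G^*$ such that $\psi(\chi)=1$ for all $\psi\in H$; specializing to $\psi=c_\sigma(t)$ yields $\chi(t)=c_\sigma(t)(\chi)=1$ for every $t\in G$, hence $\chi=1$, a contradiction. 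Therefore $H=\sigma^*$, and $(\sigma^*,c_\sigma)$ is a group compactification of $G$. The only real obstacle is this density argument, i.e.\ correctly invoking the double-dual and annihilator formalism of Pontryagin duality for the discrete group $\sigma$.
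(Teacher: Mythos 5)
Your proposal is correct and follows essentially the same route as the paper: after the routine verification that $\sigma^*$ is a compact group and $c_\sigma$ a continuous homomorphism, density of $c_\sigma(G)$ is reduced via Pontryagin duality for the discrete group $\sigma$ (identifying $\sigma$ with $(\sigma^*)^*$) to the observation that any $\chi\in\sigma$ annihilating $c_\sigma(G)$ satisfies $\chi(t)=c_\sigma(t)(\chi)=1$ for all $t\in G$ and hence is trivial. The only cosmetic difference is that you phrase this through the annihilator identity $(H^\perp)^\perp=H$ and a contradiction, whereas the paper invokes the equivalent criterion that $H=\overline{c_\sigma(G)}$ is all of $\sigma^*$ if and only if the restriction map $(\sigma^*)^*\rightarrow H^*$ is injective.
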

	\begin{proof}
		It is clear that $\sigma^*$ is a compact group and that $c_\sigma$ is a continuous group homomorphism. We show that $c_\sigma$ has dense range, i.e., that the closure $H \coloneqq \overline{c_\sigma(G)}$ equals $\sigma^*$. However, as $H$ and $\sigma^*$ are compact groups, this is the case if and only if the restriction map
		\begin{align*}
			r \colon (\sigma^{*})^* \rightarrow H^*, \quad \alpha \mapsto \alpha|_H
		\end{align*}
	is injective, see \cite[Theorem 4.39]{Foll2016}. By Pontryagin duality we can identify $\sigma$ with $(\sigma^*)^*$ as a discrete group via the canonical evaluation map. Therefore $r$ is injective if and only if
		\begin{align*}
			\tilde{r}\colon \sigma \rightarrow H^*, \quad \chi \mapsto j_{\chi}
		\end{align*}
		with $j_\chi(x) = x(\chi)$ for $x \in H$ and $\chi \in \sigma$ is injective. Now take $\chi \in \ker(\tilde{r})$. Then, in particular, 
		\begin{align*}
			\chi(t) = c_\sigma(t)(\chi) =1 
		\end{align*}
	for every $t \in G$, and therefore $\chi = 1$. This finishes the proof of the lemma.
	\end{proof}
	With the notation of \cref{complemma} we now construct the desired essential inverse for the discrete duality functor $\mathrm{DDual} \colon \mathbf{Comp}(G) \rightarrow \mathbf{SubGrp}(G^*)^{\mathrm{op}}$.
	\begin{construction}
		For commutative $G$ we define a functor $\mathrm{CDual}  \colon \mathbf{SubGrp}(G^*)^{\mathrm{op}} \rightarrow \mathbf{Comp}(G)$ by setting
			\begin{itemize}
				\item $\mathrm{CDual}(\sigma) \coloneqq (\sigma^*,c_\sigma)$ for every subgroup $\sigma $ of $G^*$, and
				\item $\mathrm{CDual}(i_{\sigma_1,\sigma_2}) \colon \mathrm{CDual}(\sigma_2) \rightarrow \mathrm{CDual}(\sigma_1), \, \chi \mapsto \chi|_{\sigma_1}$ for two subgroups $\sigma_1, \sigma_2\subset G^*$ with $\sigma_1 \subset \sigma_2$.
			\end{itemize}

		We call this the \emph{compact duality functor}.
	\end{construction}
	To show that the discrete and compact duality functors are essentially inverse to each other, we again have to find two natural isomorphisms. 
	\begin{proposition}\label{dualiso1}
		Assume that $G$ is commutative. For a group compactification $(H,c)$ we obtain an isomorphism
			\begin{align*}
				\gamma_{(H,c)}\colon (H,c) \rightarrow \mathrm{CDual}(\mathrm{DDual}(H,c))
			\end{align*}
		by setting $\gamma_{(H,c)}(x)(\chi \circ c) \coloneqq \chi(x)$ for every $\chi \in H^*$ and $x \in H$. Moreover, $\gamma$ defines a natural isomorphism 
			\begin{align*}
				\gamma \colon \mathrm{Id}_{\mathbf{Comp}(G)} \rightarrow \mathrm{CDual} \circ \mathrm{DDual}.
			\end{align*} 
	\end{proposition}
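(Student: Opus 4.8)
The plan is to recognize $\gamma_{(H,c)}$ as the Pontryagin bidual isomorphism in disguise, so that the first assertion reduces to Pontryagin duality for compact abelian groups. First I would note that $c^* \colon H^* \to \mathrm{DDual}(H,c) = c^*H^*$, $\chi \mapsto \chi \circ c$, is an isomorphism of discrete abelian groups: surjectivity is the definition of $c^*H^*$, and injectivity holds because two characters of $H$ agreeing on the dense subgroup $c(G)$ agree on all of $H$ by continuity (here $H$ is abelian, being the closure of the abelian subgroup $c(G)$). Dualizing this isomorphism---forming $\mathrm{Hom}(-,\T)$ with the topology of pointwise convergence, which is exactly the construction underlying $\mathrm{CDual}$ in \cref{complemma}---yields an isomorphism of compact groups $(c^*H^*)^* \to (H^*)^*$, $\alpha \mapsto \alpha \circ c^*$. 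Composing its inverse with the Pontryagin evaluation isomorphism $\iota_H \colon H \to (H^*)^*$, $x \mapsto [\chi \mapsto \chi(x)]$ (an isomorphism of compact groups since $H$ is compact Hausdorff abelian; see \cite[Section 4.3]{Foll2016}), produces an isomorphism of compact groups $H \to (c^*H^*)^*$, and unwinding the formulas shows that this composite sends $x$ to the character $\chi \circ c \mapsto \chi(x)$, i.e.\ it is exactly $\gamma_{(H,c)}$. Finally I would check that this is an isomorphism in $\mathbf{Comp}(G)$ and not merely of compact groups: the identity $\gamma_{(H,c)}(c(t))(\chi \circ c) = \chi(c(t)) = (\chi \circ c)(t) = c_{c^*H^*}(t)(\chi \circ c)$ gives $\gamma_{(H,c)} \circ c = c_{c^*H^*}$, and applying this to $\gamma_{(H,c)}^{-1}$ shows the inverse is a morphism of compactifications as well.

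For the naturality statement I would run the obvious diagram chase. Given a morphism $\Phi \colon (H_1, c_1) \to (H_2, c_2)$, one has $\psi \circ c_2 = (\psi \circ \Phi) \circ c_1$ for every $\psi \in H_2^*$, which both explains the inclusion $c_2^*H_2^* \subset c_1^*H_1^*$ and pins down $\mathrm{CDual}(\mathrm{DDual}(\Phi))$ as restriction of characters along this inclusion. Using the same identity, for $x \in H_1$ and $\psi \in H_2^*$ one computes
\[
	(\gamma_{(H_1,c_1)}(x)|_{c_2^*H_2^*})(\psi \circ c_2) = \gamma_{(H_1,c_1)}(x)((\psi \circ \Phi) \circ c_1) = (\psi \circ \Phi)(x) = \gamma_{(H_2,c_2)}(\Phi(x))(\psi \circ c_2),
\]
which is precisely commutativity of the naturality square.

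The only genuine input is the Pontryagin duality theorem for compact abelian groups; everything else is bookkeeping once the identification of $\gamma_{(H,c)}$ with $\iota_H$ (up to the relabeling dual to $c^*$) is in place, so I expect that identification to be the conceptual heart of the argument even though it is short. The main point to handle carefully is keeping the topologies straight: $H^*$ and $c^*H^*$ are to be regarded as discrete, the biduals carry the topology of pointwise convergence, and continuity of $\gamma_{(H,c)}$ together with automatic openness (a continuous bijection of compact Hausdorff groups is a homeomorphism) should be recorded explicitly rather than left implicit.
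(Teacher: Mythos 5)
Your proof is correct, and it takes a mildly different route from the one in the paper. The paper's own argument is a short direct verification: well-definedness of $\gamma_{(H,c)}(x)$ follows from the density of $c(G)$ in $H$, surjectivity comes for free from the general observation that every morphism of group compactifications is surjective (\cref{morphsurj}), injectivity follows because $H^*$ separates the points of the compact abelian group $H$, and naturality is left as a routine check. You instead identify $\gamma_{(H,c)}$ as the Pontryagin evaluation $\iota_H\colon H \to (H^*)^*$ composed with the dual of the discrete-group isomorphism $c^*\colon H^* \to c^*H^*$, so bijectivity and the homeomorphism property are imported wholesale from the Pontryagin duality theorem; the compatibility $\gamma_{(H,c)}\circ c = c_{c^*H^*}$ and the naturality square are then verified by hand. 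This uses a slightly stronger input (the full bidual isomorphism rather than mere point separation plus \cref{morphsurj}), but that costs nothing here since the paper invokes full Pontryagin duality in \cref{complemma} and \cref{dualiso2} anyway; in exchange, your factorization makes the conceptual content of the statement explicit, and you spell out the topological bookkeeping (discrete duals, pointwise-convergence topology on the biduals, automatic openness of continuous bijections of compact groups) and the naturality computation that the paper only gestures at.
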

	\begin{proof}
		Take a group compactification $(H,c)$ and note that $\gamma_{(H,c)}(x)$ is well-defined for every $x \in H$, since $c$ has dense range. It is clear that $\gamma_{(H,c)}$ is a morphism of compactifications and hence surjective (cf. \cref{morphsurj}). On the other hand, it is injective since $H^*$ separates the points of $H$. This shows that $\gamma_{(H,c)}$ is an isomorphism of group compatifications. It is readily checked that $\gamma$ defines a natural isomorphism.
	\end{proof}
	\begin{proposition}\label{dualiso2}
		If $G$ is commutative and $\sigma \subset G^*$ is a subgroup of $G^*$, we obtain $\sigma = \mathrm{DDual}(\mathrm{CDual}(\sigma))$. In particular, the identity on every such subgroup defines a natural isomorphism 
			\begin{align*}
				\gamma \colon \mathrm{Id}_{\mathbf{SubGrp}(G^*)^{\mathrm{op}}} \rightarrow \mathrm{DDual} \circ \mathrm{CDual}.
			\end{align*}
	\end{proposition}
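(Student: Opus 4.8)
The plan is to reduce the whole statement to Pontryagin duality for discrete abelian groups, exactly as was done in the proof of \cref{complemma}. First I would simply unwind the two functors. For a subgroup $\sigma \subset G^*$ we have $\mathrm{CDual}(\sigma) = (\sigma^*, c_\sigma)$, where $\sigma$ is regarded as a discrete group, so that $\sigma^*$ is compact and
\[
    \mathrm{DDual}(\mathrm{CDual}(\sigma)) = c_\sigma^*\,(\sigma^*)^* = \{\alpha \circ c_\sigma \mid \alpha \in (\sigma^*)^*\} \subset G^*.
\]
By Pontryagin duality the canonical evaluation map identifies $\sigma$ with its bidual $(\sigma^*)^*$: to $\chi \in \sigma$ it assigns the character $\mathrm{ev}_\chi$ of $\sigma^*$ given by $x \mapsto x(\chi)$.

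The only computation needed is then a one-liner. For $\chi \in \sigma$ and $t \in G$,
\[
    (\mathrm{ev}_\chi \circ c_\sigma)(t) = \mathrm{ev}_\chi\bigl(c_\sigma(t)\bigr) = c_\sigma(t)(\chi) = \chi(t),
\]
so $\mathrm{ev}_\chi \circ c_\sigma = \chi$. Letting $\chi$ range over $\sigma$, and using that $\chi \mapsto \mathrm{ev}_\chi$ is a bijection onto $(\sigma^*)^*$, yields $\mathrm{DDual}(\mathrm{CDual}(\sigma)) = \sigma$, which is the asserted equality of objects.

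For the statement about the natural isomorphism I would invoke that $\mathbf{SubGrp}(G^*)$, and hence $\mathbf{SubGrp}(G^*)^{\mathrm{op}}$, is a thin category: every hom-set contains at most one morphism. A functor between thin categories is completely determined by its action on objects, so the endofunctor $\mathrm{DDual} \circ \mathrm{CDual}$ of $\mathbf{SubGrp}(G^*)^{\mathrm{op}}$, which by the first part is the identity on objects, is literally the identity functor. Consequently every square required in \cref{natiso} commutes trivially, and the family $(\mathrm{id}_\sigma)_\sigma$ is a natural isomorphism $\mathrm{Id}_{\mathbf{SubGrp}(G^*)^{\mathrm{op}}} \rightarrow \mathrm{DDual} \circ \mathrm{CDual}$. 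I do not expect any genuine obstacle here; the one point deserving care is to cite Pontryagin duality in the correct form, namely that for a \emph{discrete} abelian group the map into its bidual is an isomorphism, so that no topological subtlety intervenes. Everything else is a matter of chasing the definitions of $\mathrm{DDual}$ and $\mathrm{CDual}$.
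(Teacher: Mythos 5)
Your proposal is correct and follows essentially the same route as the paper: the same one-line computation $(\alpha_\chi\circ c_\sigma)(t)=c_\sigma(t)(\chi)=\chi(t)$ together with Pontryagin duality for the discrete group $\sigma$ identifying $\sigma$ with $(\sigma^*)^*$ via evaluation. Your explicit remark that $\mathbf{SubGrp}(G^*)^{\mathrm{op}}$ is thin, so the object-level equality already forces the functor to be the identity and naturality is automatic, is just a slightly more detailed justification of what the paper leaves implicit.
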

	\begin{proof}
		Take $\chi \in \sigma$. Then $\alpha_\chi \in (\sigma^*)^*$ with $\alpha_\chi(x) = x(\chi)$ for all $x \in \sigma^*$ satisfies
			\begin{align*}
				(\alpha_\chi \circ c_\sigma)(t) = \alpha_\chi(c_\sigma(t)) = c_\sigma(t)(\chi) = \chi(t)
			\end{align*}
		for all $t \in G$. Thus, $\chi = \alpha_\chi \circ c_\sigma \in \mathrm{DDual}(\mathrm{CDual}(\sigma))$. 
		
		Conversely, take $\alpha \in (\sigma^*)^*$. By the Pontryagin duality we find $\chi \in \sigma$ with $\alpha = \alpha_\chi$ (with $\alpha_\chi$ defined as above) and by the computation above we conclude that $\alpha \circ c_\sigma = \chi \in \sigma$. 
	\end{proof}

	We combine the natural isomorphisms of \cref{dualiso1} and \cref{dualiso2} with \cref{equivcomp} and \cref{decompfunct} to obtain the following categorical version of the topological Halmos von Neumann theorem which shows that for a commutative group $G$ the categories
		\begin{itemize}
			\item $\mathbf{MinDisc}_{\mathbf{pt}}(G)$ of pointed minimal systems with discrete spectrum,
			\item $\mathbf{Comp}(G)$ of group compactifications, and
			\item $\mathbf{SubGrp}(G^*)^{\mathrm{op}}$ of dual subgroups,
		\end{itemize}
	are pairwise equivalent. 
	\begin{theorem}\label{hvncat1}
		Let $G$ be commutative.
			\begin{enumerate}[(i)]
				\item The functors
					\begin{align*}
						\mathrm{Env} &\colon\mathbf{MinDisc}_{\mathbf{pt}}(G)\rightarrow \mathbf{Comp}(G),\\
						\mathrm{Rot} &\colon  \mathbf{Comp}(G) \rightarrow \mathbf{MinDisc}_{\mathbf{pt}}(G)
					\end{align*}
				establish an equivalence between the category of pointed minimal systems with discrete spectrum and the category of group compactifications.
				\item \label{hvncat12} The functors
					\begin{align*}
						\mathrm{DDual} &\colon \mathbf{Comp}(G) \rightarrow \mathbf{SubGrp}(G^*)^{\mathrm{op}},\\
						\mathrm{CDual} &\colon \mathbf{SubGrp}(G^*)^{\mathrm{op}} \rightarrow \mathbf{Comp}(G) 
					\end{align*}
				establish an equivalence between the category of group compactifications and the opposite category of dual subgroups.
				\item The functor 
					\begin{align*}
						\sigma_{\mathrm{p}} = \mathrm{DDual} \circ \mathrm{Env} \colon \mathbf{MinDisc}_{\mathbf{pt}}(G) \rightarrow \mathbf{SubGrp}(G^*)^{\mathrm{op}}
					\end{align*}
				defines an equivalence between the category of pointed minimal systems with discrete spectrum and the opposite category of dual subgroups.
			\end{enumerate}
	\end{theorem}
	Note that in view of \cref{pointedvsnonpointed2} the classical topological version \cref{comhvn1} (and hence also the ergodic theoretic version \cref{comhvn2}) is a direct corollary of \cref{hvncat1}. Observe also that via \cref{hvncat12} the whole dual $G^*$, seen as a subgroup of itself, gives rise to a maximal compactification of $G$, known as the \emph{Bohr compactification} (cf. \cite[Section 4.7]{Foll2016}).

\section{Halmos-von Neumann for general groups}\label{scatgen}

We now turn to minimal actions with discrete spectrum of an arbitrary group $G$. This situation is significantly more intricate for two reasons: 

	\begin{enumerate}[(i)]
		\item The representation theory for non-abelian compact groups is more complicated than in the abelian case.
		\item A transitive and effective action of a non-abelian compact group need not be free. As a consequence, not every minimal system with discrete spectrum is isomorphic to a group rotation.\footnote{However, it is still isomorphic to a quasi-rotation, see \cref{finalrem} (iii) below.}
	\end{enumerate}

We will circumvent this second difficulty by restricting to so-called \emph{normal actions} bringing us back to the case of rotation systems induced by group compactifications of $G$. To find a meaningful isomorphism invariant for such systems, we first recall some standard concepts from representation theory.

\subsection{Preliminaries on finite-dimensional representations}\label{scatgen1} The following definitions are standard (see, e.g., \cite[Part II]{Kiri1976}). 
\begin{definition}
	A bounded continuous\footnote{Boundedness and continuity refer to the unique Hausdorff topological vector space structure on $\mathscr{L}(E_\pi)$. For example, this is induced by the operator norm arising from any norm on $E_T$.} group representation $\pi\colon G\rightarrow \mathscr{L}(E_\pi), \, t \mapsto \pi_t$ of $G$ on a finite-dimensional vector space $E_\pi$ is called a \emph{finite-dimensional representation} of $G$. It is \emph{irreducible} if it is non-zero and $\{0\}$ and $E_\pi$ are the only invariant subspaces of $E_\pi$.
	
		A \emph{morphism} $U \colon \pi \rightarrow \varrho$ between such finite-dimensional representations is a linear map $U \in \mathscr{L}(E_\pi,E_\varrho)$ between the underlying vector spaces such that the diagram
		\[
			\xymatrix{
					E_\pi \ar[d]^{U} \ar[r]^{\pi_t} & E_\pi \ar[d]^{U} \\
					E_\varrho \ar[r]^{\varrho_t}& E_\varrho
			}
		\]	
		commutes for every $t \in G$. 
\end{definition}

To avoid set-theoretical issues, we restrict to a \emph{set} of representations in the following (cf. \cite[footnote 2 on page 2]{HeRo1970}). More precisely, we fix a set $M$ of finite-dimensional vector spaces with the following properties.
	\begin{enumerate}[(i)]
		\item $\C^n \in M$ for every $n \in \N_0$.
		\item For every two spaces $E,F \in M$ there is a direct sum $E \oplus F$ in $M$,
		\item For every two spaces $E,F \in M$ there is a tensor product $E \otimes F$ in $M$.
		\item For every $E \in M$ the complex conjugate space $\overline{E}$ is in $M$ where $\overline{E}$ is defined as the set $E$ with the unique vector space structure turning the identity $\mathrm{id}_{E}\colon  E \rightarrow \overline{E}$ into an antilinear isomorphism.
		\item For every $E \in M$ and every linear subspace $F \subset E$, we also have $F \in M$.
	\end{enumerate}
The existence of such a set $M$ is readily verified. Having fixed such an $M$ once and for all, we write $\mathbf{FinVect}$ for the category of finite-dimensional vector spaces in $M$, $\mathbf{FinRep}(G)$ for the category of finite-dimensional representations of $G$ on vector spaces in $M$.

The following key result about morphisms between irreducible representations is known as Schur's lemma (see, e.g. \cite[Theorem 27.9 and Corollary 27.10]{HeRo1970}).
\begin{proposition}\label{schur}
		For irreducible representations $\pi$ and $\varrho$ of $G$ the following assertions hold.
			\begin{enumerate}[(i)]
				\item Every morphism $U \colon \pi \rightarrow \varrho$ is zero or an isomorphism.
				\item The vector space of morphisms $\mathrm{Mor}(\pi,\varrho)$ is at most one-dimensional.
			\end{enumerate}			 
\end{proposition}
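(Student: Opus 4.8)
The plan is to prove the two assertions in order, using only elementary linear algebra together with the definition of a morphism of representations. For part (i), let $U \colon \pi \rightarrow \varrho$ be a morphism between irreducible representations. First I would observe that $\ker(U) \subset E_\pi$ is an invariant subspace: if $Ux = 0$ then $U\pi_t x = \varrho_t U x = 0$, so $\pi_t x \in \ker(U)$ for every $t \in G$. Since $\pi$ is irreducible, $\ker(U)$ is either $\{0\}$ or all of $E_\pi$; in the latter case $U = 0$ and we are done. Dually, the range $U(E_\pi) \subset E_\varrho$ is invariant, because $\varrho_t U x = U \pi_t x \in U(E_\pi)$; by irreducibility of $\varrho$ it is either $\{0\}$ (again giving $U = 0$) or all of $E_\varrho$. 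So if $U \neq 0$ then $U$ is injective and surjective, hence a linear isomorphism, and its inverse $U^{-1}$ is again a morphism since $U^{-1}\varrho_t = U^{-1}\varrho_t U U^{-1} = U^{-1} U \pi_t U^{-1} = \pi_t U^{-1}$. This proves (i).

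For part (ii), I would first dispose of the case where $\pi$ and $\varrho$ are not isomorphic: then by (i) the only morphism is the zero map, so $\mathrm{Mor}(\pi,\varrho) = \{0\}$ is zero-dimensional. If $\pi$ and $\varrho$ are isomorphic, fix an isomorphism $U_0 \colon \pi \rightarrow \varrho$; composing with $U_0^{-1}$ gives a linear bijection $\mathrm{Mor}(\pi,\varrho) \rightarrow \mathrm{Mor}(\pi,\pi)$, $U \mapsto U_0^{-1} U$, so it suffices to show $\mathrm{Mor}(\pi,\pi)$ is one-dimensional. Here I would use that we work over $\C$ and that $E_\pi$ is finite-dimensional: any $S \in \mathrm{Mor}(\pi,\pi)$ has an eigenvalue $\lambda \in \C$, and then $S - \lambda\,\mathrm{id}_{E_\pi}$ is again a morphism (the identity is trivially a morphism, and morphisms form a vector space) with nontrivial kernel. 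By part (i) applied with $\pi = \varrho$, the morphism $S - \lambda\,\mathrm{id}$ must be zero, i.e.\ $S = \lambda\,\mathrm{id}_{E_\pi}$. Hence $\mathrm{Mor}(\pi,\pi) = \C\,\mathrm{id}_{E_\pi}$ is one-dimensional, and therefore $\mathrm{Mor}(\pi,\varrho)$ is at most one-dimensional in all cases.

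The only point requiring any care — and it is the one I would flag as the crux — is the existence of an eigenvalue in the second part, which is exactly where complex scalars and finite-dimensionality enter; the paper's blanket convention that all vector spaces are complex makes this immediate. Everything else is a routine chase through the intertwining diagram. I would also note in passing that the same argument does not even need $G$ to be a group or the representation to be continuous: only the vector-space structure of $\mathrm{Mor}$ and invariance of kernels and ranges are used, so the statement is purely algebraic once irreducibility is assumed.
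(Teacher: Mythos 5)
Your proof is correct and complete. Note that the paper itself does not prove this proposition at all: it simply cites \cite[Theorem 27.9 and Corollary 27.10]{HeRo1970}, so there is no in-paper argument to compare against. What you wrote is the standard algebraic proof of Schur's lemma, and every step checks out: invariance of $\ker(U)$ and of the range gives (i); reduction to $\mathrm{Mor}(\pi,\pi)$ via composition with a fixed isomorphism, existence of an eigenvalue over $\C$ on the finite-dimensional (and, by irreducibility, non-zero) space $E_\pi$, and another application of (i) to $S-\lambda\,\mathrm{id}_{E_\pi}$ give (ii). Your closing observation is also worth keeping in mind: since the paper's finite-dimensional representations are merely bounded and continuous, not unitary, a purely algebraic argument like yours applies directly, without invoking unitarizability (\cref{prepgroup}) or the compact-group setting in which the cited results of \cite{HeRo1970} are formulated; indeed neither continuity nor the group structure of $G$ is used.
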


We need some standard constructions for finite-dimensional representations. Subrepresentations of a given finite-dimensional representation as well as the direct sum $\pi \oplus \varrho$ and tensor product $\pi \otimes \varrho$ of finite-dimensional representations $\pi$ and $\varrho$ are defined in the obvious way. In addition we recall the following. 
\begin{definition}\label{complconj}
	Let $\pi$ be a finite-dimensional representation of $G$. The \emph{complex conjugate representation} of $\pi$ is given by 
			\begin{align*}
				\overline{\pi} \colon G \rightarrow \mathscr{L}(\overline{E_\pi}), \, \quad t \mapsto \pi_t.
			\end{align*}
\end{definition}

We now introduce the dual of $G$ generalizing the Pontryagin dual $G^*$ from the abelian case.

\begin{definition}\label{dual}
	Let $\mathrm{Irred}(G)$ be the set of all irreducible finite-dimensional representations within $\mathbf{FinRep}(G)$
	and define an equivalence relation $\sim$ on $\mathrm{Irred}(G)$ via
		\begin{align*}
			\pi \sim \varrho \Leftrightarrow \textrm{ there is an isomorphism } U \colon \pi \rightarrow \varrho.
		\end{align*}
	The set $\hat{G}\coloneqq \mathrm{Irred}(G)/\sim$ is the \emph{dual of $G$}.
\end{definition}

If $\pi \colon G \rightarrow \mathscr{L}(E_\pi)$ is any irreducible finite-dimensional representation (with $E_\pi$ not necessarily contained in our fixed set $M$ of finite-dimensional vector spaces from above), then $E_\pi$ is isomorphic to a vector space $\C^n$ for some $n \in \N$. Therefore, $\pi$ is isomorphic to an irreducible representation $\tilde{\pi}$ on $\C^n \in M$ and we write $[\pi] \coloneqq [\tilde{\pi}]$.

\begin{remark}\label{abeliandual}
	For abelian $G$, the character space $G^*$ of $G$ maps bijectively to the dual $\hat{G}$ of $G$ by mapping every character to the one-dimensional representation $G\rightarrow \mathscr{L}(\C),\, t \mapsto \chi(t)$ as a consequence of Schur's lemma (cf. \cite[Corollary 3.6]{Foll2016}\footnote{The cited result assumes $G$ to be locally compact, but the proof still works in our situation.}).
\end{remark}

For locally compact groups it is common to consider unitary representations on Hilbert spaces. The following important result allows to allows to turn any finite-dimensional representation into a unitary representation (see \cite[Lemma 7.1.1]{DeEc2009}\footnote{\label{note1}The quoted result treats the case of compact groups. However, since the closure of the image of a given finite-dimensional representation (which is bounded by definition) of $G$ is compact, the lemma also holds in general.}).

\begin{lemma}\label{prepgroup}
	Let $\pi$ be a finite-dimensional representation. 
		\begin{enumerate}[(i)]
			\item There is an inner product $(\,\cdot \mid \cdot\, )$ on $E_\pi$ with respect to which $\pi$ becomes a unitary group representation.
			\item If $\pi$ is irreducible and unitary with respect to two inner products $(\,\cdot \mid \cdot\, )_1$ and $(\,\cdot \mid \cdot\, )_2$ on $E_\pi$, then there is $c > 0$ with $(\,\cdot \mid \cdot\, )_2 = c \cdot (\,\cdot \mid \cdot\, )_1$.
		\end{enumerate}
\end{lemma}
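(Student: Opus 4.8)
The plan is to reduce both statements to the compact case by passing to the closure of the image. Concretely, write $H \coloneqq \overline{\pi(G)} \subset \mathscr{L}(E_\pi)$, which is a compact group once we know $\pi(G)$ is relatively compact; this follows because a finite-dimensional representation is bounded, so the orbit of every vector is relatively compact and hence (using that composition is jointly continuous on bounded sets, cf.\ \cite[Proposition C.19]{EFHN2015}) $\overline{\{\pi_t \mid t \in G\}}$ is a compact topological group, exactly as in the proof of \cref{chardisc}. The inclusion $H \hookrightarrow \mathscr{L}(E_\pi)$ is then a finite-dimensional representation of the \emph{compact} group $H$, and it is irreducible whenever $\pi$ is, since $\pi(G)$ and $H$ have the same invariant subspaces. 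Thus it suffices to prove both assertions for finite-dimensional representations of compact groups, and there the standard references apply.

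For part (i), I would fix any inner product $\langle\,\cdot\mid\cdot\,\rangle$ on $E_\pi$ and average it over $H$ against the normalised Haar measure $m$, setting
\begin{align*}
	(\,v \mid w\,) \coloneqq \int_H \langle h v \mid h w\rangle \, \mathrm{d}m(h) \quad \text{for } v,w \in E_\pi.
\end{align*}
This is again a (positive definite, sesquilinear) inner product because the integrand is continuous and bounded and positivity is preserved, and translation invariance of $m$ gives $(\,h_0 v \mid h_0 w\,) = (\,v \mid w\,)$ for every $h_0 \in H$; hence every $h \in H$, and in particular every $\pi_t$, acts unitarily. This is just \cite[Lemma 7.1.1]{DeEc2009} applied to $H$, as indicated in the footnote.

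For part (ii), suppose $\pi$ is irreducible and unitary with respect to $(\,\cdot\mid\cdot\,)_1$ and $(\,\cdot\mid\cdot\,)_2$. There is a unique positive definite self-adjoint (with respect to $(\,\cdot\mid\cdot\,)_1$) operator $A \in \mathscr{L}(E_\pi)$ with $(\,v\mid w\,)_2 = (\,Av \mid w\,)_1$ for all $v,w$. Using that $\pi$ is unitary for both forms, one checks $(\,A\pi_t v \mid w\,)_1 = (\,\pi_t v \mid w\,)_2 = (\,v \mid \pi_t^{-1} w\,)_2 = (\,Av \mid \pi_t^{-1}w\,)_1 = (\,\pi_t A v \mid w\,)_1$ for all $v,w$ and $t$, so $A$ commutes with every $\pi_t$, i.e.\ $A$ is a morphism $\pi \to \pi$. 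By Schur's lemma (\cref{schur}) every such morphism is a scalar multiple of the identity, $A = c \cdot \mathrm{Id}_{E_\pi}$, and positive definiteness of $A$ forces $c > 0$; hence $(\,\cdot\mid\cdot\,)_2 = c \cdot (\,\cdot\mid\cdot\,)_1$.

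The only genuinely non-routine point is the reduction step, namely verifying that $\overline{\pi(G)}$ is a compact group so that Haar measure is available — but this is handled exactly as in \cref{chardisc}, and once it is granted the rest is the classical averaging argument together with Schur's lemma; so I expect no real obstacle beyond bookkeeping.
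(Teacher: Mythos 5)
Your proposal is correct and follows essentially the route the paper intends: the paper gives no written proof but cites \cite[Lemma 7.1.1]{DeEc2009} for the compact case together with the footnote that the closure of the image of a finite-dimensional (bounded) representation is a compact group, which is exactly your reduction step. Your filling-in of the details — compactness of $\overline{\pi(G)}$ as in \cref{chardisc}, Haar averaging for (i), and the Schur's lemma argument via the positive definite intertwiner $A$ for (ii) — is accurate.
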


We from now on choose an invariant inner product whenever necessary. Note that in case of irreducible representations there is no ambiguity when refering to orthogonality or the group of unitary operators as these notions do not depend on the particular invariant inner product. We also note the following. 
\begin{remark}
	In case of a compact group $G$ our definition of the dual is equivalent to the usual definition of the unitary dual as in \cite[Definition 27.3]{HeRo1970} (use \cref{prepgroup}). Note however, that for a locally compact group $G$ there is a significant difference since we only consider irreducible \emph{finite-dimensional} representations here.
\end{remark}

Even for finite groups $G$, one cannot reconstruct $G$ from its dual $\hat{G}$ and instead has to work with a larger category (see the remarks in the prelude of \cite[Paragraph 30]{HeRo1970}). However, for the purpose of classifying \emph{group compactifications} of $G$, considering the dual is sufficient. 

\subsection{Grouplike subsets}\label{scatgen2} In view of \cref{hvncat1} (ii), compactifications of an abelian group $G$ are in correspondence with subgroups of $G^*$. In the general situation, the dual $\hat{G}$ is no longer a group, but we still have a (well-defined!) \enquote{multivalued multiplication}
	\begin{align*}
		\otimes \colon \hat{G} \times \hat{G} \rightarrow \mathcal{P}(\hat{G}),\quad ([\pi],[\varrho]) \mapsto \{[\tau] \in \hat{G}\mid \tau \textrm{ is a subrepresentation of } \pi \otimes \varrho\},
	\end{align*}
cf. \cite[Definition 27.35]{HeRo1970}. For $[\pi]$, $[\tau] \in \hat{G}$ the set $[\pi] \otimes [\tau]$ actually only has finitely many elements as shown by the following consequence of \cite[Theorem 27.30]{HeRo1970}\footnote{See footnote \ref{note1} above.}.

\begin{proposition}\label{irreddecomp}
 	Let $\pi$ be a finite-dimensional unitary representation. Then there are pairwise orthogonal invariant subspaces $E_1, \dots, E_n \subset E_\pi$ such that
			\begin{enumerate}[(i)]
				\item $E_\pi = E_1 \oplus \dots \oplus E_n$, and
				\item $\pi^j\colon G \rightarrow \mathscr{L}(E_j),\, t \mapsto \pi_t|_{E_j}$ is irreducible for every $j \in \{1, \dots, n\}$.
			\end{enumerate}
	Moreover, the set $\{[\pi^j]\mid j \in \{1, \dots n\}\}$ is the same for any such decomposition.
\end{proposition}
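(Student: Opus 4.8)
The plan is to prove existence of the decomposition by induction on $\dim E_\pi$, using unitarity to split off orthogonal complements, and then to prove uniqueness of the set of occurring equivalence classes by pairing the orthogonal projections onto the pieces of one decomposition with Schur's lemma (\cref{schur}).

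For existence, the key observation is that unitarity makes orthogonal complements of invariant subspaces invariant: if $F \subset E_\pi$ is invariant, $v \in F^\perp$, $w \in F$ and $t \in G$, then $(\pi_t v \mid w) = (v \mid \pi_t^{-1} w) = (v \mid \pi_{t^{-1}} w) = 0$, using $\pi_t^* = \pi_{t^{-1}}$. Now I would induct on $\dim E_\pi$. If $\pi$ is irreducible, take the trivial decomposition $E_\pi = E_\pi$ (this covers $\dim E_\pi = 1$; for $E_\pi = \{0\}$ the empty decomposition works). Otherwise pick a proper non-zero invariant subspace $F$; then $E_\pi = F \oplus F^\perp$ is an orthogonal direct sum of invariant subspaces of strictly smaller dimension, the restricted inner products are again invariant, and concatenating the decompositions supplied by the induction hypothesis for $\pi|_F$ and $\pi|_{F^\perp}$ yields $E_1, \dots, E_n$. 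These are pairwise orthogonal since the pieces inside $F$ are mutually orthogonal, likewise those inside $F^\perp$, and $F \perp F^\perp$.

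For uniqueness, I would first record that the orthogonal projection $Q$ onto an invariant subspace of a unitary representation is an intertwiner: writing $v = v_1 + v_2$ with $v_1$ in the subspace and $v_2$ in its (invariant) orthogonal complement gives $\pi_t Q v = \pi_t v_1 = Q \pi_t v$. Suppose $E_\pi = E_1 \oplus \dots \oplus E_n = F_1 \oplus \dots \oplus F_m$ are two decompositions as in the statement, and set $\pi^j \coloneqq \pi|_{E_j}$ and $\varrho^k \coloneqq \pi|_{F_k}$. Fix $j$. Since the $F_k$ are pairwise orthogonal, the orthogonal projections $Q_k \colon E_\pi \to F_k$ are intertwiners with $\sum_k Q_k = \mathrm{Id}_{E_\pi}$, so each $Q_k|_{E_j}$ is a morphism $\pi^j \to \varrho^k$ between irreducible representations and $\sum_k Q_k|_{E_j} = \mathrm{Id}_{E_j} \neq 0$. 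Hence some $Q_k|_{E_j}$ is non-zero, thus an isomorphism by \cref{schur}, giving $[\pi^j] \in \{[\varrho^k] \mid k\}$; by symmetry $\{[\pi^j] \mid j\} = \{[\varrho^k] \mid k\}$.

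I do not expect a serious obstacle: because $\pi$ is assumed already unitary, the argument avoids the unitarization step and is pure linear algebra together with the already-established Schur's lemma. The only points requiring a little care are verifying that orthogonal complements of invariant subspaces are invariant and that the associated orthogonal projections intertwine $\pi$ (both used above), plus the harmless edge case $E_\pi = \{0\}$. If one additionally wanted the multiplicities — not needed in what follows — the same projection argument identifies the multiset $\{[\pi^j]\}$, for instance by comparing $\dim \bigoplus_{[\pi^j] = [\sigma]} E_j$ across the two decompositions.
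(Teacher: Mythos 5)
Your proof is correct. Both halves hold up: unitarity makes the orthogonal complement of an invariant subspace invariant (via $\pi_t^*=\pi_{t^{-1}}$), so induction on dimension gives the orthogonal decomposition into irreducibles, and for uniqueness the orthogonal projections onto the pieces of the second decomposition are intertwiners, so restricting them to a fixed $E_j$ and using $\sum_k Q_k|_{E_j}=\mathrm{Id}_{E_j}\neq 0$ together with Schur's lemma (\cref{schur}) identifies $[\pi^j]$ with some $[\varrho^k]$; symmetry finishes the argument, and the edge cases you flag are harmless. The route differs from the paper's: the paper gives no argument of its own but quotes \cite[Theorem 27.30]{HeRo1970}, which is stated for compact groups, and extends it to an arbitrary topological group $G$ by the remark that the closure of the image of a finite-dimensional representation is a compact group. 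That citation buys brevity and stays inside the Hewitt--Ross framework the paper leans on elsewhere (e.g.\ for \cref{pointseparating}); your argument buys self-containedness, since it is pure linear algebra plus the already-stated Schur's lemma, needs no unitarization or compact-closure reduction because unitarity is part of the hypothesis, and it even yields the stronger statement about multiplicities, which the paper does not need. So the proposal is a complete and somewhat more elementary substitute for the cited result in this finite-dimensional setting.
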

\begin{corollary}
	For $[\pi]$, $[\tau] \in \hat{G}$ the set $[\pi] \otimes [\tau] \subset \hat{G}$ is finite.
\end{corollary}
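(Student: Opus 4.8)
The plan is to obtain this as an immediate consequence of \cref{irreddecomp} together with Schur's lemma (\cref{schur}). Fix $[\pi], [\tau] \in \hat{G}$ and consider the tensor product representation $\pi \otimes \tau$ of $G$ on $E_\pi \otimes E_\tau$. Choosing invariant inner products on $E_\pi$ and $E_\tau$ according to \cref{prepgroup} and equipping $E_\pi \otimes E_\tau$ with the induced tensor inner product turns $\pi \otimes \tau$ into a finite-dimensional unitary representation, since $(\pi \otimes \tau)_t = \pi_t \otimes \tau_t$ is a tensor product of unitaries. Hence \cref{irreddecomp} provides pairwise orthogonal invariant subspaces $E_1, \dots, E_n \subset E_\pi \otimes E_\tau$ with $E_\pi \otimes E_\tau = E_1 \oplus \dots \oplus E_n$ such that each $\pi^j \colon G \to \mathscr{L}(E_j)$, $t \mapsto (\pi \otimes \tau)_t|_{E_j}$, is irreducible.

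By definition of the multivalued multiplication $\otimes$, the set $[\pi] \otimes [\tau]$ consists of the classes $[\sigma] \in \hat{G}$ of irreducible subrepresentations $\sigma$ of $\pi \otimes \tau$. So it remains to check that every such $\sigma$ is equivalent to one of the $\pi^j$, for then $[\pi] \otimes [\tau] \subseteq \{[\pi^1], \dots, [\pi^n]\}$ and the finiteness follows at once. To this end, let $\sigma$ be irreducible admitting an injective morphism $\iota \colon \sigma \to \pi \otimes \tau$, and let $P_j \colon E_\pi \otimes E_\tau \to E_j$ denote the orthogonal projections onto the summands $E_j$. Since the $E_j$ are invariant and mutually orthogonal for an invariant inner product, each $P_j$ intertwines the $G$-action, so $P_j \circ \iota \colon \sigma \to \pi^j$ is a morphism of representations for every $j$. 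From $\sum_{j=1}^n P_j = \mathrm{id}_{E_\pi \otimes E_\tau}$ we get $\sum_{j=1}^n (P_j \circ \iota) = \iota \neq 0$, so at least one $P_j \circ \iota$ is nonzero; by \cref{schur}~(i) such a morphism is an isomorphism, whence $[\sigma] = [\pi^j]$.

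I expect no genuine obstacle here — the argument is routine once \cref{irreddecomp} is available. The only points deserving a word are that the tensor-product inner product is again $G$-invariant and that orthogonal projections onto invariant, mutually orthogonal subspaces commute with the representation; both are standard facts about unitary representations, and I would either dispatch them in a sentence or simply cite the relevant references already used for \cref{prepgroup} and \cref{irreddecomp}.
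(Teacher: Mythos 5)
Your argument is correct, but it closes the proof with a different tool than the paper does. The paper also equips $E_{\pi\otimes\varrho}$ with an invariant inner product, but then argues as follows: any irreducible invariant subspace can be orthogonally complemented and hence extended to a full orthogonal decomposition into irreducibles, so the claim follows from the \emph{uniqueness} assertion of \cref{irreddecomp}, which says that the set of classes $\{[\pi^1],\dots,[\pi^n]\}$ is independent of the chosen decomposition. You instead fix one decomposition from the existence part of \cref{irreddecomp} and show directly, via the intertwining orthogonal projections $P_j$ and Schur's lemma (\cref{schur}), that every irreducible subrepresentation is isomorphic to one of the summands $\pi^j$; the uniqueness clause is never invoked. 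Both routes are short; the paper's is a one-line reduction to the cited uniqueness statement (itself imported from Hewitt--Ross), while yours is more self-contained and in effect reproves the relevant piece of that uniqueness via the standard Schur projection argument. Your two supporting facts (invariance of the tensor inner product, and that orthogonal projections onto invariant, mutually orthogonal summands of a unitary representation intertwine the action) are indeed routine and can be dispatched exactly as you indicate; just make sure to phrase membership in $[\pi]\otimes[\tau]$ as \enquote{isomorphic to a subrepresentation}, so that the injective intertwiner $\iota$ you use is the inclusion composed with that isomorphism.
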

\begin{proof}
	Choose an invariant inner product on $E_{\pi \otimes \varrho}$. Since any irreducible invariant subspace of $E_{\pi \otimes \varrho}$ can be orthogonally complemented (and hence is part of an orthogonal decomposition of $E_{\pi \otimes \varrho}$ into irreducible subspaces), the claim follows directly from the uniqueness assertion of \cref{irreddecomp}.
\end{proof}
Setting $\overline{[\pi]} \coloneqq [\overline{\pi}]$ for $[\pi] \in \hat{G}$, the complex conjugate representation from \cref{complconj} also gives rise to a replacement of the inverse in a group. Finally, the element $[\mathbbm{1}] \in \hat{G}$ where $\mathbbm{1} \colon G \rightarrow \mathscr{L}(\C), \, t \mapsto \mathrm{id}_{\C}$ is the constant representation can be seen as a \enquote{neutral element} for $\hat{G}$.\footnote{\cite[Theorem 27.38]{HeRo1970} provides some more justification for these replacements of multiplication, inverses and the neutral element.}

With these concepts we arrive of the following generalized version of dual subgroups (cf. \cite[Definition 27.35]{HeRo1970}).

\begin{definition}
	A subset $\sigma \subset \hat{G}$ is \emph{grouplike} if
		\begin{enumerate}[(i)]
			\item $[\pi] \otimes [\varrho] \subset \sigma$ for all $[\pi]$, $[\varrho] \in \sigma$,
			\item $\overline{[\pi]} \in \sigma$ for every $[\pi] \in \sigma$, and
			\item $[\mathbbm{1}] \in \sigma$.
		\end{enumerate}
				
	We write $\mathbf{GrpLike}(\hat{G})$ for the \emph{category of grouplike subsets} of $\hat{G}$ where the morphisms are given by inclusion maps $i_{\sigma_1,\sigma_2}$ for $\sigma_1 \subset \sigma_2$ as in \cref{dualsbgrp}.
\end{definition}

For a compact group the following result, which is part of \cite[Theorem 27.39]{HeRo1970}, shows that there is only one grouplike subset \enquote{separating the points}.

\begin{proposition}\label{pointseparating}
	Let $G$ be compact and $\sigma \subset \hat{G}$ a grouplike subset. Assume that for every $1 \neq t \in G$ there is an irreducible representation $\pi$ of $G$ with $\pi_t \neq \mathrm{id}_{E_\pi}$ and $[\pi] \in \sigma$. Then $\sigma = \hat{G}$.
\end{proposition}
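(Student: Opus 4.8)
The plan is to show that the hypothesis forces the grouplike subset $\sigma$ to be point-separating in the strongest possible sense, so that the subalgebra of $\mathrm{C}(G)$ it generates is all of $\mathrm{C}(G)$, and then to identify $\hat{G}$ with the matrix coefficients of this algebra. First I would form the linear span $\mathcal{A} \subset \mathrm{C}(G)$ of all matrix coefficients $t \mapsto (\pi_t v \mid w)$ of representations $\pi$ with $[\pi] \in \sigma$. Using the grouplike axioms one checks that $\mathcal{A}$ is a unital $^*$-subalgebra of $\mathrm{C}(G)$: closure under multiplication follows because a product of matrix coefficients of $\pi$ and $\varrho$ is a matrix coefficient of $\pi \otimes \varrho$, whose irreducible constituents lie in $[\pi]\otimes[\varrho] \subset \sigma$ by \cref{irreddecomp} and axiom (i); closure under conjugation uses that $\overline{(\pi_t v \mid w)}$ is a matrix coefficient of $\overline{\pi}$ and axiom (ii); and the constant function $\mathbbm{1}$ is the matrix coefficient of the trivial representation, which lies in $\sigma$ by axiom (iii).

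Next I would invoke the Stone--Weierstrass theorem. The hypothesis says precisely that for every $1 \neq t \in G$ there is some $[\pi] \in \sigma$ with $\pi_t \neq \mathrm{id}_{E_\pi}$, i.e. some matrix coefficient in $\mathcal{A}$ separating $t$ from $1$; combined with translation invariance of $\mathcal{A}$ (left and right translates of matrix coefficients of $\pi$ are again matrix coefficients of $\pi$) this yields that $\mathcal{A}$ separates the points of the compact group $G$. Since $\mathcal{A}$ is a point-separating unital $^*$-subalgebra of $\mathrm{C}(G)$, Stone--Weierstrass gives $\overline{\mathcal{A}} = \mathrm{C}(G)$.

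Finally I would show that density of $\mathcal{A}$ forces $\sigma = \hat{G}$. Take an arbitrary $[\varrho] \in \hat{G}$ and pick a nonzero matrix coefficient $g$ of $\varrho$. By the Peter--Weyl theory the orthogonal projection of $g \in \mathrm{L}^2(G)$ onto the isotypic component of $[\varrho]$ is $g$ itself (matrix coefficients of $\varrho$ lie in the $[\varrho]$-isotypic subspace), while the isotypic component of any $[\pi] \in \sigma$ is orthogonal to $g$ whenever $[\pi] \neq [\varrho]$, by the Schur orthogonality relations. Hence if $[\varrho] \notin \sigma$, every element of $\mathcal{A}$ is orthogonal to $g$ in $\mathrm{L}^2(G)$, contradicting the $\mathrm{L}^\infty$-density (hence $\mathrm{L}^2$-density) of $\mathcal{A}$ together with $g \neq 0$. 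Therefore $[\varrho] \in \sigma$, and since $[\varrho]$ was arbitrary, $\sigma = \hat{G}$.

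The main obstacle is the bookkeeping in the first paragraph: making sure the multiplicativity of $\mathcal{A}$ genuinely uses only the grouplike axioms and the finiteness/uniqueness statement of \cref{irreddecomp}, and being careful that "matrix coefficient of $\pi \otimes \varrho$" decomposes as a sum of matrix coefficients of its irreducible summands (which requires choosing an invariant inner product on $E_{\pi\otimes\varrho}$, available by \cref{prepgroup}). The Stone--Weierstrass and Peter--Weyl inputs are then routine, but one should note explicitly that compactness of $G$ is what makes both available --- this is exactly why the proposition is stated only for compact $G$.
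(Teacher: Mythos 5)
Your argument is correct. Note that the paper does not prove this proposition at all: it is quoted as part of \cite[Theorem 27.39]{HeRo1970}, so your proposal in effect reconstructs the proof behind that citation, and it is indeed the standard one. The span $\mathcal{A}$ of matrix coefficients of irreducible representations with class in $\sigma$ is a unital self-adjoint subalgebra of $\mathrm{C}(G)$ exactly because $\sigma$ is grouplike (multiplicativity via $\pi\otimes\varrho$ and \cref{irreddecomp} after fixing an invariant inner product as in \cref{prepgroup}, conjugation via $\overline{\pi}$, the constants via $[\mathbbm{1}]$); you correctly handle the only slightly delicate point, namely upgrading the hypothesis (separation of $t\neq 1$ from $1$) to separation of arbitrary pairs by observing that translates of matrix coefficients of $\pi$ are again matrix coefficients of $\pi$, so that for $s\neq t$ the function $x\mapsto(\pi_x\pi_{t^{-1}}v\mid w)$ does the job. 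Stone--Weierstrass then gives density of $\mathcal{A}$ in $\mathrm{C}(G)$, hence in $\mathrm{L}^2(G)$ for the Haar probability measure, and the Schur orthogonality relations (applied to coefficients taken with respect to invariant inner products, which span the same space) show that a class $[\varrho]\notin\sigma$ would produce a nonzero continuous function, e.g.\ $x\mapsto(\varrho_x v\mid v)$ with value $\|v\|^2$ at $1$, orthogonal to the dense subspace $\mathcal{A}$ --- a contradiction. Compactness of $G$ enters precisely where you say it does (Haar measure, Peter--Weyl/Schur orthogonality, Stone--Weierstrass on a compact space), which is consistent with the proposition being stated only for compact $G$.
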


In complete analogy to the abelian case, every group compactification of $G$ gives rise to a grouplike subset of $\hat{G}$ in a functorial way (cf. \cref{discreteduality} above).

\begin{construction}\label{represfunct}	
	Define a functor $\mathrm{Rep}\colon \mathbf{Comp}(G) \rightarrow \mathbf{GrpLike}(\hat{G})^{\mathrm{op}}$ by setting	
		\begin{enumerate}[(i)]
			\item $\mathrm{Rep}(H,c) \coloneqq c^*\hat{H} \coloneqq \{[\pi \circ c]\mid [\pi] \in \hat{H}\}$ for every group compactification $(H,c)$, and 
			\item $\mathrm{Rep}(\Phi) \coloneqq i_{c_2^*\hat{H_2}, c_1^*\hat{H_1}}$ for every morphism $\Phi \colon (H_1, c_1) \rightarrow (H_2,c_2)$ of group compactifications of $G$.
		\end{enumerate}
	We call this the \emph{representations functor}.
\end{construction}

\subsection{Tannaka categories and Tannaka groups}\label{scatgen3}

Our task is now the construction of an essential inverse for the functor $\mathrm{Rep}$. This is done via the Tannaka-Krein duality (see, e.g., \cite[Paragraph 30]{HeRo1970}, \cite[Section 12.2]{Kiri1976}, \cite[Chapter 9]{Robe1983} and \cite{JoSt1991}). Our approach is mostly based on the elegant category theoretic exposition of the duality given in \cite{JoSt1991}. Before we assign a group compactification to a given grouplike subset $\sigma \subset \hat{G}$, we first build a category of representations out of $\sigma$.

\begin{construction}\label{tannakacat}
	Let $\sigma \subset \hat{G}$ be a grouplike subset. We consider the preimage $p^{-1}(\sigma)$ under the canonical map
		\begin{align*}
			p\colon \mathrm{Irred}(G) \rightarrow \hat{G}, \quad \pi \mapsto [\pi]
		\end{align*}
	and the smallest set $\mathscr{T}(\sigma)$ of representations in $\mathbf{FinRep}(G)$ containing $p^{-1}(\sigma)$ with the following properties.
	\begin{enumerate}[(i)]
		\item If $\pi, \varrho \in \mathscr{T}(\sigma)$, then $\pi \oplus \varrho\in \mathscr{T}(\sigma)$.
		\item If $\pi, \varrho \in \mathscr{T}(\sigma)$, then $\pi \otimes \varrho \in \mathscr{T}(\sigma)$.
		\item If $\pi \in \mathscr{T}(\sigma)$, then $\overline{\pi} \in \mathscr{T}(\sigma)$.
		\item If $\pi \in \mathscr{T}(\sigma)$ and $\varrho$ is a subrepresentation of $\pi$, then also $\varrho \in \mathscr{T}(\sigma)$.
	\end{enumerate}
	We consider $\mathscr{T}(\sigma)$ as a full subcategory of the category $\mathbf{FinRep}(G)$ of finite-dimensional representations of $G$ and call this the \emph{Tannaka category} associated to $\sigma$. 
\end{construction}

The following simple observation is important.
\begin{lemma}\label{irreduciblescontained}
	Let $\sigma \subset \hat{G}$ be a grouplike subset. If $\pi \in \mathscr{T}(\sigma)$ is irreducible, then $[\pi] \in \sigma$.
\end{lemma}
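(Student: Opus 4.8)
The plan is to show that the property ``every irreducible subrepresentation lies in $p^{-1}(\sigma)$'' is preserved under all four closure operations used to build $\mathscr{T}(\sigma)$, so that $\mathscr{T}(\sigma)$ itself has this property; since an irreducible $\pi \in \mathscr{T}(\sigma)$ is its own (only nonzero) subrepresentation, this forces $[\pi] \in \sigma$. Concretely, let $\mathscr{S}$ denote the class of finite-dimensional representations $\pi$ of $G$ with the property that $[\tau] \in \sigma$ for every irreducible subrepresentation $\tau$ of $\pi$. First I would observe that $p^{-1}(\sigma) \subseteq \mathscr{S}$ trivially (if $[\pi] \in \sigma$ and $\tau$ is an irreducible subrepresentation of the irreducible $\pi$, then $\tau \cong \pi$, so $[\tau] = [\pi] \in \sigma$). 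Then, since $\mathscr{T}(\sigma)$ is by definition the \emph{smallest} set containing $p^{-1}(\sigma)$ and closed under (i)--(iv), it suffices to check that $\mathscr{S}$ is closed under these four operations; this gives $\mathscr{T}(\sigma) \subseteq \mathscr{S}$, which is exactly the claim.

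So the key steps are the four closure checks. For (iv) (subrepresentations), this is immediate: an irreducible subrepresentation of a subrepresentation of $\pi$ is an irreducible subrepresentation of $\pi$. For (iii) (complex conjugates), I would use that an irreducible subrepresentation of $\overline{\pi}$ is, via the antilinear identity map, the conjugate $\overline{\varrho}$ of an irreducible subrepresentation $\varrho$ of $\pi$, and that $\overline{[\varrho]} = [\overline{\varrho}] \in \sigma$ by the grouplike property (ii) of $\sigma$ applied to $[\varrho] \in \sigma$. For (i) (direct sums): by complete reducibility of finite-dimensional representations of $G$ (via \cref{prepgroup}, after choosing an invariant inner product, together with \cref{irreddecomp}), every irreducible subrepresentation of $\pi \oplus \varrho$ is equivalent to an irreducible subrepresentation of $\pi$ or of $\varrho$ --- indeed, decomposing $\pi = \bigoplus \pi^i$ and $\varrho = \bigoplus \varrho^j$ into irreducibles and using Schur's lemma (\cref{schur}), any irreducible subrepresentation of $\pi \oplus \varrho$ maps nontrivially into some summand and hence is isomorphic to it. For (ii) (tensor products): decompose $\pi \otimes \varrho$ into irreducibles; each irreducible constituent $[\tau]$ lies in $[\pi^i] \otimes [\varrho^j]$ for suitable constituents $\pi^i$ of $\pi$ and $\varrho^j$ of $\varrho$, and since $\pi, \varrho \in \mathscr{S}$ we have $[\pi^i], [\varrho^j] \in \sigma$, so grouplike property (i) of $\sigma$ gives $[\tau] \in [\pi^i] \otimes [\varrho^j] \subseteq \sigma$.

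The main obstacle, or rather the only real subtlety, is the bookkeeping for steps (i) and (ii): one must be careful that the notion of ``irreducible subrepresentation'' of a direct sum or tensor product is correctly related, up to the equivalence $\sim$, to the irreducible constituents of the factors --- this is where complete reducibility (not automatic for general topological groups, but valid here because finite-dimensional representations of $G$ have relatively compact image, so \cref{prepgroup} and \cref{irreddecomp} apply) and Schur's lemma \cref{schur} do the work. Once $\mathscr{T}(\sigma) \subseteq \mathscr{S}$ is established, the conclusion is immediate: if $\pi \in \mathscr{T}(\sigma)$ is irreducible, then $\pi$ is an irreducible subrepresentation of itself, so by membership in $\mathscr{S}$ we get $[\pi] \in \sigma$, completing the proof.
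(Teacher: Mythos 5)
Your proposal is correct and follows essentially the same route as the paper: both arguments consider the class of representations all of whose irreducible subrepresentations have class in $\sigma$, verify it contains $p^{-1}(\sigma)$ and is stable under the four closure operations of \cref{tannakacat} (using \cref{irreddecomp}, and the grouplike properties of $\sigma$ for tensor products and conjugates), and conclude by minimality of $\mathscr{T}(\sigma)$. Your version merely spells out the bookkeeping for direct sums and tensor products that the paper compresses into one sentence; the only cosmetic difference is that the paper takes its auxiliary class inside $\mathscr{T}(\sigma)$ rather than among all finite-dimensional representations, which is what the minimality argument formally requires.
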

\begin{proof}
	We consider the set $\mathscr{T}$ of all representations $\tau \in \mathscr{T}(\sigma)$ having the property, that every irreducible subrepresentation $\pi$ of $\tau$ satisfies $[\pi] \in \sigma$. We check that $\mathscr{T}$ has properties (i) -- (iv) of \cref{tannakacat} and hence $\mathscr{T} = \mathscr{T}(\sigma)$. Note first that property (iv) holds trivially. Moreover, (i) is a consequence of  \cref{irreddecomp} showing that every irreducible subrepresentation of $\pi \oplus \varrho$ is isomorphic to an irreducible subrepresentation of $\pi$ or $\varrho$.
	Using \cref{irreddecomp} once again to decompose representations into irreducible ones, it is enough to show (ii) and (iii) for irreducible representations. But for these it is evident since $\sigma$ is grouplike.
\end{proof}

From the Tannaka category $\mathscr{T}(\sigma)$ of a grouplike subset $\sigma$ we now contruct the desired group compactification. To do so, we consider certain natural transformations of the forgetful functor 
	\begin{align*}
		\mathrm{For}\colon \mathscr{T}(\sigma) \rightarrow \mathbf{FinVect}
	\end{align*}
which sends a representation to the underlying vector space and a intertwining linear map to itself (cf. \cite[Paragraph 1]{JoSt1991}).

\begin{definition}
	Let $\sigma \subset \hat{G}$ be a grouplike subset, $\mathscr{T}(\sigma)$ the corresponding Tannaka category and $\mathrm{For} \colon \mathscr{T}(\sigma) \rightarrow \mathbf{FinVect}$ the forgetful functor. A natural transformation $\eta \colon \mathrm{For} \rightarrow \mathrm{For}$ is a \emph{Tannaka transformation} if it is
		\begin{enumerate}[(i)]
			\item \emph{tensor-preserving}, i.e., $\eta_{\pi \otimes \varrho} = \eta_{\pi} \otimes \eta_{\varrho}$ for all $\pi,\varrho\in \mathscr{T}(\sigma)$,
			\item \emph{self-adjoint}, i.e., $\eta_{\overline{\pi}} = \eta_\pi$ for every every $\pi \in \mathscr{T}(\sigma)$, and
			\item \emph{unital}, i.e., $\eta_{\mathbbm{1}} = \mathrm{id}_{\C}$.
		\end{enumerate}
	The set $\check{\sigma}$ of all Tannaka transformations $\eta \colon \mathrm{For} \rightarrow \mathrm{For}$ equipped with the composition of natural transformations and the topology induced by the product topology on $\prod_{\pi \in \mathscr{T}(\sigma)} \mathscr{L}(E_\pi)$ is the \emph{Tannaka group} of $\sigma$. 
\end{definition}

\begin{remark}\label{sums}
	It is easy to see that any Tannaka transformation also respects direct sums, i.e., $\sigma \subset \hat{G}$ is a grouplike subset, $\eta \in \check{\sigma}$ and $\pi, \varrho \in \mathscr{T}(\sigma)$, then $\eta_{\pi \oplus \varrho} = \eta_{\pi} \oplus \eta_{\varrho}$ (cf. \cite[Problem 12.4]{Kiri1976}).
\end{remark}

The following result shows that the Tannaka group of a grouplike subset is in fact a compact group. The proof is basically the one of \cite[Proposition 9.5]{JoSt1991}.
\begin{proposition}\label{tannakagroup}
	For every grouplike subset $\sigma \subset \hat{G}$ the Tannaka group $\check{\sigma}$ is a compact group.
\end{proposition}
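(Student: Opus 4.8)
The plan is to realize $\check\sigma$ as a closed subset of a compact product space and then verify that it is closed under the group operations. First I would observe that by \cref{prepgroup} we may fix an invariant inner product on each $E_\pi$ for $\pi \in \mathscr{T}(\sigma)$, so that every $\pi$ becomes unitary; for irreducible $\pi$ this choice is canonical up to scaling. The key point is that any Tannaka transformation $\eta$ takes values in unitary operators: for irreducible $\pi$, the condition $\eta_{\overline\pi} = \eta_\pi$ together with tensor-preservation forces $\eta_\pi$ to preserve the invariant bilinear pairing $E_\pi \otimes \overline{E_\pi} \to \C$ coming from the fact that $[\mathbbm 1]$ is a subrepresentation of $\pi \otimes \overline\pi$ (using unitality $\eta_{\mathbbm 1} = \mathrm{Id}_\C$ and naturality applied to the intertwiner $\mathbbm 1 \hookrightarrow \pi \otimes \overline\pi$); this pins down $\eta_\pi^{-1} = \eta_\pi^*$, i.e. $\eta_\pi \in \mathrm{U}(E_\pi)$. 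By \cref{sums} and \cref{irreddecomp} the general case reduces to the irreducible one, so $\eta_\pi \in \mathrm{U}(E_\pi)$ for all $\pi \in \mathscr{T}(\sigma)$. Hence $\check\sigma$ embeds into the compact space $\prod_{\pi \in \mathscr{T}(\sigma)} \mathrm{U}(E_\pi)$.

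Next I would check that $\check\sigma$ is a \emph{closed} subset of this product. Each defining condition cuts out a closed set: naturality ($\eta_\varrho \circ U = U \circ \eta_\pi$ for every morphism $U \colon \pi \to \varrho$ in $\mathscr{T}(\sigma)$) is a family of closed conditions since composition of operators on a \emph{fixed} finite-dimensional space is continuous; tensor-preservation ($\eta_{\pi\otimes\varrho} = \eta_\pi \otimes \eta_\varrho$), self-adjointness ($\eta_{\overline\pi} = \eta_\pi$, noting $\mathscr{L}(\overline{E_\pi})$ and $\mathscr{L}(E_\pi)$ have the same underlying set), and unitality ($\eta_{\mathbbm 1} = \mathrm{Id}_\C$) are likewise closed. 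A net in $\check\sigma$ converging in the product topology therefore has a limit satisfying all these identities componentwise, so $\check\sigma$ is closed, hence compact.

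It then remains to see that $\check\sigma$ is a topological group under composition. The identity natural transformation $\mathrm{id}_{\mathrm{For}}$ is clearly a Tannaka transformation. If $\eta, \mu \in \check\sigma$, then $(\eta \circ \mu)_\pi = \eta_\pi \circ \mu_\pi$ is again natural, tensor-preserving, self-adjoint and unital, so $\check\sigma$ is closed under composition; and since each $\eta_\pi$ is unitary, the pointwise inverse $(\eta^{-1})_\pi \coloneqq \eta_\pi^{-1} = \eta_\pi^*$ defines a natural transformation (invert the naturality squares) which again satisfies (i)--(iii), so $\eta^{-1} \in \check\sigma$. Continuity of composition and inversion: on the fixed finite-dimensional space $\mathscr{L}(E_\pi)$ both operations are continuous, and the topology on $\check\sigma$ is the initial topology for the coordinate projections $\eta \mapsto \eta_\pi$, so both group operations are continuous coordinatewise, hence continuous. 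Combining this with compactness from the previous paragraph gives that $\check\sigma$ is a compact group.

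The main obstacle I anticipate is the first step — showing that a Tannaka transformation is automatically unitary on each $E_\pi$, so that $\check\sigma$ lives inside a \emph{compact} product rather than the non-compact $\prod_\pi \mathscr{L}(E_\pi)$. This is exactly the place where self-adjointness of $\eta$ and the interaction between $\pi$, $\overline\pi$ and the trivial representation $\mathbbm 1$ (via the invariant pairing that exists because $\sigma$ is grouplike) must be used carefully; without it, closedness of $\check\sigma$ in the product still holds but compactness fails. Everything after that — closedness under the categorical conditions and under the group operations — is routine, following \cite[Proposition 9.5]{JoSt1991}.
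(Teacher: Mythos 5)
Your proposal is correct and takes essentially the same route as the paper: the invariant pairing $\pi\otimes\overline{\pi}\to\mathbbm{1}$ together with unitality, tensor-preservation, self-adjointness and naturality forces each $\eta_\pi$ to be unitary, so that $\check{\sigma}$ is a closed subgroup of the compact group $\prod_{\pi\in\mathscr{T}(\sigma)}\mathrm{U}(E_\pi)$ and hence compact. The only cosmetic differences are that the paper runs the pairing argument for every $\pi\in\mathscr{T}(\sigma)$ directly (so no reduction to irreducibles via \cref{irreddecomp} and \cref{sums} is needed) and checks closure under inverses via the adjoint family $\eta^*$ rather than by inverting the naturality squares.
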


\begin{proof}
	For every representation $\pi \in \mathscr{T}(\sigma)$ let $(\, \cdot\mid  \cdot \,)$ be an invariant inner product as in \cref{prepgroup}. We show that $\check{\sigma}$ is contained in the product
	\begin{align*}
		H \coloneqq \prod_{\pi \in \mathscr{T}(\sigma)} \mathrm{U}(E_\pi)
	\end{align*}
	of the corresponding unitary groups $\mathrm{U}(E_\pi)$ for $\pi \in \mathscr{T}(\sigma)$.
	
	Pick $\eta \in \check{\sigma}$. For a fixed representation $\pi \in \mathscr{T}(\sigma)$ consider the morphism of representations $V \colon \pi \otimes \overline{\pi} \rightarrow \mathbbm{1}$ given by $V(x \otimes y) \coloneqq (x|y)$ for $x, y \in E_\pi$.
			 
	Since $\eta$ is a natural transformation, we obtain $V \circ \eta_{\pi \otimes \overline{\pi}} = \eta_{\mathbbm{1}} \circ V$. Using that $\eta$ is unital and tensor-preserving, this implies
		\begin{align*}
			V \circ \eta_{\pi} \otimes \eta_{\overline{\pi}} =  \mathrm{id}_{\C} \circ V = V.
		\end{align*}
		But then
			\begin{align*}
				\left(\eta_{\pi}x\mmid \eta_{\pi}y\right) = V(\eta_{\pi}x \otimes \eta_{\overline{\pi}}y) = V(x \otimes y) = (x|y)
			\end{align*}
		for all $x,y \in E_\pi$ since $\eta$ is self-adjoint. Thus, $\eta_\pi \in \mathscr{L}(E_\pi)$ is an isometry and, since $E_\pi$ is finite-dimensional, even unitary. Thus, $\check{\sigma}$ is contained in $H$.
	
	To see that $\check{\sigma}$ is even a subgroup of $H$, obeserve first that it contains the neutral element of $H$ and is closed with respect to composition. To obtain that it is also closed with respect to inverses, let again $\eta \in \check{\sigma}$ and consider $\eta^* \in H$ given by $(\eta^*)_{\pi}
\coloneqq (\eta_\pi)^*$ for every $\pi \in \mathscr{T}(\sigma)$. If $U \colon \pi \rightarrow \varrho$ is a morphism of finite-dimensional representations, then the adjoint $U^*$ defines a morphism $\varrho \rightarrow \pi$. Thus,
	\begin{align*}
		(\eta_\varrho)^*U = (U^*\eta_\varrho)^* = (\eta_\pi U^*)^* = U (\eta_\pi)^*
	\end{align*}
	showing that $\eta^*\colon \mathrm{For} \rightarrow \mathrm{For}$ is a natural transformation. It is routine to check that $\eta^*$ satisfies properties (i) -- (iii). We therefore obtain that $\eta^* \in \check{\sigma}$ and hence $\check{\sigma}$ is a subgroup of the compact group $H$. Finally, it is easy to see that $\check{\sigma}$ is closed and therefore itself a compact group. 
\end{proof}

To obtain a group compatification of $G$, we consider the following canonical map (which substitutes the evaluation mapping from the abelian case, cf. \cref{complemma}).
\begin{definition}\label{groupcompmap}
	Let $\sigma \subset \hat{G}$ a grouplike subset and $\check{\sigma}$ its Tannaka group. We define
		\begin{align*}
			c_{\sigma} \colon G \rightarrow \check{\sigma},\quad t \mapsto \eta(t)
		\end{align*}
	by setting $\eta(t)_{\pi} \coloneqq \pi_t$ for every $\pi \in \mathscr{T}(\sigma)$ and every $t \in G$.
\end{definition}

We now want to show the following.

\begin{proposition}\label{isgroupcomp}
	For every grouplike subset $\sigma$ the pair $(\check{\sigma},c_\sigma)$ is a group compactification of $G$.
\end{proposition}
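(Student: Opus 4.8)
The statement contains three assertions: that $\check\sigma$ is a compact group, that $c_\sigma$ is a continuous group homomorphism, and that $c_\sigma(G)$ is dense in $\check\sigma$. The first is \cref{tannakagroup}, and the second is routine, so the plan is to reduce quickly to the density claim. First I would check that $c_\sigma(t)=(\pi_t)_{\pi\in\mathscr{T}(\sigma)}$ really is a Tannaka transformation: naturality is precisely the intertwining relation $U\pi_t=\varrho_t U$ that makes $U$ a morphism in $\mathbf{FinRep}(G)$; tensor-preservation is $(\pi\otimes\varrho)_t=\pi_t\otimes\varrho_t$; self-adjointness is $\overline{\pi}_t=\pi_t$ (the conjugate representation has the same underlying operators, \cref{complconj}); and unitality is $\mathbbm{1}_t=\mathrm{id}_{\C}$. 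That $c_\sigma$ is a homomorphism follows from $\pi_{st}=\pi_s\pi_t$, $\pi_1=\mathrm{id}$ and the componentwise composition on $\check\sigma$, and $c_\sigma$ is continuous because the topology of $\check\sigma$ is the one induced from $\prod_\pi\mathscr{L}(E_\pi)$ and each $t\mapsto\pi_t$ is continuous. Everything thus comes down to proving $\overline{c_\sigma(G)}=\check\sigma$.

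For the density I would first record two facts. (1) A rigidity property: every $\eta\in\check\sigma$ acts as the identity on the fixed space $E_\pi^{\pi(G)}=\{v\in E_\pi : \pi_t v=v\text{ for all }t\in G\}$ of each $\pi\in\mathscr{T}(\sigma)$. Indeed $E_\pi^{\pi(G)}$ equipped with the trivial $G$-action is a subrepresentation of $\pi$, hence an object of $\mathscr{T}(\sigma)$, and it is a finite direct sum of copies of $\mathbbm{1}$; combining \cref{sums}, unitality, and naturality of $\eta$ along the inclusion $E_\pi^{\pi(G)}\hookrightarrow E_\pi$ forces $\eta_\pi$ to fix $E_\pi^{\pi(G)}$ pointwise. (2) Every irreducible representation of the compact group $\check\sigma$ is equivalent to a subrepresentation of one of the evaluation representations $\mathrm{ev}_\pi\colon\check\sigma\to\mathscr{L}(E_\pi)$, $\eta\mapsto\eta_\pi$, with $\pi\in\mathscr{T}(\sigma)$. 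To see this, note that the set $\sigma''\subset\widehat{\check\sigma}$ of classes of such subrepresentations is grouplike: the assignment $\pi\mapsto\mathrm{ev}_\pi$ turns $\oplus$, $\otimes$ and $\overline{\ \cdot\ }$ into the corresponding operations on representations of $\check\sigma$ (here one uses that $\eta$ is tensor-preserving and self-adjoint), $\mathscr{T}(\sigma)$ is closed under these operations and under subrepresentations, and $\mathrm{ev}_{\mathbbm{1}}$ is trivial by unitality. Moreover $\sigma''$ satisfies the hypothesis of \cref{pointseparating}: if $1\neq\eta\in\check\sigma$ then some component $\eta_\pi\neq\mathrm{id}_{E_\pi}$, and decomposing $\mathrm{ev}_\pi$ into irreducibles via \cref{irreddecomp} one finds an irreducible subrepresentation $\rho\subset\mathrm{ev}_\pi$ with $\rho(\eta)\neq\mathrm{id}$ and $[\rho]\in\sigma''$. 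Applying \cref{pointseparating} to $\check\sigma$ then gives $\sigma''=\widehat{\check\sigma}$.

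It remains to combine (1) and (2). Put $H\coloneqq\overline{c_\sigma(G)}$, a closed subgroup and hence a compact group, and suppose towards a contradiction that $H\neq\check\sigma$. Then $\check\sigma/H$ is a compact Hausdorff space with more than one point, so $\mathrm{C}(\check\sigma/H)\neq\C$, and a standard consequence of the Peter--Weyl theorem (see, e.g., \cite[Section 5.2]{Foll2016}) provides a nontrivial irreducible representation $\rho$ of $\check\sigma$ on a space $V$ admitting a nonzero $H$-fixed vector $v$. By (2) we may assume $V\subset E_\pi$ is $\check\sigma$-invariant and $\rho(\eta)=\eta_\pi|_V$ for some $\pi\in\mathscr{T}(\sigma)$. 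Since $\rho(c_\sigma(t))v=\pi_t v=v$ for all $t\in G$, the vector $v$ lies in $E_\pi^{\pi(G)}$, so by (1) we get $\eta_\pi v=v$ for \emph{every} $\eta\in\check\sigma$; hence $v$ is a $\check\sigma$-fixed vector in $V$, and irreducibility of $\rho$ makes $\rho$ trivial, a contradiction. Therefore $\overline{c_\sigma(G)}=\check\sigma$. The main obstacle is precisely this last synthesis: setting up the Tannaka category so that its irreducible pieces are faithfully recorded by $\widehat{\check\sigma}$ (fact (2), which is where \cref{pointseparating} does the real work), and then playing this off against the rigidity in (1); the remaining ingredient — that a proper closed subgroup of a compact group is detected by a nontrivial representation with invariant vectors — is standard.
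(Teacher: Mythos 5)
Your proof is correct, and it closes the density argument by a genuinely different mechanism than the paper. Up to the density claim you proceed as the paper does (continuity, the homomorphism property, and the verification that $c_\sigma(t)$ is a Tannaka transformation are treated as routine there as well), and your fact (2) is essentially the surjectivity half of the paper's \cref{inversebij} specialized to $H=\check{\sigma}$, proved by the same device (show the image in $\widehat{\check{\sigma}}$ is grouplike and point-separating, then apply \cref{pointseparating}); you could simply have cited that lemma instead of re-deriving it. The real divergence is in the last step: the paper applies \cref{inversebij} a \emph{second} time, to $H=\overline{c_\sigma(G)}$, and then uses the Peter--Weyl orthogonality of weighted matrix coefficients to see that the restriction map $\mathrm{C}(\check{\sigma})\rightarrow \mathrm{C}(\overline{c_\sigma(G)})$ carries an orthonormal basis to an orthonormal basis, hence is an $\mathrm{L}^2$-isometry, hence injective, which forces $\overline{c_\sigma(G)}=\check{\sigma}$. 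You instead prove a rigidity statement (every Tannaka transformation fixes the $G$-fixed vectors of each $\pi\in\mathscr{T}(\sigma)$) and play it against the standard Peter--Weyl consequence that a proper closed subgroup of a compact group is detected by a nontrivial irreducible representation with a nonzero invariant vector. Your route avoids having to identify the dual of $\overline{c_\sigma(G)}$ altogether (only the $H=\check{\sigma}$ case of the duality is needed), at the price of importing that one extra standard fact, which you cite rather than prove; the paper's route stays entirely within its own lemma plus matrix-coefficient computations. Two small polish points: in fact (1), $E_\pi^{\pi(G)}$ is only \emph{isomorphic} to a direct sum of copies of $\mathbbm{1}$, and naturality along that isomorphism is what you actually use --- more economically, each $v\in E_\pi^{\pi(G)}$ defines a morphism $\mathbbm{1}\rightarrow\pi$, $z\mapsto zv$, and naturality plus unitality immediately give $\eta_\pi v=v$, with no appeal to \cref{sums}; and in fact (2) one should note that $\mathrm{ev}_\pi$ is unitary for the invariant inner product chosen in \cref{tannakagroup}, so that \cref{irreddecomp} applies to it.
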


While it is clear that $c_{\sigma}$ is a continuous group homomorphism, showing that its range is dense in $\check{\sigma}$ requires some work. In the following two lemmas we first elaborate some properties of closed subgroups $H$ of $\check{\sigma}$ containing the image $c_\sigma(G)$. We will then see that $H = \check{\sigma}$ is the only such subgroup thereby finishing the proof of \cref{isgroupcomp}.

\begin{lemma}\label{equivirred}
	Let $\sigma \subset \hat{G}$ be a grouplike subset and $H \subset \check{\sigma}$ a closed subgroup of $\check{\sigma}$ containing $c_\sigma(G)$. For a representation $\pi \in \mathscr{T}(\sigma)$ of $G$ the induced representation
	\begin{align*}
			\pi_H \colon H \rightarrow \mathscr{L}(E_\pi), \quad \eta \mapsto \eta_{\pi}
	\end{align*}
	has the following properties.
		\begin{enumerate}[(i)]
			\item A subspace $E \subset E_\pi$ is $\pi$-invariant if and only if it is $\pi_H$-invariant.
			\item The representation $\pi$ is irreducible if and only if $\pi_H$ is irreducible.
		\end{enumerate}		 
\end{lemma}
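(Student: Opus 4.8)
The plan is to reduce both assertions to the defining naturality of Tannaka transformations, and to treat (ii) as a formal corollary of (i). For (ii) I would simply note that ``$\pi$ irreducible'' and ``$\pi_H$ irreducible'' each consist of two conditions: non-vanishing of the representation — which in both cases is equivalent to $E_\pi \neq \{0\}$, since $\pi_H(\mathrm{id}_{\check\sigma}) = \mathrm{id}_{E_\pi}$ — together with the non-existence of a non-trivial invariant subspace; and by part (i) the families of $\pi$-invariant and $\pi_H$-invariant subspaces of $E_\pi$ are literally the same. So everything hinges on (i).

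For (i), the easy direction is that $\pi_H$-invariance implies $\pi$-invariance: if $E \subset E_\pi$ is $\pi_H$-invariant, then for each $t \in G$ we have $c_\sigma(t) \in H$ and $\pi_H(c_\sigma(t)) = c_\sigma(t)_\pi = \pi_t$, so $E$ is invariant under every $\pi_t$. The content is the converse. Assume $E \subset E_\pi$ is $\pi$-invariant and let $\varrho$ be the subrepresentation of $\pi$ carried by $E$, i.e.\ $E_\varrho = E$ and $\varrho_t = \pi_t|_E$. By property (iv) of \cref{tannakacat}, $\varrho \in \mathscr{T}(\sigma)$, and the inclusion $\iota \colon E_\varrho \hookrightarrow E_\pi$ is a morphism $\varrho \to \pi$ in $\mathscr{T}(\sigma)$ (it intertwines the two actions by construction). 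Now for any $\eta \in \check\sigma$ — in particular for $\eta \in H$ — naturality of $\eta \colon \mathrm{For} \to \mathrm{For}$ applied to $\iota$ gives $\eta_\pi \circ \iota = \iota \circ \eta_\varrho$, since the forgetful functor fixes morphisms. Reading this on $E$, it says $\eta_\pi|_E = \iota \circ \eta_\varrho$ maps $E$ into $\iota(E_\varrho) = E$. Hence $E$ is $\eta_\pi$-invariant for every $\eta \in H$, i.e.\ $\pi_H$-invariant, which closes the loop.

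I do not anticipate a genuine obstacle: the only point requiring care is the invocation of closure property (iv) of the Tannaka category, which guarantees that the subrepresentation on an arbitrary $\pi$-invariant subspace actually lies in $\mathscr{T}(\sigma)$, so that $\iota$ is a legitimate morphism in the domain category of the Tannaka transformations; after that the naturality square does all the work. As a side remark worth recording, the argument shows the stronger fact that every $\pi$-invariant subspace is $\eta_\pi$-invariant for \emph{all} $\eta \in \check\sigma$, and it uses neither closedness of $H$ nor density of $c_\sigma(G)$, so the lemma is in fact valid for any subgroup $H$ with $c_\sigma(G) \subset H \subset \check\sigma$.
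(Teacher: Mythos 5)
Your proof is correct and follows essentially the same route as the paper: the nontrivial direction of (i) is obtained by noting that the subrepresentation on a $\pi$-invariant subspace lies in $\mathscr{T}(\sigma)$ (closure property (iv) of \cref{tannakacat}), so the inclusion is a morphism and the naturality square for $\eta \in H \subset \check{\sigma}$ forces $\eta_\pi(E) \subset E$, while the converse uses only $\pi_t = (c_\sigma(t))_\pi$ with $c_\sigma(G) \subset H$, and (ii) is a formal consequence of (i). Your side remark that neither closedness of $H$ nor density of $c_\sigma(G)$ is needed is also accurate.
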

\begin{proof}
	Part (ii) follows directly from part (i). To prove (i), take a subspace $E \subset E_\pi$. If $E$ is $\pi$-invariant, then the inclusion map $i \colon E \rightarrow E_\pi$ defines a morphism from the restricted representation $\pi|_E$ to the representation $\pi$. Thus, if $\eta\in H \subset \check{\sigma}$, the diagram
		\[
			\xymatrix{
					E \ar[d]_{\eta_{\pi|_E}} \ar[r]^{i} & E_\pi \ar[d]^{\eta_{\pi}} \\
					E \ar[r]^{i}& E_\pi
			}
		\]	
	commutes by the definition of natural transformations. This shows $\eta_{\pi}(E) \subset E$. We conclude that $E$ is $\pi_H$-invariant.
	
	Assume conversely that $E$ is $\pi_H$-invariant. Then, in particular, $\pi_tE = c_\sigma(t)E\subset E$ for every $t \in G$. 
\end{proof}

With the notation of \cref{groupcompmap} and \cref{equivirred} we also obtain the following.
\begin{lemma}\label{inversebij}
	Let $\sigma \subset \hat{G}$ be a grouplike subset and $H \subset \check{\sigma}$ a closed subgroup containing $c_\sigma(G)$. Then the maps
		\begin{align*}
			\delta & \colon \sigma \rightarrow \hat{H}, \quad [\pi] \mapsto [\pi_H], \\
			c_\sigma^*&\colon \hat{H} \rightarrow \sigma, \quad [\pi] \mapsto [\pi \circ c_\sigma]
		\end{align*}
	are mutually inverse bijections.
\end{lemma}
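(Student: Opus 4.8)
The plan is to show that $c_\sigma^*$ is a two-sided inverse of $\delta$; the only substantial point is surjectivity of $\delta$.

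First I would record that $\delta$ is well defined: for $[\pi]\in\sigma$ we may take $\pi$ to act on some $\C^n$, so $\pi\in p^{-1}(\sigma)\subset\mathscr{T}(\sigma)$, and since $\pi$ is irreducible, $\pi_H$ is irreducible by \cref{equivirred}(ii); if $U\colon\pi\to\pi'$ is an isomorphism of $G$-representations, then $U$ is a morphism in $\mathscr{T}(\sigma)$ and naturality of every $\eta\in\check\sigma$ gives $\eta_{\pi'}\circ U=U\circ\eta_\pi$, so $U$ intertwines $\pi_H$ and $\pi'_H$ as well. Next, the composite $c_\sigma^*\circ\delta$ is immediate: for $\pi\in\mathscr{T}(\sigma)$ and $t\in G$ one has $(\pi_H\circ c_\sigma)(t)=(c_\sigma(t))_\pi=\pi_t$ by \cref{groupcompmap}, so $\pi_H\circ c_\sigma=\pi$ as $G$-representations and hence $c_\sigma^*(\delta([\pi]))=[\pi]$ for every $[\pi]\in\sigma$. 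Thus $c_\sigma^*\circ\delta=\mathrm{id}_\sigma$, so $\delta$ is injective; once $\delta$ is also shown to be surjective, it follows that $c_\sigma^*=\delta^{-1}$ (in particular $c_\sigma^*$ is well defined and maps into $\sigma$) and that the two maps are mutually inverse bijections.

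For surjectivity of $\delta$ I would use Peter--Weyl theory on the compact group $H$ (a closed subgroup of the compact group $\check\sigma$, see \cref{tannakagroup}). By \cref{tannakagroup} each $\pi_H$ is a unitary representation of $H$ for the chosen invariant inner product $(\,\cdot\mid\cdot\,)_\pi$ on $E_\pi$. Let $\mathscr A\subset\mathrm{C}(H)$ be the linear span of all matrix coefficients $\eta\mapsto(\eta_\pi x\mid y)_\pi$ with $\pi\in\mathscr{T}(\sigma)$ and $x,y\in E_\pi$. Since $\mathscr{T}(\sigma)$ is closed under $\oplus$, $\otimes$ and complex conjugation and contains $\mathbbm{1}$, and since $\eta_{\pi\otimes\varrho}=\eta_\pi\otimes\eta_\varrho$, $\eta_{\overline\pi}=\eta_\pi$, $\eta_{\mathbbm{1}}=\mathrm{Id}_\C$ and \cref{sums} holds, one checks that $\mathscr A$ is a unital, conjugation-closed subalgebra of $\mathrm{C}(H)$ that separates points of $H$ (distinct Tannaka transformations differ in some coordinate $\pi$). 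By Stone--Weierstrass, $\mathscr A$ is dense in $\mathrm{C}(H)$, hence in $\mathrm{L}^2(H)$. Writing $\mathcal E_\tau\subset\mathrm{C}(H)$ for the span of the matrix coefficients of an irreducible representation $\tau$ of $H$, the matrix coefficients of a single $\pi_H$ span the sum of the $\mathcal E_\tau$ over the irreducible constituents of $\pi_H$, so $\mathscr A=\bigoplus_{[\tau]\in S}\mathcal E_\tau$, where $S\subset\hat H$ is the set of classes occurring in some $\pi_H$ with $\pi\in\mathscr{T}(\sigma)$; since the $\mathcal E_\tau$ are pairwise orthogonal in $\mathrm{L}^2(H)$ with $\bigoplus_{[\tau]\in\hat H}\mathcal E_\tau$ dense (Peter--Weyl), density of $\mathscr A$ forces $S=\hat H$. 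Hence every $[\varrho]\in\hat H$ occurs as an irreducible constituent of $\pi_H$ for some $\pi\in\mathscr{T}(\sigma)$.

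To conclude, given $[\varrho]\in\hat H$, pick such a $\pi$ and decompose $\pi=\pi^1\oplus\dots\oplus\pi^n$ into irreducible $G$-subrepresentations (\cref{irreddecomp}). Each $\pi^j$ lies in $\mathscr{T}(\sigma)$ by property (iv) of \cref{tannakacat} and is irreducible, so $[\pi^j]\in\sigma$ by \cref{irreduciblescontained}; moreover $\pi_H=\pi^1_H\oplus\dots\oplus\pi^n_H$ by \cref{sums}, with each $\pi^j_H$ irreducible by \cref{equivirred}(ii). Therefore $\varrho$, being an irreducible constituent of $\pi_H$, is isomorphic to some $\pi^{j_0}_H$, i.e.\ $[\varrho]=\delta([\pi^{j_0}])$ with $[\pi^{j_0}]\in\sigma$; thus $\delta$ is surjective and, together with the first paragraph, the lemma follows. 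I expect the Peter--Weyl step to be the main obstacle: one must argue that the dense, translation-invariant $*$-subalgebra $\mathscr A$ is assembled from full isotypic matrix-coefficient spaces and so meets every irreducible class of $H$. The remaining steps are routine bookkeeping with \cref{tannakacat} and \cref{equivirred}.
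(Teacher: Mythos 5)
Your proof is correct, and while the bookkeeping (well-definedness of $\delta$, the identity $\pi_H\circ c_\sigma=\pi$ giving $c_\sigma^*\circ\delta=\mathrm{id}_\sigma$ and hence, once surjectivity is known, the mutual-inverse statement) coincides with the paper's, your surjectivity argument takes a genuinely different route. The paper disposes of surjectivity by invoking \cref{pointseparating}: it checks that for every $\mathrm{id}\neq\eta\in H$ some irreducible $\pi$ with $[\pi]\in\sigma$ has $(\pi_H)_\eta\neq\mathrm{id}$ (reducing to irreducibles via \cref{irreddecomp} and \cref{sums}), notes that $\delta(\sigma)$ is grouplike, and concludes $\delta(\sigma)=\hat H$ from that cited Hewitt--Ross theorem. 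You instead unfold the Peter--Weyl/Stone--Weierstrass mechanism directly: the matrix coefficients of the representations $\pi_H$, $\pi\in\mathscr{T}(\sigma)$, form a unital, conjugation-closed, point-separating subalgebra of $\mathrm{C}(H)$ (using that $\mathscr{T}(\sigma)$ is closed under $\otimes$, $\overline{\phantom{\pi}}$ and contains $\mathbbm{1}$, and that Tannaka transformations respect these operations), so density plus Schur orthogonality of the isotypic coefficient spaces forces every class of $\hat H$ to occur in some $\pi_H$, after which \cref{irreddecomp}, \cref{tannakacat}(iv), \cref{irreduciblescontained} and \cref{equivirred} identify it as some $\delta([\pi^{j}])$. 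In effect you reprove the relevant special case of \cref{pointseparating} rather than cite it; this costs you the Stone--Weierstrass/orthogonality argument but spares you the verification that $\delta(\sigma)$ is grouplike (which the paper leaves to ``a moment's thought''), and it stays self-contained within the tools already established (\cref{tannakagroup}, \cref{prepgroup}, classical Peter--Weyl for the compact group $H$), with no circularity since \cref{isgroupcomp} is not used. The paper's route is shorter given that \cref{pointseparating} is already on record; yours makes the harmonic-analytic content of the step explicit.
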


\begin{proof}
	In view of \cref{equivirred} we obtain an injective mapping	
		\begin{align*}
			\tilde{\delta}\colon \{\pi \in \mathrm{Irred}(G)\mid [\pi] \in \sigma\} \rightarrow \mathrm{Irred}(H), \quad \pi \mapsto \pi_H.
		\end{align*}
	By the properties of natural transformations, two irreducible representations $\pi$ and $\varrho$ of $G$ with $[\pi], [\varrho] \in \sigma$ are isomorphic if and only if the induced representations $\pi_H$ and $\varrho_H$ of $H$ are isomorphic. Therefore, $\tilde{\delta}$ induces an injective map
		\begin{align*}
			\delta &\colon \sigma \rightarrow \hat{H}, \quad [\pi] \mapsto [\pi_H].
		\end{align*}
	
	To show that $\delta$ is surjective, we apply \cref{pointseparating}. Take $\eta \in H$ with $\eta \neq \mathrm{id}$. Thus, there is $\pi \in \mathscr{T}(\sigma)$ with $\eta_\pi \neq \mathrm{id}_{E_\pi}$, i.e., $(\pi_H)_\eta \neq \mathrm{id}_{E_\pi}$. Using \cref{irreddecomp} and \cref{sums} we may assume that $\pi$ is irreducible,  hence $[\pi] \in \sigma$ by \cref{irreduciblescontained}. A moment's thought reveals that the image $\delta(\sigma) \subset \hat{H}$ is grouplike, hence \cref{pointseparating} shows $\delta(\sigma) = \hat{H}$.
	
	Now observe that the map
		\begin{align*}
			c_\sigma^*\colon \hat{H} \rightarrow \hat{G}, \quad [\pi] \mapsto [\pi \circ c_\sigma]
		\end{align*}
	is well-defined. Moreover, if $[\pi] \in \hat{H}$ we find $[\tau] \in \sigma$ with $[\pi] = \delta([\tau]) = [\tau_H]$. But since $\tau_H \circ c_\sigma = \tau$, we conclude that $c_\sigma^*([\pi]) \in \sigma$.  This argument also shows that $c_\sigma^*$ and $\delta$ are inverse to each other.
\end{proof}

We now prove \cref{isgroupcomp} similarly to \cite[Theorem 20]{JoSt1991}.

\begin{proof}[Proof of \cref{isgroupcomp}]
	We choose a set $R$ of representatives for the elements of $\sigma$ and for every $\pi \in R$ an invariant inner product $(\,\cdot\mid\cdot\,)_\pi$ as well as an orthonormal basis $e_1, \dots, e_{n_{\pi}}$ wih respect to $(\,\cdot\mid\cdot\,)_\pi$. By \cref{inversebij} applied to $H = \check{\sigma}$ and the Peter-Weyl theorem (see, e.g., \cite[Theorem 5.12]{Foll2016}), the weighted matrix coefficients
		\begin{align*}
			\alpha_{ij}^\pi\colon \check{\sigma} \rightarrow \C, \quad \eta \mapsto \sqrt{n_{\pi}} \cdot (\eta_{\pi} e_i|e_j)_\pi
		\end{align*}
	for $i,j \in \{1, \dots, n_{\pi}\}$ and $\pi \in R$ define an orthonormal basis of the Hilbert space $\mathrm{L}^2(\check{\sigma})$ (where $\check{\sigma}$ is equipped with the Haar measure) and span a dense subset of $\mathrm{C}(\check{\sigma})$. Similarly, applying \cref{inversebij} to $H= \overline{c_\sigma(G)}$ and using the Peter-Weyl theorem shows that the matrix coefficients
		\begin{align*}
			\beta_{ij}^\pi \colon \overline{c_\sigma(G)} \rightarrow \C, \quad \eta \mapsto \sqrt{n_{\pi}} \cdot (\eta_{\pi} e_i|e_j)_\pi
		\end{align*}
	for $i,j \in \{1, \dots, n_{\pi}\}$ and $\pi \in R$ define an orthonormal basis of the Hilbert space $\mathrm{L}^2(\overline{c_\sigma(G)})$ and span a dense subset of $\mathrm{C}(\overline{c_\sigma(G)})$. Since the restriction mapping
		\begin{align*}
			\mathrm{C}(\check{\sigma}) \rightarrow \mathrm{C}(\overline{c_\sigma(G)}), \quad  f\mapsto f|_{\overline{c_\sigma(G)}}
		\end{align*}
	sends $\alpha_{ij}^\pi$ to $\beta_{ij}^\pi$ for $i,j \in \{1, \dots, n_\pi\}$ and $\pi \in R$, it defines an isometry with respect to the $\mathrm{L}^2$-norms. In particular, it is injective. Thus, $\overline{c_\sigma(G)} = \check{\sigma}$.
\end{proof}

The construction of group compactifications from grouplike subsets of $\hat{G}$ is compatible with morphisms: If $\sigma_1, \sigma_2 \subset \hat{G}$ are grouplike subsets with $\sigma_1 \subset \sigma_2$, then also $\mathscr{T}(\sigma_1) \subset \mathscr{T}(\sigma_2)$ and the restriction map
	\begin{align*}
		\check{\sigma_2} \rightarrow \check{\sigma_1}, \quad \eta \mapsto \eta|_{\mathscr{T}(\sigma_1)}
	\end{align*}
defines a morphism of group compactifications. We therefore obtain a functor.

\begin{construction}\label{tannakafunct}
	Define a functor $\mathrm{Tan}\colon \mathbf{GrpLike}(\hat{G})^{\mathrm{op}} \rightarrow \mathbf{Comp}(G)$ by setting
		\begin{itemize}
			\item $\mathrm{Tan}(\sigma) \coloneqq \check{\sigma}$ for every grouplike subset $\sigma \subset \hat{G}$, and
			\item $\mathrm{Tan}(i_{\sigma_1,\sigma_2}) \colon \check{\sigma_2} \rightarrow \check{\sigma_1}, \, \eta \mapsto \eta|_{\mathscr{T}(\sigma_1)}$ for two grouplike subsets $\sigma_1, \sigma_2 \subset \hat{G}$ with $\sigma_1 \subset \sigma_2$.
		\end{itemize}
	We call this the \emph{Tannaka functor}.
\end{construction}

\subsection{Tannaka-Krein duality for group compactifications}\label{scatgen4}
We now show that   
	\begin{itemize}
		\item the representations functor $\mathrm{Rep}\colon \mathbf{Comp}(G) \rightarrow \mathbf{GrpLike}(\hat{G})^{\mathrm{op}}$ from \cref{represfunct}, and 
		\item the Tannaka functor $\mathrm{Tan}\colon \mathbf{GrpLike}(\hat{G})^{\mathrm{op}} \rightarrow\mathbf{Comp}(G)$ from \cref{tannakafunct}
	\end{itemize}
are essentially inverse to each other and thereby extend the categorical equivalence of \cref{hvncat1} (ii) to the case of a general (not necessarily abelian) group $G$.

To contruct the corresponding natural isomorphisms we need the following lemma. Here, $\mathscr{T}(\hat{H})$ denotes the Tannaka category of the whole dual $\hat{H}$ with respect to the compact group $H$, i.e., the smallest subcategory of $\mathbf{FinRep}(H)$ containing $\mathrm{Irred}(H)$ which is closed with respect to taking direct sums, tensor products, complex conjugate representations and subrepresentations.

\begin{lemma}\label{pullbacktannaka}
	Let $(H,c)$ be a group compactification. Then 
		\begin{align*}
			c^*\colon \mathscr{T}(\hat{H}) \rightarrow \mathscr{T}(c^*\hat{H}), \quad \pi \mapsto \pi \circ c
		\end{align*}
	is a bijection such that 
		\begin{enumerate}[(i)]
			\item $c^*(\pi \oplus \varrho) = c^*(\pi) \oplus c^*(\varrho)$ for all $\pi,\varrho \in \mathscr{T}(\hat{H})$, 
			\item $c^*(\pi \otimes \varrho) = c^*(\pi) \otimes c^*(\varrho)$ for all $\pi,\varrho \in \mathscr{T}(\hat{H})$, 
			\item $c^*(\overline{\pi}) = \overline{c^*(\pi)}$ for all $\pi \in \mathscr{T}(\hat{H})$, 
			\item given $\pi \in \mathscr{T}(\hat{H})$, a subspace $E \subset E_\pi$ is $\pi$-invariant if and only it is $c^*(\pi)$-invariant, and in that case $c^*(\pi|_E) = c^*(\pi)|_E$, and
			\item $c^*(\mathbbm{1}) = \mathbbm{1}$.
		\end{enumerate}
\end{lemma}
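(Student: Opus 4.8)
The plan is to isolate one elementary consequence of the density of $c(G)$ in $H$ and then to deduce the bijectivity from two \enquote{minimality} arguments, one on the side of $H$ and one on the side of $G$. Properties (i), (ii), (iii) and (v) need no argument beyond the definitions: the constructions $\oplus$, $\otimes$, $\overline{\,\cdot\,}$ (see \cref{complconj}) and the trivial representation $\mathbbm{1}$ are built purely from the group action, while pullback along $c$ acts group-element-by-group-element and does not change underlying vector spaces; hence $(\pi\oplus\varrho)\circ c = (\pi\circ c)\oplus(\varrho\circ c)$, $(\pi\otimes\varrho)\circ c=(\pi\circ c)\otimes(\varrho\circ c)$, $\overline{\pi}\circ c=\overline{\pi\circ c}$ and $\mathbbm{1}\circ c=\mathbbm{1}$ hold literally.

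Next I would prove the \enquote{density lemma}, which is precisely (iv) together with its usual consequences. For $\pi\in\mathbf{FinRep}(H)$ and a subspace $E\subset E_\pi$, $\pi$-invariance trivially gives $(\pi\circ c)$-invariance; conversely $\{h\in H\mid \pi_hE\subset E\}$ is a closed subset of $H$ (as $E$ is finite-dimensional and $h\mapsto\pi_h$ is continuous) that contains $c(G)$, hence equals $\overline{c(G)}=H$ (see \cref{topexample}), so $E$ is $\pi$-invariant; then $c^*(\pi|_E)=(\pi|_E)\circ c=(\pi\circ c)|_E=c^*(\pi)|_E$. Applying the same argument to the graph of a linear map shows that $U\in\mathscr{L}(E_\pi,E_\varrho)$ is a morphism $\pi\to\varrho$ if and only if it is a morphism $\pi\circ c\to\varrho\circ c$; in particular $\pi$ is irreducible precisely when $\pi\circ c$ is, and $\pi\cong\varrho$ precisely when $\pi\circ c\cong\varrho\circ c$, so $c^*$ will be fully faithful. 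Then I would check that $c^*$ maps $\mathscr{T}(\hat H)$ into $\mathscr{T}(c^*\hat H)$ and is injective. For the first point, the class $\{\pi\in\mathbf{FinRep}(H)\mid \pi\circ c\in\mathscr{T}(c^*\hat H)\}$ contains every irreducible representation $\pi$ of $H$ — indeed $\pi\circ c$ is then irreducible with $[\pi\circ c]=c^*[\pi]\in c^*\hat H$, so $\pi\circ c\in p^{-1}(c^*\hat H)\subset\mathscr{T}(c^*\hat H)$ — and it is closed under $\oplus$, $\otimes$, $\overline{\,\cdot\,}$ and subrepresentations by (i)--(iv); by minimality of $\mathscr{T}(\hat H)$ it therefore contains all of $\mathscr{T}(\hat H)$. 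Injectivity on objects is again the density lemma: if $\pi\circ c=\varrho\circ c$, then $\pi$ and $\varrho$ share the underlying space and agree on $c(G)$, hence on $H$.

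The main step, and the only one I expect to require care, is surjectivity of $c^*$ onto $\mathscr{T}(c^*\hat H)$; the subtle point is that an element of $p^{-1}(c^*\hat H)$ is a priori only \emph{isomorphic} to a pullback from $H$. To remedy this, given an irreducible $\tau\colon G\to\mathscr{L}(\C^n)$ with $[\tau]\in c^*\hat H$, I would choose an irreducible $\pi'$ of $H$ and an isomorphism $U$ with $U\tau_t=\pi'_{c(t)}U$ for all $t$, and put $\pi_h\coloneqq U^{-1}\pi'_hU$; then $\pi\in\mathrm{Irred}(H)\subset\mathscr{T}(\hat H)$ and $\pi\circ c=\tau$ exactly, so $\tau$ lies in the image $\mathscr{B}\coloneqq c^*(\mathscr{T}(\hat H))$. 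Using (i)--(iv) and closure of $\mathscr{T}(\hat H)$ under the four operations, $\mathscr{B}$ is itself closed under $\oplus$, $\otimes$, $\overline{\,\cdot\,}$ and subrepresentations — for the last one, (iv) lifts a $(\pi\circ c)$-invariant subspace of $E_\pi$ to a $\pi$-invariant one, whose restriction pulls back correctly. Since $\mathscr{B}\supset p^{-1}(c^*\hat H)$ and is closed under all four operations, minimality of $\mathscr{T}(c^*\hat H)$ (see \cref{tannakacat}) forces $\mathscr{T}(c^*\hat H)\subset\mathscr{B}$; with the previous paragraph this gives $c^*(\mathscr{T}(\hat H))=\mathscr{T}(c^*\hat H)$, and together with injectivity, $c^*$ is the claimed bijection. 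The transport-of-structure trick upgrading \enquote{isomorphic to a pullback} to \enquote{a pullback}, and the bookkeeping in this last minimality argument, are the only delicate points; everything else reduces to the density lemma and formal manipulation. This lemma then feeds the construction of the natural isomorphisms relating $\mathrm{Rep}$ and $\mathrm{Tan}$ in \cref{scatgen4}.
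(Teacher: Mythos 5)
Your proposal is correct and follows essentially the same route as the paper: the formal identities (i)--(v), the transport-of-structure trick $\pi_h \coloneqq U^{-1}\pi'_h U$ to turn \enquote{isomorphic to a pullback} into an exact pullback, two minimality arguments (for the sets you call $\mathscr{B}$ and the class of $\pi$ with $\pi\circ c\in\mathscr{T}(c^*\hat H)$, which are the paper's $\mathscr{T}$ and $\mathscr{S}$), and injectivity via density of $c(G)$ in $H$. You merely spell out more carefully the density argument behind (iv) (and the morphism-level consequence, which is not needed), where the paper declares these assertions obvious.
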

\begin{proof}
	For every $\pi \in \mathscr{T}(\hat{H})$ we obtain that $c^*(\pi) \coloneqq \pi \circ c\colon G \rightarrow E_\pi$ is a finite-dimensional representation of $G$. Moreover, assertions (i) -- (v) are obvious.
	In particular, the set
		\begin{align*}
			\mathscr{T} \coloneqq \{c^*(\pi)\mid \pi \in \mathscr{T}(\hat{H})\}
		\end{align*}
	satisfies conditions (i) -- (iv) of \cref{tannakacat}. Now take an irreducible representation $\pi \colon G \rightarrow \mathscr{L}(E_{\pi})$ in $\mathrm{Irred}(G)$ with $[\pi] \in c^*(\hat{H})$ and show that $\pi \in \mathscr{T}$. By assumption, we find an irreducible representation $\varrho \colon H \rightarrow \mathscr{L}(E_{\varrho})$ in $\mathrm{Irred}(H)$ and an isomorphism $U \colon \pi \rightarrow \varrho \circ c$ of finite-dimensional representations of $G$. We then obtain a new irreducible representation
		\begin{align*}
			\tau \colon H \rightarrow \mathscr{L}(E_{\pi}), \quad x \mapsto U^{-1}\varrho(x)U
		\end{align*}
	of $H$ and this satisfies $\pi = \tau \circ c = c^*(\tau) \in  \mathscr{T}$ as desired. By definition of $\mathscr{T}(c^*\hat{H})$, we conclude that $\mathscr{T}(c^*\hat{H})\subset \mathscr{T}$. 
	
	On the other hand, the set
		\begin{align*}
			\mathscr{S} \coloneqq \{\pi \in \mathscr{T}(\hat{H})\mid c^*(\pi)\in \mathscr{T}(c^*\hat{H})\}
		\end{align*}
	contains every irreducible representation $\pi \colon H \rightarrow \mathscr{L}(E_{\pi})$ in $\mathrm{Irred}(H)$ and satisfies conditions (i) -- (iv) of \cref{tannakacat}. By definition of $\mathscr{T}(\hat{H})$, we obtain $\mathscr{S} = \mathscr{T}(\hat{H})$. As a consequence, 
		\begin{align*}
			c^*\colon \mathscr{T}(\hat{H}) \rightarrow \mathscr{T}(c^*\hat{H}), \quad \pi \mapsto c^*(\pi) = \pi \circ c
		\end{align*}
	is a surjective map satisfying (i) -- (iv). Since $H$ is dense in $G$, the mapping $c^*$ is also injective and therefore a bijection. 
\end{proof}

We now prove the analogues of \cref{dualiso1} and \cref{dualiso2} establishing the desired categorical equivalence in our more general framework. With the notation of \cref{pullbacktannaka} we obtain the following.

\begin{proposition}\label{natisotannaka1}
	For a group compactification $(H,c)$ we obtain an isomorphism of group compactifications
		\begin{align*}
			\gamma_{(H,c)} \colon (H,c) \rightarrow \mathrm{Tan}(\mathrm{Rep}(H,c))
		\end{align*}
	by setting $(\gamma_{(H,c)}(x))_\pi \coloneqq ((c^*)^{-1}(\pi))_x$ for $\pi \in \mathscr{T}(c^*\hat{H})$ and $x \in H$. Moreover, $\gamma$ defines a natural isomorphism 
		\begin{align*}
			\gamma \colon \mathrm{Id}_{\mathrm{Comp}(G)} \rightarrow \mathrm{Tan} \circ \mathrm{Rep}.
		\end{align*}
\end{proposition}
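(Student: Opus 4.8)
The guiding idea is that $\gamma_{(H,c)}$ is, up to the identification furnished by \cref{pullbacktannaka}, the canonical isomorphism $H \cong \check{\hat{H}}$ expressing Tannaka--Krein duality for the compact group $H$; I would argue directly rather than isolating that fact. Write $\sigma \coloneqq \mathrm{Rep}(H,c) = c^*\hat{H}$, so that $\mathrm{Tan}(\mathrm{Rep}(H,c)) = \check{\sigma}$. First I would upgrade \cref{pullbacktannaka} to a statement about morphisms: since $c$ has dense range and all representations in sight are continuous, a linear map $U \colon E_\varrho \to E_{\varrho'}$ intertwines $\varrho\circ c$ and $\varrho'\circ c$ if and only if it intertwines $\varrho$ and $\varrho'$. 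Combined with the bijectivity and the compatibility with $\oplus$, $\otimes$, complex conjugation and $\mathbbm{1}$ from \cref{pullbacktannaka}, and the tautological equality $E_{\varrho\circ c} = E_\varrho$, this shows that $c^*$ is an isomorphism of categories $\mathscr{T}(\hat{H}) \to \mathscr{T}(\sigma)$ commuting with the forgetful functors to $\mathbf{FinVect}$. Consequently, for $x \in H$ the family $\eta^x \coloneqq \bigl( \pi \mapsto ((c^*)^{-1}(\pi))_x \bigr)_{\pi \in \mathscr{T}(\sigma)}$ is a natural transformation of the forgetful functor $\mathrm{For}$ (naturality transfers across the category isomorphism), and it is tensor-preserving, self-adjoint and unital because $(c^*)^{-1}$ respects $\otimes$, complex conjugation and $\mathbbm{1}$; hence $\eta^x \in \check{\sigma}$ and $\gamma_{(H,c)}$ is well-defined.

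Next I would check that $\gamma_{(H,c)} \colon H \to \check{\sigma}$ is a morphism of group compactifications and in fact an isomorphism. It is a group homomorphism because composition of natural transformations is computed componentwise and each $(c^*)^{-1}(\pi)$ is a homomorphism into $\mathscr{L}(E_\pi)$; it is continuous because each component $x \mapsto ((c^*)^{-1}(\pi))_x$ is continuous and $\check{\sigma}$ carries the subspace topology of $\prod_{\pi \in \mathscr{T}(\sigma)} \mathscr{L}(E_\pi)$; and $\gamma_{(H,c)} \circ c = c_\sigma$ holds by \cref{groupcompmap}, since for $t \in G$ and $\pi = \varrho\circ c \in \mathscr{T}(\sigma)$ one has $(\eta^{c(t)})_\pi = \varrho_{c(t)} = \pi_t$. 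Hence $\gamma_{(H,c)}$ is a morphism of group compactifications, so surjective by \cref{morphsurj}. It is injective because $\eta^x = \mathrm{id}$ forces $\varrho_x = \mathrm{id}_{E_\varrho}$ for every irreducible representation $\varrho$ of $H$, and these separate the points of the compact group $H$ by the Peter--Weyl theorem. A continuous bijective homomorphism of compact groups is an isomorphism of topological groups, and its inverse visibly satisfies $\gamma_{(H,c)}^{-1} \circ c_\sigma = c$; so $\gamma_{(H,c)}$ is an isomorphism in $\mathbf{Comp}(G)$.

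Finally I would verify naturality. Given a morphism $\Phi \colon (H_1,c_1) \to (H_2,c_2)$, which is surjective by \cref{morphsurj}, one has $c_2^*\hat{H_2} \subseteq c_1^*\hat{H_1}$ and hence $\mathscr{T}(c_2^*\hat{H_2}) \subseteq \mathscr{T}(c_1^*\hat{H_1})$, so that $\mathrm{Tan}(\mathrm{Rep}(\Phi))$ is the restriction map $\eta \mapsto \eta|_{\mathscr{T}(c_2^*\hat{H_2})}$ by \cref{represfunct} and \cref{tannakafunct}. For $x \in H_1$ and $\pi \in \mathscr{T}(c_2^*\hat{H_2})$, putting $\varrho \coloneqq (c_1^*)^{-1}(\pi)$ and $\varrho' \coloneqq (c_2^*)^{-1}(\pi)$, the identity $\varrho\circ c_1 = \pi = \varrho'\circ c_2 = (\varrho'\circ\Phi)\circ c_1$ together with density of $c_1(G)$ in $H_1$ forces $\varrho = \varrho'\circ\Phi$, whence $(\mathrm{Tan}(\mathrm{Rep}(\Phi))\,\gamma_{(H_1,c_1)}(x))_\pi = \varrho_x = \varrho'_{\Phi(x)} = (\gamma_{(H_2,c_2)}(\Phi(x)))_\pi$. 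Thus the naturality square commutes, and since every $\gamma_{(H,c)}$ is an isomorphism, $\gamma$ is a natural isomorphism.

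I do not expect a real obstacle here: given the preparatory \cref{pullbacktannaka}, the whole argument is bookkeeping across the identification $c^*$. The one point needing a little care is the very first step, namely verifying that $\gamma_{(H,c)}(x)$ is a genuine natural transformation of $\mathrm{For}$ rather than merely a coherent family of operators --- this is exactly where density of $c(G)$ in $H$ enters --- after which every remaining claim follows from results already established.
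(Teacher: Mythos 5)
Your proposal is correct and follows essentially the same route as the paper's proof: well-definedness of $\gamma_{(H,c)}(x)$ as a Tannaka transformation via \cref{pullbacktannaka}, surjectivity from \cref{morphsurj}, injectivity from point-separation by finite-dimensional representations of the compact group $H$, and the same naturality computation based on $(c_1^*)^{-1}(\pi) = (c_2^*)^{-1}(\pi)\circ\Phi$. You merely spell out details the paper leaves implicit (that $c^*$ also identifies morphisms, the homomorphism/continuity checks, and the compactness argument for the inverse), which is sound bookkeeping rather than a different argument.
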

\begin{proof}
	Take a group compactification $(H,c)$. Using \cref{pullbacktannaka} we obtain that for $x \in H$ the map $\gamma_{(H,c)}(x)$ is well-defined and a Tannaka transformation. It is clear, that $\gamma_{(H,c)}$ is a morphism of group compactifications and in particular a surjective map (see \cref{morphsurj}). On the other hand, since the finite-dimensional representations of a compact group separate its points (see, e.g., \cite[Theorem 3.34]{Foll2016}), we obtain that $\gamma_{(H,c)}$ is injective and hence an isomorphism of group compactifications. 
	
	To see that $\gamma$ defines a natural isomorphism, take a morphism $\Phi \colon (H_1, c_1) \rightarrow (H_2,c_2)$ of group compactifications of $G$. We abbreviate the induced restriction map
	\begin{align*}
		\mathrm{Tan}(\mathrm{Rep}(\Phi)) \colon \mathrm{Tan}(\mathrm{Rep}(H_1,c_1)) \rightarrow \mathrm{Tan}(\mathrm{Rep}(H_2,c_2))
	\end{align*}		
	by $r$ and then have to show that $r\gamma_{(H_1,c_1)} = \gamma_{(H_2,c_2)}\Phi$. Let $x \in H_1$ and $\pi \in \mathscr{T}(c_2^*\hat{H_2})$. Observing that $\tau \circ \Phi \in \mathscr{T}(\hat{H_1})$ for every $\tau \in \mathscr{T}(\hat{H_2})$, we obtain $(c_1^*)^{-1}(\pi) = (c_2^*)^{-1}(\pi) \circ \Phi$ and thus
		\begin{align*}
			(r\gamma_{(H_1,c_1)}(x))_\pi &= ((c_1^*)^{-1}(\pi))_{x} = ((c_2^*)^{-1}(\pi))_{\Phi(x)} = (\gamma_{(H_2,c_2)}(\Phi(x)))_{\pi}  \\
			&= ((\gamma_{(H_2,c_2)}\Phi)(x))_{\pi}.
		\end{align*}
	This shows the desired equality.
\end{proof}

\begin{proposition}\label{natisotannaka2}
	For a grouplike subset $\sigma \subset \hat{G}$ we obtain $\sigma = \mathrm{Rep}(\mathrm{Tan}(\sigma))$. In particular, the identity on every grouplike subset defines a natural isomorphism 
		\begin{align*}
			\gamma \colon \mathrm{Id}_{\mathbf{GrpLike}(\hat{G})^{\mathrm{op}}} \rightarrow \mathrm{Rep} \circ \mathrm{Tan}.
		\end{align*}
\end{proposition}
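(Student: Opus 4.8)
The plan is to derive both assertions directly from \cref{inversebij}. Applying that lemma to the closed subgroup $H = \check{\sigma}$ of $\check{\sigma}$ itself --- which trivially contains $c_\sigma(G)$ --- shows that the map
\begin{align*}
	c_\sigma^* \colon \widehat{\check{\sigma}} \to \sigma, \quad [\pi] \mapsto [\pi \circ c_\sigma]
\end{align*}
is a bijection. By \cref{isgroupcomp} the pair $(\check{\sigma}, c_\sigma)$ is a group compactification of $G$, and by the definition of the representations functor (see \cref{represfunct}) we have $\mathrm{Rep}(\check{\sigma}, c_\sigma) = c_\sigma^*\widehat{\check{\sigma}}$. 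Surjectivity of $c_\sigma^*$ therefore gives $\mathrm{Rep}(\mathrm{Tan}(\sigma)) = \mathrm{Rep}(\check{\sigma}, c_\sigma) = \sigma$, which is the first assertion.

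For the statement about the natural isomorphism, the component at a grouplike subset $\sigma$ is the identity morphism $\gamma_\sigma = \mathrm{id}_\sigma \colon \sigma \to \mathrm{Rep}(\mathrm{Tan}(\sigma)) = \sigma$, which is in particular an isomorphism in $\mathbf{GrpLike}(\hat{G})^{\mathrm{op}}$. Since there is at most one morphism between any two objects of $\mathbf{GrpLike}(\hat{G})^{\mathrm{op}}$, all naturality squares commute automatically, so $\gamma$ is a natural isomorphism. If one wishes to check naturality explicitly, note that for grouplike subsets $\sigma_1 \subset \sigma_2$ the morphism $\mathrm{Tan}(i_{\sigma_1,\sigma_2})$ is the restriction map $\check{\sigma_2} \to \check{\sigma_1}, \, \eta \mapsto \eta|_{\mathscr{T}(\sigma_1)}$, and applying $\mathrm{Rep}$ to it yields, by \cref{represfunct} together with the equality $c_{\sigma_j}^*\widehat{\check{\sigma_j}} = \sigma_j$ just established, exactly the inclusion $i_{\sigma_1,\sigma_2}$; hence $\gamma_{\sigma_1} \circ i_{\sigma_1,\sigma_2} = \mathrm{Rep}(\mathrm{Tan}(i_{\sigma_1,\sigma_2})) \circ \gamma_{\sigma_2}$.

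I do not expect any serious obstacle here: the substantive content --- that $(\check{\sigma}, c_\sigma)$ is a group compactification and that $c_\sigma^*$ is a bijection onto $\sigma$ --- is already packaged in \cref{isgroupcomp} and \cref{inversebij}. The only points requiring a moment's care are verifying that $\check{\sigma}$ genuinely meets the hypotheses of \cref{inversebij} (it does, being a closed subgroup of itself that contains $c_\sigma(G)$) and keeping track of the variance of $\mathrm{Rep}$ and $\mathrm{Tan}$ when identifying $\mathrm{Rep}(\mathrm{Tan}(i_{\sigma_1,\sigma_2}))$ with $i_{\sigma_1,\sigma_2}$.
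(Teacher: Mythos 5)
Your proposal is correct and matches the paper's argument, which simply cites \cref{inversebij} (applied with $H = \check{\sigma}$) as giving the result directly; you have merely spelled out the details, including the observation that naturality is automatic since $\mathbf{GrpLike}(\hat{G})^{\mathrm{op}}$ has at most one morphism between any two objects.
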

\begin{proof}
	This is a direct consequence of \cref{inversebij}.
\end{proof}

Combining \cref{natisotannaka1} and \cref{natisotannaka2}, we obtain the following Tannaka-Krein duality theorem for group compactifications of a general topological group $G$.

\begin{theorem}\label{grpcompvsgrplike}
	The functors 
		\begin{align*}
			\mathrm{Rep}&\colon \mathbf{Comp}(G) \rightarrow \mathbf{GrpLike}(\hat{G})^{\mathrm{op}}, \\
			\mathrm{Tan}&\colon  \mathbf{GrpLike}(\hat{G})^{\mathrm{op}} \rightarrow \mathbf{Comp}(G)
		\end{align*}
	establish an equivalence between the category of group compactifications and the opposite category of grouplike subsets.
\end{theorem}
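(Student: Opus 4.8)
The plan is to read the equivalence off directly from the two natural isomorphisms already constructed, following exactly the pattern by which \cref{hvncat1}~(ii) was deduced from \cref{dualiso1} and \cref{dualiso2} in the abelian case. By the definition of an equivalence (\cref{natiso}) applied to $F_1 = \mathrm{Rep}$ and $F_2 = \mathrm{Tan}$, it suffices to produce a natural isomorphism $\mathrm{Tan} \circ \mathrm{Rep} \cong \mathrm{Id}_{\mathbf{Comp}(G)}$ and a natural isomorphism $\mathrm{Rep} \circ \mathrm{Tan} \cong \mathrm{Id}_{\mathbf{GrpLike}(\hat{G})^{\mathrm{op}}}$. These are precisely the contents of \cref{natisotannaka1} and \cref{natisotannaka2} respectively, so once those are in hand the theorem is immediate.

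Concretely, first I would invoke \cref{natisotannaka1}: for every group compactification $(H,c)$ the assignment $x \mapsto \bigl(\pi \mapsto ((c^*)^{-1}(\pi))_x\bigr)$ defines an isomorphism of group compactifications $\gamma_{(H,c)} \colon (H,c) \to \mathrm{Tan}(\mathrm{Rep}(H,c))$, and these assemble into a natural transformation. The work hidden there is that $\gamma_{(H,c)}(x)$ is a well-defined Tannaka transformation — which uses that $c^*$ is a bijection compatible with $\oplus$, $\otimes$, complex conjugation and subrepresentations (\cref{pullbacktannaka}) — that $\gamma_{(H,c)}$ is surjective as a morphism of compactifications (\cref{morphsurj}) and injective because the finite-dimensional representations of the compact group $H$ separate its points, and that the naturality square commutes. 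Second, I would invoke \cref{natisotannaka2}: for a grouplike subset $\sigma \subset \hat{G}$ one in fact has the literal equality $\mathrm{Rep}(\mathrm{Tan}(\sigma)) = \sigma$ (an instance of \cref{inversebij} applied to $H = \check{\sigma}$), so that the identity maps furnish a natural isomorphism; naturality here is automatic since all morphisms in $\mathbf{GrpLike}(\hat{G})$ are inclusions. Combining the two natural isomorphisms with \cref{natiso} yields the asserted equivalence.

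The genuine obstacle is therefore not in this final assembly but in the supporting results, which have already been cleared. The crux is \cref{isgroupcomp}, i.e.\ that $(\check{\sigma},c_\sigma)$ really is a group compactification — equivalently, that $c_\sigma(G)$ is dense in $\check{\sigma}$ — which rests on the Peter--Weyl/matrix-coefficient argument showing that the restriction map $\mathrm{C}(\check{\sigma}) \to \mathrm{C}(\overline{c_\sigma(G)})$ is an $\mathrm{L}^2$-isometry, together with the bijection $\sigma \cong \hat{H}$ of \cref{inversebij}. With those facts established, \cref{grpcompvsgrplike} is a formal consequence of \cref{natiso}, \cref{natisotannaka1} and \cref{natisotannaka2}.
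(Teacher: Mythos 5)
Your proposal is correct and follows exactly the paper's route: the theorem is obtained by combining \cref{natisotannaka1} and \cref{natisotannaka2}, which supply the natural isomorphisms $\mathrm{Id}_{\mathbf{Comp}(G)} \cong \mathrm{Tan} \circ \mathrm{Rep}$ and $\mathrm{Id}_{\mathbf{GrpLike}(\hat{G})^{\mathrm{op}}} \cong \mathrm{Rep} \circ \mathrm{Tan}$ required by \cref{natiso}. Your identification of \cref{isgroupcomp} and \cref{inversebij} as the real content underlying this formal assembly also matches the structure of the paper's argument.
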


\begin{remark}\label{translationproperties}
	Many properties of a group compactification $(H,c)$ translate directly into properties of its dual $\hat{H}$ or (equivalently) the associated grouplike subset $c^*(\hat{H}) \subset \hat{G}$.
		\begin{enumerate}[(i)]
			\item $H$ is finite if and only if $\hat{H}$ is finite (see \cite[Lemma 28.1]{HeRo1970}).
			\item $H$ is second countable if and only if $\hat{H}$ is countable (see \cite[Theorem 28.2]{HeRo1970}).
			\item $H$ is connected if and only if $\hat{H}$ is torsion-free, i.e., the grouplike subset generated by each $[\pi]\in \hat{H}\setminus\{[\mathbbm{1}]\}$ is infinite (see \cite[Theorem 28.21]{HeRo1970}).
			\item $H$ is totally disconnected if and only if $\hat{H}$ consists only of torsion elements, i.e., the grouplike subset generated by each $[\pi]\in \hat{H}$ is finite (see \cite[Theorem 28.19]{HeRo1970}).
			\item $H$ is a Lie group if and only if $\hat{H}$ is finitely generated, i.e, there are $[\pi_1], \dots, [\pi_n] \in \hat{H}$ such that the generated grouplike subset by these elements is the whole dual $\hat{H}$.\footnote{This is an easy consequence of the fact that a compact group is a Lie group if and only if it admits an injective finite-dimensional representation (see, e.g., \cite[Corollary 2.40 and Definition 2.41]{HoMo2006} or \cite[Theorem 5.13]{Foll2016}) combined with \cref{pointseparating}.}
			\item $H$ is abelian, if and only if $\dim([\pi]) = 1$ for every $[\pi] \in \hat{H}$ (see \cite[Remark 27.51]{HeRo1970}).
		\end{enumerate}
\end{remark}
\subsection{Normal dynamical systems}\label{scatgen5}
We now return to irreducible topological and measure-preserving systems with discrete spectrum. Which of these systems are isomorphic to a group rotation system? And what is the desired complete isomorphism invariant for such systems? To answer these questions, we define a generalized version of the point spectrum of a bounded strongly continuous representation of $G$ from \cref{defpointspect} by considering all irreducible finite-dimensional subrepresentations.

\begin{definition}\label{defmultpspec}
	Let $T\colon G \rightarrow \mathscr{L}(E)$ be a bounded strongly continuous group representation of $G$ on a Banach space $E$. 
		\begin{enumerate}[(i)]
			\item For $[\pi] \in \hat{G}$ we call 
				\begin{align*}
					\, \, \, \, \quad \quad \mathrm{mult}(T;[\pi])&\coloneqq \sup \left\{n \in \N_0 \mmid \bigoplus_{k=1}^n \pi \textrm{ is isomorphic to a subrepresentation of } T\right\}
				\end{align*}
			the \emph{multiplicity of $[\pi]$ in $T$}.
			\item The set
				\begin{align*}
					\sigma_{\mathrm{p}}(T) \coloneqq \{[\pi] \in \hat{G}\mid \mathrm{mult}(T;[\pi]) > 0\} 
				\end{align*}
				is the \emph{point spectrum of $T$}.
		\end{enumerate}		
	For a topological dynamical system $(K,\varphi)$ we set $\mathrm{mult}(K,\varphi;[\pi]) \coloneqq \mathrm{mult}(T^\varphi;[\pi])$ for $[\pi] \in \hat{G}$, and $\sigma_{\mathrm{p}}(K,\varphi) \coloneqq \sigma_{\mathrm{p}}(T^\varphi)$. Likewise, if $(\uX,T)$ is a measure-preserving system, we write $\mathrm{mult}(\uX,T;[\pi]) \coloneqq \mathrm{mult}(T;[\pi])$ for $[\pi] \in \hat{G}$, and $\sigma_{\mathrm{p}}(\uX,T) \coloneqq \sigma_{\mathrm{p}}(T)$. 
\end{definition}

\begin{remark}
	Note that for abelian $G$ the notions of the point spectrum from \cref{defpointspect} and \cref{defmultpspec} (ii) agree up to the canonical identification of $G^*$ with $\hat{G}$ from \cref{abeliandual}.
\end{remark}

Based on \cite[Corollary 5.8]{Zimm1976} we now introduce normal representations of $T$ via the following \enquote{spectral conditions}.
\begin{definition}\label{defnormal}
	A bounded strongly continuous representation $T\colon G \rightarrow \mathscr{L}(E)$ of $G$ with discrete spectrum  on a Banach space $E$ is \emph{normal} if 
		\begin{enumerate}[(i)]
			\item $\sigma_{\mathrm{p}}(T)$ is grouplike, and
			\item $\mathrm{mult}(T;[\pi]) \in \{0, \dim([\pi])\}$ for every $[\pi] \in \hat{G}$.
		\end{enumerate}
		A topological dynamical system $(K,\varphi)$ is \emph{normal} if the induced Koopman representation $T^\varphi$ is normal. Likewise, a measure-preserving system $(\uX,T)$ is \emph{normal} if the representation $T$ is normal.
\end{definition}
\begin{remark}
	\begin{enumerate}[(i)]
		\item Notice that a normal representation $T$ of $G$ automatically satisfies that its fixed space 
		\begin{align*}
			\fix(T) \coloneqq \{v \in E \mid T_tv = v \textrm{ for every } t \in G\}
		\end{align*}
	has dimension zero or one. In particular, every normal topological dynamical system is minimal (see \cref{charirred}), and every normal measure-preserving system is ergodic.
		\item The point spectrum $\sigma_{\mathrm{p}}(K,\varphi)$ of a topological dynamical system automatically contains $[\mathbbm{1}]$ and is closed with respect to taking complex conjugation. Thus, $\sigma_{\mathrm{p}}(K,\varphi)$ is grouplike if and only if $[\pi] \otimes [\varrho] \subset \sigma_{\mathrm{p}}(K,\varphi)$ for all $[\pi], [\varrho] \in \sigma_{\mathrm{p}}(K,\varphi)$. An analoguous statement holds for measure-preserving systems.
		\item In his situation, Zimmer gives another definition of normal measure-preserving systems based on virtual subgroups and measurable Hilbert bundles (see \cite[Section 5]{Zimm1976}). 
	\end{enumerate}
\end{remark}

\begin{example}\label{protonormal}
	Let $(H,c)$ be a group compatification of $G$. Then the induced topological and measure-preserving dynamical systems $(H,\varphi_c)$ and $(H,m_H,T^{\varphi_c})$ (see \cref{topexample} and \cref{measrot}) are normal by the Peter-Weyl theorem (see \cite[Theorem 5.12]{Foll2016}).
\end{example}
For an abelian group $G$, it is easily seen that every irreducible topological and measure-preserving system with discrete spectrum is normal (cf. \cref{subgroup1} and \cite[Theorem 4.21]{EFHN2015}). In the general case, we obtain the following result on normality of topological dynamical systems (using the notation of \cref{scatcomm}).

\begin{proposition}\label{normaltop}
	For a minimal topological dynamical system $(K,\varphi)$ with discrete spectrum the inequality $\mathrm{mult}(K,\varphi;[\pi]) \leq \dim([\pi])$ holds for every $[\pi] \in \hat{G}$. Moreover, the following assertions are equivalent.
		\begin{enumerate}[(a)]
			\item The system $(K,\varphi)$ is normal.
			\item If $(t_i)_{i \in I}$ is a net in $G$ with $\lim_{i} \varphi_{t_i}(x) =x$ for one $x \in K$, then $\lim_i \varphi_{t_i}(x) = x$ for every $x \in K$.
			\item For every $y \in K$ the natural extension
				\begin{align*}
					(\mathrm{E}_{\mathrm{u}}(K,\varphi),\varphi_{i_{(K,\varphi)}}) \rightarrow (K,\varphi), \quad \vartheta \mapsto \vartheta(y)
				\end{align*}
				is an isomorphism.
		\end{enumerate}
\end{proposition}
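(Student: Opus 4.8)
The plan is to establish the multiplicity bound first and then prove the cycle of implications (a) $\Rightarrow$ (c) $\Rightarrow$ (b) $\Rightarrow$ (a). For the inequality $\mathrm{mult}(K,\varphi;[\pi]) \le \dim([\pi])$, I would use that $H \coloneqq \mathrm{E}_{\mathrm{u}}(K,\varphi)$ is a compact group acting transitively on $K$ (by \cref{chardiscrsp} and the proof of \cref{charirred}), so that $\mathrm{C}(K)$ embeds $H$-equivariantly into $\mathrm{C}(H)$ as the functions constant on the cosets of the stabilizer of a chosen base point $y$; since in $\mathrm{L}^2(H)$ the Koopman representation of $H$ decomposes by Peter-Weyl with each $[\rho] \in \hat H$ occurring with multiplicity exactly $\dim([\rho])$, and since the $G$-representation on $\mathrm{C}(K)$ is the restriction along $i_{(K,\varphi)}\colon G \to H$ of the $H$-representation, the multiplicity of any $[\pi] \in \hat G$ in $T_\varphi$ is bounded by $\sum \dim([\rho])$ over those $[\rho] \in \hat H$ whose restriction contains $\pi$ and which occur in $\mathrm{C}(K)$; the key point (Frobenius reciprocity for the homogeneous space $H/H_y$) is that the restrictions $\rho\circ i_{(K,\varphi)}$ appearing in $\mathrm{C}(K) = \mathrm{C}(H/H_y)$ are, with multiplicity, exactly the representations with an $H_y$-fixed vector, and a short count gives the bound $\dim([\pi])$. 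I expect this multiplicity estimate to be the main obstacle, since it requires carefully combining the Peter-Weyl decomposition of $\mathrm{C}(H/H_y)$ with the branching behaviour of $\hat H$ along $i_{(K,\varphi)}$.

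For (a) $\Rightarrow$ (c): assuming normality, I would show the orbit map $\mathrm{ev}_y\colon H \to K,\ \vartheta \mapsto \vartheta(y)$ is injective. It is already a continuous surjective morphism of systems (with $H = \mathrm{E}_{\mathrm{u}}(K,\varphi)$ carrying the rotation by $i_{(K,\varphi)}$), and by compactness it suffices to rule out a nontrivial stabilizer $H_y$. If $H_y \ne \{1\}$, then by \cref{pointseparating} applied to the compact group $H$ there is $[\rho] \in \hat H$ with $\rho|_{H_y} \ne \mathrm{id}$, and among all $[\rho] \in \hat H$ this forces the $H_y$-fixed subspace to be a proper subspace; a dimension count then shows that some $[\pi] \in \sigma_{\mathrm{p}}(K,\varphi)$, namely $[\pi] = [\rho \circ i_{(K,\varphi)}]$, has $0 < \mathrm{mult}(K,\varphi;[\pi]) < \dim([\pi])$, because the multiplicity of $\pi$ in $\mathrm{C}(K) = \mathrm{C}(H/H_y)$ equals $\dim(E_\rho^{H_y}) < \dim(E_\rho) = \dim([\pi])$ (using that $\rho\circ i_{(K,\varphi)}$ stays irreducible by density of $i_{(K,\varphi)}(G)$ in $H$, cf. \cref{equivirred}). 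This contradicts condition (ii) of \cref{defnormal}, so $H_y = \{1\}$ and $\mathrm{ev}_y$ is the desired isomorphism.

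The implication (c) $\Rightarrow$ (b) is then immediate: if $(K,\varphi) \cong (\mathrm{E}_{\mathrm{u}}(K,\varphi), \varphi_{i_{(K,\varphi)}})$ via $\vartheta \mapsto \vartheta(y)$, then convergence $\varphi_{t_i}(x) \to x$ for a single $x$ translates into $i_{(K,\varphi)}(t_i)\vartheta_0 \to \vartheta_0$ in the group $H$ for the corresponding $\vartheta_0$, hence $i_{(K,\varphi)}(t_i) \to 1$ by cancellation, and then $\varphi_{t_i}(x') \to x'$ for all $x'$ by continuity of the action. Finally, for (b) $\Rightarrow$ (a): condition (b) says precisely that the stabilizer of any point in $H = \mathrm{E}_{\mathrm{u}}(K,\varphi)$ is trivial (a net of $\varphi_{t_i}$ accumulating at $\mathrm{id}$ "at one point" does so everywhere, i.e. the only element of $H$ fixing one point is $\mathrm{id}$), so again $\mathrm{ev}_y$ is an isomorphism of $(K,\varphi)$ with the rotation $(H, \varphi_{i_{(K,\varphi)}})$; normality of the latter follows from \cref{protonormal} (it is the rotation attached to the group compactification $(\mathrm{E}_{\mathrm{u}}(K,\varphi), i_{(K,\varphi)})$ via \cref{prepcompfunct}(i)), and normality is an isomorphism invariant of the Koopman representation, so $(K,\varphi)$ is normal. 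I would remark that (b) $\Rightarrow$ (c) and (c) $\Rightarrow$ (b) together essentially give the converse directions for free, so only the harmonic-analytic core—the multiplicity computation on $\mathrm{C}(H/H_y)$ used in both the bound and (a) $\Rightarrow$ (c)—needs real work.
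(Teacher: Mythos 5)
Most of your outline is sound: the multiplicity bound via the embedding of $\mathrm{C}(K)$ into $\mathrm{C}(\mathrm{E}_{\mathrm{u}}(K,\varphi))$, and the implications (c) $\Rightarrow$ (b) and (b) $\Rightarrow$ (a) (the latter via triviality of the stabilizer and \cref{protonormal}), match the paper's arguments in substance. The problem is the step (a) $\Rightarrow$ (c). You assume $H_y \neq \{1\}$, pick \emph{some} $[\rho] \in \hat{H}$ with $\rho|_{H_y} \neq \mathrm{id}$ (this exists because the irreducible representations of a compact group separate points; \cref{pointseparating} is not quite the right citation), and then claim that $[\pi] = [\rho \circ i_{(K,\varphi)}]$ lies in $\sigma_{\mathrm{p}}(K,\varphi)$ with $0 < \mathrm{mult}(K,\varphi;[\pi]) = \dim(E_\rho^{H_y}) < \dim([\pi])$. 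The strict upper bound does follow from $\rho|_{H_y} \neq \mathrm{id}$, but the positivity $E_\rho^{H_y} \neq 0$ is asserted without proof, and it is simply false for an arbitrary such $\rho$: for instance, for $H = S_3$ and $H_y$ generated by a transposition, the sign representation is nontrivial on $H_y$ and has no nonzero $H_y$-fixed vector, so its multiplicity in $\mathrm{C}(H/H_y)$ is $0$ and your dimension count yields no contradiction with condition (ii) of \cref{defnormal}. Note also that your argument never uses condition (i) of \cref{defnormal} (grouplikeness of the point spectrum), which is exactly the ingredient the paper uses to close this hole: by \cref{generatedgrplike} the grouplike subset generated by $\sigma_{\mathrm{p}}(K,\varphi)$ is all of $i_{(K,\varphi)}^*\hat{\mathrm{E}_{\mathrm{u}}(K,\varphi)}$, so grouplikeness of $\sigma_{\mathrm{p}}(K,\varphi)$ forces \emph{every} $[\rho] \in \hat{\mathrm{E}_{\mathrm{u}}(K,\varphi)}$ to have positive, hence by (ii) full, multiplicity; then a Peter--Weyl dimension count on the isotypic components of $\mathrm{C}(\mathrm{E}_{\mathrm{u}}(K,\varphi))$ shows the embedding $J_q$ is onto, i.e.\ $q$ is an isomorphism.

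Your route could be repaired without invoking condition (i): one can show that if $\dim(E_\rho^{H_y}) \in \{0,\dim E_\rho\}$ for \emph{all} $[\rho] \in \hat{H}$, then the isotypic decomposition of $\mathrm{L}^2(H/H_y)$ coincides with that of $\mathrm{L}^2(H/N)$, where $N$ is the closed normal subgroup generated by $H_y$, whence $H_y = N$; since $H_y$ contains no nontrivial normal subgroup of $H$ (the enveloping group acts effectively), this gives $H_y = \{1\}$. But this isotypic argument is precisely the missing harmonic-analytic content; as written, your proposal replaces it by an unjustified assertion, so the central implication (a) $\Rightarrow$ (c) has a genuine gap.
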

For the proof of \cref{normaltop} we use the following lemma. 
\begin{lemma}\label{generatedgrplike}
	Let $(K,\varphi)$ be a minimal topological dynamical system with discrete spectrum. Let $\langle\sigma_{\mathrm{p}}(K,\varphi)\rangle$ be the smallest grouplike subset of $\hat{G}$ containing $\sigma_{\mathrm{p}}(K,\varphi)$. Then
		\begin{align*}
			i_{(K,\varphi)}^* \colon \hat{\mathrm{E}_{\mathrm{u}}(K,\varphi)} \rightarrow \langle\sigma_{\mathrm{p}}(K,\varphi)\rangle, \quad [\pi] \mapsto [\pi \circ i_{(K,\varphi)}]
		\end{align*}
	is a bijection.
\end{lemma}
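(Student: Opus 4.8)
The plan is to realize $i_{(K,\varphi)}^*$ through the Koopman representation of the compact group $H \coloneqq \mathrm{E}_{\mathrm{u}}(K,\varphi)$ and then to pin down its image with the help of \cref{pointseparating}. First I would record the set-up: by \cref{chardiscrsp} the uniform enveloping semigroup $H$ is a compact group, and by \cref{prepcompfunct} the pair $(H,c)$ with $c \coloneqq i_{(K,\varphi)}$ (so $c(t) = \varphi_t$) is a group compactification of $G$. Since $H \subset \mathrm{C}(K,K)$ consists of genuine self-maps of $K$, the tautological action $H \times K \to K$, $(\vartheta,x) \mapsto \vartheta(x)$ is continuous and \emph{effective}, and its Koopman representation $R\colon H \to \mathscr{L}(\mathrm{C}(K))$, $R_\vartheta f \coloneqq f \circ \vartheta^{-1}$, is strongly continuous with $T_\varphi = R \circ c$. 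Because $c(G)$ is dense in $H$ and $R$ is strongly continuous, a finite-dimensional subspace $M \subset \mathrm{C}(K)$ is $T_\varphi$-invariant if and only if it is $R$-invariant; on such an $M$ one has $T_\varphi|_M = (R|_M)\circ c$, and $T_\varphi|_M$ is irreducible if and only if $R|_M$ is (again by density of $c(G)$ the invariant subspaces coincide).

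Next I would establish injectivity and the easy inclusion. Injectivity of $i_{(K,\varphi)}^*$ follows from $c$ having dense range: an isomorphism $\pi \circ c \to \varrho \circ c$ of representations of $G$ automatically intertwines $\pi$ and $\varrho$ by continuity. For the image, if $[\pi] \in \sigma_{\mathrm{p}}(K,\varphi)$ then $\pi$ is isomorphic to $T_\varphi|_M$ for some finite-dimensional invariant $M \subset \mathrm{C}(K)$, hence to $(R|_M)\circ c$ with $R|_M$ irreducible, so $[\pi] \in i_{(K,\varphi)}^*(\hat{H}) = \mathrm{Rep}(H,c)$. As the latter is a grouplike subset of $\hat{G}$ by \cref{represfunct}, this yields $\langle\sigma_{\mathrm{p}}(K,\varphi)\rangle \subset i_{(K,\varphi)}^*(\hat{H})$.

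The substantial step is the reverse inclusion $i_{(K,\varphi)}^*(\hat{H}) \subset \langle\sigma_{\mathrm{p}}(K,\varphi)\rangle$. I would consider $\sigma \coloneqq \{[\varrho] \in \hat{H} \mid [\varrho \circ c] \in \langle\sigma_{\mathrm{p}}(K,\varphi)\rangle\}$. Since $c^*$ commutes with forming tensor products, complex conjugates and the trivial representation (because $(\pi \otimes \varrho)\circ c = (\pi \circ c) \otimes (\varrho \circ c)$, $\overline{\pi \circ c} = \overline{\pi} \circ c$, and by density of $c(G)$ the decompositions into irreducibles match up), one checks that $\sigma$ is grouplike. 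To apply \cref{pointseparating} to the compact group $H$, fix $1 \neq h \in H$; effectiveness gives a point $x_0$ with $h^{-1}(x_0) \neq x_0$, hence $R_h \neq \mathrm{id}$. As the finite-dimensional $R$-invariant subspaces of $\mathrm{C}(K)$ span a dense subspace — they coincide with the $T_\varphi$-invariant ones, and $T_\varphi$ has discrete spectrum — some such subspace $N$ satisfies $R_h|_N \neq \mathrm{id}$; decomposing $N$ into irreducibles via \cref{irreddecomp} produces an irreducible $R$-invariant $M \subset N$ with $R_h|_M \neq \mathrm{id}$. Then $\varrho \coloneqq R|_M$ is an irreducible representation of $H$ with $\varrho_h \neq \mathrm{id}$, and $[\varrho \circ c] = [T_\varphi|_M] \in \sigma_{\mathrm{p}}(K,\varphi)$, so $[\varrho] \in \sigma$. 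Thus \cref{pointseparating} gives $\sigma = \hat{H}$, i.e. $i_{(K,\varphi)}^*(\hat{H}) \subset \langle\sigma_{\mathrm{p}}(K,\varphi)\rangle$. Combining the two inclusions shows $i_{(K,\varphi)}^*(\hat{H}) = \langle\sigma_{\mathrm{p}}(K,\varphi)\rangle$, and together with injectivity we conclude that $i_{(K,\varphi)}^*$ is a bijection onto $\langle\sigma_{\mathrm{p}}(K,\varphi)\rangle$.

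I expect the main obstacle to be exactly the point-separating argument feeding \cref{pointseparating}: one must see that effectiveness of the $H$-action on $K$ forces the irreducible subrepresentations of the Koopman representation to "detect" every nontrivial element of $H$, which is precisely the hypothesis \cref{pointseparating} requires. Without this observation the grouplike subset generated by $\sigma_{\mathrm{p}}(K,\varphi)$ could in principle be strictly smaller than $c^*\hat{H}$; the remaining reductions (identifying finite-dimensional $R$- and $T_\varphi$-invariant subspaces, checking that $\sigma$ is grouplike) are routine once the Koopman picture $T_\varphi = R \circ c$ is in place.
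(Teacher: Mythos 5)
Your proposal is correct and follows essentially the same route as the paper: identify $T_\varphi$ with the Koopman representation of the compact group $\mathrm{E}_{\mathrm{u}}(K,\varphi)$ precomposed with $i_{(K,\varphi)}$, note injectivity and grouplikeness of the image from density of the range of $i_{(K,\varphi)}$, and show that the preimage of $\langle\sigma_{\mathrm{p}}(K,\varphi)\rangle$ separates the points of $\mathrm{E}_{\mathrm{u}}(K,\varphi)$ (using discrete spectrum and effectiveness) so that \cref{pointseparating} applies. Your write-up is merely more explicit than the paper's proof about the two inclusions and the verification that this preimage is grouplike.
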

\begin{proof}
	It is clear that 
		\begin{align*}
			i_{(K,\varphi)}^* \colon \hat{\mathrm{E}_{\mathrm{u}}(K,\varphi)} \rightarrow \hat{G}, \quad [\pi] \mapsto [\pi \circ i_{(K,\varphi)}]
		\end{align*}
	is a well-defined and injective map. Moreover, its image is grouplike. Now consider the grouplike subset
		\begin{align*}
			\sigma \coloneqq  (i_{(K,\varphi)}^*)^{-1}(\langle\sigma_{\mathrm{p}}(K,\varphi)\rangle) 
		\end{align*}
	of $\hat{\mathrm{E}_{\mathrm{u}}(K,\varphi)}$. We show that $\sigma$ separates the points of $\mathrm{E}_{\mathrm{u}}(K,\varphi)$. Take $\vartheta \in \mathrm{E}_{\mathrm{u}}(K,\varphi)\setminus \{\mathrm{id}_K\}$. Since $(K,\varphi)$ has discrete spectrum, we find a finite-dimensional invariant subspace $M \subset \mathrm{C}(K)$ of the Koopman representation $T^\varphi$ and $f \in M$ with $f \circ \vartheta^{-1} \neq f$. We may assume that the subrepresentation $\pi_M \coloneqq (T^\varphi)|_M$ of $G$ is irreducible. Then the induced representation
		\begin{align*}
			\pi \colon \mathrm{E}_{\mathrm{u}}(K,\varphi) \rightarrow \mathscr{L}(M), \quad \omega \mapsto [f \mapsto f \circ \omega^{-1}]
		\end{align*}	
	is also irreducible and satisfies $\pi(\vartheta) \neq \mathrm{id}_M$. Moreover, $\pi \circ i_{(K,\varphi)} = \pi_M$, in particular, $[\pi] \in \sigma$. By \cref{pointseparating} we obtain $\sigma = \hat{\mathrm{E}_{\mathrm{u}}(K,\varphi)}$ and thus $i_{(K,\varphi)}^*$ is surjective.
\end{proof}
\begin{proof}[Proof of \cref{normaltop}.]
	Pick a point $y \in K$. The corresponding extension 
		\begin{align*}
			q \colon (\mathrm{E}_{\mathrm{u}}(K,\varphi),\varphi_{i_{(K,\varphi)}}) \rightarrow (K,\varphi), \quad \vartheta \mapsto \vartheta(y)
		\end{align*}
	induces an embedding of representations $J_q \colon \mathrm{C}(K) \rightarrow \mathrm{C}(\mathrm{E}_{\mathrm{u}}(K,\varphi))$, see \cref{koopmanfunct}. In view of \cref{protonormal}, we therefore obtain 
		\begin{align*}
			\mathrm{mult}(K,\varphi;[\pi]) \leq \mathrm{mult}(\mathrm{E}_{\mathrm{u}}(K,\varphi),\varphi_{i_{(K,\varphi)}};[\pi]) \leq \dim([\pi])
		\end{align*}
	for every $[\pi] \in \hat{G}$, as desired.
	
	We show that statements (a) -- (c) are equivalent. The implication \enquote{(c) $\Rightarrow$ (a)} is clear (see \cref{protonormal}). Likewise, the implication \enquote{(c) $\Rightarrow$ (b)} is obvious.
	
	Now assume that (a) holds. Given $[\pi] \in \hat{\mathrm{E}_{\mathrm{u}}(K,\varphi)}$, let $M_{[\pi]}$ be the sum of all subspaces of $\mathrm{C}(\mathrm{E}_{\mathrm{u}}(K,\varphi))$ on which the left-regular representation of $\mathrm{E}_{\mathrm{u}}(K,\varphi)$ is isomorphic to $\pi$. By \cref{generatedgrplike} and \cref{defnormal}, we obtain 
		\begin{enumerate}[(i)]
			\item $i_{(K,\varphi)}^*([\pi]) \in \sigma_{\mathrm{p}}(K,\varphi)$, and 
			\item $\mathrm{mult}(K,\varphi;i_{(K,\varphi)}^*([\pi])) = \dim([\pi])$.
		\end{enumerate}
	By (i) we obtain that $\pi$ is isomorphic to a finite-dimensional irreducible subrepresentation of
		\begin{align*}
			S \colon \mathrm{E}_{\mathrm{u}}(K,\varphi) \rightarrow \mathscr{L}(\mathrm{C}(K)), \quad \vartheta \mapsto [f \mapsto f \circ \vartheta^{-1}],
		\end{align*}
	and (ii) then implies $\mathrm{mult}([\pi];S) = \dim([\pi])$. This shows
		\begin{align*}
			\dim(\rg(J_q) \cap M_{[\pi]}) \geq \dim([\pi])^2.
		\end{align*}
	 On the other hand, $\dim(M_{[\pi]}) = \dim([\pi])^2$ by the Peter-Weyl theorem (see \cite[Theorem 5.12]{Foll2016}) and hence $\rg(J_q)$ contains $M_{[\pi]}$. Since the subspaces $M_{[\pi]}$ for $[\pi] \in \hat{\mathrm{E}_{\mathrm{u}}(K,\varphi)}$ span a dense subspace of $\mathrm{C}(\mathrm{E}_{\mathrm{u}}(K,\varphi))$ (see again \cite[Theorem 5.12]{Foll2016}), we conclude that $J_q$ is surjective. Thus, $q$ is an isomorphism, and this yields (c).
	
	Finally, assume (b). For $\vartheta_1,\vartheta_2 \in \mathrm{E}_{\mathrm{u}}(K,\varphi)$ with $\vartheta_1(y) = \vartheta_2(y)$ set $\vartheta \coloneqq \vartheta_2^{-1} \circ \vartheta_1 \in \mathrm{E}_{\mathrm{u}}(K,\varphi)$ and choose a net $(\vartheta_{t_i})_{i \in I}$ with $\vartheta = \lim_{i} \vartheta_{t_i}$. Then $\lim_{i}\vartheta_i(y) = \vartheta(y) =y$ and hence $\vartheta(x) = \lim_i \vartheta_i(x) = x$ for every $x \in K$ by (b). This shows $\vartheta_1 = \vartheta_2$ and therefore $q$ is injective. Thus, (b) implies (c).
\end{proof}
Denoting the category of pointed normal topological dynamical systems by $\mathbf{TopNorm}_{\mathbf{pt}}(G)$, \cref{normaltop} yields the following analogue of \cref{natiso1comp}.
\begin{proposition}\label{natiso1norm}
	For every pointed normal topological dynamical system $(K,\varphi,y)$ we obtain an isomorphism $\gamma_{(K,\varphi,y)} \colon \mathrm{Rot}(\mathrm{Env}(K,\varphi,y)) \rightarrow (K,\varphi,y), \, \vartheta \mapsto \vartheta(y)$. Moreover, these isomorphisms define a natural isomorphism 
		\begin{align*}
			\gamma \colon \mathrm{Rot} \circ \mathrm{Env} \rightarrow \mathrm{Id}_{\mathbf{TopNorm}_{\mathbf{pt}}(G)}.
		\end{align*}
\end{proposition}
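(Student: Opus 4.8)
The plan is to read the statement off directly from \cref{normaltop}, which carries all the genuine content. First I would unwind the definitions: by \cref{envgrpfunct}, $\mathrm{Env}(K,\varphi,y) = (\mathrm{E}_{\mathrm{u}}(K,\varphi), i_{(K,\varphi)})$, and by \cref{rotfunct}, applying $\mathrm{Rot}$ to this group compactification gives the pointed rotation $(\mathrm{E}_{\mathrm{u}}(K,\varphi), \varphi_{i_{(K,\varphi)}}, \mathrm{id}_K)$, since the unit of the compact group $\mathrm{E}_{\mathrm{u}}(K,\varphi)$ (whose operation is composition of maps) is $\mathrm{id}_K$. Since a normal topological dynamical system is in particular minimal (see the remark after \cref{defnormal}), the category $\mathbf{TopNorm}_{\mathbf{pt}}(G)$ is a full subcategory of $\mathbf{MinDisc}_{\mathbf{pt}}(G)$ on which $\mathrm{Env}$ restricts, while $\mathrm{Rot}$ lands in $\mathbf{TopNorm}_{\mathbf{pt}}(G)$ by \cref{protonormal}; hence $\mathrm{Rot} \circ \mathrm{Env}$ is an endofunctor of $\mathbf{TopNorm}_{\mathbf{pt}}(G)$ and the statement makes sense.

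Fix a pointed normal system $(K,\varphi,y)$. The assignment $\gamma_{(K,\varphi,y)} \colon \vartheta \mapsto \vartheta(y)$ is exactly the natural extension occurring in \cref{normaltop}(c); since $(K,\varphi)$ is normal, the equivalence \enquote{(a) $\Leftrightarrow$ (c)} of \cref{normaltop} tells us it is an isomorphism of topological dynamical systems. As it maps the base point $\mathrm{id}_K$ to $\mathrm{id}_K(y) = y$, it is an isomorphism in $\mathbf{TopNorm}_{\mathbf{pt}}(G)$. For naturality, take a morphism $q \colon (K,\varphi,y) \rightarrow (L,\psi,z)$ (so $q(y) = z$); then $(\mathrm{Rot}\circ\mathrm{Env})(q) = \mathrm{Rot}(p_q) = p_q$ with $p_q$ the morphism of group compactifications from \cref{prepcompfunct}, characterised by $p_q(\vartheta)(q(x)) = q(\vartheta(x))$. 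For every $\vartheta \in \mathrm{E}_{\mathrm{u}}(K,\varphi)$ one computes
\[
	q\bigl(\gamma_{(K,\varphi,y)}(\vartheta)\bigr) = q(\vartheta(y)) = p_q(\vartheta)(q(y)) = p_q(\vartheta)(z) = \gamma_{(L,\psi,z)}\bigl(p_q(\vartheta)\bigr),
\]
which is precisely commutativity of the naturality square, so $\gamma$ is a natural isomorphism.

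There is essentially no obstacle here: the whole difficulty sits in \cref{normaltop}, and once its equivalence \enquote{(a) $\Leftrightarrow$ (c)} is available, the present proposition is bookkeeping entirely parallel to the abelian \cref{natiso1comp}. The only point deserving a second's attention is matching up the base points---verifying that the unit of $\mathrm{E}_{\mathrm{u}}(K,\varphi)$ is $\mathrm{id}_K$ and that $q(y) = z$---which is exactly what makes $\gamma_{(K,\varphi,y)}$ a morphism of \emph{pointed} systems and closes the naturality square.
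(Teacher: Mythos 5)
Your proposal is correct and follows exactly the paper's route: the paper derives \cref{natiso1norm} as a direct consequence of \cref{normaltop} (whose equivalence of (a) and (c) gives that $\vartheta \mapsto \vartheta(y)$ is an isomorphism), with the base-point matching and the naturality square---your computation $q(\vartheta(y)) = p_q(\vartheta)(q(y))$---left as the routine verification, just as in the abelian counterpart \cref{natiso1comp}.
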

As a consequence of \cref{natiso1norm}, we obtain the following generalization of \cref{pointedvsnonpointed2} by the same proof.
\begin{corollary}\label{pointedvsnonpointed3}
	Let $(K,\varphi,y)$ and $(L,\psi,z)$ be pointed normal topological dynamical systems. Then $(K,\varphi,y)$ and $(L,\psi,z)$ are isomorphic if and only if $(K,\varphi)$ and $(L,\psi)$ are isomorphic.
\end{corollary}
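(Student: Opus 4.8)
The plan is to follow the proof of \cref{pointedvsnonpointed2} verbatim, substituting the appeal to \cref{natiso1comp} by \cref{natiso1norm}. The forward implication is immediate: an isomorphism $(K,\varphi,y) \to (L,\psi,z)$ of pointed systems is, by definition, in particular an isomorphism $(K,\varphi) \to (L,\psi)$ of the underlying topological dynamical systems.

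For the converse, suppose $q \colon (K,\varphi) \to (L,\psi)$ is an isomorphism of topological dynamical systems. First I would observe that both $q$ and $q^{-1}$ are morphisms of minimal systems with discrete spectrum (normality entails minimality, see the remark after \cref{defnormal}), so \cref{prepcompfunct}(ii) produces morphisms of group compactifications $p_q \colon (\mathrm{E}_{\mathrm{u}}(K,\varphi), i_{(K,\varphi)}) \to (\mathrm{E}_{\mathrm{u}}(L,\psi), i_{(L,\psi)})$ and $p_{q^{-1}}$ in the opposite direction. Functoriality of $q \mapsto p_q$ (equivalently, a direct check that $p_{q^{-1}} \circ p_q = p_{q^{-1}\circ q} = p_{\mathrm{id}_K} = \mathrm{id}$ and symmetrically) shows these are mutually inverse, so $\mathrm{Env}(K,\varphi,y) = (\mathrm{E}_{\mathrm{u}}(K,\varphi), i_{(K,\varphi)})$ and $\mathrm{Env}(L,\psi,z) = (\mathrm{E}_{\mathrm{u}}(L,\psi), i_{(L,\psi)})$ are isomorphic objects of $\mathbf{Comp}(G)$.

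Applying the rotation functor $\mathrm{Rot}$, which as a functor preserves isomorphisms, yields $\mathrm{Rot}(\mathrm{Env}(K,\varphi,y)) \cong \mathrm{Rot}(\mathrm{Env}(L,\psi,z))$ in $\mathbf{TopNorm}_{\mathbf{pt}}(G)$. By the natural isomorphism $\gamma$ of \cref{natiso1norm} the left-hand side is isomorphic to $(K,\varphi,y)$ and the right-hand side to $(L,\psi,z)$, whence $(K,\varphi,y) \cong (L,\psi,z)$, as desired.

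I do not expect a genuine obstacle here; the only point worth stressing is that the construction $\mathrm{Env}$ does not see the base point — it depends only on the underlying system $(K,\varphi)$ — so the isomorphism of enveloping group compactifications is automatic from an isomorphism of the unpointed systems, and the base point is then recovered for free through $\gamma$. This is exactly the mechanism by which \cref{natiso1norm} replaces \cref{natiso1comp} in the argument, and nothing beyond \cref{prepcompfunct} and \cref{natiso1norm} is needed.
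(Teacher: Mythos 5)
Your proposal is correct and follows exactly the paper's route: the paper proves this corollary "by the same proof" as \cref{pointedvsnonpointed2}, i.e.\ an isomorphism of the unpointed systems yields an isomorphism of the enveloping group compactifications, and then \cref{natiso1norm} (in place of \cref{natiso1comp}) transfers this back to an isomorphism of the pointed systems. Your additional remarks on functoriality of $q \mapsto p_q$ and on $\mathrm{Env}$ ignoring the base point just make explicit what the paper leaves implicit.
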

We also obtain that the restriction of the enveloping group functor from \cref{envgrpfunct}
	\begin{align*}
		\mathrm{Env}\colon \mathbf{TopNorm}_{\mathbf{pt}}(G) \rightarrow \mathbf{Comp}(G)
	\end{align*}
defines an essential inverse of the rotation functor 
	\begin{align*}
		\mathrm{Rot} \colon \mathbf{Comp}(G) \rightarrow \mathbf{TopNorm}_{\mathbf{pt}}(G)
	\end{align*}
from \cref{rotfunct}. 
In combination with \cref{natiso2comp} we obtain a categorical equivalence (cf. \cref{equivcomp}).
\begin{theorem}\label{equivcomp2}
	The functors
		\begin{align*}
			\mathrm{Env}&\colon \mathbf{TopNorm}_{\mathbf{pt}}(G) \rightarrow \mathbf{Comp}(G),\\
			\mathrm{Rot} &\colon \mathbf{Comp}(G) \rightarrow \mathbf{TopNorm}_{\mathbf{pt}}(G)
		\end{align*}
	establish an equivalence between the category of pointed normal topological dynamical systems and the category of group compactifications.
\end{theorem}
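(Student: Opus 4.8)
The plan is to assemble the statement from natural isomorphisms that are already available, the only preliminary point being that the two functors are well-defined between the stated categories. Every rotation system $(H,\varphi_c,1_H)$ is normal by \cref{protonormal}, so the functor $\mathrm{Rot}$ of \cref{rotfunct} does land in the subcategory $\mathbf{TopNorm}_{\mathbf{pt}}(G)$; and since every normal topological dynamical system is minimal (see the remark following \cref{defnormal}, together with \cref{charirred}), the category $\mathbf{TopNorm}_{\mathbf{pt}}(G)$ is a full subcategory of $\mathbf{MinDisc}_{\mathbf{pt}}(G)$, so the enveloping group functor $\mathrm{Env}$ of \cref{envgrpfunct} restricts to it without any change, and the composites $\mathrm{Rot}\circ\mathrm{Env}$ and $\mathrm{Env}\circ\mathrm{Rot}$ coincide with the corresponding composites considered in Sections~\ref{scatcomm} and \ref{scatgen5}.

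With the functors in place, the proof is a direct application of the definition of an equivalence (\cref{natiso}): \cref{natiso1norm} supplies a natural isomorphism $\gamma \colon \mathrm{Rot} \circ \mathrm{Env} \rightarrow \mathrm{Id}_{\mathbf{TopNorm}_{\mathbf{pt}}(G)}$, and \cref{natiso2comp} supplies a natural isomorphism $\mathrm{Id}_{\mathbf{Comp}(G)} \rightarrow \mathrm{Env} \circ \mathrm{Rot}$. One should note in passing that, although \cref{natiso2comp} is stated inside the section on abelian groups, neither its statement nor its (omitted) proof uses commutativity: it is simply the observation that the uniform enveloping semigroup of $(H,\varphi_c)$ consists precisely of the left translations of the compact group $H$, which holds for any group compactification. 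Hence $\mathrm{Env}$ and $\mathrm{Rot}$ are essentially inverse to each other, and the two categories are equivalent.

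I expect no genuine obstacle in this final assembly; the real work has already been done in \cref{normaltop} (and the auxiliary \cref{generatedgrplike}). What \cref{natiso1norm} encodes is that for a normal system the evaluation map $\mathrm{E}_{\mathrm{u}}(K,\varphi) \rightarrow K$, $\vartheta \mapsto \vartheta(y)$, is a bijection — equivalently, that the transitive action of the compact group $\mathrm{E}_{\mathrm{u}}(K,\varphi)$ on $K$ is free. This freeness is exactly what can fail for a general non-abelian minimal system with discrete spectrum, and is the reason one must pass from $\mathbf{MinDisc}_{\mathbf{pt}}(G)$ to the subcategory $\mathbf{TopNorm}_{\mathbf{pt}}(G)$; once it is available, the argument proving \cref{natiso1comp} transfers verbatim. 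In short, the theorem is a packaging of \cref{natiso1norm} and \cref{natiso2comp} in the language of \cref{natiso}.
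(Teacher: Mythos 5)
Your proposal is correct and follows essentially the same route as the paper: the paper also deduces the theorem by combining \cref{natiso1norm} (which is a direct consequence of \cref{normaltop}) with \cref{natiso2comp}, noting via \cref{protonormal} that $\mathrm{Rot}$ lands in $\mathbf{TopNorm}_{\mathbf{pt}}(G)$ and that $\mathrm{Env}$ restricts to this subcategory. Your side remark that \cref{natiso2comp} never uses commutativity is exactly how the paper reuses it in the general setting.
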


We also note that \cref{equivtopmeas} implies that the categories $\mathbf{TopNorm}(G)$ of normal topological dynamical systems and the opposite category $\mathbf{MeasNorm}(G)^{\mathrm{op}}$ of normal measure-preserving systems are equivalent.
\begin{theorem}\label{normalequiv}
	The restricted functors 
		\begin{align*}
			\mathrm{Meas}&\colon \mathbf{TopNorm}(G) \rightarrow \mathbf{MeasNorm}(G)^{\mathrm{op}},\\
			\mathrm{Top}&\colon  \mathbf{MeasNorm}(G)^{\mathrm{op}} \rightarrow \mathbf{TopNorm}(G)
		\end{align*}
	establish an equivalence between the category of normal topological dynamical systems and the opposite category of normal measure-preserving systems.
\end{theorem}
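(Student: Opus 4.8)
The plan is to derive this from the already-established equivalence \cref{equivtopmeas} between $\mathbf{MinDisc}(G)$ and $\mathbf{ErgDisc}(G)^{\mathrm{op}}$, by verifying that the functors $\mathrm{Meas}$ and $\mathrm{Top}$ carry normal systems to normal systems and hence restrict to the relevant full subcategories. Note first that, by the first remark following \cref{defnormal}, every normal topological dynamical system is a minimal system with discrete spectrum and every normal measure-preserving system is an ergodic system with discrete spectrum; thus $\mathbf{TopNorm}(G)$ is a full subcategory of $\mathbf{MinDisc}(G)$ and $\mathbf{MeasNorm}(G)$ is a full subcategory of $\mathbf{ErgDisc}(G)$.

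The crucial step is that passing between a minimal topological system with discrete spectrum $(K,\varphi)$ and the induced measure-preserving system $(K,\mu,T_\varphi)$ changes neither the point spectrum nor the multiplicities. Concretely, I would show that the finite-dimensional $T_\varphi$-invariant subspaces of $\mathrm{C}(K)$ coincide with the finite-dimensional invariant subspaces of $\mathrm{L}^1(K,\mu)$: the inclusion ``$\subseteq$'' is immediate since $\mathrm{C}(K) \hookrightarrow \mathrm{L}^1(K,\mu)$ intertwines the (restricted and extended) Koopman representations, and for ``$\supseteq$'' one applies \cref{subinv} to land such a subspace in $\mathrm{L}^\infty(K,\mu)$, hence in $\mathrm{L}^\infty(K,\mu)_{\mathrm{disc}}$, and then \cref{comingback} to conclude that it lies in $\mathrm{C}(K)$. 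As the restricted representations agree on $\mathrm{C}(K)$, this yields $\mathrm{mult}(K,\varphi;[\pi]) = \mathrm{mult}(K,\mu,T_\varphi;[\pi])$ for every $[\pi]\in\hat{G}$ and $\sigma_{\mathrm{p}}(K,\varphi) = \sigma_{\mathrm{p}}(K,\mu,T_\varphi)$, so $(K,\varphi)$ is normal if and only if $\mathrm{Meas}(K,\varphi)$ is normal. Reading the same identification through the Gelfand isomorphism $(\mathrm{L}^\infty(\uX)_{\mathrm{disc}},T_{\mathrm{disc}}) \cong (\mathrm{C}(K),T_\varphi)$ provided by \cref{gelfandmap}, where $(K,\varphi) = \mathrm{Top}(\uX,T)$, and again using \cref{subinv} to see that all finite-dimensional invariant subspaces of $\mathrm{L}^1(\uX)$ already lie in $\mathrm{L}^\infty(\uX)_{\mathrm{disc}}$, shows that an ergodic measure-preserving system $(\uX,T)$ with discrete spectrum is normal if and only if $\mathrm{Top}(\uX,T)$ is normal.

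With this at hand, $\mathrm{Meas}$ restricts to a functor $\mathbf{TopNorm}(G) \rightarrow \mathbf{MeasNorm}(G)^{\mathrm{op}}$ and $\mathrm{Top}$ restricts to a functor $\mathbf{MeasNorm}(G)^{\mathrm{op}} \rightarrow \mathbf{TopNorm}(G)$. Since $\mathbf{TopNorm}(G)$ and $\mathbf{MeasNorm}(G)$ are full subcategories, the natural isomorphisms $\gamma \colon \mathrm{Id}_{\mathbf{MinDisc}(G)} \rightarrow \mathrm{Top} \circ \mathrm{Meas}$ of \cref{natiso1} and $\gamma \colon \mathrm{Id}_{\mathbf{ErgDisc}(G)} \rightarrow \mathrm{Meas} \circ \mathrm{Top}$ of \cref{natiso2} restrict to natural isomorphisms between the restricted composite functors and the respective identity functors. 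Hence the restricted functors are essentially inverse to each other, and the equivalence follows. I do not anticipate any real obstacle: the only substantive point is the bookkeeping in the preceding paragraph showing that normality is preserved in both directions, and that is an immediate consequence of \cref{subinv} and \cref{comingback}.
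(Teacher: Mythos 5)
Your proposal is correct and takes essentially the same route as the paper, which obtains \cref{normalequiv} simply by restricting the equivalence of \cref{equivtopmeas} to the full subcategories of normal systems. The only substantive point---that $\mathrm{Meas}$ and $\mathrm{Top}$ preserve normality because point spectra and multiplicities are unchanged, via \cref{subinv} and \cref{comingback}---is exactly the verification the paper leaves implicit (cf.\ the discussion of point spectra after \cref{comhvn2}), and your bookkeeping fills it in correctly.
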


\subsection{A Halmos-von Neumann theorem for normal actions}\label{scatgen6}
	We now state versions of the Halmos-von Neumann theorem for normal dynamical systems, first in a categorical language, and then in a more classical formulation. 
	
	\begin{construction}
		We define a functor $\sigma_{\mathrm{p}} \colon \mathbf{TopNorm}_{\mathbf{pt}}(G) \rightarrow \mathbf{GrpLike}(\hat{G})^{\mathrm{op}}$ by setting
			\begin{itemize}
				\item $\sigma_{\mathrm{p}}(K,\varphi,y) \coloneqq \sigma_{\mathrm{p}}(K,\varphi)$ for every pointed normal topological dynamical system $(K,\varphi,y)$, and
				\item $\sigma_{\mathrm{p}}(q) \coloneqq i_{\sigma_{\mathrm{p}}(L,\psi),\sigma_{\mathrm{p}}(K,\varphi)}$ for every morphism $q \colon (K,\varphi,y) \rightarrow (L,\psi,z)$.
			\end{itemize}
		It is called the \emph{point spectrum functor}.
	\end{construction}
	
	With the enveloping group functor $\mathrm{Env}$ from \cref{envgrpfunct} and the representations functor $\mathrm{Rep}$ from \cref{represfunct}, \cref{generatedgrplike} implies the following.
	\begin{proposition}\label{functorcomm2}
		The diagram of functors
			\[
				\xymatrix{
					\mathbf{TopNorm}_{\mathbf{pt}}(G) \ar[rr]^{\sigma_{\mathrm{p}}} \ar[rd]_{\mathrm{Env}} & & \mathbf{GrpLike}(\hat{G})^{\mathrm{op}} \\
					& \mathbf{Comp}(G) \ar[ru]_{\mathrm{Rep}} &  
				}
			\]	
		commutes.
	\end{proposition}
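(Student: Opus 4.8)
The plan is to verify that the diagram commutes first on objects and then to observe that the statement for morphisms is automatic. Fix a pointed normal topological dynamical system $(K,\varphi,y)$. Unwinding the definitions of the enveloping group functor (\cref{envgrpfunct}) and the representations functor (\cref{represfunct}), one has $\mathrm{Env}(K,\varphi,y) = (\mathrm{E}_{\mathrm{u}}(K,\varphi), i_{(K,\varphi)})$ and therefore $\mathrm{Rep}(\mathrm{Env}(K,\varphi,y)) = i_{(K,\varphi)}^*\,\hat{\mathrm{E}_{\mathrm{u}}(K,\varphi)} = \{[\pi \circ i_{(K,\varphi)}] \mid [\pi] \in \hat{\mathrm{E}_{\mathrm{u}}(K,\varphi)}\}$. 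The key input is now \cref{generatedgrplike}: the map $i_{(K,\varphi)}^*$ is a bijection from $\hat{\mathrm{E}_{\mathrm{u}}(K,\varphi)}$ onto $\langle\sigma_{\mathrm{p}}(K,\varphi)\rangle$, the smallest grouplike subset of $\hat{G}$ containing $\sigma_{\mathrm{p}}(K,\varphi)$. Its image is exactly the set displayed above, so $\mathrm{Rep}(\mathrm{Env}(K,\varphi,y)) = \langle\sigma_{\mathrm{p}}(K,\varphi)\rangle$.

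The second step removes the angle brackets. Since $(K,\varphi)$ is normal, condition (i) of \cref{defnormal} says precisely that $\sigma_{\mathrm{p}}(K,\varphi)$ is itself grouplike, and hence $\langle\sigma_{\mathrm{p}}(K,\varphi)\rangle = \sigma_{\mathrm{p}}(K,\varphi) = \sigma_{\mathrm{p}}(K,\varphi,y)$. This establishes that the two composite functors $\sigma_{\mathrm{p}}$ and $\mathrm{Rep}\circ\mathrm{Env}$ agree on objects.

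Finally, nothing further is needed for morphisms. Both $\sigma_{\mathrm{p}}$ and $\mathrm{Rep}\circ\mathrm{Env}$ take values in $\mathbf{GrpLike}(\hat{G})^{\mathrm{op}}$, a category in which there is at most one morphism between any two objects (an inclusion map), so two functors into it that coincide on objects automatically coincide on morphisms. Thus all the substance is packaged into \cref{generatedgrplike} — which ultimately rests on the point-separation criterion \cref{pointseparating} — together with the normality hypothesis; I do not expect any genuine obstacle, and the proof in the paper should consist of little more than the two observations above.
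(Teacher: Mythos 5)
Your argument is correct and is exactly the paper's (implicit) proof: the paper derives \cref{functorcomm2} directly from \cref{generatedgrplike}, which identifies $\mathrm{Rep}(\mathrm{Env}(K,\varphi,y))$ with $\langle\sigma_{\mathrm{p}}(K,\varphi)\rangle$, and normality collapses this to $\sigma_{\mathrm{p}}(K,\varphi)$, with agreement on morphisms automatic since $\mathbf{GrpLike}(\hat{G})^{\mathrm{op}}$ has at most one morphism between any two objects. Your write-up just spells out these steps (including the tacit point that a normal system is minimal with discrete spectrum, so \cref{generatedgrplike} applies), which the paper leaves to the reader.
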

	
	Combining \cref{functorcomm2} with \cref{grpcompvsgrplike} and \cref{equivcomp2}, we obtain the following extension of \cref{hvncat1} to the setting of a general topological group $G$. 
	\begin{theorem}\label{main2}
		\begin{enumerate}[(i)]
				\item The functors
					\begin{align*}
						\mathrm{Env} &\colon\mathbf{TopNorm}_{\mathbf{pt}}(G)\rightarrow \mathbf{Comp}(G),\\
						\mathrm{Rot} &\colon  \mathbf{Comp}(G) \rightarrow \mathbf{TopNorm}_{\mathbf{pt}}(G)
					\end{align*}
				establish an equivalence between the category of pointed normal topological dynamical systems and the category of group compactifications.
				\item The functors
					\begin{align*}
						\mathrm{Rep} &\colon \mathbf{Comp}(G) \rightarrow \mathbf{GrpLike}(\hat{G})^{\mathrm{op}},\\
						\mathrm{Tan} &\colon \mathbf{GrpLike}(\hat{G})^{\mathrm{op}} \rightarrow \mathbf{Comp}(G) 
					\end{align*}
				establish an equivalence between the category of group compactifications and the opposite category of grouplike subsets of $\hat{G}$.
				\item The functor 
					\begin{align*}
						\sigma_{\mathrm{p}} = \mathrm{Rep} \circ \mathrm{Env} \colon \mathbf{TopNorm}_{\mathbf{pt}}(G) \rightarrow \mathbf{GrpLike}(\hat{G})^{\mathrm{op}}
					\end{align*}
				defines an equivalence between the category of pointed normal topological dynamical systems and the opposite category of grouplike subsets of $\hat{G}$.
			\end{enumerate}
	\end{theorem}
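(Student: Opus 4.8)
The plan is to assemble \cref{main2} directly from results already in hand, treating parts (i), (ii) and (iii) in that order. Part (ii) is nothing but \cref{grpcompvsgrplike}, so there is nothing to prove. Part (i) has essentially been established as well: \cref{natiso1norm} provides the natural isomorphism $\gamma \colon \mathrm{Rot} \circ \mathrm{Env} \rightarrow \mathrm{Id}_{\mathbf{TopNorm}_{\mathbf{pt}}(G)}$, and \cref{natiso2comp} (whose statement and proof refer only to group compactifications and make no use of commutativity of $G$) provides a natural isomorphism $\mathrm{Id}_{\mathbf{Comp}(G)} \rightarrow \mathrm{Env} \circ \mathrm{Rot}$. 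Hence the first thing I would do is record explicitly that the functors $\mathrm{Env}$ and $\mathrm{Rot}$ restrict/corestrict correctly to the categories in play (the image of $\mathrm{Rot}$ lands in $\mathbf{TopNorm}_{\mathbf{pt}}(G)$ by \cref{protonormal}, and $\mathrm{Env}$ is well-defined on $\mathbf{TopNorm}_{\mathbf{pt}}(G)$ since every pointed normal system is in particular a pointed minimal system with discrete spectrum), and then conclude that they are essentially inverse to each other, which is exactly part (i). This is the content of \cref{equivcomp2}, so part (i) is just a restatement of that theorem.

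For part (iii), the key input is \cref{functorcomm2}, which says that the triangle with vertices $\mathbf{TopNorm}_{\mathbf{pt}}(G)$, $\mathbf{Comp}(G)$, $\mathbf{GrpLike}(\hat{G})^{\mathrm{op}}$ and edges $\mathrm{Env}$, $\mathrm{Rep}$, $\sigma_{\mathrm{p}}$ commutes; equivalently, $\sigma_{\mathrm{p}} = \mathrm{Rep} \circ \mathrm{Env}$ as functors. Once this identity is in place, part (iii) follows by a completely formal argument: a composition of two equivalences of categories is again an equivalence. Concretely, I would note that $\mathrm{Env}$ is an equivalence with essential inverse $\mathrm{Rot}$ by part (i), and $\mathrm{Rep}$ is an equivalence with essential inverse $\mathrm{Tan}$ by part (ii); therefore $\mathrm{Rep} \circ \mathrm{Env}$ is an equivalence with essential inverse $\mathrm{Rot} \circ \mathrm{Tan}$, the required natural isomorphisms being obtained by pasting the four natural isomorphisms from parts (i) and (ii) in the usual way (for instance, $(\mathrm{Rot} \circ \mathrm{Tan}) \circ (\mathrm{Rep} \circ \mathrm{Env}) = \mathrm{Rot} \circ (\mathrm{Tan} \circ \mathrm{Rep}) \circ \mathrm{Env} \cong \mathrm{Rot} \circ \mathrm{Id}_{\mathbf{Comp}(G)} \circ \mathrm{Env} = \mathrm{Rot} \circ \mathrm{Env} \cong \mathrm{Id}$, using \cref{natisotannaka1} in the middle and \cref{natiso1norm} at the end, and symmetrically for the other composite). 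Since $\sigma_{\mathrm{p}} = \mathrm{Rep} \circ \mathrm{Env}$ by \cref{functorcomm2}, this proves (iii).

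There is essentially no obstacle here: all the real work has been done in \cref{grpcompvsgrplike}, \cref{equivcomp2} and \cref{functorcomm2}, and what remains is the routine bookkeeping of composing equivalences and pasting natural isomorphisms. The only point requiring a moment's care is checking that the forgetful passage from $\mathbf{MinDisc}_{\mathbf{pt}}(G)$-based statements to $\mathbf{TopNorm}_{\mathbf{pt}}(G)$-based statements is legitimate — i.e., that \cref{natiso1norm} really does give the restriction of the construction in \cref{natiso1comp} on the subcategory of normal systems, with the same naturality — but this is immediate from \cref{normaltop}, which identifies $\gamma_{(K,\varphi,y)}$ as an isomorphism precisely in the normal case. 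I would therefore present the proof of \cref{main2} in three short paragraphs: (ii) cites \cref{grpcompvsgrplike}; (i) cites \cref{equivcomp2} (itself built from \cref{natiso1norm} and \cref{natiso2comp}); and (iii) combines these two equivalences through the identity $\sigma_{\mathrm{p}} = \mathrm{Rep} \circ \mathrm{Env}$ of \cref{functorcomm2}.
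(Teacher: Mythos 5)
Your proposal is correct and follows exactly the paper's own route: the paper derives \cref{main2} by combining \cref{equivcomp2} for part (i), \cref{grpcompvsgrplike} for part (ii), and the commuting triangle of \cref{functorcomm2} together with the formal composition of equivalences for part (iii). The extra bookkeeping you include (restriction of the functors and pasting of the natural isomorphisms) is exactly what the paper leaves implicit.
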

	
	We formulate a slightly weaker \enquote{category-free} version which is the analogue of \cref{comhvn1}. In view of \cref{pointedvsnonpointed3}, it is a direct consequence of \cref{main2}.
	\begin{theorem}\label{nomralhvntop}
		For normal topological dynamical systems the following assertions hold.
	\begin{enumerate}[(i)]
    	\item Two normal systems $(K_1,\varphi_1)$ and $(K_2,\varphi_2)$ are isomorphic if and only if $\sigma_{\mathrm{p}}(K_1,\varphi_1) = \sigma_{\mathrm{p}}(K_2,\varphi_2)$. 
    	\item For every grouplike subset $\sigma \subset \hat{G}$ there is a normal system $(K,\varphi)$ such that $\sigma_{\mathrm{p}}(K,\varphi) = \sigma$.
    	\item For every normal system $(K,\varphi)$ there is a group compactification $(H,c)$ such that $(K,\varphi)$ is isomorphic to the rotation $(H,\varphi_c)$.
	\end{enumerate}
	\end{theorem}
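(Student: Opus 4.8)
The plan is to deduce all three assertions directly from \cref{main2} together with \cref{pointedvsnonpointed3}. The only general fact needed about the target category is that in $\mathbf{GrpLike}(\hat{G})$ every isomorphism is an identity map, so that two grouplike subsets are isomorphic precisely when they are equal.

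\emph{Part (i).} The forward implication is routine: an isomorphism $q\colon (K_1,\varphi_1)\to(K_2,\varphi_2)$ induces, via the Koopman functor of \cref{koopmanfunct}, an isomorphism of the representations $T_{\varphi_1}$ and $T_{\varphi_2}$, and the multiplicities $\mathrm{mult}(\,\cdot\,;[\pi])$ --- hence the point spectrum --- are invariants of the isomorphism class of a representation. For the converse, pick base points $y_i\in K_i$. Then $\sigma_{\mathrm{p}}(K_1,\varphi_1,y_1)=\sigma_{\mathrm{p}}(K_1,\varphi_1)=\sigma_{\mathrm{p}}(K_2,\varphi_2)=\sigma_{\mathrm{p}}(K_2,\varphi_2,y_2)$, and since the point spectrum functor $\sigma_{\mathrm{p}}\colon\mathbf{TopNorm}_{\mathbf{pt}}(G)\to\mathbf{GrpLike}(\hat{G})^{\mathrm{op}}$ is an equivalence of categories by \cref{main2} (iii), it reflects isomorphisms; thus $(K_1,\varphi_1,y_1)\cong(K_2,\varphi_2,y_2)$ in $\mathbf{TopNorm}_{\mathbf{pt}}(G)$, and \cref{pointedvsnonpointed3} yields $(K_1,\varphi_1)\cong(K_2,\varphi_2)$.

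\emph{Parts (ii) and (iii).} For (ii), essential surjectivity of the equivalence $\sigma_{\mathrm{p}}$ from \cref{main2} (iii) provides a pointed normal system $(K,\varphi,y)$ with $\sigma_{\mathrm{p}}(K,\varphi)=\sigma_{\mathrm{p}}(K,\varphi,y)\cong\sigma$, i.e.\ $\sigma_{\mathrm{p}}(K,\varphi)=\sigma$; concretely one may take the rotation $(\check{\sigma},\varphi_{c_\sigma})$ attached to the group compactification $(\check\sigma,c_\sigma)$ of \cref{isgroupcomp}, which is normal by \cref{protonormal} and whose point spectrum is $\mathrm{Rep}(\check\sigma,c_\sigma)=\sigma$ by \cref{functorcomm2} and \cref{natisotannaka2}. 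For (iii), given a normal system $(K,\varphi)$ choose any $y\in K$; by \cref{natiso1norm} (equivalently \cref{equivcomp2}) the map $\gamma_{(K,\varphi,y)}\colon\mathrm{Rot}(\mathrm{Env}(K,\varphi,y))\to(K,\varphi,y)$, $\vartheta\mapsto\vartheta(y)$, is an isomorphism of pointed systems, so setting $(H,c)\coloneqq\mathrm{Env}(K,\varphi,y)=(\mathrm{E}_{\mathrm{u}}(K,\varphi),i_{(K,\varphi)})$ exhibits $(K,\varphi)$ as isomorphic to the rotation $(H,\varphi_c)$.

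I do not anticipate a real obstacle: the mathematical content has already been placed into \cref{main2}, \cref{equivcomp2}, \cref{natiso1norm} and \cref{pointedvsnonpointed3}. The only subtle points are the passage from the pointed category (on which the equivalence is stated) back to the unpointed one via \cref{pointedvsnonpointed3}, and the use of the elementary fact that an equivalence of categories reflects isomorphisms --- which, together with the triviality of isomorphisms in $\mathbf{GrpLike}(\hat{G})$, is exactly what turns the realization and uniqueness statements into their stated ``for every''/``if and only if'' form.
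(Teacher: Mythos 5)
Your proposal is correct and is essentially the paper's own argument: the paper proves \cref{nomralhvntop} by remarking that, in view of \cref{pointedvsnonpointed3}, it is a direct consequence of \cref{main2}, and your write-up merely makes explicit the routine steps (isomorphisms in $\mathbf{GrpLike}(\hat{G})$ are identities, equivalences reflect isomorphisms, and \cref{natiso1norm}/\cref{protonormal} for the realization and representation parts).
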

	
	Using the categorical equivalence between normal topological and measure-preserving dynamical systems from \cref{normalequiv}, we obtain the ergodic theoretical variant of \cref{nomralhvntop}. It is a version of Zimmer's results in the case of systems (see \cite[Theorems 4.3 and 5.7, Theorem 6.2, and Theorem 6.4]{Zimm1976}) for general normal measure-preserving systems dropping any separability conditions on the involved probability spaces and  on the group $G$ (as well as local compactness of $G$).
	\begin{theorem}\label{nomralhvntop2}
		For normal systems measure-preserving systems the following assertions hold.
	\begin{enumerate}[(i)]
    	\item Two normal systems $(\uX_1,T_1)$ and $(\uX_2,T_2)$ are isomorphic if and only if $\sigma_{\mathrm{p}}(\uX_1,T_1) = \sigma_{\mathrm{p}}(\uX_2,T_2)$. 
    	\item For every grouplike subset $\sigma \subset \hat{G}$ there is a normal system $(\uX,T)$ such that $\sigma_{\mathrm{p}}(\uX,T) = \sigma$.
    	\item For every normal system $(\uX,T)$ there is a group compactification $(H,c)$ such that $(\uX,T)$ is isomorphic to the rotation $(H,m_H,T^{\varphi_c})$.
	\end{enumerate}
	\end{theorem}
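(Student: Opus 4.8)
The plan is to deduce this measure-preserving statement from its topological counterpart \cref{nomralhvntop} via the categorical equivalence \cref{normalequiv}. The one ingredient that makes the translation work is that the functors $\mathrm{Top}$ and $\mathrm{Meas}$ preserve the point spectrum \emph{together with all multiplicities}. For $\mathrm{Top}$ this is because, by \cref{gelfandcat}, the commutative $\mathrm{C}^*$-dynamical system $(\mathrm{L}^\infty(\uX)_{\mathrm{disc}},T_{\mathrm{disc}})$ is isomorphic as a $G$-representation to $(\mathrm{C}(K),T_\varphi)$ for $(K,\varphi) = \mathrm{Top}(\uX,T)$, while $\mathrm{L}^\infty(\uX)_{\mathrm{disc}}$ contains every finite-dimensional invariant subspace of $\mathrm{L}^1(\uX)$ by \cref{subinv}; hence the lattices of finite-dimensional invariant subspaces of $T$ and of $T_\varphi$ are isomorphic as $G$-representations, so $\mathrm{mult}(\uX,T;[\pi]) = \mathrm{mult}(\mathrm{Top}(\uX,T);[\pi])$ for every $[\pi] \in \hat{G}$. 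For $\mathrm{Meas}$ the same conclusion follows from \cref{comingback}, which shows that every finite-dimensional invariant subspace of $\mathrm{L}^1(K,\mu)$ already lies in $\mathrm{C}(K)$. In particular, $(\uX,T)$ is normal if and only if $\mathrm{Top}(\uX,T)$ is normal, $(K,\varphi)$ is normal if and only if $\mathrm{Meas}(K,\varphi)$ is normal, and in both cases the point spectra coincide.

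With this preparation part (i) is immediate. If $(\uX_1,T_1) \cong (\uX_2,T_2)$ then trivially $\sigma_{\mathrm{p}}(\uX_1,T_1) = \sigma_{\mathrm{p}}(\uX_2,T_2)$, since an isomorphism of systems carries finite-dimensional invariant subspaces to finite-dimensional invariant subspaces and intertwines the representations. Conversely, if the point spectra agree, then $\mathrm{Top}(\uX_1,T_1)$ and $\mathrm{Top}(\uX_2,T_2)$ are normal topological dynamical systems with the same point spectrum, hence isomorphic by \cref{nomralhvntop} (i); applying the functor $\mathrm{Meas}$ and invoking the natural isomorphism $\mathrm{Meas} \circ \mathrm{Top} \cong \mathrm{Id}$ supplied by \cref{normalequiv} gives $(\uX_1,T_1) \cong (\uX_2,T_2)$.

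For part (ii), given a grouplike subset $\sigma \subset \hat{G}$, choose by \cref{nomralhvntop} (ii) a normal topological dynamical system $(K,\varphi)$ with $\sigma_{\mathrm{p}}(K,\varphi) = \sigma$; then $\mathrm{Meas}(K,\varphi)$ is a normal measure-preserving system with $\sigma_{\mathrm{p}}(\mathrm{Meas}(K,\varphi)) = \sigma$ by the first paragraph. (Equivalently, one may take $(H,c) \coloneqq \mathrm{Tan}(\sigma)$ and the Haar rotation $(H,m_H,T_{\varphi_c})$, using \cref{protonormal} and $\mathrm{Rep}(\mathrm{Tan}(\sigma)) = \sigma$ from \cref{natisotannaka2}.) For part (iii), let $(\uX,T)$ be normal and put $(K,\varphi) \coloneqq \mathrm{Top}(\uX,T)$, a normal topological system. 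By \cref{nomralhvntop} (iii) there is a group compactification $(H,c)$ with $(K,\varphi) \cong (H,\varphi_c)$. Applying $\mathrm{Meas}$ and using $\mathrm{Meas} \circ \mathrm{Top} \cong \mathrm{Id}$ we obtain $(\uX,T) \cong \mathrm{Meas}(K,\varphi) \cong \mathrm{Meas}(H,\varphi_c)$; since $(H,\varphi_c)$ is strictly ergodic by \cref{charirred} with $m_H$ as its (necessarily unique) invariant probability measure, $\mathrm{Meas}(H,\varphi_c) = (H,m_H,T_{\varphi_c})$, which completes the proof.

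The argument is essentially pure bookkeeping once \cref{normalequiv}, \cref{nomralhvntop} and the point-spectrum preservation are in place, so there is no genuine obstacle; the only step requiring a little care is the first paragraph, namely verifying that $\mathrm{Top}$ and $\mathrm{Meas}$ preserve not merely the set $\sigma_{\mathrm{p}}$ but the full multiplicity function, so that normality is transported along the equivalence in both directions.
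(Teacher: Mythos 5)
Your proposal is correct and follows exactly the paper's route: the paper also obtains \cref{nomralhvntop2} from the topological version \cref{nomralhvntop} via the equivalence \cref{normalequiv}, and your first paragraph merely makes explicit the point-spectrum (and multiplicity) preservation under $\mathrm{Top}$ and $\mathrm{Meas}$ that the paper records for the same purpose (via \cref{subinv} and \cref{comingback}) in Section \ref{sclassicalcom}. No gaps; the extra care about multiplicities is exactly what is needed to transport normality along the equivalence.
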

	\begin{remark}\label{finalrem}
		\begin{enumerate}[(i)]
			\item As a consequence of \cref{main2}, we obtain for normal topological dynamical systems $(K_1,\varphi_1)$ and $(K_2,\varphi_2)$ that the existence of a morphism $q \colon (K_1,\varphi_1) \rightarrow (K_2,\varphi_2)$ is equivalent to the existence of a (not-necessarily continuous) injective linear map $V \colon \mathrm{C}(K_2) \rightarrow \mathrm{C}(K_1)$ intertwining the Koopman representations $T^{\varphi_2}$ and $T^{\varphi_1}$. Moreover, $(K_1,\varphi_1)$ and $(K_2,\varphi_2)$ are isomorphic if and only we can find such $V$ which is a vector space isomorphism. An analoguous statement holds in the ergodic theoretical setting. In particular, two normal measure-preserving systems $(\uX_1,T_1)$ and $(\uX_2,T_2)$ are isomorphic if and only the representations $T_1$ and $T_2$ restricted to the corresponding $\mathrm{L}^2$-spaces are unitarily equivalent (cf. \cite[Corollary 17.14]{EFHN2015}).
			\item In view of \cref{translationproperties}, many properties of a normal system (or the corresponding group compactification) translate directly into properties of its point spectrum and vice versa.
			\item A general (possibly non-normal) minimal topological or ergodic measure-preserving system with discrete spectrum is always isomorphic to a quasi-rotation (in the topological case, consider the map \cref{normaltop} and factor out the stabilizer group; for the measure-preserving case use the categorical equivalence of \cref{equivtopmeas}). This representation aspect of the Halmos-von Neumann result is well-known (see, e.g., \cite[Theorem 1]{Mack1964}, \cite[Theorem 3.6]{Ausl1988}, and \cite[Section 3]{NaWo1972}). However there is an obstacle to covering the uniqueness and realization aspects of \cref{nomralhvntop2} for general irreducible systems with discrete spectrum already pointed out by Mackey (see \cite[Remark 2 of Section 3]{Mack1964}): Due to an example of Todd (see \cite{Todd1950}) there are non-isomorphic ergodic systems $(\uX_1,T_1)$ and $(\uX_2,T_2)$ with discrete spectrum such that the representations $T_1$ and $T_2$ restricted to the corresponding $\mathrm{L}^2$-spaces are unitarily equivalent\footnote{In fact, the acting group $G$ can even be chosen to be finite and the systems $(\uX_1,T_1)$ and $(\uX_2,T_2)$ are then given by quasi-rotations on homogeneous spaces of $G$.}. Thus, any isomorphism invariant depending only on these unitary representations (such as the point spectrum from \cref{defmultpspec}), cannot be complete. Notice however, that in view of \cref{generatedgrplike} and \cref{grpcompvsgrplike} two minimal systems with discrete spectrum $(K_1,\varphi)$ and $(K_2,\varphi_2)$ with the same point spectrum induce isomorphic group compactifications $(\mathrm{E}_{\mathrm{u}}(K_1,\varphi_1),i_{K_1,\varphi_1)})$ and $(\mathrm{E}_{\mathrm{u}}(K_2,\varphi_2),i_{K_2,\varphi_2)})$. In particular, two irreducible topological or measure-preserving systems with the same point spectrum can be realized as two factors of the same normal system. We also refer to Mackey's article \cite{Mack1964} for an alternative approach to the Halmos-von Neumann theorem for general ergodic measure-preserving systems with discrete spectrum.
		\end{enumerate}
	\end{remark}
	
\parindent 0pt
\parskip 0.5\baselineskip
\setlength{\footskip}{4ex}
\bibliographystyle{alpha}
\footnotesize
\subsection*{Conflict of Interest}
The authors declare that they have no conflict of interest.
\subsection*{Availability of data and materials}
NA.
\bibliography{./bib/bibliography} 
\footnotesize

\end{document}